\documentclass[11pt, leqno]{amsart}
\usepackage{latexsym}
\usepackage{amsmath}
\usepackage{amssymb}
\usepackage{amsfonts}
\usepackage{esint}
\usepackage{graphicx}
\usepackage{accents}



\usepackage{color}
\definecolor{Blue}{rgb}{0.,0.,1.}
\definecolor{Red}{rgb}{1.,0.,0.}
\newcounter{smallarabics}
\newenvironment{arabicenumerate}
{\begin{list}{{\normalfont\textrm{(\arabic{smallarabics})}}}
  {\usecounter{smallarabics}\setlength{\item[--]indent}{0cm}
   \setlength{\leftmargin}{5ex}\setlength{\labelwidth}{4ex}
   \setlength{\topsep}{0.75\parsep}\setlength{\partopsep}{0ex}
   \setlength{\item[--]sep}{0ex}}}
{\end{list}}

\newcounter{smallroman}

\newcommand{\ben}{\begin{arabicenumerate}}  
\newcommand{\een}{\end{arabicenumerate}}


\newtheorem{theoreme}{Theorem }[section]
\newtheorem{theorem}[theoreme]{Theorem}
\newtheorem{proposition}[theoreme]{Proposition}
\newtheorem{lemma}[theoreme]{Lemma}
\newtheorem{definition}[theoreme]{Definition}
\newtheorem{corollary}[theoreme]{Corollary}
\newtheorem{remark}[theoreme]{Remark}
\newtheorem{example}[theoreme]{Example}

\newcommand{\beq}{\begin{equation}}
\newcommand{\eeq}{\end{equation}}

\newcommand{\bex}{\begin{example}}
\newcommand{\eex}{\end{example}}
\def\bel{\begin{lemma}}
\def\eel{\end{lemma}}
\def\bet{\begin{theoreme}}
\def\eet{\end{theoreme}}
\def\bed{\begin{definition}}
\def\eed{\end{definition}}
\def\ber{\begin{remark}}
\def\eer{\end{remark}}


\def\rag{\rangle}
\def\lag{\langle}

\def\braket#1#2{\langle{#1}|{#2}\rangle}

\def\jap#1{\langle {#1} \rangle}

\def\slim{\mbox{\rm s-}\!\lim}

\def\nin{\notin}

\def\Ker{\mbox{\rm Ker\,}}

\def\supp{\mbox{\rm supp\! }}

\def\nin{\notin}

\def\qed{\hfill \raisebox{0.5ex}{\framebox[1.6ex]{
                                       \rule[0ex]{0ex}{0.3ex} }}}

\def\build#1_#2^#3{\mathrel{\mathop{\kern 0pt#1}\limits_{#2}^{#3}}}


\def\rr{{\mathbb R}}
\def\zz{{\mathbb Z}}
\def\cc{{\mathbb C}}
\def\nn{{\mathbb N}}


\def\slim{{\rm s-}\lim}

\def\bar{\overline}

\def\coinf{C_0^\infty}

\def\cV{{\mathcal V}}

\def\qed{$\Box$\medskip}
\def \p{ \partial}

\def\12{\frac{1}{2}}

\def\supp{{\rm supp}}

\def\e{{\rm e}}

\def\bbbone{{\mathchoice {\rm 1\mskip-4mu l} {\rm 1\mskip-4mu l}
{\rm 1\mskip-4.5mu l} {\rm 1\mskip-5mu l}}}
\def\one{\bbbone}
\def\cH{{\mathcal H}}


\def\Ker{{\rm Ker}}

\def\C{{\mathcal C}}
\def\cT{{\mathcal T}}

\def\N{{\mathcal N}}

\def\bep{\begin{proposition}}
\def\eep{\end{proposition}}

\newcommand{\mat}[4]{\left(\begin{array}{cc}#1 &#2  \\ #3 &#4 \end{array}\right)}

\usepackage{amsmath}
\usepackage{amssymb}
\usepackage{a4wide}
\usepackage{graphics}
\usepackage{epsfig}
\usepackage{enumerate}
\parskip = 0.05 in




\newcounter{hypo}


\def\C{{\mathbb C}}

\def\N{{\mathbb N}} 
\def\R{{\mathbb R}} 

\def\Z{{\mathbb Z}}


\def\CC{\mathcal {C}}
\def\CA{\mathcal {A}}
\def\CB{\mathcal {B}}
\def\CD{\mathcal {D}}
\def\CE{\mathcal {E}}

\def\CH{\mathcal {H}}

\def\CL{\mathcal {L}}
\def\CM{\mathcal {M}}

\def\CO{\mathcal {O}}
\def\CP{\mathcal {P}}

\def\CS{\mathcal {S}}
\def\CT{\mathcal {T}}
\def\CU{\mathcal {U}}
\def\CV{\mathcal {V}}


\def\one{{\mathchoice {\rm 1\mskip-4mu l} {\rm 1\mskip-4mu l} {\rm 1\mskip-4.5mu l} {\rm 1\mskip-5mu l}}}

\def\supp{\mathop {\rm supp}\nolimits}

\def\<{\langle}
\def\>{\rangle}

\def\slim{\mathop{\text{s--lim}}}

\def\bel{\begin{lemma}}




\makeatletter
 \@addtoreset{equation}{section}
 \makeatother


\def\4{\frac{1}{4}}

\def\cE{\mathcal{E}}

\def\Dom{{\rm Dom}}
\def\i{{\rm i}}
\def\h{\langle h \rangle}

\def\jh{\langle h\rangle}

\def\rr{{\mathbb R}}
\def\zz{{\mathbb Z}}
\def\cc{{\mathbb C}}
\def\nn{{\mathbb N}}


\def\slim{{\rm s-}\lim}

\def\bar{\overline}

\def\coinf{C_0^\infty}

\def\cV{{\mathcal V}}

\def\qed{$\Box$\medskip}
\def \p{ \partial}

\def\12{\frac{1}{2}}

\def\supp{{\rm supp}}

\def\e{{\rm e}}

\def\bbbone{{\mathchoice {\rm 1\mskip-4mu l} {\rm 1\mskip-4mu l}
{\rm 1\mskip-4.5mu l} {\rm 1\mskip-5mu l}}}
\def\one{\bbbone}
\def\cH{{\mathcal H}}



\def\C{{\cc}}
\def\cT{{\mathcal T}}

\def\N{\nn}

\def\bep{\begin{proposition}}
\def\eep{\end{proposition}}

\newcommand{\cb}{\mathcal{B}}

\newcommand{\ce}{\mathcal{E}}
\newcommand{\ch}{\mathcal{H}}


%




%


%

%


\def\rag{\rangle}
\def\lag{\langle}

\def\braket#1#2{\langle{#1}|{#2}\rangle}

\def\jap#1{\langle {#1} \rangle}

\def\slim{\mbox{\rm s-}\!\lim}

\def\nin{\notin}

\def\Ker{\mbox{\rm Ker\,}}

\def\supp{\mbox{\rm supp\! }}

\def\nin{\notin}

\def\qed{\hfill \raisebox{0.5ex}{\framebox[1.6ex]{
                                       \rule[0ex]{0ex}{0.3ex} }}}

\def\build#1_#2^#3{\mathrel{\mathop{\kern 0pt#1}\limits_{#2}^{#3}}}
\author{V. Georgescu}
\address{D\'epartement de Math\'ematiques,
Universit\'e de Cergy-Pontoise,
95302 Cergy-Pontoise Cedex, France }
\email{Vladimir.Georgescu@math.cnrs.fr}
\author{C. G\'erard}
\address{D\'epartement de Math\'ematiques, Universit\'e de Paris XI,
  91405 Orsay Cedex France} 
\email{christian.gerard@math.u-psud.fr}
\author{D. H\"{a}fner}
\address{Universit\'e de Grenoble 1, Institut Fourier, UMR 5582
  CNRS, BP 74 38402 Saint-Martin d'H\`eres France} 
\email{Dietrich.Hafner@ujf-grenoble.fr}
\begin{document}

\title[Asymptotic completeness for superradiant Klein-Gordon
equations]{Asymptotic completeness for superradiant Klein-Gordon equations and applications to the De Sitter Kerr metric}

\keywords{Asymptotic completeness, Klein-Gordon equation, De Sitter
  Kerr metric, superradiance}

\subjclass[2000]{35L05, 35P25, 81U, 81Q05}
\begin{abstract}
We show asymptotic completeness for a class of superradiant
  Klein-Gordon equations. Our results are applied to the Klein-Gordon
  equation on the De Sitter Kerr metric with small angular momentum of
  the black hole. For this equation we obtain
  asymptotic completeness for fixed angular momentum of the field.
\end{abstract}

\maketitle
\section{Introduction}
\subsection{Introduction}
{\em Asymptotic completeness} is one of the fundamental properties one might
want to show for a Hamiltonian describing the dynamics of a physical system. Roughly speaking it states that
the Hamiltonian of the system is equivalent to a free Hamiltonian
for which the dynamics is well understood. The
dynamics that we want to understand behaves then at large times like
this free dynamics modulo possible eigenvalues. In the case when the Hamiltonian is
selfadjoint with respect to some suitable Hilbert space inner product
an enormous amount of literature has been dedicated to this
question. The question is much less studied in the case when the
Hamiltonian is not selfadjoint. This situation occurs for example for
the Klein-Gordon equation when the field is coupled to a (strong)
electric field. This system has been studied by Kako in a short range
case (see \cite{Ka}) and by C. G\'erard in the long range case (see
\cite{Ge12}). In this situation the Hamiltonian, although not
selfadjoint on a Hilbert space, is selfadjoint on a so called {\em Krein
space}. In a previous paper \cite{GGH1} we addressed the question of boundary values of the
resolvent for selfadjoint operators on Krein spaces. Applications to
propagation estimates for the Klein-Gordon equation are given in \cite{GGH2}. 

The Klein Gordon
equation can be written in a quite general setting in the form
\begin{equation}
\label{KG}
(\partial_t^2-2ik\partial_t+h)u=0, \ u:\ \rr\to \cH
\end{equation}
with selfadjoint operators $k$ and $h$. However if $h$ is not positive
the natural conserved energy for \eqref{KG}
\[\Vert \partial_tu\Vert^2+(hu|u)\]
is not positive and in general no positive conserved energy is
available. This happens in particular when the equation is associated to
a lorentzian manifold with no global time-like Killing vector
field. In this situation natural positive energies can grow in time, and we will loosely 
 speak about {\em superradiance}, the most famous example being  the (De Sitter) Kerr metric which
describes rotating black holes. 
T
his example doesn't enter into the
framework of our previous papers because the Hamiltonian can no longer
be realized as a selfadjoint operator on a Krein space whose 
topology is given by some natural positive (but not conserved) energy. The problem
comes from the fact that the operator $k$ has different ``limit
operators'' in the different ends of the manifold. In the one
dimensional case scattering results for this situation have been obtained
by Bachelot in \cite{Ba04}.

Asymptotic completeness for wave equations on Lorentzian manifolds
 has been studied for a long time since the works of Dimock
and Kay in the 1980's, see e.g.  \cite{DiKa86}. The
main motivation came from the {\em Hawking effect}. Such results are a
necessary step  to give mathematically rigorous descriptions of the Hawking effect, 
see Bachelot \cite{Ba99} and H\"afner \cite{Ha09}. The most complete
scattering results exist in the
Schwarzschild metric, see e.g.  Bachelot \cite{Ba94}. Asymptotic completeness has
also been shown on the Kerr metric for non superradiant modes of the
Klein-Gordon (see H\"afner \cite{Ha03})     
and for the Dirac equation (for which no superradiance occurs), see
H\"afner-Nicolas \cite{HaNi04}.  In this setting asymptotic
completeness can be understood as an existence and uniqueness result
for the characteristic Cauchy problem in energy space at null
infinity, see \cite{HaNi04} for details. As
far as we are aware asymptotic completeness has not been
addressed in the setting of superradiant equations on the (De Sitter)
Kerr background. Note however that scattering
results have been obtained by Dafermos, Holzegel, Rodnianski in the
difficult nonlinear setting of the Einstein equations supposing
exponential decay for the scattering data on the future event horizon
and on future null infinity, see \cite{DaHoRo13}. Also there has been enormous
progress in the last years on a somewhat related question which is the question of decay of the local energy for the wave
equation on the (De Sitter) Kerr metric. We mention in this context the
papers of Andersson-Blue \cite{AnBl09}, Dyatlov \cite{Dy11_02},
Dafermos-Rodnianski \cite{DaRo13_01},
Finster-Kamran-Smoller-Yau \cite{FKSY}, \cite{FKSYER}, Tataru-Tohaneanu \cite{TaTo11} and Vasy \cite{Va13} as well as
references therein for an overview. Let us make some comments on
the similarities and differences between asymptotic completeness results and decay of the
local energy: 
\begin{itemize}
\item[--] for a hyperbolic equation like the wave equation, the  essential
  ingredients for asymptotic completeness are {\em minimal velocity
  estimates} stating that the energy in cones inside the light cone
  goes to zero. No precise rate is required, but no loss of
  derivatives is permitted in the estimates. 
\item[--] {\em energy estimates} are necessary for asymptotic
  completeness. One needs to estimate the energy at null infinity by
  the energy on a $t=0$ slice and the energy at the time $t=0$ slice
  by the energy at null infinity. Leaving aside the question of loss
  of derivatives the first estimate can probably be deduced from the
  local energy decay estimates. For the other direction however a new
  argument is needed. Indeed most of the local energy decay estimates
  use the redshift, that becomes a blueshift in the inverse sens of
  time, see \cite{DaHoRo13}.
\item[--] the choice of coordinates has probably to be different. Whereas
  coordinates that extend smoothly across the event horizon are well adapted
  to the question of decay of local energy, they don't seem to be well
  adapted for showing asymptotic completeness results. 
\end{itemize}
We refer to \cite{Ni13} for a more detailed discussion on the link
between local energy decay and asymptotic completeness results. 

We show in this paper asymptotic completeness results for the
superradiant Klein-Gordon equation in a quite general setting. Our abstract Klein-Gordon operators
have to be understood as  operators acting on $\rr_{t}\times \Sigma$, where $\Sigma$ is a manifold with two
ends, both being asymptotically hyperbolic. 
We also impose the
existence of ``limit operators'' in the ends which can be realized as
selfadjoint operators on a Hilbert space. 
In this setting the non real
spectrum of the Hamiltonian consists of a finite number of complex
eigenvalues with finite multiplicity, we can define a smooth
functional calculus for the Hamiltonian and the truncated
resolvent can be extended meromorphically across  the real axis. We
show propagation estimates for initial data which in energy are
supported outside so called singular points. These singular points are
closely related to real resonances but unlike the selfadjoint setting
they may be singular points which are not real resonances. 

From the
propagation estimates it follows in particular that the evolution is
uniformly bounded for data supported in energy outside the singular
points. The same holds true for high energy data for which no
superradiance appears. 

We apply then our results  to the De Sitter Kerr
metric with small angular momentum. We show asymptotic completeness
for fixed but arbitrary angular momentum $n$ of the field. For angular momentum $n\neq 0$ the absence of real resonances
follows from the results of Dyatlov, see \cite{Dy11_01}. However some
additional work is required to show the absence of singular
points. In a subsequent paper we will discuss the De Sitter Kerr case
in more detail.     
\subsection{Abstract Klein-Gordon equation}
Let $(\CH, (\cdot|\cdot))$ be a Hilbert space, $(h,D(h))$ be a selfadjoint operator on
$\CH$ and $k\in \CB(\<h\>^{-1/2}\CH;\, \<h\>^{1/2}\CH)$ be another selfadjoint operator.  We consider the following abstract Klein-Gordon equation~:
\begin{eqnarray}
\label{WE}
\left\{\begin{array}{rcl} (\partial_t^2-2ik\partial_t+h)u&=&0,\\
u\vert_{t=0}&=&u_0,\\ \partial_tu\vert_{t=0}&=&u_1. \end{array}\right.
\end{eqnarray}
The operator $h$ will be in general not positive. As a consequence the
conserved {\it energy}
\begin{equation*}
\<u,u\>_0:=\Vert\partial_tu\Vert_{\CH}^2+(hu|u)_{\CH}
\end{equation*}
will in general not be positive. The hyperbolic character of the
equation is expressed by the condition
\begin{equation*}
h_0:=h+k^2\ge 0.
\end{equation*}
Another important conserved quantity is the {\it charge} which is given by
\begin{equation*}
q(u,v)=\<u|v\>=(u_0|v_1)+(u_1|v_0)
\end{equation*}
Setting $\Psi_0=(u_0,-iu_1)$ we can rewrite \eqref{WE} as a first order equation
\begin{eqnarray}
\label{W1O} \left\{\begin{array}{rcl} (\partial_t-iH)\Psi&=&0,\\
\Psi\vert_{t=0}&=&\Psi_0,\end{array}\right.
\end{eqnarray}
where  
\begin{eqnarray*}
H=\left(\begin{array}{cc} 0 & \one \\ h &
    2k \end{array}\right).
\end{eqnarray*}
Observe that 
\begin{equation}\label{eq:inverform}
(H- z)^{-1}=p( z)^{-1}\mat{ z-2k}{1}{h}{ z},
\end{equation}
where 
\begin{equation*}
p(z)=h+z(2k-z)=h_0-(k-z)^2\in B(\<h_0\>^{-1/2}\CH,\<h_0\>^{1/2}\CH),\,
z\in \C.
\end{equation*}
The map $z\mapsto p(z)$ is called a \emph{quadratic pencil} and plays an important role in
the study of $H$.
\subsection{Results for the Klein-Gordon equation on the De Sitter Kerr
  metric}
Let $(\CM,g)$ be  the De Sitter Kerr spacetime and $u$ a solution of
the Klein-Gordon equation
\begin{equation}
\label{WEDK}
\left\{\begin{array}{rcl} (\Box_g+m^2)u&=&0,\\
u|_{t=0}&=&u_0,\\
\partial_tu|_{t=0}&=&u_1,\end{array}\right.
\end{equation}
where $t$ is the Boyer-Lindquist time. We define the Cauchy surface
$\Sigma=\{t=0\}$. We write \eqref{WEDK} in the
form \eqref{WE} and associate the homogeneous energy space $\dot{\CE}$, which is
the completion of $C_0^{\infty}(\Sigma)\times C_0^{\infty}(\Sigma)$ for
the norm
\[\Vert u\Vert_{\dot{\CE}}=\Vert
u_1-ku_0\Vert_{\CH}^2+((h+k^2)u_0|u_0)_{\CH},\]
where $\CH=L^2(\Sigma; dVol)$ for a suitable measure $dVol$.
We can take $dVol=dxd\omega$, where $x$ is a Regge-Wheeler type coordinate if
we multiply $u$ with a suitable function. 
We then write the equation in the form \eqref{W1O} and
obtain  a Hamiltonian $\dot{H}$ (formally $\dot{H}=H$) acting
on $\dot{\CE}$. We also consider  the wave equations 
\begin{equation}
\label{WEpm}
\left\{\begin{array}{rcl} (\partial_t^2-2\Omega_{l/r}\partial_{\varphi}\partial_t-\partial_x^2+\Omega^2_{l/r}\partial_{\varphi}^2)u^{l/r}&=&0,\\
u^{l/r}|_{t=0}&=&u^{l/r}_0,\\
\partial_tu^{l/r}|_{t=0}&=&u^{l/r}_1,\end{array}\right.
\end{equation}
where $\Omega_{l/r}$ is the angular velocity of the  black hole resp. cosmological horizon. Let $H_{l/r}$, $\dot{\CE}_{l/r}$  be the
associated Hamiltonians and 
homogeneous energy spaces.  We denote by $\dot{\CE}^{n}$, $\dot{\CE}^{n}_{l/r}$ the subspaces of angular momentum $n$.  Let $i_{l/r}$ be smooth cut-off functions
equal to one at $\mp\infty$, equal to zero at $\pm\infty$. Let $a$ be the angular momentum per unit mass of the De Sitter Kerr spacetime. The main result in the De Sitter Kerr
setting is the following
\begin{theorem}
There exists $a_0>0$ such that for all $|a|<a_0$ and 
$n\in \Z\setminus\{0\}$ we have the following.
\begin{enumerate} 
\item There exists a dense subspace $\CE^{fin,n}_{l/r}$ of
  $\dot{\CE}^n_{l/r}$ such that for all $v_{l/r}\in \CE^{fin,n}_{l/r}$ the
  limits
\begin{equation*}
W^{l/r}_{\pm}v_{l/r}:=\lim_{t\rightarrow\pm \infty}e^{it\dot{H}}i_{l/r}e^{-it\dot{H}_{l/r}}v_{l/r}
\end{equation*}
exist. The operators $W^{l/r}_{\pm}$ extend to  bounded operators
$W^{l/r}_{\pm}\in \CB(\dot{\CE}^n_{l/r};\dot{\CE}^n)$.
\item For all $u\in\dot{\CE}^n$ the limit
\begin{equation*}
\Omega^{l/r}_{\pm}u:=\lim_{t\rightarrow\pm\infty}e^{it\dot{H}_{l/r}}i_{l/r}e^{-it\dot{H}}u
\end{equation*}
exists in $\dot{\CE}^n_{l/r}$.
\end{enumerate}
$i),\, ii)$ also hold for $n=0$ if the mass $m$ of the field is strictly positive.
\end{theorem}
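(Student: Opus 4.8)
The plan is to deduce this De Sitter Kerr result from the abstract asymptotic completeness theorems for superradiant Klein-Gordon equations that form the core of the paper. The overall strategy is to verify, for the operators arising from \eqref{WEDK} restricted to a fixed angular momentum sector $n$, all the abstract hypotheses needed: the hyperbolicity condition $h_0 = h+k^2 \ge 0$, the existence and self-adjointness of the ``limit operators'' at the two horizons (which here are the comparison operators appearing in \eqref{WEpm} with $k$ replaced by its limit $\Omega_{l/r}\partial_\varphi$ acting on the sector $n$, hence by the scalar $\Omega_{l/r} n$), the asymptotic-hyperbolic structure of $\Sigma$ at both ends in the chosen Regge--Wheeler type coordinate $x$, and the requisite short/long range decay of $h - h_{l/r}$, $k - k_{l/r}$ and their derivatives towards each end. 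First I would write the Klein--Gordon operator on the De Sitter Kerr metric in Boyer--Lindquist coordinates, separate the angular variable using the (generalized) spheroidal harmonics so that on each sector the problem becomes one-dimensional in $x$, identify $\CH = L^2(\R_x \times S^1_\varphi)$ (or its angular-momentum-$n$ subspace) with the volume element $dx\,d\omega$ after the unitary rescaling of $u$ alluded to in the text, and read off $h$, $k$, and the comparison dynamics $\dot H_{l/r}$ on $\dot\CE^n_{l/r}$.

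Next I would invoke the structural results announced in the introduction: for $|a|$ small the non-real spectrum of $\dot H$ on the sector $n$ is a finite set of complex eigenvalues of finite multiplicity, a smooth functional calculus exists, and the truncated resolvent extends meromorphically across the real axis, with ``singular points'' controlling the propagation estimates. The abstract theorem then gives the existence of the wave operators and their inverses provided the initial data are supported in energy away from the singular points; to upgrade this to all of $\dot\CE^n$ (resp.\ a dense subspace $\CE^{fin,n}_{l/r}$) one must show that for $n\ne 0$ there are \emph{no} singular points at all, and for $n=0$ with $m>0$ likewise. For $n\ne 0$ the absence of real resonances is exactly Dyatlov's result \cite{Dy11_01}; the extra step is to rule out singular points that are not real resonances, which I expect requires a direct analysis of the quadratic pencil $p(z) = h_0 - (k-z)^2$ near the relevant real values, using the explicit form of the limit operators $\Omega_{l/r} n - z$ and the fact that for $n\ne 0$ the thresholds $z = \Omega_l n$ and $z = \Omega_r n$ are distinct and nonzero, so the two ends contribute ``incoming/outgoing'' behaviour with no cancellation. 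For $n=0$, $m>0$, the mass term shifts the bottom of $h_0$ and plays the role that the angular momentum played before, again excluding singular points.

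The key steps, in order: (1) separate variables and realize \eqref{WEDK} on sector $n$ in the abstract form \eqref{WE}–\eqref{W1O}, checking $h_0\ge0$ and computing the limit operators; (2) verify the decay/regularity hypotheses of the abstract theorems at both asymptotically hyperbolic ends in the Regge--Wheeler variable, using the explicit De Sitter Kerr metric coefficients and their expansions near the black hole and cosmological horizons, with $|a|<a_0$ ensuring the perturbation from the comparison operators is controlled; (3) apply the abstract propagation estimates and abstract asymptotic completeness to get $W^{l/r}_\pm$ and $\Omega^{l/r}_\pm$ on the subspace of data with energy support away from singular points; (4) prove that this exceptional set is empty for $n\ne0$ (combining \cite{Dy11_01} with the pencil analysis near real thresholds) and for $n=0$, $m>0$, thereby removing the support restriction, and define $\CE^{fin,n}_{l/r}$ as the dense subspace on which the strong limits are first shown to converge before extending by density and uniform boundedness.

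I expect step (4) to be the main obstacle: Dyatlov's theorem handles real resonances but the notion of ``singular point'' introduced here is strictly broader in the non-self-adjoint Krein-type setting, so the crux is a careful Fredholm/pencil argument showing that at the candidate singular real energies the boundary values of the resolvent are genuinely regular — equivalently that the relevant limiting absorption principle holds without loss — for all fixed $n\ne0$ (and for $n=0$ when $m>0$), uniformly enough that the abstract machinery applies. A secondary technical point is the matching of function spaces and measures ($dVol$ versus $dx\,d\omega$) and keeping track of the conjugation by the rescaling function, but that is routine compared with the resonance/singular-point analysis.
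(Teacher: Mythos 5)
Your high-level plan matches the paper's strategy: separate the angular variable, verify the geometric hypotheses of the abstract framework on each fixed-$n$ sector, apply the abstract asymptotic completeness theorem for data localized away from singular points, then remove the restriction by showing the singular set is empty. That skeleton is right, and you correctly identify the singular-point question as the crux. However, several of the load-bearing steps are missing or only gestured at, and without them the argument does not close.

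First, the paper does not compare $\dot H$ directly to the profiles $\dot H_{l/r}$; it goes through an intermediate \emph{separable} comparison dynamics $\dot H_{\pm\infty}$ which still retains the angular operator $P$ but drops the nonseparable perturbation. That intermediate step (Theorem~\ref{thACKerrSC}) lives on the \emph{same} energy space as $\dot H$, so ordinary Cook-type arguments work; the further passage from $\dot H_{\pm\infty}$ to $\dot H_{l/r}$ is where the space changes. Crucially, the profile spaces $\dot{\CE}^n_{l/r}$ are strictly \emph{larger} than $\dot{\CE}^n$ (they forget the angular part of the energy), so $i_{l/r}\dot{\CE}^n_{l/r}\not\subset\dot{\CE}^n$ and there is no hope of a direct strong limit on all of $\dot{\CE}^n_{l/r}$. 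You say the wave operator is ``first shown on a dense subspace and extended by density and uniform boundedness,'' but that requires you to have an a priori \emph{norm bound} on the dense subspace; that is precisely what needs proving. The paper gets it by (a) taking $\CE^{fin,n}_{l/r}$ to consist of data with finitely many spherical harmonics, (b) decomposing data into incoming/outgoing pieces (Lemma~\ref{lem12.2.1}), observing that $W_{r/l}$ kills the outgoing part (because $i^2_{l/r}e^{-it\dot H_{l/r}}u^{\mathrm{out}}=0$ for large $t$), and (c) bounding the incoming part using the \emph{exponential decay of $\Delta_r$} near the horizons, which makes the angular contribution to the energy go to zero along the free flow. None of this is visible in your sketch, and it is exactly the content of part (i).

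Second, on the singular points: you propose a direct Fredholm/pencil analysis near the candidate real energies, which is vague and would be hard to carry out from scratch. The paper instead proves an abstract structural criterion (Proposition~\ref{thm5.3.1}) that $\CS\subset\CT\cup\CT_-\cup\CT_+$, reducing the problem to showing the three sets of \emph{real resonances} are empty. This is a genuinely different and more efficient reduction. Moreover, $\CT=\emptyset$ does not follow directly from Dyatlov, since Dyatlov's result is for compactly supported cutoffs while the abstract theory uses an exponential weight $w^{-\epsilon}$; bridging that gap requires the analytic hypo-ellipticity argument (Morrey--Nirenberg) to show the residue kernel is supported on the diagonal and therefore vanishes. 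You do not flag this, and it is a nontrivial ingredient. Finally, for $\CT_\pm$ one needs a separate argument at $z=0$ (where $p_\pm(0)\gtrsim n^2 w^{-2}$ is used) and the reference to the limiting-absorption results of the companion paper away from zero. In short: your outline is headed in the right direction, but the actual mechanism — the criterion $\CS\subset\CT\cup\CT_-\cup\CT_+$, the hypo-ellipticity bootstrap, the two-stage comparison, and the in/out decomposition with exponential decay for the norm estimate defining $\CE^{fin,n}_{l/r}$ — is where the proof lives, and it is absent from the proposal.
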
  
\subsection{Plan of the paper}
\begin{itemize}
\item[--] in Sect. \ref{SecBG} we collect some results on general
  abstract Klein-Gordon equations. Also
  we give some basic resolvent estimates. It turns out that already in
  this abstract setting superradiance can only occur at low
  frequencies as expressed in the estimates of Lemma
  \ref{pr:est}. This fact is already known for the Kerr metric, see
  e.g. Dafermos Rodnianski \cite{DaRo13_01}, but not in this spectral formulation. We also study {\em gauge
  transformations} in Sect. \ref{SecGT}. These gauge transformations 
  correspond to choices of different Killing fields in a more geometric
  language.
\item[--] in Sect. \ref{SecME} we recall some facts on meromorphic
  Fredholm theory. We show that a meromorphic extension of the
  truncated resolvent of $h$ gives a meromorphic extension of the
  weighted resolvent of $H$.
\item[--] in Sect. \ref{SecTE} we describe the abstract setting for a
  Klein-Gordon operator on a manifold with two ends. Our assumptions
  assure that the asymptotic Hamiltonians in the ends are selfadjoint.
  Gluing the resolvents of the asymptotic operators together gives the resolvent
  for $H$ using the Fredholm theory of Sect. \ref{SecME}. We obtain
  in this way resolvent estimates for the Hamiltonian $H$ which are
  sufficient to construct  a smooth functional calculus for $H$.
\item[--] in Sect. \ref{SecAC1} we prove  propagation estimates which
  are needed for the proof of the asymptotic completeness result. We
  also introduce the notion of {\em singular points}. Singular points are
  obstacles for uniform boundedness of the evolution and therefore
  also for asymptotic completeness. A  useful criterion for the absence of
  singular points is given. 
\item[--] in Sect. \ref{secbound1} we show uniform boundedness of the evolution for data which is
  spectrally supported outside the singular points.
\item[--] asymptotic completeness is shown in the abstract setting in
  Sect. \ref{secAsympc1}. The scattering space corresponds to data which are  supported in energy
  outside the singular points. 
\item[--] in Sect. \ref{SecGS} we introduce the geometric setting. Our
  operators fulfill the hypotheses of the abstract setting. We
  obtain meromorphic extensions of the weighted resolvents using a
  result of Mazzeo-Melrose \cite{MM87}.
\item[--] We apply our general result to the geometric setting in Section
  \ref{secACgeo} and obtain an asymptotic completeness result in
  the geometric setting. 
\item[--] in Sect. \ref{SecKGDK} we describe the Klein-Gordon equation on
  the De Sitter Kerr metric.
\item[--] in Sect. \ref{SecAC3} the main results are formulated in the
  De Sitter Kerr setting. Two types of results are established: comparison to spherically
  symmetric asymptotic dynamics on the same energy space and
  comparison to asymptotic profiles. These asymptotic profiles give rise
  to energy spaces which are bigger than the original ones. The wave
  operators can therefore only be defined as limits on dense
  subspaces. They then extend by continuity to the whole energy space
  for the profiles. Inverse wave operators exist on the whole energy
  space as limits.
\item[--] The proofs of the theorems in the De Sitter Kerr setting are
  given in Sect. \ref{prDSKerr}. We apply our earlier abstract
  theorems. To obtain the meromorphic extensions of the different
  truncated resolvents it is crucial that the cosmological constant is
  strictly positive. The absence of real resonances and complex eigenvalues
  follows from the work of Dyatlov \cite{Dy11_01} for a compactly
  supported cutoff resolvent. An
  hypo-ellipticity argument  enables us to use an exponential weight. Our
  earlier results in \cite{GGH2} and the general criterion in Section
  \ref{SecAC1} enable us to show the absence of singular points.  
\end{itemize}
As usual for asymptotic completeness results and to simplify the
exposition we consider only the limit $t\rightarrow \infty$. All the
results in this paper also hold in the limit $t\rightarrow -\infty$
and the proofs are the same. 
\begin{center}

\vspace{1cm}

{\bf \large Acknowledgements}
\end{center}
The authors thank J.-F. Bony for fruitful discussions. This work was
partially supported by the ANR project AARG. DH thanks the
MSRI in Berkeley for hospitality during his stay in the
fall of 2013.

\section{Background on abstract Klein-Gordon operators}
\label{SecBG}

\subsection{Notations}
\begin{itemize}
\item[--]  if $X,Y$ are sets and $f:X\to Y$,  we write $f:X\tilde\to Y$  if $f$ is bijective. We use the same notation if $X, Y$ are topological spaces  if $f$ is an isomorphism.

 \item[--] if $\cH$ is a Banach space we denote $\cH^*$ its adjoint space, the
set of continuous anti-linear functionals on $\cH$ equipped with the
natural Banach space structure.  Thus the canonical anti-duality $\lag
u, w \rag$, where $u\in\cH$ and $w\in\cH^{*}$, is anti-linear in $u$
and linear in $w$. In general we denote by $\braket{\cdot}{\cdot}$
hermitian forms on $\cH$, again anti-linear in the first argument and
linear in the second one, but if $\cH$ is a Hilbert space its scalar
product is denoted by $(\cdot|\cdot)$.
\item[--] $\CB(\CH)$ is the space  of bounded operators on $\CH$ and
$\CB_{\infty}(\CH)$ the subspace of compact operators.

\item[--] if $S$ is a closed densely defined operator on a Banach space, then
$D(S),\, \rho(S)$, $\sigma(S)$ are its domain, resolvent set and
spectrum.  We use the notation $\jap{S}=(1+S^2)^{1/2}$ if $S$ is an
operator for which this expression has a meaning, in particular if $S$
is a real number.

\item[--] if $S$ is a self-adjoint operator on a Hilbert space then $S>0$ means
$S\geq0$ and $\Ker S=\{0\}$. 

\item[--] if $A, B$ are two selfadjoint operators or real numbers, (possibly depending on some parameters), we write $A\lesssim B$ if $A\leq C B$ for some  constant $C>0$, (uniformly with respect to the parameters).

\end{itemize}

\subsection{Scales of Hilbert spaces}

Let $\cH$ be a Hilbert space identified with its adjoint space
$\cH^{*}=\cH$ via the Riesz isomorphism. If $h$ is a selfadjoint
operator on $\cH$ we associate to it the {\em non-homogeneous
  Sobolev spaces}
\[
 \langle h\rangle^{-s}\cH:= \Dom |h|^{s}, \   \langle
 h\rangle^{s}\cH:= ( \langle h\rangle^{-s}\cH)^{*}, \ s\geq 0. 
\]
The spaces $ \langle h\rangle^{-s}\cH$ are equipped with the graph
norm $\| \langle h\rangle^{s}u\|$.  We keep the notation
\[
(u| v), \ u\in \langle h\rangle^{-s}\cH, \ v\in \langle
h\rangle^{s}\cH,
\]
to denote the duality bracket between $\jh^{-s}\cH$ and
$\jh^{s}\cH$.  \def\jh{\langle h\rangle}

If $\Ker h=\{0\}$ then we also define the {\em homogeneous Sobolev
  space} $|h|^{s}\cH$ equal to the completion of $\Dom |h|^{-s}$ for
the norm $\||h|^{-s}u\|$.  The notation $\langle h\rangle^{s}\cH$ or
$|h|^{s}\cH$ is convenient but somewhat ambiguous because usually
$a\cH$ denotes the image of $\cH$ under the linear operator $a$. We
refer to \cite[Subsect. 2.1]{GGH2} for a complete discussion of this
question. 

Let us mention some properties of the scales of spaces defined above:
\[
\begin{array}{l}
 \langle h\rangle^{-s}\cH \subset \langle h\rangle^{-t}\cH, 
\hbox{  if }t\leq s, \  \langle h\rangle^{-s}\cH\subset |h|^{-s}\cH, 
|h|^{s}\cH\subset \h^{s}\cH \hbox{ if }s\geq 0, \\[2mm]
\h^{0}\cH= |h|^{0}\cH= \cH, \ \h^{s}\cH= (\h^{-s}\cH)^{*}, 
\ |h|^{s}\cH= (|h|^{-s}\cH)^{*},\\[2mm]
0\in \rho(h)\Leftrightarrow \ \h^{s}\cH= |h|^{s}\cH
\hbox{ for some } s\neq 0 
\Leftrightarrow \h^{s}\cH= |h|^{s}\cH\hbox{ for all }s, \\[2mm]
\text{the operator } |h|^{s} \text{ is unitary from } 
|h|^{-t}\cH \text{ to } |h|^{s-t}\cH \text{ for all } s,t\in \rr. 
\end{array}
\]

\subsection{Quadratic pencils}
\label{qp}
Let $\CH$ be a Hilbert space, $h$ a selfadjoint operator on $\CH$, and
$k\in B(\CH)$ a \emph{bounded} symmetric operator. Then $h_0=h+k^2$ is
a self-adjoint operator on $\ch$ with the same domain as $h$ hence
$\<h\>^s\CH=\<h_0\>^s\cH$  for  $s\in[-1,1]$. Thus the operators $h$ and
$h_0$ define the same scale of Sobolev spaces for $s\in[-1,1]$ that we
shall denote:
\[
\ch^s := \< h\>^{-s}\cH = \< h_0\>^{-s}\cH
\quad\text{if } -1 \leq s \leq 1.
\]
We define the {\it quadratic pencil}
\begin{equation*}
p(z)=h+z(2k-z)=h_0-(k-z)^2, \ z\in \cc.
\end{equation*}
 A priori these are operators on $\cH$
with domain $\cH^1$ and we clearly have $p(z)^*=p(\bar{z})$ as
operators on $\cH$. Moreover, for each $s\in[0,1]$ they extend to
operators in $B(\cH^s,\ch^{s-1})$ and, for example, the relation
$p(z)^*=p(\bar{z})$ holds as operators $\cH^{\12} \to \cH^{-\12}$.
From this it is easy to deduce the following lemma (see \cite[Lemma
8.1]{GGH1} for a more general result).

\begin{lemma}\label{lm:kg}
  The following conditions are equivalent:
\[
  \begin{array}{ll}
 (1) \ p( z):\cH^1\tilde\to\cH
& (2) \ p(\bar{ z}):\cH^1\tilde\to\cH\\[2mm]
 (3) \ p( z):\cH^{-\12}\tilde\to\cH^{\12} \ 
& (4) \ p(\bar{ z}):\cH^{-\12}\tilde\to\cH^{\12}\\[2mm]
 (5) \ p( z):\cH\tilde\to\cH^{-1} \
& (6) \ p(\bar{ z}):\cH\tilde\to\cH^{-1}.
\end{array}
\]
In particular, the set 
\begin{equation}\label{eq:rhohk}
\rho(h,k):=\{z\in\cc\mid p(z):\cH^{-\12}\tilde\to\cH^{\12}\}
=\{z\in\cc\mid p(z):\cH^{1}\tilde\to\cH\}
\end{equation}
is invariant under conjugation.
\end{lemma}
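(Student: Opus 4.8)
The plan is to establish the chain of equivalences by exploiting the factorization $p(z) = h_0 - (k-z)^2$ together with the fact that $h_0 \geq 0$, and then to reduce everything to a single closed-graph argument plus the self-adjointness relation $p(z)^* = p(\bar z)$. Concretely, I would first observe that for $s \in [0,1]$ the operator $p(z)$ maps $\cH^s$ boundedly into $\ch^{s-1}$, and that the three pairs of spaces $(\cH^1,\cH)$, $(\cH^{-1/2},\cH^{1/2})$, $(\cH,\cH^{-1})$ are related by the unitary operators $\jap{h}^{t}$ (equivalently $\jap{h_0}^t$), which commute with nothing in particular but do intertwine the scales. The key point is that $p(z)$ does \emph{not} commute with these weights, so one cannot simply conjugate; instead one uses interpolation/duality. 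The statement $p(\bar z)^* = p(z)$ as a map $\cH^{1/2} \to \cH^{-1/2}$ immediately gives the equivalence (3) $\Leftrightarrow$ (4) by taking adjoints of bijections, and similarly (1) $\Leftrightarrow$ (6), (2) $\Leftrightarrow$ (5), since adjoint of a topological isomorphism between reflexive spaces is again one.

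Next I would prove (3) $\Rightarrow$ (1) and (3) $\Rightarrow$ (5) (hence, by the adjoint symmetry, all of (1)(2)(5)(6) from (3)(4)). For (3) $\Rightarrow$ (5): if $p(z) : \cH^{-1/2} \tilde\to \cH^{1/2}$ is a bijection, then since $\cH \subset \cH^{-1/2}$ densely, $p(z)|_{\cH}$ is injective into $\cH^{-1/2}$; to see it is onto $\cH^{-1}$ with bounded inverse, write $p(z) = h_0 - (k-z)^2$ and note that on $\cH^{-1/2}$ the operator $h_0 : \cH^{1/2} \to \cH^{-1/2}$ while $(k-z)^2 \in B(\cH)$, so $(k-z)^2$ is a relatively compact-type lower-order perturbation in the appropriate sense — actually more simply, one uses that $p(z)^{-1} : \cH^{1/2} \to \cH^{-1/2}$ restricts to a map $\cH \to \cH^{-1}$? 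That is not automatic. The cleaner route is the one indicated in the text: cite \cite[Lemma 8.1]{GGH1}, but since we want a self-contained sketch, I would argue via the identity $p(z)^{-1} = \jap{h_0}^{-1/2} \big( \jap{h_0}^{-1/2} p(z) \jap{h_0}^{-1/2} \big)^{-1} \jap{h_0}^{-1/2}$ whenever the bracketed operator, which is bounded and invertible on $\cH$ in case (1), is invertible; and the bracketed operator's invertibility on $\cH$ is equivalent to its invertibility on $\cH^{1/2}$ and on $\cH^{-1/2}$ because $\jap{h_0}^{-1/2} p(z) \jap{h_0}^{-1/2} = \one - \jap{h_0}^{-1}(k-z)^2\jap{h_0}\,\jap{h_0}^{-1}$ differs from the identity by an operator that is bounded on all three spaces of the scale, so by analytic Fredholm / stability of the index its invertibility is scale-independent.

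So the spine of the argument is: reduce $p(z)$ to the form $\one + K(z)$ conjugated by powers of $\jap{h_0}$, where $K(z)$ is a bounded operator leaving each $\ch^s$, $s\in[-1,1]$, invariant; then $\one + K(z)$ invertible on one $\ch^s$ forces invertibility on all of them (the inverse is again of the form $\one + (\text{bounded on the scale})$, by the Neumann-series/second-resolvent identity applied within each space), which transports through the conjugations to give (1) $\Leftrightarrow$ (3) $\Leftrightarrow$ (5); the columns (1)(3)(5) versus (2)(4)(6) are then matched by the adjoint relation $p(z)^* = p(\bar z)$. Finally the displayed description of $\rho(h,k)$ and its conjugation-invariance is just a restatement of "(3) $\Leftrightarrow$ (1)" together with "(3) $\Leftrightarrow$ (4)". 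The main obstacle I anticipate is the bookkeeping showing that $(k-z)^2$, after the $\jap{h_0}^{-1/2}$ sandwiching, really does give an operator bounded on $\cH^{-1/2}$, $\cH$, and $\cH^{1/2}$ simultaneously with the \emph{same} invertibility behavior — this needs $k \in B(\cH)$ (so $k$ need not preserve $\cH^{1/2}$, but $\jap{h_0}^{-1/2} k \jap{h_0}^{1/2}$ and its relatives must be controlled), and is exactly the place where one either invokes \cite[Lemma 8.1]{GGH1} or writes out a short duality-plus-interpolation estimate. Everything else is formal manipulation with the scale $\ch^s$ and adjoints.
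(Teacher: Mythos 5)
Your overall strategy---dualize to reduce to one column, then conjugate $p(z)$ by a power of $(h_0+1)$ so that the three conditions $(1),(3),(5)$ become invertibility of an operator of the form ``identity minus something'' on the single space $\cH$, and then argue that this invertibility is the same for the three conjugations---is indeed the right plan, and matches the spirit of what \cite[Lemma 8.1]{GGH1} does. The duality step is fine: the adjoint of $p(z):\cH^s\to\cH^{s-1}$ is $p(\bar z):\cH^{1-s}\to\cH^{-s}$, which gives $(1)\Leftrightarrow(6)$, $(3)\Leftrightarrow(4)$, $(5)\Leftrightarrow(2)$. The problem is in the link between the rows.

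There are two concrete gaps. First, your displayed formula is wrong: $\jap{h_0}^{-1/2}h_0\jap{h_0}^{-1/2}=h_0\jap{h_0}^{-1}$, which is not the identity, so $\jap{h_0}^{-1/2}p(z)\jap{h_0}^{-1/2}$ is \emph{not} of the form $\one$ plus a correction. The sandwiching that actually produces such a form is
\[
(h_0+1)^{-1/2}p(z)(h_0+1)^{-1/2}=\one-(h_0+1)^{-1/2}\big(1+(k-z)^2\big)(h_0+1)^{-1/2},
\]
together with $p(z)(h_0+1)^{-1}=\one-(1+(k-z)^2)(h_0+1)^{-1}$ and $(h_0+1)^{-1}p(z)=\one-(h_0+1)^{-1}(1+(k-z)^2)$ for the other two cases. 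Second, and more seriously, the principle you invoke---``$\one+K$ with $K$ bounded on all three spaces of the scale has scale-independent invertibility''---is \textbf{false} in general: a bounded non-smoothing perturbation of the identity can perfectly well be invertible on $\cH$ but not on $\cH^{1/2}$, because nothing forces $(\one+K)^{-1}$ to preserve the domain of $\jap{h_0}^{1/2}$. The appeal to ``analytic Fredholm / stability of the index'' is also not available here, since in this abstract setting $(h_0+1)^{-1}$ need not be compact. What actually closes the argument is one of two things you do not state: (a) the middle perturbation $(h_0+1)^{-1/2}(1+(k-z)^2)(h_0+1)^{-1/2}$ is \emph{one-smoothing}, mapping $\cH^{-1/2}$ into $\cH^{1/2}$, so the identity $(\one-M)^{-1}=\one+M(\one-M)^{-1}$ bootstraps the inverse through the scale; or, cleaner and purely on $\cH$, (b) the elementary fact that $\one-AB$ is boundedly invertible on $\cH$ if and only if $\one-BA$ is (with $(\one-BA)^{-1}=\one+B(\one-AB)^{-1}A$), applied with $A=1+(k-z)^2$ and $B=(h_0+1)^{-1}$ and then $A'=B^{1/2}A$, $B'=B^{1/2}$, which identifies the invertibility of the three conjugates $\one-AB$, $\one-B^{1/2}AB^{1/2}$, $\one-BA$ at once. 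Either of these plugs the hole; without one of them the chain $(1)\Leftrightarrow(3)\Leftrightarrow(5)$ is not established.
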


The next result is easy to prove in the present context; one can find
a proof under more general conditions in \cite[Lemma 8.2]{GGH1}.

\begin{proposition}\label{pr2.2}
  If $h$ is bounded below then there exists $c_{0}>0$ such that
 \[
\{ z\ : |{\rm Im} z| > |{\rm Re}z| + c_{0}\}\subset \rho(h,k).
\]
\end{proposition}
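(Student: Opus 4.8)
The plan is to exhibit a large region of the complex plane where the quadratic pencil $p(z)=h_0-(k-z)^2$ is boundedly invertible as a map $\cH^1\tilde\to\cH$, using the equivalences of Lemma \ref{lm:kg} so that it suffices to treat any one of the six formulations. Write $z=x+\rmi y$ with $x,y\in\rr$. Since $k$ is bounded and self-adjoint and $h$ is bounded below, say $h\ge -M$, we have $h_0=h+k^2\ge -M$ as well, and $(k-z)^2=(k-x)^2-y^2-2\rmi y(k-x)$ with $k-x$ self-adjoint. The idea is to compute the imaginary part of the sesquilinear form associated with $p(z)$ and show that when $|y|>|x|+c_0$ it is bounded below by a strictly positive multiple of the graph norm, which forces invertibility.

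First I would fix $u\in\cH^1$ with $\|u\|=1$ and look at $\im\,(u\,|\,p(z)u)$. Only the term $-(k-z)^2$ contributes to the imaginary part (as $h_0$ is self-adjoint), and $\im\bigl(u\,|\,-(k-z)^2u\bigr)=2y\,(u\,|\,(k-x)u)$. This is not yet sign-definite, so instead I would work with the real part together with a shift. Compute $\re\,(u\,|\,p(z)u)=(u\,|\,h_0u)-(u\,|\,(k-x)^2u)+y^2\ge -M-\|k-x\|^2+y^2$. Since $\|k-x\|\le\|k\|+|x|$, if $|y|>|x|+c_0$ then $y^2-\|k-x\|^2\ge y^2-(\|k\|+|x|)^2$, and choosing $c_0$ large enough relative to $\|k\|$ and $M$ makes $\re\,(u\,|\,p(z)u)\ge \tfrac12 y^2\ge \tfrac12 c_0^2>0$ uniformly. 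This gives a lower bound $\|p(z)u\|\ge \tfrac12 c_0^2\|u\|$, hence $p(z)$ is injective with closed range on $\cH$ (as an operator $\cH^1\to\cH$). The same estimate applied to $p(\bar z)=p(z)^*$ shows the adjoint is injective, so the range of $p(z)$ is dense; combined with closedness, $p(z):\cH^1\tilde\to\cH$, i.e.\ $z\in\rho(h,k)$.

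The only genuinely delicate point is making the functional-analytic bookkeeping clean: the pencil is naturally an operator between the different scale spaces $\cH^1$, $\cH$, $\cH^{-1}$, and one must be careful that the coercivity estimate is taken in the right pairing. I would therefore phrase the quadratic-form computation using the duality bracket between $\cH^{1/2}$ and $\cH^{-1/2}$ (where $h_0-(k-x)^2+y^2$ is a self-adjoint form bounded below by $y^2-\|k-x\|^2\ge\tfrac12 c_0^2$ on the nose), conclude that $p(z):\cH^{1/2}\tilde\to\cH^{-1/2}$ by Lax--Milgram, and then invoke Lemma \ref{lm:kg} to upgrade this to $p(z):\cH^1\tilde\to\cH$ and to get invariance under conjugation for free. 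This is routine once the constant $c_0$ is chosen as $c_0 := \|k\| + \sqrt{M} + 1$ (any sufficiently large value works); the main obstacle is simply keeping track of which scale space each inequality lives in, not any real analytic difficulty.
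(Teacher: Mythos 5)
The paper does not prove this proposition internally — it remarks that it is ``easy to prove in the present context'' and defers to \cite{GGH1} for a more general version — so there is no in-paper argument to compare against. Your coercivity argument is the natural route and is essentially correct. Writing $z = x + iy$, you compute $\re(u\,|\,p(z)u) = (u\,|\,h_0 u) - \|(k-x)u\|^2 + y^2\|u\|^2$, and $h\geq -M$ together with $\|k-x\|\leq \|k\|+|x|$ gives $\re(u\,|\,p(z)u) \geq \bigl(y^2 - (\|k\|+|x|)^2 - M\bigr)\|u\|^2$. In the region $|y|>|x|+c_0$ with $c_0\geq\|k\|$ one has $y^2 - (\|k\|+|x|)^2 > (c_0-\|k\|)(2|x|+c_0+\|k\|) \geq c_0^2 - \|k\|^2$, so the right side is bounded below by $(c_0^2 - \|k\|^2 - M)\|u\|^2$, which is $\geq\|u\|^2$ with your choice $c_0 = \|k\|+\sqrt{M}+1$. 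From there either of your two routes closes the argument cleanly: the lower bound $\|p(z)u\|\geq\|u\|$ combined with closedness of $p(z)$ (a bounded perturbation of $h$) and injectivity of $p(\bar z)=p(z)^*$; or Lax--Milgram on the $\cH^{1/2}$--$\cH^{-1/2}$ pairing followed by Lemma~\ref{lm:kg}.

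One small flaw, which does not affect the conclusion: the intermediate claim $\re(u\,|\,p(z)u)\geq\tfrac12 y^2$ does not hold uniformly on the wedge. Inside $|y|>|x|+c_0$, the quantity $y^2$ is only guaranteed to exceed $(|x|+c_0)^2 = |x|^2 + O(|x|)$, while the subtracted term $(\|k\|+|x|)^2 = |x|^2 + O(|x|)$ as well, so their difference is $O(|x|)$, not $\sim\tfrac12 y^2$ for large $|x|$. What you actually need and actually have is a uniform positive lower bound; just state it as $\re(u\,|\,p(z)u)\geq (c_0^2 - \|k\|^2 - M)\|u\|^2>0$ rather than $\geq\tfrac12 y^2\|u\|^2$.
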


We shall prove now some estimates on $p(z)^{-1}$ for $z\in\rho(h,k)$. Note that
they are valid under much more general assumptions on $h$ and $k$ then those
imposed in this paper.

\begin{lemma}\label{lm:est1}
Assume that $h+c\geq 0$ for some $0\leq c$ and let $b>1$. If $z\in\rho(h,k)$ then 
\begin{equation}\label{eq:est1}
\|p(z)^{-1}\|\leq\frac{b}{|z\,{\rm Im}z|} \quad\text{if}\quad 
|z|^2\geq \frac{bc}{b-1}.
\end{equation}
\end{lemma}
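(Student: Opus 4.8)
The plan is to derive the bound from the representation $p(z)=h_0-(k-z)^2$ together with a lower bound on the self-adjoint operator $h_0-t^2$ for real $t$, using the functional calculus for $h$ (equivalently $h_0$, since $k$ is bounded and $h_0=h+k^2$). Write $z=\lambda+\rmi\mu$ with $\mu={\rm Im}z\neq0$ (since $z\in\rho(h,k)$ and real points with $p(z)$ not invertible are excluded precisely when $\mu=0$ by Proposition \ref{pr2.2}'s companion facts — more directly, if $\mu=0$ the claimed right-hand side is infinite so there is nothing to prove). The key algebraic identity is
\[
p(z)=h_0-(k-\lambda)^2+\mu^2+2\rmi\mu(k-\lambda),
\]
so that, taking the imaginary part of $(p(z)u|u)$ for $u\in\cH^1$,
\[
{\rm Im}(p(z)u|u)=2\mu\,((k-\lambda)u|u).
\]
This alone is not quite enough; the cleaner route is to factor. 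Since $\mu\neq0$, write $k-z=(k-\lambda)-\rmi\mu$ and observe
\[
p(z)=h_0-(k-\lambda)^2+\mu^2+2\rmi\mu(k-\lambda)
     =\bigl(h_0-(k-\lambda)^2+\mu^2\bigr)+2\rmi\mu(k-\lambda).
\]

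First I would establish the scalar inequality. For $u\in\cH^1$ with $\|u\|=1$, set $a=((k-\lambda)u|u)\in\rr$ and use $h_0=h+k^2\geq$ (the relevant lower bound coming from $h+c\geq0$). Actually the efficient estimate is: for any real $t$,
\[
(p(\lambda+\rmi\mu)u|u)=(h_0u|u)-((k-\lambda)^2u|u)+\mu^2+2\rmi\mu((k-\lambda)u|u).
\]
Bounding $|(p(z)u|u)|$ from below by its real and imaginary parts and optimizing is a short computation; the intended outcome, matching \eqref{eq:est1}, is
\[
|(p(z)u|u)|\;\geq\;\frac{|z|\,|\mu|}{b}\qquad\text{whenever }|z|^2\geq\tfrac{bc}{b-1},
\]
which gives $\|p(z)u\|\geq|z\,{\rm Im}z|/b$ for all $u\in\cH^1$, $\|u\|=1$. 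Since $z\in\rho(h,k)$ means $p(z):\cH^1\tilde\to\cH$ is a bijection, this lower bound on $\|p(z)u\|$ immediately yields $\|p(z)^{-1}\|\leq b/|z\,{\rm Im}z|$ as an operator on $\cH$.

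To get the scalar lower bound: write $r={\rm Re}(p(z)u|u)=(h_0u|u)-a^2+\mu^2$ and $\iota={\rm Im}(p(z)u|u)=2\mu a$, where $a=((k-\lambda)u|u)$ and $(h_0u|u)\geq c'$ for the appropriate constant (here $h+c\geq0$ gives $h_0=h+k^2\geq -c+k^2$; one keeps $(h_0u|u)\geq -c$ crudely, or better, keeps the $k^2$ term — the paper's constant $bc/(b-1)$ suggests only $(hu|u)\geq -c$ is used, so $(h_0u|u)-a^2\geq (hu|u)+((k^2-(k-\lambda)^2)u|u)\geq -c + (2\lambda k-\lambda^2)$-type terms; I would track this carefully). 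Then $|(p(z)u|u)|^2=r^2+\iota^2\geq \iota^2=4\mu^2a^2$ handles the regime where $|a|$ is not too small, while if $|a|$ is small then $r\geq \mu^2+\lambda^2-O(c)-O(|a|)$ is controlled from below by $|z|^2$ minus a constant, and the hypothesis $|z|^2\geq bc/(b-1)$ is exactly what makes $r\geq |z|^2/b$ (roughly). Combining the two regimes via a case split on $|a|$ versus $|\lambda|$, and using $4\mu^2a^2+ (\text{something})^2\geq \frac{|z|^2\mu^2}{b^2}$, produces the claim.

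The main obstacle is the bookkeeping in the two-regime estimate: making the case split on $|a|=|((k-\lambda)u|u)|$ clean enough that the constant comes out to be exactly $b/|z\,{\rm Im}z|$ under the single threshold $|z|^2\geq bc/(b-1)$, rather than a messier condition. The trick will be to use the identity $\mu^2+\lambda^2=|z|^2$ to absorb cross terms, and to note that whenever the real part $r$ fails to be $\gtrsim|z|^2$, it must be because $a^2$ is comparable to $(h_0u|u)+\mu^2\gtrsim |z|^2$, at which point the imaginary part $2\mu a$ takes over and is itself $\gtrsim |z\mu|$. Everything else — passing from the quadratic-form lower bound to the operator-norm bound on $p(z)^{-1}$, and dismissing the case ${\rm Im}z=0$ — is routine given Lemma \ref{lm:kg} and Proposition \ref{pr2.2}.
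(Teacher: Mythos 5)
Your strategy --- lower-bound the numerical range of $p(z)$ and deduce the bound on $\|p(z)^{-1}\|$ from that --- is genuinely different from the paper's, which applies the identity $\frac{z}{p}-\frac{\bar z}{p^*}=(z-\bar z)\frac1{p^*}(h+|z|^2)\frac1p$ directly to $v=p^{-1}u$ and gets the estimate in one line. Your route can be made to give exactly the same constant, but as written there is a concrete error: you set $a=((k-\lambda)u|u)$ and write $r={\rm Re}(p(z)u|u)=(h_0u|u)-a^2+\mu^2$, whereas the real part is $(h_0u|u)-\|(k-\lambda)u\|^2+\mu^2$, and $\|(k-\lambda)u\|^2\geq a^2$ by Cauchy--Schwarz with equality only for special $u$. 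So your expression for $r$ over-estimates the true real part, which is precisely the wrong direction when trying to lower-bound $r^2+\iota^2$; the proposed two-regime case split inherits this gap and, as you say yourself, is never actually carried out.

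The clean fix avoids the case split altogether. For $\|u\|=1$ put $\kappa=(ku|u)\in\R$. From $p(z)=h+z(2k-z)$ one has the exact scalar identity $(p(z)u|u)=(hu|u)+2z\kappa-z^2=(hu|u)+|z|^2+2za$ with $a=\kappa-\lambda=((k-\lambda)u|u)\in\R$, using $2z\lambda-z^2=z\bar z=|z|^2$. Thus $(p(z)u|u)$ lies on the line $\{A+tz:t\in\R\}\subset\cc$ through the real number $A=(hu|u)+|z|^2\geq|z|^2-c>0$ in the direction $z$, whose distance from the origin is $A\,|{\rm Im}z|/|z|$. Hence $|(p(z)u|u)|\geq(|z|^2-c)|{\rm Im}z|/|z|\geq|z|\,|{\rm Im}z|/b$ when $|z|^2\geq bc/(b-1)$, and since $z\in\rho(h,k)$ makes $p(z):\cH^1\tilde\to\cH$ bijective, the operator bound follows. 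This is a perfectly good alternative proof; the paper's identity, though, has the advantage of being reusable essentially verbatim in Lemmas~\ref{lm:est2} and~\ref{lm:est3}, where a numerical-range argument on $p(z)$ alone would not be as direct.
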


\proof

We abbreviate $p=p(z)$ and $\mu={\rm Im z}$. The main point is the identity
\begin{equation}\label{eq:id}
{\rm Im}\frac{z}{\mu p} =\frac{1}{p^*}(h+|z|^2)\frac{1}{p} 
\end{equation}
which is rather obvious:
\[
\frac{z}{p}-\frac{\bar{z}}{p^*}= \frac{1}{p^*}\big( zp^*-\bar{z}p \big)\frac{1}{p} 
=(z-\bar{z})\frac{1}{p^*}\big( h+|z|^2 \big)\frac{1}{p}   .
\]
Then the relation \eqref{eq:id} gives
\( (|z|^2-c) \frac{1}{p^*}\frac{1}{p} \leq {\rm Im}\frac{z}{\mu p} \) hence
\[
|\mu| (|z|^2-c) \|p^{-1}u\|^2 \leq \big| {\rm Im} (u|zp^{-1}u) \big| \leq
|z| \|u\| \|p^{-1}u\|
\]
hence $|\mu| (|z|^2-c) \|p^{-1}u\|^2 \leq |z| \|u\|$ which is more than required.
\qed

\begin{lemma}\label{lm:est2}
Assume $h_0\geq 0$ and $k^2 \leq \alpha h_0+\beta$ with $\alpha<1$.
Then $h$ is bounded from below and if $h+c\geq 0$ and $\varepsilon >0$ then there is 
a number $C$ such that for $z\in\rho(h,k)$ and $|z|\geq \sqrt{c}+\varepsilon$
\begin{equation}\label{eq:est2}
\|h_0^{\frac{1}{2}} p(z)^{-1}\|\leq C |{\rm Im}z|^{-1}.
\end{equation}
\end{lemma}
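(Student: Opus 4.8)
The plan is to combine the resolvent bound of Lemma \ref{lm:est1} with a form-boundedness argument for $h_0^{1/2}p(z)^{-1}$, using the hypothesis $k^2\le\alpha h_0+\beta$ with $\alpha<1$ to absorb the "bad" term. First I would check the preliminary claims: since $h_0\ge0$ and $h=h_0-k^2\ge h_0-\alpha h_0-\beta=(1-\alpha)h_0-\beta\ge-\beta$, the operator $h$ is bounded from below, so Lemma \ref{lm:est1} applies; fix $c\ge0$ with $h+c\ge0$ and fix $\varepsilon>0$, and note that for $|z|\ge\sqrt c+\varepsilon$ we have $|z|^2\ge c+\varepsilon^2>0$, so one may choose $b>1$ in Lemma \ref{lm:est1} (depending on $c,\varepsilon$) for which the hypothesis $|z|^2\ge bc/(b-1)$ holds; this yields $\|p(z)^{-1}\|\le b|z\,\mathrm{Im}z|^{-1}$ on the relevant region.

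The heart of the matter is to control $\|h_0^{1/2}p(z)^{-1}u\|^2=(h_0 p(z)^{-1}u\,|\,p(z)^{-1}u)$. The natural identity to exploit is the real part of $p(z)$: writing $z=\lambda+\mathrm{i}\mu$ one has, as operators $\cH\to\cH^{-1}$,
\[
\mathrm{Re}\,p(z)=\tfrac12\big(p(z)+p(\bar z)\big)=h_0-(k-\lambda)^2+\mu^2,
\]
so that $h_0=\mathrm{Re}\,p(z)+(k-\lambda)^2-\mu^2$. Applying this to $v:=p(z)^{-1}u$ gives
\[
\|h_0^{1/2}v\|^2=\mathrm{Re}(p(z)v\,|\,v)+\|(k-\lambda)v\|^2-\mu^2\|v\|^2
\le \mathrm{Re}(u\,|\,v)+\|(k-\lambda)v\|^2.
\]
The first term is bounded by $\|u\|\,\|v\|\le b|z\mu|^{-1}\|u\|^2$. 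For the second term I would use $\|(k-\lambda)v\|^2\le 2\|kv\|^2+2\lambda^2\|v\|^2$ and then $\|kv\|^2=(k^2v|v)\le\alpha\|h_0^{1/2}v\|^2+\beta\|v\|^2$; the term $\alpha\|h_0^{1/2}v\|^2$ with $\alpha<1$ is moved to the left-hand side, leaving
\[
(1-\alpha)\|h_0^{1/2}v\|^2\le \|u\|\|v\|+\big(2\beta+2\lambda^2\big)\|v\|^2.
\]
Using again $\|v\|=\|p(z)^{-1}u\|\le b|z\mu|^{-1}\|u\|$ and $\lambda^2\le|z|^2$, the right-hand side is bounded by $\big(b|z\mu|^{-1}+(2\beta+2|z|^2)b^2|z\mu|^{-2}\big)\|u\|^2$. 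On the region $|z|\ge\sqrt c+\varepsilon$ one has $|z|^{-2}\le\varepsilon^{-2}$ and $|z|^2|z\mu|^{-2}=\mu^{-2}$, so every term is $\lesssim\mu^{-2}\|u\|^2=|\mathrm{Im}z|^{-2}\|u\|^2$ with a constant depending only on $\alpha,\beta,c,\varepsilon$ (and $b$, itself a function of $c,\varepsilon$). Taking square roots and dividing by $(1-\alpha)$ yields \eqref{eq:est2}.

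The main obstacle I anticipate is purely a matter of bookkeeping the functional-analytic subtleties rather than a genuine difficulty: one must justify the identity $h_0=\mathrm{Re}\,p(z)+(k-\lambda)^2-\mu^2$ and the pairing $(h_0 v|v)=\mathrm{Re}(p(z)v|v)+\dots$ at the correct level of the Sobolev scale, since $p(z)v=u\in\cH$ while a priori $v\in\cH^1=\Dom h\subset\Dom h_0$ (recall $h,h_0$ have the same domain as $k$ is bounded), so all quantities $h_0^{1/2}v$, $kv$, $(k-\lambda)v$ are genuinely in $\cH$ and the manipulations are legitimate. A density argument (first for $u$ in a core, then by continuity using the already-established bound $\|p(z)^{-1}\|\le b|z\mu|^{-1}$) removes any remaining scruple. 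I would also double-check the borderline case $\alpha$ close to $1$ only affects the constant $C$ through the factor $(1-\alpha)^{-1}$, which is harmless since $\alpha<1$ is fixed.
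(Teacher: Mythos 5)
Your approach is genuinely different from the paper's: instead of the commutator identity ${\rm Im}\tfrac{z}{\mu p}=p^{-*}(h+|z|^2)p^{-1}$ from Lemma~\ref{lm:est1}, you work directly with ${\rm Re}\,p(z)=h_0-(k-\lambda)^2+\mu^2$. This is a perfectly reasonable route in principle, and your preliminary reductions (boundedness of $h$ below, choice of $b$ depending on $c,\varepsilon$, and the observation $|z|\geq|\mu|$ so that $|z\mu|^{-1}\leq\mu^{-2}$) are all correct.

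However, there is a genuine arithmetic gap at the absorption step. From $\|(k-\lambda)v\|^2\le 2\|kv\|^2+2\lambda^2\|v\|^2$ and $\|kv\|^2\le\alpha\|h_0^{1/2}v\|^2+\beta\|v\|^2$ you obtain a contribution $2\alpha\|h_0^{1/2}v\|^2$, not $\alpha\|h_0^{1/2}v\|^2$, on the right; moving it over leaves $(1-2\alpha)$ on the left, not $(1-\alpha)$. Since $\alpha<1$ only, $1-2\alpha$ can be $\le 0$, and the argument as written proves nothing for $\alpha\in[1/2,1)$. The fix is to replace the crude Young inequality by the weighted version $\|(k-\lambda)v\|^2\le (1+\delta)\|kv\|^2+(1+\delta^{-1})\lambda^2\|v\|^2$ and choose $\delta>0$ so small that $(1+\delta)\alpha<1$; the resulting constant blows up as $\alpha\to 1$ but that is unavoidable. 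Note that the paper's route avoids the issue entirely: the identity from Lemma~\ref{lm:est1} places $(h+|z|^2)$, not $(k-\lambda)^2$, between $p^{-*}$ and $p^{-1}$, and the substitution $h\ge(1-\alpha)h_0-\beta$ delivers the clean factor $1-\alpha$ with no loss. In addition, the $|z|^2$ term then sits on the favorable side of the inequality and does not have to be re-absorbed, whereas in your decomposition it appears as $\lambda^2\|v\|^2$ on the wrong side and must be dominated using the already-known bound on $\|p^{-1}\|$.
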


\proof

If we set $q=p^{-1}$ then \eqref{eq:id} implies 
$q^* \big( (1-\alpha) h_0 +|z|^2 \big)  q  \leq \beta q^* q + \mu^{-1} {\rm Im}\,z q $
hence
\[
(1-\alpha) \|h_0^{\frac{1}{2}} p^{-1} u \|^2 \leq \beta \|p^{-1}u\|^2 +| z/\mu | \|u\| \|p^{-1}u\| \leq
{\textstyle\frac{\beta b^2}{|z\mu|^2}} \|u\|^2 + {\textstyle\frac{b}{\mu^2}} \|u\|^2
\]
if $|z|^2\geq \frac{bc}{b-1}$. This estimate is more precise than \eqref{eq:est2}. Note that 
we may take $c=\beta$. 
\qed

\subsection{Spaces}

The operators $h,k,h_0$ and the spaces $\ch^s$ are as in the preceding
subsection, in particular $k$ is a bounded operator in $\cH$, but now
we shall impose much stronger conditions. 

From now on \emph{we always assume} that the following  condition is
satisfied:
\begin{equation}
\tag{A1}  \label{Hyp1} h_0:=h+k^2 > 0 . 
\end{equation}
Then the homogeneous scale $h_0^s\cH$ associated to $h_0$ is well
defined. Note that, if $h$ is injective, the spaces $|h|^{-1}\cH$ and
$h_0^{-1}\cH$ are quite different in general, although
$\<h\>^{-1}\cH=\<h_0\>^{-1}\cH$. 

We shall require $k$ to behave well with respect to the
homogeneous $h_0$-scale:
\begin{equation}
\tag{A2} \label{A2}
\left\{ \begin{array}{l}  
k\in \CB(h_0^{-1/2}\CH);   \\[2mm]         
\text{if } z\nin \R \text{ then }  
(k-z)^{-1}\in \CB(h_0^{-1/2}\CH)
\text{ and} \\[1mm] 
\Vert
(k-z)^{-1}\Vert_{\CB(h_0^{-1/2}\CH)}\lesssim |{\rm Im} z|^{-n} 
\text{ for some } n>0;    \\[3mm]
\exists m>0 \text{ such that if } |z|\ge m\Vert
  k\Vert_{\CB(\CH)} \text{ then}\\[1mm] 
\Vert (k-z)^{-1}\Vert_{\CB(h_0^{-1/2}\CH)}\lesssim
\left| |z|-\Vert k\Vert_{\CB(\CH)} \right|^{-1}    .
\end{array}\right.
\end{equation}
The next comments will clarify the meaning of these conditions.
Recall that $h_0^{-s}\cH$ and $h_0^{s}\cH$ are adjoints to each other
but they are not comparable and neither are they comparable with $\cH$. 
The first assumption 
says that the operator $k$ leaves $D(h_0^{1/2})$ invariant and that
its restriction to $D(h_0^{1/2})$ extends to a bounded operator, say $\bar{k}$, in
$h_0^{-1/2}\cH$. The rest of the assumption concerns the resolvent of
$\bar{k}$ in this space. In order not to overcharge the notation we
kept the notation $k$ for $\bar{k}$. 


The preceding assumptions allow us to get a new estimate on the quadratic pencil $p$.

\begin{lemma}\label{lm:est3}
Under the conditions \eqref{Hyp1} and \eqref{A2}, there are numbers
$C,M>0$ such that 
\begin{equation} \label{eq:est3}
\|h_0^{\frac{1}{2}} p(z)^{-1}(k-z)u\| \leq C |{\rm Im} z|^{-1} 
\|h_0^{\frac{1}{2}} u \| \quad\text{if } 
|z| \geq M \|k\|_{\cb(\ch)}  . 
\end{equation}
\end{lemma}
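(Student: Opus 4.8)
The statement to prove is Lemma~\ref{lm:est3}: under \eqref{Hyp1} and \eqref{A2}, for $|z|\geq M\|k\|_{\cb(\ch)}$ one has $\|h_0^{1/2}p(z)^{-1}(k-z)u\|\leq C|{\rm Im}\,z|^{-1}\|h_0^{1/2}u\|$. The natural approach is to reduce this to the already-proved Lemma~\ref{lm:est2} (the bound $\|h_0^{1/2}p(z)^{-1}\|\leq C|{\rm Im}\,z|^{-1}$) by absorbing the extra factor $(k-z)$ using the resolvent hypotheses on $k$ in the homogeneous space $h_0^{-1/2}\CH$. First I would check that the hypotheses of Lemma~\ref{lm:est2} are actually available here: condition \eqref{A2} gives $k\in\CB(h_0^{-1/2}\CH)$, and I need to translate this into a bound of the form $k^2\leq\alpha h_0+\beta$ on $\CH$ with $\alpha<1$. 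The point is that $k$ bounded on $h_0^{-1/2}\CH=D(h_0^{1/2})$ means $\|h_0^{1/2}ku\|\leq c_1\|h_0^{1/2}u\|$; but I actually want a bound relating $\|ku\|$ to $\|h_0^{1/2}u\|$. Here I should instead argue directly: $k\in\CB(h_0^{-1/2}\CH)$ together with $k\in\CB(\CH)$ and interpolation gives $k\in\CB(h_0^{-s/2}\CH)$ for $s\in[0,1]$, and in particular a bound $\|ku\|_{h_0^{1/2}\CH\to\,?}$... Actually the cleanest route is: since $k\in\CB(h_0^{-1/2}\CH)$, its adjoint (w.r.t. the $\CH$-pairing) satisfies $k\in\CB(h_0^{1/2}\CH)$ as well (using $k=k^*$ on $\CH$), so $k$ is bounded on the pair $(h_0^{-1/2}\CH,h_0^{1/2}\CH)$; duality and interpolation then give whatever scale statement I need, and in particular one gets $k^2\leq\alpha h_0+\beta$ for suitable constants, but NOT necessarily with $\alpha<1$. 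This is the first technical point to handle with care.

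Granting that, the heart of the argument is the algebraic manipulation
\begin{equation*}
h_0^{1/2}p(z)^{-1}(k-z)=h_0^{1/2}p(z)^{-1}(k-z)h_0^{1/2}\cdot h_0^{-1/2},
\end{equation*}
so it suffices to bound $h_0^{1/2}p(z)^{-1}(k-z)h_0^{1/2}$ on $\CH$. I would write $(k-z)=p(z)(k-z)^{-1}+\big[(k-z)-p(z)(k-z)^{-1}\big]$; since $p(z)=h_0-(k-z)^2$, one computes $p(z)(k-z)^{-1}=h_0(k-z)^{-1}-(k-z)$, hence
\begin{equation*}
(k-z)=h_0(k-z)^{-1}-p(z)(k-z)^{-1}.
\end{equation*}
Therefore
\begin{equation*}
h_0^{1/2}p(z)^{-1}(k-z)=h_0^{1/2}p(z)^{-1}h_0(k-z)^{-1}-h_0^{1/2}(k-z)^{-1}.
\end{equation*}
The second term is controlled directly by the third clause of \eqref{A2}: for $|z|\geq m\|k\|_{\CB(\CH)}$, $\|(k-z)^{-1}\|_{\CB(h_0^{-1/2}\CH)}\lesssim ||z|-\|k\||^{-1}$, and since $\mathrm{Im}\,z$ is bounded by $|z|$ while $|z|-\|k\|$ is comparable to $|z|$ in this regime, this is $\lesssim|\mathrm{Im}\,z|^{-1}$ after noting $\|h_0^{1/2}(k-z)^{-1}u\|\leq\|(k-z)^{-1}\|_{\CB(h_0^{-1/2}\CH)}\|h_0^{1/2}u\|$ (here I use that $(k-z)^{-1}$ bounded on $h_0^{-1/2}\CH$ is, by symmetry/duality as above, equivalent to it being bounded on $h_0^{1/2}\CH$). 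For the first term I would commute things to get $h_0^{1/2}p(z)^{-1}h_0^{1/2}\cdot h_0^{1/2}(k-z)^{-1}$; the factor $h_0^{1/2}p(z)^{-1}h_0^{1/2}$ needs a two-sided version of Lemma~\ref{lm:est2}. Since $p(z)^*=p(\bar z)$ one gets $\|p(z)^{-1}h_0^{1/2}\|=\|h_0^{1/2}p(\bar z)^{-1}\|\leq C|\mathrm{Im}\,z|^{-1}$ from Lemma~\ref{lm:est2} applied at $\bar z$; but to bound the product $h_0^{1/2}p(z)^{-1}h_0^{1/2}$ I need an interpolation or resolvent-identity argument, not just the product of two $|\mathrm{Im}\,z|^{-1}$ bounds (which would give $|\mathrm{Im}\,z|^{-2}$, too weak). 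The right move is to use the identity \eqref{eq:id} once more: from $q^*\big((1-\alpha)h_0+|z|^2\big)q\leq\beta q^*q+\mu^{-1}\mathrm{Im}\,z\,q$ with $q=p(z)^{-1}$, one extracts $\|h_0^{1/2}p(z)^{-1}h_0^{1/2}v\|$ by testing against $h_0^{1/2}v$ and using Cauchy--Schwarz together with the already-known bound $\|h_0^{1/2}p(z)^{-1}\|\lesssim|\mathrm{Im}\,z|^{-1}$; this yields the bound $\|h_0^{1/2}p(z)^{-1}h_0^{1/2}\|\lesssim 1$ in the region $|z|\geq M\|k\|$, which combined with $\|h_0^{1/2}(k-z)^{-1}\|_{\CB(\CH)}\lesssim|\mathrm{Im}\,z|^{-1}$ finishes the estimate.

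The main obstacle I anticipate is exactly the bound on the ``doubly weighted'' resolvent $h_0^{1/2}p(z)^{-1}h_0^{1/2}$: proving it is $O(1)$ (rather than $O(|\mathrm{Im}\,z|^{-1})$, which is false, or $O(|\mathrm{Im}\,z|^{-2})$, which is useless) in the high-energy region requires using the $|z|^2$ gain in the identity \eqref{eq:id} to compensate, and tracking how the constants depend on $\alpha,\beta$ and on $\|k\|$ so that the threshold $M\|k\|_{\CB(\CH)}$ comes out uniformly. A secondary technical annoyance is the bookkeeping needed to pass between $\CB(h_0^{-1/2}\CH)$, $\CB(h_0^{1/2}\CH)$, and $\CB(\CH)$-type bounds for $k$ and $(k-z)^{-1}$ using self-adjointness on $\CH$ and duality of the homogeneous scales; none of this is deep, but it must be done cleanly since the spaces $h_0^{\pm1/2}\CH$ are not comparable to $\CH$. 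Once these two points are settled, assembling the two terms gives \eqref{eq:est3} with $C,M$ depending only on the constants in \eqref{A2} and on $\alpha,\beta$.
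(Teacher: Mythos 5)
Your decomposition is a genuinely different route from the paper's. The paper proves the lemma by computing the imaginary part of $z m^{-1}h_0 p^{-1}m$ directly (a variant of the identity \eqref{eq:id} sandwiched by $m^{-1}=(z-k)^{-1}$), drops the positive term $\|m^{-1}h_0p^{-1}mu\|^2$, and then only needs that $zm^{-1}=1+km^{-1}$ is bounded and $km^{-1}\to 0$ in $\CB(h_0^{-1/2}\CH)$ as $|z|\to\infty$; no two-sided bound on $p^{-1}$ is ever invoked. You instead use $p(z)(k-z)^{-1}=h_0(k-z)^{-1}-(k-z)$ to split
\[
h_0^{1/2}p(z)^{-1}(k-z) \;=\; h_0^{1/2}p(z)^{-1}h_0^{1/2}\cdot h_0^{1/2}(k-z)^{-1}\;-\;h_0^{1/2}(k-z)^{-1},
\]
reducing everything to a bound on the doubly weighted resolvent and the $(k-z)^{-1}$-bound from \eqref{A2}. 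That structure is sound, and the product does in fact close the estimate --- but not with the intermediate bounds you state.

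The claimed bound $\Vert h_0^{1/2}p(z)^{-1}h_0^{1/2}\Vert_{\CB(\CH)}\lesssim 1$ in the region $|z|\geq M\|k\|$ is false. Already with $k=0$ and $h_0$ multiplication by $\lambda\geq 0$, one has $h_0^{1/2}p(z)^{-1}h_0^{1/2}=h_0(h_0-z^2)^{-1}$, whose norm is $\sup_{\lambda\geq 0}|\lambda/(\lambda-z^2)|$; taking $\lambda=\mathrm{Re}(z^2)$ with $z=R+i$, $R\gg 1$, gives $\approx R/2$, so the norm grows like $|\mathrm{Re}\,z|/|\mathrm{Im}\,z|$. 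The quadratic-form argument you sketch actually yields exactly this: testing $\frac{1}{p^*}(h_0-\|k\|^2+|z|^2)\frac{1}{p}\leq\frac{1}{\mu}\mathrm{Im}\,\frac{z}{p}$ against $h_0^{1/2}v$, dropping the $(|z|^2-\|k\|^2)$ term, and using Cauchy--Schwarz gives $\Vert h_0^{1/2}p(z)^{-1}h_0^{1/2}\Vert\lesssim |z|\,|\mathrm{Im}\,z|^{-1}$, not $\lesssim 1$. Correspondingly, the bound you should use on the other factor is not $\Vert h_0^{1/2}(k-z)^{-1}u\Vert\lesssim|\mathrm{Im}\,z|^{-1}\Vert h_0^{1/2}u\Vert$ (which only follows, with a possibly large power, from the second clause of \eqref{A2}) but rather the high-energy clause: for $|z|\geq M\|k\|$ with $M>1$, $\Vert(k-z)^{-1}\Vert_{\CB(h_0^{-1/2}\CH)}\lesssim ||z|-\|k\||^{-1}\lesssim|z|^{-1}$. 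As written your two intermediate bounds are both wrong, and it is only because the errors compensate ($1\cdot|\mathrm{Im}\,z|^{-1}$ versus $|z|\,|\mathrm{Im}\,z|^{-1}\cdot|z|^{-1}$) that the final exponent comes out right. Replace them by the correct pair $\Vert h_0^{1/2}p(z)^{-1}h_0^{1/2}\Vert\lesssim|z|\,|\mathrm{Im}\,z|^{-1}$ and $\Vert h_0^{1/2}(k-z)^{-1}u\Vert\lesssim|z|^{-1}\Vert h_0^{1/2}u\Vert$, and your decomposition does deliver \eqref{eq:est3}. (Also note the preliminary factorization should read $h_0^{1/2}p(z)^{-1}(k-z)h_0^{-1/2}$, not $h_0^{1/2}p(z)^{-1}(k-z)h_0^{1/2}$, though this doesn't affect the rest.)
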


\proof

We abbreviate $p=p(z)$ and $m=z-k$, so that $m^*=\bar{z}-k$ and $p=h_0-m^2$.  
We have:
\[
\frac{z}{m} h_0 \frac{1}{p} m - m^* \frac{1}{p^*} h_0 \frac{\bar{z}}{m^*}=
m^* \frac{1}{p^*} \left( 
p^* \frac{z}{|m|^2} h_0 -h_0 \frac{\bar{z}}{|m|^2} p
\right) \frac{1}{p} m .
\]
If we replace here $p$ by $h_0-m^2$ and then develop and rearrange the terms,  
we get:
\[
m^* \frac{1}{p^*} \left(
(z-\bar{z}) h_0 \frac{1}{|m|^2}h_0 + h_0 \frac{\bar{z}m}{m^*} -
\frac{zm^*}{m} h_0 
\right) \frac{1}{p} m .
\]
Since $\frac{m^*}{m}=1-\frac{z-\bar{z}}{m}$ and $\frac{z}{m}=1+\frac{k}{m}$,
a simple computation gives:
\[
h_0 \frac{\bar{z}m}{m^*} - \frac{zm^*}{m} h_0  =
(z-\bar{z}) \left(
h_0 + h_0 \frac{k}{m^*} + \frac{k}{m} h_0 
\right) .
\]
To conclude, we have proved, with $\mu={\rm Im} z$, 
\[
\frac{1}{\mu} {\rm Im} \left( \frac{z}{m}h_0\frac{1}{p}m \right) =
m^* \frac{1}{p^*} \left(
h_0 + 2{\rm Re} \big(\frac{k}{m} h_0 \big) +h_0 \frac{1}{|m|^2}h_0
\right) \frac{1}{p} m .
\]
We may also write this as follows:
\[
\frac{1}{\mu} {\rm Im} (u | zm^{-1} h_0 p^{-1}mu) =
\|h_0^{\frac{1}{2}} p^{-1} m u \|^2 +
2{\rm Re} (p^{-1}mu | km^{-1} h_0 p^{-1} mu ) 
+ \|m^{-1} h_0 p^{-1} m u \|^2 .
\]
Since the last term is positive, we get 
\begin{align*}{rl}
\|h_0^{\frac{1}{2}} p^{-1} m u \|^2   \leq& 
\frac{1}{\mu} {\rm Im} (u | zm^{-1} h_0 p^{-1}mu) 
- 2{\rm Re} (p^{-1}mu | km^{-1} h_0 p^{-1} mu )   \\
 =&
\frac{1}{\mu} {\rm Im} (h_0^{\frac{1}{2}}u | 
h_0^{-\frac{1}{2}}zm^{-1} h_0^{\frac{1}{2}} 
\cdot h_0^{\frac{1}{2}} p^{-1}mu)   \\ 
& - 2{\rm Re} (h_0^{\frac{1}{2}} p^{-1}mu | 
h_0^{-\frac{1}{2}}km^{-1} h_0^{\frac{1}{2}} 
\cdot h_0^{\frac{1}{2}} p^{-1} mu ) .
\end{align*}
Set $a(z)=\|h_0^{-\frac{1}{2}}zm^{-1} h_0^{\frac{1}{2}}\|$ and 
$b(z)=2\|h_0^{-\frac{1}{2}}km^{-1} h_0^{\frac{1}{2}}\|$. Since $zm^{-1}=1+km^{-1}$,
assumption \eqref{A2} implies the boundedness of  $a(z)$ for large $z$
and $b(z)\to0$ if $z\to\infty$. Finally, we have
\[
\big( 1-b(z) \big) \|h_0^{\frac{1}{2}} p^{-1} m u \| \leq
a(z)|\mu|^{-1} \|h_0^{\frac{1}{2}} u\|, 
\]
which proves the lemma.
\qed

For easier reference later on, we summarize in the next proposition a particular case
of the estimates we got in Lemmas \ref{lm:est1}, \ref{lm:est2} and \ref{lm:est3}.

\begin{proposition}\label{pr:est}
Assume that the conditions \eqref{Hyp1} and \eqref{A2} are satisfied and let
$\varepsilon>0$. Then there are numbers $C,M>0$ such that:

 \begin{align}
\label{basicp1}
\|p^{-1}(z)\|  & \leq C |z|^{-1} |{\rm Im}z|^{-1}
 \text{ if } |z| \ge(1+\epsilon)\Vert k\Vert_{\CB(\CH)} ,
\\
\label{basicp2}
 \|h_0^{1/2} p^{-1}(z)\| &\leq C |{\rm Im}z|^{-1} 
 \text{ if } |z| \ge(1+\epsilon)\Vert k\Vert_{\CB(\CH)} , 
\\
\label{basicp3}
 \Vert h_0^{1/2}p^{-1}(z)(k-z)u\Vert  & \leq C |{\rm Im}z|^{-1} \Vert h_0^{1/2}u\Vert 
 \text{ if } |z|  \ge M\Vert k\Vert_{\CB(\CH)} .
\end{align}
\end{proposition}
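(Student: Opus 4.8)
The plan is to read off \eqref{basicp1}, \eqref{basicp2} and \eqref{basicp3} directly from Lemmas \ref{lm:est1}, \ref{lm:est2} and \ref{lm:est3} respectively; no new analytic estimate is needed, only a matching of the free parameters appearing in those lemmas. The one preliminary observation is that, since $k\in\CB(\CH)$ is symmetric, one has $0\le k^{2}\le\Vert k\Vert_{\CB(\CH)}^{2}$ as operators on $\CH$. Combined with \eqref{Hyp1} this gives, on the one hand, $h=h_{0}-k^{2}\ge-\Vert k\Vert_{\CB(\CH)}^{2}$, so the hypothesis ``$h+c\ge0$'' of Lemmas \ref{lm:est1} and \ref{lm:est2} holds with $c=\Vert k\Vert_{\CB(\CH)}^{2}$; and, on the other hand, for any fixed $\alpha\in(0,1)$ one has $k^{2}\le\alpha h_{0}+\beta$ with $\beta=\Vert k\Vert_{\CB(\CH)}^{2}$, which is the remaining hypothesis of Lemma \ref{lm:est2} (and there, as noted in its proof, one may take $c=\beta$). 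Thus under \eqref{Hyp1} and \eqref{A2} all three lemmas apply with $c=\beta=\Vert k\Vert_{\CB(\CH)}^{2}$.

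For \eqref{basicp1} I would apply Lemma \ref{lm:est1} with this $c$: given $\varepsilon>0$, fix $b=b(\varepsilon)>1$ large enough that $\frac{b}{b-1}\le(1+\varepsilon)^{2}$ (possible since $\frac{b}{b-1}\to1$ as $b\to\infty$); then $|z|\ge(1+\varepsilon)\Vert k\Vert_{\CB(\CH)}$ forces $|z|^{2}\ge\frac{bc}{b-1}$, so \eqref{eq:est1} yields $\Vert p(z)^{-1}\Vert\le b\,|z|^{-1}|{\rm Im}z|^{-1}$, and one takes $C=b$ (when $k=0$ the threshold condition $|z|^{2}\ge\frac{bc}{b-1}=0$ is automatic). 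For \eqref{basicp2} I would apply Lemma \ref{lm:est2} with $\alpha\in(0,1)$ fixed and $\beta=c=\Vert k\Vert_{\CB(\CH)}^{2}$; since $\sqrt c=\Vert k\Vert_{\CB(\CH)}$, choosing the parameter called $\varepsilon$ in that lemma to be $\varepsilon\Vert k\Vert_{\CB(\CH)}$ makes its hypothesis $|z|\ge\sqrt c+\varepsilon\Vert k\Vert_{\CB(\CH)}=(1+\varepsilon)\Vert k\Vert_{\CB(\CH)}$ coincide with the one in \eqref{basicp2}, and \eqref{eq:est2} is then verbatim \eqref{basicp2}. Finally, \eqref{basicp3} is exactly \eqref{eq:est3} of Lemma \ref{lm:est3}, whose hypotheses are precisely \eqref{Hyp1} and \eqref{A2}, with $M$ the constant produced there.

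Since the estimates are only asserted for $z\in\rho(h,k)$, where $p(z)^{-1}$ is defined, and are vacuous for real $z$, there is essentially no analytic obstacle: the proof is a matter of bookkeeping. The only step deserving a line of justification is the reduction of the first paragraph — that boundedness of $k$ on $\CH$ already supplies both ``$h$ bounded below'' and the relative bound $k^{2}\le\alpha h_{0}+\beta$ with $\alpha<1$ — after which each of the three displays is read off from the corresponding lemma. (The degenerate case $\Vert k\Vert_{\CB(\CH)}=0$, i.e. $h=h_{0}\ge0$ self-adjoint, may be treated separately if one wishes to include it, but it is not the case of interest and the quantitative thresholds above are designed for $\Vert k\Vert_{\CB(\CH)}>0$.)
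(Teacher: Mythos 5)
Your proposal is correct and is essentially the paper's own argument: the authors explicitly describe Proposition~\ref{pr:est} as a summary of ``a particular case of the estimates we got in Lemmas~\ref{lm:est1}, \ref{lm:est2} and \ref{lm:est3},'' and your matching of constants ($c=\beta=\Vert k\Vert_{\CB(\CH)}^{2}$, $b$ chosen so that $b/(b-1)\le(1+\varepsilon)^{2}$, the lemma's $\varepsilon$ set to $\varepsilon\Vert k\Vert_{\CB(\CH)}$) is exactly the bookkeeping that is implicit there. The only thing you add over the paper is the explicit justification that boundedness of $k$ already furnishes the hypotheses ``$h$ bounded below'' and ``$k^{2}\le\alpha h_{0}+\beta$ with $\alpha<1$,'' which the paper leaves unstated.
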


\bigskip

Sometimes it is useful to consider also the homogeneous $h$-scale. The
following assumption will be convenient in such situations. 
\begin{equation}
\tag{A3} \label{A3}  
h\ge c k^2 \text{ for some real } c>0  .
\end{equation}
This means that $h$ is positive and $\Vert ku\Vert\leq c^{-1/2} \Vert h^{1/2}u\Vert$ 
for all $u\in D(h^{1/2}) $.

\begin{lemma}\label{lm:A3}
  If $c>0$ is real then $h\geq ck^2 \Leftrightarrow h\geq
  \frac{c}{1+c} h_0$. Thus \eqref{A3} is satisfied if and only if
  there is $b>0$ real such that $bh_0\leq h\leq h_0$. If \eqref{Hyp1}
  and \eqref{A3} hold then $h>0$.
\end{lemma}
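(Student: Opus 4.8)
The plan is to reduce everything to a single elementary operator inequality. First I would prove the equivalence $h\geq ck^2 \Leftrightarrow h\geq \frac{c}{1+c}h_0$. Since $h_0=h+k^2$, we have $k^2=h_0-h$, so $h\geq ck^2$ is the same as $h\geq c(h_0-h)$, i.e. $(1+c)h\geq ch_0$, i.e. $h\geq \frac{c}{1+c}h_0$. This is purely algebraic manipulation of quadratic forms on $D(h^{1/2})=D(h_0^{1/2})$ (the form domains coincide because $k\in\CB(\CH)$), so no subtlety arises beyond checking that the form inequalities make sense on the common form domain; in particular $h$ is automatically bounded below once $h\geq ck^2\geq 0$.

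Next I would record the "Thus" clause: \eqref{A3} holds iff $bh_0\leq h\leq h_0$ for some real $b>0$. The forward direction is immediate from the equivalence just proved, taking $b=\frac{c}{1+c}\in(0,1)$, together with the trivial bound $h\leq h+k^2=h_0$ (valid since $k^2\geq 0$). For the converse, if $bh_0\leq h$ for some $b>0$ then $b(h+k^2)\leq h$, hence $bk^2\leq (1-b)h$; if $b<1$ this gives $h\geq \frac{b}{1-b}k^2$, which is \eqref{A3} with $c=\frac{b}{1-b}>0$; and one may always assume $b<1$ since $h\leq h_0$ forces $b\leq 1$, while $b=1$ would force $k=0$ and then any $c>0$ works.

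Finally, for the last assertion, assume \eqref{Hyp1} ($h_0>0$) and \eqref{A3}. From the equivalence, $h\geq \frac{c}{1+c}h_0$. Since $h_0>0$ means $h_0\geq 0$ and $\Ker h_0=\{0\}$, the inequality $h\geq \frac{c}{1+c}h_0$ with $\frac{c}{1+c}>0$ gives $h\geq 0$ and, for any $u\in D(h^{1/2})$ with $(hu|u)=0$, also $(h_0 u|u)=0$, hence $u=0$ since $h_0>0$; thus $\Ker h=\{0\}$ and therefore $h>0$. There is no real obstacle here — the only point requiring a little care is the manipulation of the (possibly unbounded, not necessarily positive) form $h$ on the form domain of $h_0$, which is legitimate precisely because $\langle h\rangle^{-1/2}\CH=\langle h_0\rangle^{-1/2}\CH$ as noted earlier in the text. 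I expect the entire argument to be a few lines.
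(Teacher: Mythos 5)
Your proof is correct and follows essentially the same route as the paper's: a purely algebraic manipulation of the quadratic-form inequality via $h_0=h+k^2$ (the paper passes through $h_0\geq(1+c)k^2$, you substitute $k^2=h_0-h$ directly, which is the same chain written slightly differently). You simply spell out the ``thus'' clause and the strict positivity in more detail than the paper's one-line ``this implies the assertions.''
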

\proof Note that $h=h_0-k^2\leq h_0$. On the other hand, if $c>0$ is
real then we clearly have
\begin{equation}\label{eq:ch0}
h\geq ck^2 \Leftrightarrow h_0\geq (1+c)k^2
\Leftrightarrow h\geq \frac{c}{1+c} h_0
\end{equation}
and this implies the assertions of the lemma. \qed

\begin{corollary}\label{co:A3}
If \eqref{Hyp1} and \eqref{A3} are satisfied then $h_0^s\CH=h^s\CH$
for all $-1/2\leq s\leq 1/2$. 
\end{corollary}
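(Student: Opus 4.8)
The plan is to derive the corollary from the two-sided operator bound $bh_0\le h\le h_0$ supplied by Lemma~\ref{lm:A3}, transported to fractional powers by operator monotonicity. Under \eqref{Hyp1} and \eqref{A3}, Lemma~\ref{lm:A3} gives that $h$ is strictly positive and that there is a real $b>0$ with $bh_0\le h\le h_0$ (the upper bound being $h=h_0-k^2\le h_0$, and the lower bound the reformulation $h\ge\frac{c}{1+c}h_0$ of \eqref{A3}); so $h$ and $h_0$ are strictly positive selfadjoint operators with comparable quadratic forms.

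Next I would invoke the L\"owner--Heinz inequality: $t\mapsto t^{\theta}$ is operator monotone on $[0,\infty)$ for every $\theta\in[0,1]$. Applying it with $\theta=2s$, $0\le s\le 1/2$, to $bh_0\le h\le h_0$ yields
\[
b^{2s}h_0^{2s}\le h^{2s}\le h_0^{2s}.
\]
By the usual dictionary between form bounds and form domains this means $\Dom(h^{s})=\Dom(h_0^{s})$ together with $b^{s}\|h_0^{s}u\|\le\|h^{s}u\|\le\|h_0^{s}u\|$ on this common domain. Since for $0\le s\le 1/2$ the homogeneous space $h^{-s}\CH$ (resp.\ $h_0^{-s}\CH$) is by definition the completion of $\Dom(h^{s})$ (resp.\ $\Dom(h_0^{s})$) for the norm $\|h^{s}\cdot\|$ (resp.\ $\|h_0^{s}\cdot\|$), and these two norms live on the same subspace of $\CH$ and are equivalent there, the two completions coincide: $h^{-s}\CH=h_0^{-s}\CH$ for $0\le s\le 1/2$. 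I would then obtain the positive exponents by duality, using $h^{s}\CH=(h^{-s}\CH)^{*}$ and the analogous identity for $h_0$, the adjoints being taken with respect to the anti-duality extending the scalar product of $\CH$: Banach spaces that agree with equivalent norms have the same adjoint space, so $h^{s}\CH=h_0^{s}\CH$ for $0\le s\le 1/2$ as well, and together with the trivial case $s=0$ this covers all $-1/2\le s\le 1/2$.

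The one point requiring care is the passage from the operator inequality $bh_0\le h\le h_0$ to the bounds on the fractional powers and then to equality of the homogeneous spaces: this rests on the L\"owner--Heinz theorem for unbounded strictly positive selfadjoint operators and on the standard identification, from a form inequality $A\le B$, of $\Dom(A^{1/2})$ and $\Dom(B^{1/2})$ with comparable norms. It is exactly here that the hypothesis $|s|\le 1/2$ is used, so that $2s\in[0,1]$ stays in the range where $t\mapsto t^{2s}$ is operator monotone; this is the reason the conclusion cannot be pushed to larger $|s|$ in general, consistently with the remark preceding the corollary on how $|h|^{-1}\CH$ and $h_0^{-1}\CH$ differ.
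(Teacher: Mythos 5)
Your argument is correct and is essentially the paper's own proof: from $bh_0\le h\le h_0$ (Lemma~\ref{lm:A3}), operator monotonicity of $t\mapsto t^\theta$ on $[0,1]$ gives $b^\theta h_0^\theta\le h^\theta\le h_0^\theta$, which for $\theta=2s$ with $0\le s\le 1/2$ yields equality of the domains $\Dom(h^s)=\Dom(h_0^s)$ with equivalent norms, hence coincidence of the completions $h^{-s}\CH=h_0^{-s}\CH$, and the positive exponents follow by duality. The paper compresses all of this into a single line ("$b^\theta h_0^\theta\le h^\theta\le h_0^\theta$ if $0\le\theta\le1$"), leaving the passage from the operator inequality to the identity of homogeneous Sobolev spaces implicit, but the underlying reasoning — L\"owner--Heinz plus form-domain comparison plus duality — is exactly what you wrote.
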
 
\proof Indeed, from $bh_0\leq h\leq h_0$ we get 
$b^\theta h_0^\theta\leq h^\theta\leq h_0^\theta$ if
$0\leq\theta\leq1$. \qed

\subsubsection{Inhomogeneous energy spaces}

The {\it inhomogeneous} energy space is the vector space 
\[
\CE=\CH^{1/2}\oplus\CH,
\]
equipped with the natural direct sum topology which makes it a
Hilbertizable space. For consistency with the norm that we introduce later in
the homogeneous case we take 
\begin{equation}\label{eq:norm}
\Vert(\begin{smallmatrix}u_0\\u_1\end{smallmatrix})\Vert_{\mathcal E}^2
=\Vert u_1-ku_0\Vert^2+((h_0+1)u_0|u_0)
\end{equation}
but of course we could replace here $k$ by zero. It is convenient, as
explained in \cite{GGH1}, to identify its adjoint space $\CE^*$ with
$\CH\oplus\CH^{-1/2}$
the anti-duality being given by 
\begin{equation}
\label{charge}
\<u,v\>=(u_0|v_1-kv_0)+(u_1-ku_0|v_0) \quad \text{if}\quad
u=\left(\begin{array}{c} u_0 \\ u_1\end{array} \right)\in \CE,\quad
v=\left(\begin{array}{c} v_0 \\ v_1\end{array} \right)\in \CE^*
\end{equation}
usually called  the {\it charge}.
Observe that $\CE\subset\CE^*$ continuously and densely.  We identify
$\CE^{**}=\CE$ as in the Hilbert space case by setting
$\<v,u\>=\overline{\<u,v\>}$.

In what follows it will often be convenient to use the operator
\begin{equation}
\label{phik}
\Phi(k)=\left(\begin{array}{cc} 1 & 0 \\ k & 1 \end{array}\right). 
\end{equation}
Note that $\Phi(k):\CE\tilde\to\CE$ and $\Phi(k):\CE^*\tilde\to\CE^*$ 
with $\Phi^{-1}(k)=\Phi(-k)$ and we may write
\begin{equation}
\CE=\Phi(k)(\<h_0\>^{-1/2}\CH\oplus\CH) \quad\text{and}\quad
    \CE^*=\Phi(k)(\CH\oplus\<h_0\>^{1/2}\CH), 
\end{equation}
which explains the choice of the norm in \eqref{eq:norm} and makes the 
connection with \eqref{eq:dote}.

\subsubsection{Homogeneous energy spaces}

We define the {\it homogeneous energy space}  $\dot\CE$ as the completion of 
$\CE$ under the norm  defined by:
\begin{equation} \label{eq:hnorm}
\Vert(\begin{smallmatrix}u_0\\u_1\end{smallmatrix})\Vert_{\dot{\CE}}^2
:=\Vert u_1-ku_0\Vert^2+(h_0u_0|u_0).
\end{equation}
The completion is the set of couples 
$u=(\begin{smallmatrix}u_0\\u_1\end{smallmatrix})$ with
$u_0\in h_0^{-1/2}\CH, u_1\in (1+h_0^{-1/2})\CH$, and such that  $u_1 - ku_0\in\CH$.
We shall realize  its adjoint space $\dot\CE^*$ with the help of the charge anti-duality 
defined as in \eqref{charge}.  Observe that, since $k\in\CB(h_0^{-1/2}\CH)$ by  
\eqref{A2}, we also have  
\begin{equation}\label{eq:dote}
\dot{\CE}=\Phi(k)(h_0^{-1/2}\CH\oplus\CH),\quad
\dot{\CE}^*=\Phi(k)(\CH\oplus h_0^{1/2}\CH).
\end{equation}
If the assumption \eqref{A3} is satisfied then we also define the
$h$-homogeneous energy spaces
\begin{equation*}
\dot{\tilde{\CE}}=h^{-1/2}\CH\oplus\CH,\quad
\dot{\tilde{\CE}}^*=\CH\oplus h^{1/2}\CH.
\end{equation*}
Here the direct sums are in the Hilbert space sense and 
the identification of $\dot{\tilde{\CE}}^*$ with the space adjoint to 
$\dot{\tilde{\CE}}$ is done with the help of the sesquilinear form 
defined as in \eqref{charge} but with $k=0$. 

\begin{lemma}
Assume \eqref{Hyp1}-\eqref{A3}. Then $\dot{\CE}=\dot{\tilde{\CE}}$ and
the norms $\Vert\cdot\Vert_{\dot{\CE}}$ and
$\Vert\cdot\Vert_{\dot{\tilde{\CE}}}$ are equivalent. 
\end{lemma}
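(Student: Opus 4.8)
The plan is to show that the two norms $\Vert\cdot\Vert_{\dot{\CE}}$ and $\Vert\cdot\Vert_{\dot{\tilde{\CE}}}$ are equivalent on $\CE$ (so that the completions coincide as topological vector spaces, with equal underlying sets once we track the identifications carefully). Recall that by definition
\[
\Vert(\begin{smallmatrix}u_0\\u_1\end{smallmatrix})\Vert_{\dot{\CE}}^2
=\Vert u_1-ku_0\Vert^2+(h_0u_0|u_0),\qquad
\Vert(\begin{smallmatrix}u_0\\u_1\end{smallmatrix})\Vert_{\dot{\tilde{\CE}}}^2
=\Vert u_1\Vert^2+(hu_0|u_0).
\]
First I would invoke Lemma \ref{lm:A3}: under \eqref{Hyp1}+\eqref{A3} we have $bh_0\leq h\leq h_0$ for some real $b>0$, so the two \emph{potential-energy} terms $(hu_0|u_0)$ and $(h_0u_0|u_0)$ are already equivalent, and in particular $h^{-1/2}\CH=h_0^{-1/2}\CH$ with equivalent norms by Corollary \ref{co:A3} (case $s=1/2$). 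The only genuine work is therefore to compare the two \emph{kinetic-energy} terms $\Vert u_1-ku_0\Vert^2$ and $\Vert u_1\Vert^2$, given that $u_0$ ranges over $h_0^{-1/2}\CH$.

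The key estimate is that $k$ is bounded as an operator from $h_0^{-1/2}\CH$ into $\CH$. This follows from \eqref{A3}, which exactly says $\Vert ku_0\Vert\leq c^{-1/2}\Vert h^{1/2}u_0\Vert$, and hence, by $h\leq h_0$, $\Vert ku_0\Vert\leq c^{-1/2}\Vert h_0^{1/2}u_0\Vert = c^{-1/2}(h_0u_0|u_0)^{1/2}$. Thus $\Vert ku_0\Vert \lesssim \Vert(\begin{smallmatrix}u_0\\u_1\end{smallmatrix})\Vert_{\dot{\CE}}$, and one direction follows from the triangle inequality:
\[
\Vert u_1\Vert \leq \Vert u_1-ku_0\Vert + \Vert ku_0\Vert \lesssim \Vert(\begin{smallmatrix}u_0\\u_1\end{smallmatrix})\Vert_{\dot{\CE}},
\]
so $\Vert(\begin{smallmatrix}u_0\\u_1\end{smallmatrix})\Vert_{\dot{\tilde{\CE}}} \lesssim \Vert(\begin{smallmatrix}u_0\\u_1\end{smallmatrix})\Vert_{\dot{\CE}}$. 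For the reverse direction, write $u_1-ku_0 = (u_1)-(ku_0)$ and use the same bound on $\Vert ku_0\Vert$, now controlled by the potential term of $\Vert\cdot\Vert_{\dot{\tilde{\CE}}}$ via $(h_0u_0|u_0)\leq b^{-1}(hu_0|u_0)$: indeed $\Vert ku_0\Vert\leq c^{-1/2}(h_0u_0|u_0)^{1/2}\leq (bc)^{-1/2}(hu_0|u_0)^{1/2}$, whence $\Vert u_1-ku_0\Vert\leq\Vert u_1\Vert+\Vert ku_0\Vert\lesssim\Vert(\begin{smallmatrix}u_0\\u_1\end{smallmatrix})\Vert_{\dot{\tilde{\CE}}}$. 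Combining with the equivalence of the potential terms gives $\Vert(\begin{smallmatrix}u_0\\u_1\end{smallmatrix})\Vert_{\dot{\CE}} \lesssim \Vert(\begin{smallmatrix}u_0\\u_1\end{smallmatrix})\Vert_{\dot{\tilde{\CE}}}$.

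Having the two norms equivalent on the common dense domain $\CE$, the completions are canonically isomorphic as topological vector spaces. To identify them as literally the same set, I would then check the explicit descriptions: $\dot{\tilde{\CE}}=h^{-1/2}\CH\oplus\CH$ is already complete (Hilbert direct sum), and using $\Phi(k):\CE\tilde\to\CE$ together with \eqref{eq:dote}, $\dot{\CE}=\Phi(k)(h_0^{-1/2}\CH\oplus\CH)$; since $\Phi(k)$ and $\Phi(-k)$ map $h_0^{-1/2}\CH\oplus\CH = h^{-1/2}\CH\oplus\CH$ into itself (boundedness of $k:h^{-1/2}\CH\to\CH$ established above, in both $h$- and $h_0$-scales which agree), the two sets coincide. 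The main (very mild) obstacle is purely bookkeeping: making sure the abstract completions are realized concretely inside $h^{-1/2}\CH\oplus\CH$ and that the charge anti-dualities used to define $\dot{\CE}^*$ and $\dot{\tilde{\CE}}^*$ are compatible under this identification — but this is immediate once one notes that $\Phi(k)$ conjugates the $k=0$ form into the general one.
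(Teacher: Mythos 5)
Your proof is correct and follows the same route the paper takes; the paper simply declares the norm equivalence ``obvious by \eqref{A3} and Lemma \ref{lm:A3}'' while you spell out exactly the triangle-inequality estimates and the use of $bh_0\leq h\leq h_0$ that make it so. Your closing paragraph on identifying the completions as sets (via $\Phi(k)$ mapping $h_0^{-1/2}\CH\oplus\CH$ to itself and Corollary \ref{co:A3}) is sound and actually makes explicit a point the paper leaves implicit.
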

\proof
We have to prove that 
$\Vert u_1-ku_0 \Vert^2+(h_0u_0|u_0)\simeq \Vert u_1\Vert^2+(hu_0|u_0)$.
But this is obvious by \eqref{A3} and Lemma \ref{lm:A3}.
\qed

\subsubsection{Conserved quantities}
On $\CE$ we introduce for $\ell\in \R$ the following sesquilinear
forms :
\begin{equation}\label{eq:ell}
\<u|v\>_{\ell}:=(u_1-\ell u_0|v_1-\ell v_0)+(p(\ell)u_0|v_0),
\end{equation}
where $p(\ell)=h_0-(k-\ell)^2.$ We have seen in the introduction that
these forms are formally conserved by the evolution, but in general
not positive definite. 
\begin{lemma}
\label{lem2.3.3}
For all $\ell\in \R$, $\<\cdot|\cdot\>_{\ell}$ is continuous with
respect to the norm $\Vert \cdot \Vert_{\CE}$.
\end{lemma}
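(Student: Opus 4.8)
The plan is to expand $\langle u|v\rangle_\ell$ into three terms via the identity $p(\ell)=h_0-(k-\ell)^2$ and estimate each separately, using nothing more than Cauchy--Schwarz, the fact that $k$ is a bounded self-adjoint operator on $\CH$, and the two elementary inequalities
\[
\Vert u_0\Vert^2 \le \Vert u\Vert_{\CE}^2, \qquad \Vert h_0^{1/2}u_0\Vert^2 \le \Vert u\Vert_{\CE}^2,
\]
which hold for every $u=(\begin{smallmatrix}u_0\\u_1\end{smallmatrix})\in\CE$ because $\Vert u\Vert_{\CE}^2=\Vert u_1-ku_0\Vert^2+\Vert h_0^{1/2}u_0\Vert^2+\Vert u_0\Vert^2$ (recall $u_0\in\CH^{1/2}=D(h_0^{1/2})$, since $h_0\ge 0$).

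First I would write
\[
\langle u|v\rangle_\ell=(u_1-\ell u_0\,|\,v_1-\ell v_0)+(h_0^{1/2}u_0\,|\,h_0^{1/2}v_0)-((k-\ell)u_0\,|\,(k-\ell)v_0),
\]
where the middle term is the quadratic-form reading of $(h_0u_0|v_0)$ and the last one uses the self-adjointness of $k-\ell$ on $\CH$; both are meaningful since $u_0,v_0\in\CH^{1/2}\subset\CH$. The middle term is bounded by $\Vert h_0^{1/2}u_0\Vert\,\Vert h_0^{1/2}v_0\Vert\le\Vert u\Vert_{\CE}\Vert v\Vert_{\CE}$, and the last by $(\Vert k\Vert_{\CB(\CH)}+|\ell|)^2\Vert u_0\Vert\,\Vert v_0\Vert\le(\Vert k\Vert_{\CB(\CH)}+|\ell|)^2\Vert u\Vert_{\CE}\Vert v\Vert_{\CE}$.

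For the first term I would use the decomposition $u_1-\ell u_0=(u_1-ku_0)+(k-\ell)u_0$, which gives $\Vert u_1-\ell u_0\Vert\le\Vert u_1-ku_0\Vert+(\Vert k\Vert_{\CB(\CH)}+|\ell|)\Vert u_0\Vert\le(1+\Vert k\Vert_{\CB(\CH)}+|\ell|)\Vert u\Vert_{\CE}$, and likewise for $v$; Cauchy--Schwarz then controls this term as well. Adding the three estimates yields $|\langle u|v\rangle_\ell|\le C_\ell\Vert u\Vert_{\CE}\Vert v\Vert_{\CE}$ with $C_\ell$ depending only on $\Vert k\Vert_{\CB(\CH)}$ and $|\ell|$, which is the assertion.

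There is no serious obstacle here; the statement is essentially a bookkeeping check. The only point deserving a word of care is that the formal expression $(p(\ell)u_0|v_0)$ must be read through the bounded extension $p(\ell)\in\CB(\CH^{1/2},\CH^{-1/2})$ — equivalently, through the two quadratic-form interpretations used above — which is exactly the extension of $p(z)$ to $B(\CH^s,\CH^{s-1})$, $s\in[0,1]$, already recorded in the text.
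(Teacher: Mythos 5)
Your proof is correct and follows essentially the same route as the paper's: both rest on the expansion $\langle u|v\rangle_\ell=(u_1-\ell u_0|v_1-\ell v_0)+(h_0u_0|v_0)-((k-\ell)u_0|(k-\ell)v_0)$ coming from $p(\ell)=h_0-(k-\ell)^2$, with the boundedness of $k$ doing all the work. The only cosmetic difference is that the paper first invokes the polarization identity to reduce to the diagonal estimate $|\langle u|u\rangle_\ell|\lesssim\Vert u\Vert_{\CE}^2$, whereas you estimate the bilinear form directly via Cauchy--Schwarz; the bounds used are the same.
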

\proof
Due to the polarization identity it suffices to show 
$|\<u|u\>_\ell | \lesssim \Vert u\Vert_{\CE}^2$ 
for all $u\in\CE$. Since
\begin{equation}\label{eq:polar}
\<u|u\>_\ell = \|u_1-\ell u_0 \|^2 + \|h_0^{1/2}u_0\|^2 - \|(k-\ell)u_0\|^2 
\end{equation}
and $k$   is bounded, this is obvious.
\qed

\begin{lemma}
\label{lem2.3.4}
For all $\ell\in \R,\, \<\cdot|\cdot\>_{\ell}$ is continuous with
respect to the norm $\Vert \cdot \Vert_{\dot{\CE}}$ if and only if
\begin{equation}
\label{2.3.4}
h_0\gtrsim (k-\ell)^2 .
\end{equation}
\end{lemma}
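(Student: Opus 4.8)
The plan is to reduce the claimed equivalence to a pointwise (or rather, quadratic-form) comparison on $\CE$ and then pass to the completion $\dot\CE$. First I would recall from \eqref{eq:polar} that for $u=(\begin{smallmatrix}u_0\\u_1\end{smallmatrix})\in\CE$ we have the clean identity
\[
\<u|u\>_\ell = \|u_1-\ell u_0\|^2 + (h_0 u_0|u_0) - \|(k-\ell)u_0\|^2,
\]
so that $\<\cdot|\cdot\>_\ell$ is continuous for $\Vert\cdot\Vert_{\dot\CE}$ precisely when the quadratic form $u\mapsto \|u_1-\ell u_0\|^2 + (h_0 u_0|u_0) - \|(k-\ell)u_0\|^2$ is bounded above and below by a constant times $\Vert u\Vert_{\dot\CE}^2 = \|u_1-\ell u_0\|^2 + (h_0 u_0|u_0)$. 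Since $h_0\geq 0$, the upper bound $\<u|u\>_\ell \leq \Vert u\Vert_{\dot\CE}^2$ is automatic; the content is the lower bound, i.e. the existence of $C>0$ with $\|(k-\ell)u_0\|^2 \leq (1-C)\big(\|u_1-\ell u_0\|^2 + (h_0 u_0|u_0)\big)$ — but since $u_1$ is free, the term $\|u_1-\ell u_0\|^2$ on the right is useless, and the bound reduces exactly to $\|(k-\ell)u_0\|^2 \lesssim (h_0 u_0|u_0)$, which is \eqref{2.3.4}.

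Carrying this out, the direction ``\eqref{2.3.4} $\Rightarrow$ continuity'' is immediate: if $(k-\ell)^2 \leq C' h_0$ with $C'$ we may assume $<1$ after rescaling — actually one just uses $\|(k-\ell)u_0\|^2 \leq C'(h_0u_0|u_0)$ directly to get $|\<u|u\>_\ell| \leq (1+C')\Vert u\Vert_{\dot\CE}^2$ using the triangle inequality on the three terms, hence continuity on $\CE$ and then on $\dot\CE$ by density. For the converse, suppose $\<\cdot|\cdot\>_\ell$ is $\Vert\cdot\Vert_{\dot\CE}$-continuous, so $|\<u|u\>_\ell| \leq M\Vert u\Vert_{\dot\CE}^2$ for all $u\in\CE$. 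I would test this on vectors of the form $u=(\begin{smallmatrix}u_0\\ \ell u_0\end{smallmatrix})$ with $u_0\in D(h_0^{1/2})\cap D(h_0^{-1/2})$ (a core issue: one needs $u_0$ in a space where $\Vert u\Vert_{\dot\CE}$ makes literal sense and $u\in\CE$; taking $u_0\in D(h)$, say, is safe). For such $u$ the first term $\|u_1-\ell u_0\|^2$ vanishes, so $\<u|u\>_\ell = (h_0 u_0|u_0) - \|(k-\ell)u_0\|^2$ and $\Vert u\Vert_{\dot\CE}^2 = (h_0u_0|u_0)$; the inequality $-\<u|u\>_\ell \leq M(h_0u_0|u_0)$ then reads $\|(k-\ell)u_0\|^2 \leq (1+M)(h_0 u_0|u_0)$, i.e. $(k-\ell)^2 \leq (1+M) h_0$ as quadratic forms on a core, which is \eqref{2.3.4}.

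The main obstacle — and it is a genuinely technical rather than conceptual one — is bookkeeping about domains and the precise meaning of the forms on the completion $\dot\CE$ versus on $\CE$. On $\CE$ everything is literal since $k$ is bounded on $\CH$ and $u_0\in\CH^{1/2}$. To justify testing on $u_0\in D(h_0^{1/2})$ with $u_1=\ell u_0$ and to know that such couples are dense enough in $\dot\CE$ that the extracted inequality $(k-\ell)^2\lesssim h_0$ holds on all of $h_0^{-1/2}\CH$ (equivalently, as an operator inequality in the form sense), I would invoke that $D(h_0)$ is a form core for $h_0$ and that $k$ extends continuously to $h_0^{-1/2}\CH$ by \eqref{A2}; the quadratic form $\|(k-\ell)u_0\|^2$ is then $\Vert\cdot\Vert_{\dot\CE}$-continuous in $u_0$ by \eqref{A2} regardless, so both sides of the candidate inequality are continuous in the $\dot\CE$-topology and it suffices to verify it on the dense set $\CE$. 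This makes the density argument clean and closes the proof; I would write it in two short paragraphs, one per implication, referencing \eqref{eq:polar}, \eqref{A2}, and Lemma \ref{lm:A3} only as needed.
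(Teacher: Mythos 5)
Your proof contains a concrete error that breaks the converse direction. You write $\Vert u\Vert_{\dot\CE}^2 = \|u_1-\ell u_0\|^2 + (h_0 u_0|u_0)$, but by \eqref{eq:hnorm} the homogeneous norm is $\Vert u\Vert_{\dot\CE}^2 = \|u_1-ku_0\|^2 + (h_0u_0|u_0)$, with $k$ rather than $\ell$ in the first slot. Hence for your test vectors $u=(\begin{smallmatrix}u_0\\ \ell u_0\end{smallmatrix})$ one has $\Vert u\Vert_{\dot\CE}^2 = \|(k-\ell)u_0\|^2 + (h_0u_0|u_0)$, not $(h_0u_0|u_0)$ as you claim. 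Plugging this in, the continuity bound $|\<u|u\>_\ell|\le M\Vert u\Vert_{\dot\CE}^2$ evaluated on these vectors reads $\big|(h_0u_0|u_0)-\|(k-\ell)u_0\|^2\big| \le M\big(\|(k-\ell)u_0\|^2 + (h_0u_0|u_0)\big)$, which is satisfied identically once $M\ge1$ (since $|a-b|\le a+b$ for $a,b\ge0$), and $M\ge1$ is forced anyway by taking $u_0=0$. So testing only on $u_1=\ell u_0$ produces no relation at all between $(k-\ell)^2$ and $h_0$, and \eqref{2.3.4} is not obtained.

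The missing idea is that one must range over a family of $u_1$ and optimize. Set $v:=u_1-ku_0$, $w:=u_0$; then $u_1-\ell u_0 = v+(k-\ell)w$, $\Vert u\Vert_{\dot\CE}^2 = \|v\|^2+(h_0w|w)$, and \eqref{eq:polar} simplifies to the identity $\<u|u\>_\ell = \Vert u\Vert_{\dot\CE}^2 + 2\Re\big(v\,|\,(k-\ell)w\big)$, so continuity is equivalent to $\big|\Re(v|(k-\ell)w)\big|\lesssim \|v\|^2+(h_0w|w)$ for all $v\in\CH$, $w\in\CH^{1/2}$. The implication from \eqref{2.3.4} is then Cauchy--Schwarz plus Young's inequality (this is, up to bookkeeping with the corrected norm, what you do in the forward direction, using $\|u_1-\ell u_0\|^2\le 2\|u_1-ku_0\|^2+2\|(k-\ell)u_0\|^2$). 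For the converse, take $v=t(k-\ell)w$ with $t>0$ and let $t$ vary: $2t\|(k-\ell)w\|^2\le M'\big(t^2\|(k-\ell)w\|^2+(h_0w|w)\big)$ must hold for every $t>0$, and looking at the discriminant of this quadratic in $t$ (equivalently, at $t=\|(k-\ell)w\|^{-1}(h_0w|w)^{1/2}$) forces $\|(k-\ell)w\|^2\le (M')^2(h_0w|w)$, which is \eqref{2.3.4}. No single fixed $u_1$ can give this; note that the paper's own terse proof, which takes $u_1=ku_0$, i.e.\ $v=0$, likewise yields a vacuous $0\lesssim\|h_0^{1/2}u_0\|^2$, so the optimization in $t$ is the step that actually closes the forward implication in either write-up. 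Your closing remarks about working on the dense subspace $\CE\subset\dot\CE$ and extending by continuity are fine.
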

\proof
We have $\Vert u \Vert_{\dot\CE}^2 = \|u_1- k u_0 \|^2 + \|h_0^{1/2}u_0\|^2$ 
and we have to decide when $|\<u|u\>_\ell | \lesssim \Vert u\Vert_{\dot\CE}^2$.
By \eqref{eq:polar} this holds  if and only if 
$\big| \|u_1-\ell u_0 \|^2 - \|(k-\ell)u_0\|^2 \big| \lesssim \|u_1- k u_0 \|^2 + \|h_0^{1/2}u_0\|^2$.
If this holds and we take $u_1=ku_0$ we get \eqref{2.3.4}. The converse is obvious.
\qed

\subsection{Energy Klein-Gordon operators}
Let
\begin{equation} \label{eq:hathk}
\hat{H}=\left(\begin{array}{cc} 0 & 1 \\ h & 2k \end{array} \right)=\Phi(k)\hat{K}\Phi^{-1}(k)
\quad\text{where}\quad
\hat{K} =\left(\begin{array}{cc} k & 1 \\ h_0 & k \end{array} \right) .
\end{equation}
The {\em energy Klein-Gordon operators }will be various realizations of $\hat{H}$. 
The operator $\hat{K}$ is a {\em charge Klein-Gordon operator} and will only play a technical role.

\subsubsection{Klein-Gordon operator on the inhomogeneous energy  space}
The {\em inhomogeneous Klein-Gordon operator} is the operator $H$
induced by $\hat{H}$ on $\CE$. This means that its domain is
\begin{equation*}
D(H):=\{u \in \CE;\, \hat{H} u\in \CE\}  = \CH^{1} \oplus \CH^{1/2},
\end{equation*}
and for $u \in D(H)$ we have $Hu=\hat{H} u$. For the second equality above, see
\cite[Sect. 5.2]{GGH2}.  We also recall \cite[Proposition 5.3]{GGH2}:

\begin{proposition}
\label{propspecH}
Assume \eqref{Hyp1} and \eqref{A2}.
\begin{enumerate}
\item[--] One has $\rho(H)=\rho(h,k)$.
\item[--] In particular, if $\rho(h,k)\neq \emptyset$ then $H$ is a closed
  densely defined operator in $\CE$ and its spectrum is invariant
  under complex conjugation.
\item[--] If $z\in \rho(h,k)$, then
\begin{equation}
\label{resH}
R(z):=(H-z)^{-1}=p^{-1}(z)\left(\begin{array}{cc} z-2k & 1 \\ h &
    z \end{array}\right).
\end{equation}
\end{enumerate}
\end{proposition}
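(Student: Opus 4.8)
The plan is to derive everything from the equivalences in Lemma \ref{lm:kg} together with a direct inversion of $\hat H-z$ on $\CE$. First I would pin down the domain: writing $\hat Hu=(u_1,\,hu_0+2ku_1)$ for $u=(u_0,u_1)\in\CE=\CH^{1/2}\oplus\CH$, the requirement $\hat Hu\in\CE$ forces $u_1\in\CH^{1/2}$ and, since $k\in\CB(\CH)$ and $\CH^{1/2}\subset\CH$, also $hu_0\in\CH$, i.e. $u_0\in\CH^1$; conversely any such pair lies in the domain. Hence $D(H)=\CH^1\oplus\CH^{1/2}$ (this is \cite[Sect. 5.2]{GGH2}), which is dense in $\CE$ because $\CH^1$ is dense in $\CH^{1/2}$ and $\CH^{1/2}$ in $\CH$.

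The core step is the algebraic inversion. For $z\in\cc$, solving $(H-z)u=f$ with $f=(f_0,f_1)\in\CE$ and $u\in D(H)$ is equivalent to the triangular system $u_1=f_0+zu_0$ and $p(z)u_0=(z-2k)f_0+f_1$, and using the identity $p(z)+z(z-2k)=h$ one rewrites the first line as $u_1=p(z)^{-1}(hf_0+zf_1)$ whenever $p(z)$ is invertible --- precisely the formula \eqref{resH}. If $z\in\rho(h,k)$ then, by Lemma \ref{lm:kg}, $p(z)$ is simultaneously an isomorphism $\CH^1\,\tilde\to\,\CH$ and $\CH^{-1/2}\,\tilde\to\,\CH^{1/2}$, with bounded inverses by the open mapping theorem. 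Reading off \eqref{resH}: the first row maps $\CH^{1/2}\oplus\CH$ into $\CH$ and then $p(z)^{-1}$ into $\CH^1$, while the second row maps it into $\CH^{-1/2}$ and then $p(z)^{-1}$ into $\CH^{1/2}$; thus $R(z)$ is bounded from $\CE$ into $D(H)=\CH^1\oplus\CH^{1/2}$, and the computation above shows it is a two-sided inverse of $H-z$. Hence $\rho(h,k)\subset\rho(H)$; in particular, once $\rho(h,k)\neq\emptyset$, $H$ admits a bounded everywhere-defined resolvent, so it is a closed (densely defined) operator.

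For the converse inclusion, suppose $z\in\rho(H)$. Applying $(H-z)^{-1}$ to the vectors $(0,g)$ with $g\in\CH$, the system above produces $u_0\in\CH^1$ with $p(z)u_0=g$, so $p(z):\CH^1\to\CH$ is surjective; and if $p(z)v=0$ with $v\in\CH^1$ then $(v,zv)\in D(H)$ lies in $\ker(H-z)=\{0\}$, forcing $v=0$. Therefore $p(z):\CH^1\,\tilde\to\,\CH$, i.e. $z\in\rho(h,k)$, and $\rho(H)=\rho(h,k)$. Finally, invariance of $\sigma(H)$ under complex conjugation follows at once: $p(z)^*=p(\bar z)$ as maps $\CH^{1/2}\to\CH^{-1/2}$, so Lemma \ref{lm:kg} makes $\rho(h,k)$ conjugation-invariant, hence so is its complement $\sigma(H)$.

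I expect the only genuinely delicate point to be the bookkeeping of Sobolev scales in \eqref{resH}: the two rows of the $2\times2$ matrix have different mapping properties ($\CH^{1/2}\oplus\CH\to\CH$ for the top row versus $\to\CH^{-1/2}$ for the bottom), so one must invoke the appropriate equivalence from Lemma \ref{lm:kg} ($p(z):\CH^1\,\tilde\to\,\CH$ for the first component, $p(z):\CH^{-1/2}\,\tilde\to\,\CH^{1/2}$ for the second) in order to land back in $D(H)$. The remaining algebra --- the triangular system and the identity $p(z)+z(z-2k)=h$ --- is routine, and the non-selfadjointness of $H$ plays no role beyond requiring the charge-anti-duality framework already set up for $\CE$.
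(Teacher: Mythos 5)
Your proof is correct, and since the paper itself merely recalls this proposition from \cite[Proposition 5.3]{GGH2} without reproducing the argument, there is no in-paper proof to compare against; the route you take — deriving everything from Lemma~\ref{lm:kg} via the triangular system $u_1=f_0+zu_0$, $p(z)u_0=(z-2k)f_0+f_1$, then verifying the Sobolev-scale mapping properties row by row (using $p(z):\CH^1\tilde\to\CH$ for the first row and $p(z):\CH^{-1/2}\tilde\to\CH^{1/2}$ for the second) — is precisely the natural one that the reference would use. The algebra $p(z)^{-1}h = 1 + zp(z)^{-1}(z-2k)$ converting $u_1=f_0+zu_0$ into the form $p(z)^{-1}(hf_0+zf_1)$ is checked correctly, the converse direction $\rho(H)\subset\rho(h,k)$ via surjectivity from $(0,g)$ and injectivity via $(v,zv)\in\ker(H-z)$ is sound, and conjugation-invariance is correctly read off from $p(z)^*=p(\bar z)$ and Lemma~\ref{lm:kg}.
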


We may similarly define the operator $K$ induced by $\hat K$ in $\CE$ and one may
easily check, under the same conditions \eqref{Hyp1} and \eqref{A2}, that 
$\Phi(k): \CH^{1} \oplus \CH^{1/2} \tilde\to \CH^{1} \oplus \CH^{1/2}$ with inverse $\Phi(-k)$,
hence $H$ and $K$ have the same domain and $H=\Phi(k)K\Phi(-k)$. This implies
\begin{equation}
\label{resK}
(K-z)^{-1}=\left(\begin{array}{cc}
    p^{-1}(z)(z-k) & p^{-1}(z) \\ 1 + (z-k)p^{-1}(z)(z-k) &  (z-k)p^{-1}(z) 
\end{array}\right)     .
\end{equation}

\subsubsection{Klein-Gordon operator on the homogeneous energy
  space}

The {\it homogeneous Klein-Gordon operator} is the operator
$\dot{H}$ induced by $\hat{H}$ on $\dot{\CE}$. This means that its
domain is
\begin{equation*}
D(\dot{H})=\{u\in \dot{\CE};\, \hat{H}u\in \dot{\CE}\}
\end{equation*}
and for $u\in D(\dot{H})$ we have $\dot{H}u=\hat{H}u$. 
The proofs will involve the homogeneous  operator $\dot{K}$ associated 
to the auxiliary operator $\hat{K}$ and acting in the space $h_0^{-1/2}\CH \oplus \cH$ 
with domain
\[
D(\dot{K})=\{v\in h_0^{-1/2}\CH \oplus \cH ;\, \hat{K}v\in h_0^{-1/2}\CH \oplus \cH \}.
\]
From the relations \eqref{eq:dote} and \eqref{eq:hathk} we see that $\Phi(k)$ 
induces an isomorphism of $h_0^{-1/2}\CH\oplus\CH$ with $\dot\CE$ whose inverse
is $\Phi(-k)$. Clearly then $\dot{H}=\Phi(k)\dot{K}\Phi(-k)$.  

\begin{lemma}\label{lm:DH}
Under the conditions \eqref{Hyp1} and \eqref{A2} we have:
\begin{equation*}
D(\dot{H})=\Phi(k) 
\left( \big(h_0^{-1/2}\CH\cap h_0^{-1}\CH \big)\oplus\CH^{1/2} \right).
\end{equation*}
\end{lemma}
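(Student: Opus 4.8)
Since $\dot H = \Phi(k)\dot K\Phi(-k)$ and $\Phi(k)$ maps $h_0^{-1/2}\CH\oplus\CH$ isomorphically onto $\dot\CE$ (with inverse $\Phi(-k)$), the domain identity is equivalent to the statement
\[
D(\dot K)=\big(h_0^{-1/2}\CH\cap h_0^{-1}\CH\big)\oplus\CH^{1/2}.
\]
So the plan is to work entirely with the charge operator $\hat K=\left(\begin{smallmatrix}k & 1\\ h_0 & k\end{smallmatrix}\right)$ acting on the Hilbert space $\dot\CG:=h_0^{-1/2}\CH\oplus\CH$, and to determine for which $v=(v_0,v_1)\in\dot\CG$ one has $\hat K v\in\dot\CG$. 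The first coordinate of $\hat K v$ is $kv_0+v_1$; since $k\in\CB(h_0^{-1/2}\CH)$ by \eqref{A2} and $v_0\in h_0^{-1/2}\CH$, we have $kv_0\in h_0^{-1/2}\CH$, and $v_1\in\CH\subset h_0^{-1/2}\CH$ automatically (as $h_0>0$ gives $\CH\hookrightarrow h_0^{-1/2}\CH$); hence the first-coordinate condition is \emph{always} satisfied and imposes nothing. The content is therefore entirely in the second coordinate: $h_0v_0+kv_1\in\CH$.

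Now I would analyze $h_0v_0+kv_1\in\CH$. Since $v_1\in\CH$ and $k\in\CB(h_0^{-1/2}\CH)$, the term $kv_1$ lies in $h_0^{-1/2}\CH$ but need not lie in $\CH$; this is the subtle point. The natural move is to use the resolvent $(H-z)^{-1}$ (equivalently $(K-z)^{-1}$ from \eqref{resK}) for some fixed $z\in\rho(h,k)$, which by Proposition \ref{pr:est} and the surrounding estimates maps $\dot\CG$ into $D(\dot K)$, together with the explicit formula. Concretely, $v\in D(\dot K)$ iff $v=(\dot K-z)^{-1}w$ for some $w\in\dot\CG$; plugging in \eqref{resK} and chasing which mapping properties of $p^{-1}(z)$ (namely $p(z)^{-1}\in\CB(\CH,\CH^1)$ and $p(z)^{-1}\in\CB(h_0^{1/2}\CH,\ h_0^{-1/2}\CH)$, the latter coming from \eqref{basicp2}) are in play, one reads off that $v_0$ ranges exactly over $h_0^{-1/2}\CH\cap h_0^{-1}\CH$ (the intersection reflecting that $v_0$ must be in the homogeneous $h_0$-scale \emph{and} in $D(h_0)$ up to the lower-order $k$-corrections) and $v_1$ ranges over $\CH^{1/2}$. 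Alternatively, and perhaps more cleanly, one shows the two inclusions directly: if $v_0\in h_0^{-1/2}\CH\cap h_0^{-1}\CH$ and $v_1\in\CH^{1/2}$, then $h_0v_0\in\CH$ and $kv_1\in\CH$ (using $\CH^{1/2}=D(h_0^{1/2})$ and $k\in\CB(h_0^{-1/2}\CH)$, so $k$ maps $\CH^{1/2}=h_0^{-1/2}\CH\cap\CH$... ) — wait, this needs care — so $\hat K v\in\dot\CG$; conversely, from $h_0v_0+kv_1\in\CH$ one has to extract \emph{separately} that $h_0v_0\in\CH$ and $v_1\in\CH^{1/2}$.

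The main obstacle is precisely this last decoupling: the condition $h_0v_0+kv_1\in\CH$ mixes $v_0$ and $v_1$, and one must use the hyperbolicity/positivity built into \eqref{A2} (specifically that $k$ is $h_0^{1/2}$-bounded, i.e. $\|kw\|_{h_0^{-1/2}\CH}\lesssim\|w\|_{h_0^{-1/2}\CH}$, which unwinds to a statement comparing $kv_1$ with $h_0^{1/2}v_1$) to show that the two terms cannot conspire to cancel their ``bad'' parts. The clean way around this is to not argue by hand but to invoke the resolvent formula \eqref{resK} at a point $z\in\rho(h,k)$ with $|{\rm Im}\,z|$ large: since $D(\dot K)=\Ran\big((\dot K-z)^{-1}\big)$ and $(\dot K-z)^{-1}$ is given explicitly, the description of $D(\dot K)$ follows by identifying the range of each matrix entry using the mapping properties of $p^{-1}(z)$ established in Lemma \ref{lm:est2} and Proposition \ref{pr:est}. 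I would therefore structure the proof as: (i) reduce to $\dot K$ via $\Phi(\pm k)$; (ii) note the first-coordinate condition is vacuous; (iii) identify $D(\dot K)$ as the range of $(\dot K-z)^{-1}$ and read off the two summands from \eqref{resK}, citing \eqref{basicp2} for the bound $h_0^{1/2}p^{-1}(z)\in\CB(\CH)$ and the elementary $p^{-1}(z)\in\CB(\CH,\CH^1)$ from $z\in\rho(h,k)$. $\hfill\qed$
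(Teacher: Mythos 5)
The claim that the first-coordinate condition $kv_0+v_1\in h_0^{-1/2}\CH$ is ``always satisfied and imposes nothing'' is wrong, and it collapses the argument. You justify it by asserting $\CH\hookrightarrow h_0^{-1/2}\CH$, but the homogeneous space $h_0^{-1/2}\CH$ and $\CH$ are \emph{not} comparable in general -- the paper explicitly warns about this right after introducing the homogeneous scale, and such an embedding would require $h_0$ to be bounded. To see concretely that dropping the first-coordinate condition is fatal: take $v_0=0$ and $v_1\in\CH\setminus\CH^{1/2}$. Then $\hat K(0,v_1)=(v_1,kv_1)$; since $k\in\CB(\CH)$ the second entry is automatically in $\CH$, so if the first-coordinate condition were vacuous one would conclude $(0,v_1)\in D(\dot K)$ -- but $(0,v_1)\notin (h_0^{-1/2}\CH\cap h_0^{-1}\CH)\oplus\CH^{1/2}$, contradicting the lemma. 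In reality the first-coordinate condition is precisely where the $\CH^{1/2}$-regularity of $v_1$ comes from: $kv_0\in h_0^{-1/2}\CH$ by \eqref{A2}, so $kv_0+v_1\in h_0^{-1/2}\CH$ forces $v_1\in h_0^{-1/2}\CH$, and combined with $v_1\in\CH$ this gives $v_1\in\CH\cap h_0^{-1/2}\CH=\CH^{1/2}$.

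Relatedly, you misidentify the obstacle. The second-coordinate condition $h_0v_0+kv_1\in\CH$ decouples trivially: $k$ is assumed bounded on $\CH$ and $v_1\in\CH$, so $kv_1\in\CH$ at once, hence $h_0v_0\in\CH$ and $v_0\in h_0^{-1}\CH$. There is no cancellation to worry about and no resolvent machinery needed; the whole converse inclusion is this two-line observation plus the first-coordinate argument above, and the forward inclusion is a direct verification. Your fallback plan of reading $D(\dot K)$ off the resolvent formula \eqref{resK} could be made to work, but it requires the mapping properties of $p^{-1}(z)$, $p^{-1}(z)h_0$ and $1+mp^{-1}m$ on $h_0^{-1/2}\CH$ (essentially the content of Lemma \ref{lm:p-1}, proved right after in the paper for the purpose of computing $(\dot H - z)^{-1}$), which makes that route strictly longer without simplifying anything here.
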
 

\proof 

From the preceding comments we see that the assertion of the lemma
is equivalent to
\begin{equation}\label{eq:DK}
D(\dot{K})=
 \big(h_0^{-1/2}\CH\cap h_0^{-1}\CH \big)\oplus\CH^{1/2}  
\end{equation}
Since $\hat{K}v= (\begin{smallmatrix} kv_0 + v_1 \\ h_0v_0 +
  kv_1\end{smallmatrix})$, if $v$ belongs to the right hand side
above then $kv_0 + v_1\in h_0^{-1/2}\CH$ and $h_0v_0 + kv_1 \in
\CH$, thus $\hat{K}v\in h_0^{-1/2}\CH \oplus \cH $, hence $v\in
D(\dot K)$. Reciprocally, if $v\in D(\dot K)$ then
\[
v_0\in h_0^{-1/2}\CH,\ v_1\in\CH,\ kv_0 + v_1\in h_0^{-1/2}\CH, 
\  h_0v_0 + kv_1 \in \CH .
\]
We have to show $v_0 \in h_0^{-1/2}\CH\cap h_0^{-1}\CH $ and 
$v_1 \in \CH^{1/2}$, which follow from 
$v_0 \in h_0^{-1}\CH$ and $v_1 \in h_0^{-1/2}\CH$. The last 
relation is a consequence of $kv_0 + v_1\in h_0^{-1/2}\CH$ 
because $k\in\CB(h_0^{-1/2}\CH)$. 
Since $k$ is bounded on $\CH$ we finally get 
$h_0v_0 \in \CH - kv_1\subset\CH$ hence 
$v_0 \in h_0^{-1}\CH $.
\qed

\begin{lemma}\label{lm:p-1}
Assume that the conditions \eqref{Hyp1} and \eqref{A2} are satisfied and let 
$z\in\rho(h,k)\setminus\R$. Then the maps $p(z)^{-1}$ and $p(z)^{-1}h_0$ induce 
continuous operators $h_0^{-1/2}\CH\to h_0^{-1/2}\CH\cap h_0^{-1}\CH$ and 
$h_0^{-1/2}\CH\to\CH^{1/2}$ respectively.
\end{lemma}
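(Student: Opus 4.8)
The plan is to prove the two claimed mapping properties by combining the resolvent estimates of Proposition~\ref{pr:est} (more precisely Lemmas~\ref{lm:est1}--\ref{lm:est3}) with the already-established description of $D(\dot K)$ in \eqref{eq:DK}, since by the intertwining $\dot H=\Phi(k)\dot K\Phi(-k)$ it suffices to treat the operators $p(z)^{-1}$ and $p(z)^{-1}h_0$ as maps into the summands $h_0^{-1/2}\CH\cap h_0^{-1}\CH$ and $\CH^{1/2}$ appearing in Lemma~\ref{lm:DH}. Throughout fix $z\in\rho(h,k)\setminus\R$ and abbreviate $p=p(z)$.

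First I would treat $p(z)^{-1}:h_0^{-1/2}\CH\to h_0^{-1/2}\CH\cap h_0^{-1}\CH$. Here the target carries the norm $\|u\|=\|h_0^{1/2}u\|+\|h_0 u\|$ (intersection norm), so I must bound both $\|h_0^{1/2}p^{-1}u\|$ and $\|h_0 p^{-1}u\|$ by $\|h_0^{1/2}u\|$, i.e.\ by the $h_0^{-1/2}\CH$-norm of the datum $u$. For the first, write $u=h_0^{1/2}w$ with $w\in\CH$ and use $h_0^{1/2}p^{-1}u=h_0^{1/2}p^{-1}h_0^{1/2}w$; the estimate \eqref{basicp2} (or Lemma~\ref{lm:est2}) controls $h_0^{1/2}p^{-1}$ as a bounded operator on $\CH$, but since $z$ is a \emph{fixed} non-real point, one does not even need the large-$|z|$ hypothesis: for fixed non-real $z\in\rho(h,k)$ the operator $p(z)^{-1}$ is bounded $\CH\to\CH^{1}$ by Lemma~\ref{lm:kg} and the closed graph theorem, and a symmetric argument using $p(z)^{*}=p(\bar z)$ together with \eqref{eq:id} gives the $h_0^{1/2}$-bound on $h_0^{-1/2}$-data. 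For the second bound I would use the pencil identity $p=h_0-(k-z)^2$, so $h_0 p^{-1}=\one+(k-z)^2p^{-1}$, and then control $(k-z)^2 p^{-1}u$ in $\CH$: factor $(k-z)^2 p^{-1}=(k-z)h_0^{-1/2}\cdot h_0^{1/2}p^{-1}(k-z)$ and invoke assumption \eqref{A2} (which gives $(k-z)^{\pm1}\in\CB(h_0^{-1/2}\CH)$, equivalently $(k-z)h_0^{-1/2}$ and $h_0^{-1/2}(k-z)$ bounded on $\CH$) together with the bound on $h_0^{1/2}p^{-1}(k-z)$ acting on $h_0^{1/2}$-data, which is exactly Lemma~\ref{lm:est3}/\eqref{basicp3}. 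Combining, $\|h_0 p^{-1}u\|\lesssim\|u\|+\|h_0^{1/2}u\|\lesssim\|h_0^{1/2}u\|$ once one notes $h_0^{-1/2}\CH$ embeds the relevant way; more carefully one keeps $u$ in $h_0^{-1/2}\CH$ throughout, writing $\|h_0 p^{-1}u\|\le\|u\|_{h_0^{-1/2}\CH}\cdot(\text{bounded factor})$ is \emph{not} automatic, so the honest route is: $h_0 p^{-1}u - u = (k-z)^2 p^{-1}u$ lives in $\CH$ by the factorization above, and $u$ itself lies only in $h_0^{-1/2}\CH\supset$ -- hence one concludes $h_0 p^{-1}u\in\CH$ only after checking $u\in\CH$ is not needed because $(k-z)^2p^{-1}u\in\CH$ already forces $h_0p^{-1}u\in h_0^{-1/2}\CH$, and the extra regularity $h_0p^{-1}u\in\CH$ comes from the finer estimate. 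This bookkeeping is where I expect the only real friction.

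Second, for $p(z)^{-1}h_0:h_0^{-1/2}\CH\to\CH^{1/2}$, I would write, for $u\in h_0^{-1/2}\CH$, that $h_0 u\in h_0^{-3/2}\CH$, and I want $h_0^{1/2}p^{-1}h_0 u\in\CH$. Factor $h_0^{1/2}p^{-1}h_0 u = (h_0^{1/2}p^{-1}h_0^{1/2})\,(h_0^{1/2}u)$; the outer operator $h_0^{1/2}p^{-1}h_0^{1/2}$ is bounded on $\CH$ — this is again \eqref{basicp2} read as a statement about $h_0^{1/2}p^{-1}$ composed with the adjoint bound $p^{-1}h_0^{1/2}=(h_0^{1/2}p(\bar z)^{-1})^*$, both supplied by Lemma~\ref{lm:est2} and its $\bar z$-analogue (valid since $\bar z\in\rho(h,k)$ as well, by Lemma~\ref{lm:kg}), and $h_0^{1/2}u\in\CH$ by definition. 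Hence $h_0^{1/2}p^{-1}h_0 u\in\CH$, i.e.\ $p^{-1}h_0 u\in\CH^{1/2}$, with the norm estimate $\|p^{-1}h_0 u\|_{\CH^{1/2}}\lesssim\|u\|_{h_0^{-1/2}\CH}$. Continuity is then the content of the uniform bounds.

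The main obstacle, as flagged, is not any single estimate — those are all in hand from Lemmas~\ref{lm:est1}--\ref{lm:est3} — but rather the careful tracking of which Sobolev space each intermediate vector lives in, since $h_0^{-1/2}\CH$, $\CH$, and $h_0^{-1}\CH$ are pairwise noncomparable (as emphasized right after \eqref{Hyp1}), so one cannot freely ``upgrade'' or ``downgrade'' regularity. The clean way to organize the argument is to always reduce to statements about operators of the form $h_0^{a}p(z)^{-1}h_0^{b}$ with $a,b\in\{0,\tfrac12\}$ being bounded on $\CH$, deducing each from \eqref{eq:id} / Lemmas~\ref{lm:est1}--\ref{lm:est3} applied at the fixed non-real $z$ (and its conjugate $\bar z$), and then transcribe. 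Once this dictionary is set up the lemma follows, and it is exactly the input needed to run the resolvent formula \eqref{resK} on $\dot\CE$ in the sequel.
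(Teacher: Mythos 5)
Your treatment of the second mapping property, $p(z)^{-1}h_0\colon h_0^{-1/2}\CH\to\CH^{1/2}$, is essentially correct and matches the paper: one composes $h_0\colon h_0^{-1/2}\CH\tilde\to h_0^{1/2}\CH\subset\CH^{-1/2}$ with $p(z)^{-1}\colon\CH^{-1/2}\to\CH^{1/2}$ from Lemma~\ref{lm:kg}, no quantitative resolvent estimate required (so your invocation of Lemma~\ref{lm:est2}/\eqref{basicp2} is a red herring, as you yourself then note). But the treatment of the first mapping property has genuine gaps, and they are precisely at the two places you flag as ``friction.'' First, the substitution: if $u\in h_0^{-1/2}\CH$, then the element of $\CH$ you should introduce is $w=h_0^{1/2}u$, i.e.\ $u=h_0^{-1/2}w$, not $u=h_0^{1/2}w$; the latter would put $u$ in $h_0^{1/2}\CH$, the dual side of the homogeneous scale. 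So the operator you need to control is $h_0^{1/2}p^{-1}h_0^{-1/2}$ on $\CH$, not $h_0^{1/2}p^{-1}h_0^{1/2}$, and neither Lemma~\ref{lm:kg} nor the bound $\|h_0^{1/2}p^{-1}\|\leq C$ from Lemma~\ref{lm:est2} helps, since $h_0^{-1/2}$ is unbounded on $\CH$. Second, the ``factorization'' $(k-z)^2p^{-1}=(k-z)h_0^{-1/2}\cdot h_0^{1/2}p^{-1}(k-z)$ silently replaces $(k-z)^2p^{-1}$ by $(k-z)p^{-1}(k-z)$, which is not legitimate: $k-z$ does not commute with $p(z)^{-1}$. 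And third, as you concede, $h_0p^{-1}u=u+(k-z)^2p^{-1}u$ cannot put $h_0p^{-1}u$ in $\CH$ when $u$ lives only in $h_0^{-1/2}\CH$, which is not comparable with $\CH$; you leave this unresolved.

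The missing idea that closes all three gaps simultaneously is the algebraic decomposition used in the paper. With $m=z-k$ one has, from $p=h_0-m^2$,
\[
p^{-1}=p^{-1}h_0\,m^{-2}-m^{-2},\qquad h_0\,p^{-1}=1+m^2p^{-1}.
\]
The first identity, combined with $p^{-1}h_0\colon h_0^{-1/2}\CH\to\CH^{1/2}\subset h_0^{-1/2}\CH$ (already proved) and $m^{-1}\in\CB(h_0^{-1/2}\CH)$ from \eqref{A2}, gives $p^{-1}\in\CB(h_0^{-1/2}\CH)$ with no direct attack on $h_0^{1/2}p^{-1}h_0^{-1/2}$. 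The second identity then gives $h_0p^{-1}\in\CB(h_0^{-1/2}\CH)$, i.e.\ $p^{-1}\colon h_0^{-1/2}\CH\to h_0^{-3/2}\CH$. Intersecting with the already established $p^{-1}\colon h_0^{-1/2}\CH\to h_0^{-1/2}\CH$ and interpolating ($\|h_0v\|\leq\|h_0^{1/2}v\|^{1/2}\|h_0^{3/2}v\|^{1/2}$) yields $p^{-1}\colon h_0^{-1/2}\CH\to h_0^{-1}\CH$. The interpolation step $h_0^{-1/2}\CH\cap h_0^{-3/2}\CH\subset h_0^{-1}\CH$ is what replaces your attempt to land directly in $\CH$, and it is what makes the ``bookkeeping'' go through.
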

\proof
We set $m=z-k$ and, to simplify the writing, we do not specify $z$ unless this is really 
necessary, e.g. we write $p$ for $p(z)$ and $p=h_0-m^2$. 
From \eqref{A2} it follows that $m$ induces
bounded invertible operators in all the spaces $\CH^{s}$ with
$-1/2\leq s\leq 1/2$ and in the space $h_0^{-1/2}\CH$ (in all $h_0^s\CH$
with $-1/2\leq s \leq 1/2$, in fact). 
Since $h_0$ extends to a unitary operator $h_0^{-1/2}\CH\to
h_0^{1/2}\CH$ and $h_0^{1/2}\CH$ is a dense subspace of
$\CH^{-1/2}$, the operator $p^{-1}h_0$ extends to a bounded map
$p^{-1}h_0:h_0^{-1/2}\CH\to\CH^{1/2}$. Then we write
$$p^{-1}=p^{-1}(m^2-h_0+h_0)m^{-2}=p^{-1}h_0m^{-2} -m^{-2} $$ 
from which it follows that  $p^{-1}$ extends to an operator in
$\CB(h_0^{-1/2}\CH)$. We still have to prove that 
$p^{-1}$ sends $h_0^{-1/2}\CH$ into $h_0^{-1}\CH$.
For this we note that
$$h_0p^{-1}=(h_0-m^2+m^2)p^{-1}=1+m^2p^{-1} $$
and thus, by what we just proved, we see that 
$h_0p^{-1} h_0^{-1/2}\CH\subset h_0^{-1/2}\CH$ 
hence $p^{-1}$ sends $h_0^{-1/2}\CH$ into $h_0^{-3/2}\CH\cap h_{0}^{-1/2}\CH\subset h_{0}^{-1}\CH$, 
which clearly proves the assertion.
\qed

\begin{proposition} \label{lemmaresHdot}
If \eqref{Hyp1}, \eqref{A2} are true then $\rho(\dot{H})\setminus
\R=\rho(h,k)\setminus \R$ and for $z$ in this set
\begin{equation} \label{resdotH}
\dot{R}(z):=(\dot{H}-z)^{-1}=\Phi(k)\left(\begin{array}{cc}
    p^{-1}(z)(z-k) & p^{-1}(z) \\ 1 + (z-k)p^{-1}(z)(z-k) &
    (z-k)p^{-1}(z) \end{array}\right)\Phi(-k).
\end{equation}
\end{proposition}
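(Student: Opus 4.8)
The plan is to reduce everything to the corresponding statement for the auxiliary charge operator $\dot K$, exactly as was done for the inhomogeneous operators where $H=\Phi(k)K\Phi(-k)$. Since $\Phi(k)$ induces an isomorphism of $h_0^{-1/2}\CH\oplus\CH$ onto $\dot\CE$ with inverse $\Phi(-k)$, and $\dot H=\Phi(k)\dot K\Phi(-k)$, we have $\rho(\dot H)=\rho(\dot K)$ and $(\dot H-z)^{-1}=\Phi(k)(\dot K-z)^{-1}\Phi(-k)$; so it suffices to prove that $\rho(\dot K)\setminus\R=\rho(h,k)\setminus\R$ together with the formula
\[
(\dot K-z)^{-1}=\left(\begin{array}{cc}
 p^{-1}(z)(z-k) & p^{-1}(z) \\ 1+(z-k)p^{-1}(z)(z-k) & (z-k)p^{-1}(z)\end{array}\right)
\]
as a bounded operator on $h_0^{-1/2}\CH\oplus\CH$, for $z\in\rho(h,k)\setminus\R$.

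First I would fix $z\in\rho(h,k)\setminus\R$ and check that the matrix on the right, call it $S(z)$, maps $h_0^{-1/2}\CH\oplus\CH$ boundedly into itself. This is precisely where Lemma \ref{lm:p-1} does the work: $p(z)^{-1}$ is bounded on $h_0^{-1/2}\CH$ (and even maps it into $h_0^{-1/2}\CH\cap h_0^{-1}\CH$), and $k\in\CB(h_0^{-1/2}\CH)$ by \eqref{A2}, so the $(1,1)$ and $(1,2)$ entries land in $h_0^{-1/2}\CH$; for the second row one uses in addition that $k$ is bounded on $\CH$ together with the mapping property $p(z)^{-1}\colon h_0^{-1/2}\CH\to\CH^{1/2}$ coming again from Lemma \ref{lm:p-1} (applied after writing $(z-k)p^{-1}(z)=(z-k)p^{-1}(z)h_0\cdot h_0^{-1}$, or more directly since $p(z)^{-1}$ already maps $h_0^{-1/2}\CH$ into $h_0^{-1}\CH\subset\CH$). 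Next I would verify the algebraic identities $(\dot K-z)S(z)=\one$ and $S(z)(\dot K-z)=\one$ on the appropriate domains; these are the same purely formal matrix computations with $\hat K=\left(\begin{smallmatrix}k&1\\h_0&k\end{smallmatrix}\right)$ that underlie \eqref{resK}, using $p(z)=h_0-(k-z)^2$ and the description of $D(\dot K)$ from \eqref{eq:DK}. Combined with the boundedness just established, this shows $z\in\rho(\dot K)$ and $(\dot K-z)^{-1}=S(z)$; transporting by $\Phi(k)$ gives \eqref{resdotH}.

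For the reverse inclusion, that $\rho(\dot K)\setminus\R\subset\rho(h,k)\setminus\R$: if $z\notin\R$ and $z\notin\rho(h,k)$, then by Lemma \ref{lm:kg} $p(z)$ fails to be an isomorphism $\cH^{-1/2}\to\cH^{1/2}$, and one checks that this forces $\dot K-z$ to fail to be invertible on $h_0^{-1/2}\CH\oplus\CH$ — either $p(z)$ is not injective on the relevant space, yielding a nonzero element of $D(\dot K)$ in $\ker(\dot K-z)$ via $v=(v_0,(z-k)v_0)$ with $p(z)v_0=0$, or $p(z)$ does not have dense/closed range, which obstructs surjectivity of $\dot K-z$ by the same matrix bookkeeping. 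I expect the main obstacle to be precisely the bookkeeping of which homogeneous Sobolev space each entry of $S(z)$ and each component of $\dot K v$ lives in — there is genuine subtlety because $h_0^{-1/2}\CH$, $\CH$, and $h_0^{-1}\CH$ are not comparable in general, so one must invoke Lemma \ref{lm:p-1} and assumption \eqref{A2} at exactly the right places rather than manipulate the matrices naively; the invertibility equivalences themselves are then routine given Lemma \ref{lm:kg} and the already-established inhomogeneous case.
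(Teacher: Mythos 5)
Your overall plan coincides with the paper's: reduce to the auxiliary operator $\dot K$ via $\Phi(k)$ and show the matrix $S(z)$ of \eqref{resdotH1} is a two-sided inverse of $\dot K-z$. The gap sits precisely on the $(2,1)$ entry $1+(z-k)p^{-1}(z)(z-k)$. Applied to $v_0\in h_0^{-1/2}\CH$ this sum cannot be handled term by term: the identity operator does \emph{not} map $h_0^{-1/2}\CH$ into $\CH$ (the homogeneous scale $h_0^s\CH$ is not comparable to $\CH$, since only $h_0>0$ is assumed and not $0\in\rho(h_0)$), so the summand $1\cdot v_0$ has no meaning in $\CH$, and the shortcut you offer --- ``$h_0^{-1}\CH\subset\CH$'' --- is false for the same reason. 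The paper gets past this with the algebraic identity $1+mp^{-1}m=m^{-1}p^{-1}h_0\,m^{-1}$, equation \eqref{eq:pm} with $m=z-k$, which combined with the \emph{second} assertion of Lemma \ref{lm:p-1} --- that $p^{-1}(z)h_0$, not $p^{-1}(z)$ as you write, is bounded from $h_0^{-1/2}\CH$ to $\CH^{1/2}$ --- yields $(1+mp^{-1}m)v_0\in\CH^{1/2}$. Without this identity, the boundedness of $S(z)$ does not follow from what you cite.

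Two smaller points. The target you need is not $h_0^{-1/2}\CH\oplus\CH$ but the domain $D(\dot K)=(h_0^{-1/2}\CH\cap h_0^{-1}\CH)\oplus\CH^{1/2}$ of \eqref{eq:DK}; otherwise $(\dot K-z)S(z)$ is not even defined on all of $h_0^{-1/2}\CH\oplus\CH$. And for $\rho(\dot K)\setminus\R\subset\rho(h,k)$ your contrapositive sketch is not immediate: if $p(z)$ has a nontrivial kernel as a map $\CH^{-1/2}\to\CH^{1/2}$, the kernel vector $v_0$ need not lie in $h_0^{-1/2}\CH\cap h_0^{-1}\CH$, so it is not a priori clear that $(v_0,(z-k)v_0)\in D(\dot K)$. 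The paper argues forward instead: assuming $z\in\rho(\dot K)$, it applies $(\dot K-z)^{-1}$ to vectors of the form $(0,v_1)$ and reads off from the structure of $\dot K-z$ that $p(z):\CH^1\to\CH$ is bijective.
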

\proof 

As in the proof of Lemma \ref{lm:DH} we prove the corresponding
statement for the operator $\dot{K}$. Fix $z\in\rho(h,k)\setminus
\R$ and adopt the notations of the proof of Lemma \ref{lm:p-1}.  We
show that $z\in\rho(\dot K)$ and that $(\dot K-z)^{-1}$ is just the
matrix in \eqref{resdotH} or in \eqref{resK}:
\begin{equation} \label{resdotH1}
(\dot K-z)^{-1} =\left(\begin{array}{cc}
    p^{-1}m & p^{-1} \\ 1 + mp^{-1}m & mp^{-1} \end{array}\right) .
\end{equation}
We denote $S$ the matrix in the right hand side of \eqref{resdotH1} 
and first show that $S$ sends $h_0^{-1/2}\CH\oplus\CH$ into 
$D(\dot K)$ as defined in \eqref{eq:DK}. Thus, if $v_0\in h_0^{-1/2}\CH$
and $v_1\in\CH$ we must prove that
\begin{equation}\label{eq:sv}
p^{-1}mv_0+p^{-1}v_1\in h_0^{-1/2}\CH\cap h_0^{-1}\CH   \quad\text{and}\quad
(1 + mp^{-1}m)v_0 +  mp^{-1} v_1\in \CH^{1/2} .
\end{equation}
From Lemma \ref{lm:kg} we get $p^{-1}v_1\in\CH^{1}\subset
h_0^{-1/2}\CH\cap h_0^{-1}\CH$ hence also $mp^{-1} v_1\in
\CH^{1/2}$.  Thus it remains to treat the terms involving $v_0$.
From Lemma \ref{lm:p-1} we get $p^{-1}mv_0\in h_0^{-1/2}\CH\cap
h_0^{-1}\CH$.  On the other hand, since $p=h_0-m^2$, Lemma
\ref{lm:kg} and a simple computation give
\begin{equation}\label{eq:pm}
1 + mp^{-1}m=m^{-1}p^{-1}h_0m^{-1} 
\end{equation}
in the sense of bounded operators $\CH^{-1/2}\to\CH^{1/2}$. 
From this relation and Lemma \ref{lm:p-1} we get $(1 + mp^{-1}m)v_0
\in \CH^{1/2} $.

Thus $S: h_0^{-1/2}\CH\oplus\CH \to D(\dot K)$ and a straightforward
computation gives $(\dot K-z)Sv=v$ for all $v\in
h_0^{-1/2}\CH\oplus\CH$. On the other hand, if $u\in D(\dot K)$ and
$v=(\dot K -z)u$ then it is easy to show that $u=Sv$. This finishes
the proof of the relation $S=(\dot K-z)^{-1}$, i.e. of
\eqref{resdotH1}. 

It remains to be shown that
$\rho(\dot{K})\setminus\R\subset\rho(h,k)$. Assume that $z\nin\R$
and 
\[
\dot{K}-z:\big(h_0^{-1/2}\CH\cap h_0^{-1}\CH \big)\oplus\CH^{1/2} 
\to  h_0^{-1/2}\CH\oplus\CH
\]
is bijective. Then for any $v$ of the form 
$v=(\begin{smallmatrix} 0 \\ v_1\end{smallmatrix})\in
h_0^{-1/2}\CH\oplus\CH $ 
there is a unique $u=(\begin{smallmatrix} u_0 \\
  u_1\end{smallmatrix})$ with $u_0\in h_0^{-1/2}\CH\cap h_0^{-1}\CH
$ and $u_1 \in \CH^{1/2}$ such that $-mu_0+u_1=0$ and
$h_0u_0-mu_1=v_1$. Then $u_0=m^{-1}u_1\in\CH^{1/2}$ and also $u_0\in
h_0^{-1}\CH$ hence $u\in\CH^1$ and we have
$pu_0=(h_0-m^2)u_0=v_1$. Thus $p:\CH^1\to\CH$ is surjective. It is
also injective because if $pu_0=0$ for some $u_0\in\CH^1$ then 
$u=(\begin{smallmatrix} u_0 \\  mu_0\end{smallmatrix})\in
D(\dot{K})$ and $(\dot{K}-z)u=0$ hence $u=0$. 
\qed

We point out a simple relation between $H$ and $\dot H$ 
(a similar statement holds for $K$ and $\dot K$). 
Recall that $\CE\subset\dot\CE$ continuously and densely. 

\begin{lemma}\label{lm:closure}
$\dot{H}$ coincides with the closure of $H$ in $\dot\CE$. 
\end{lemma}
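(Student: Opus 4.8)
The plan is to verify directly that $H$, viewed as a densely defined operator on $\dot{\CE}$, is closable and that its closure is exactly $\dot{H}$. Since $\dot{H}=\Phi(k)\dot{K}\Phi(-k)$ and $H=\Phi(k)K\Phi(-k)$ with $\Phi(k)$ an isomorphism of $\dot\CE$ onto itself (and also of $\CE$ onto itself), it suffices to prove the corresponding statement for $K$ and $\dot{K}$, i.e. that $\dot{K}$ is the closure of $K$ in the space $h_0^{-1/2}\CH\oplus\CH$. This reduction removes the nuisance of the off-diagonal $k$ in $\hat H$ and lets us work with $\hat K=\left(\begin{smallmatrix} k & 1 \\ h_0 & k\end{smallmatrix}\right)$, for which the domains were computed explicitly: $D(K)=\CH^1\oplus\CH^{1/2}$ and $D(\dot K)=(h_0^{-1/2}\CH\cap h_0^{-1}\CH)\oplus\CH^{1/2}$ by \eqref{eq:DK}.

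First I would observe that $K\subset\dot{K}$ as operators on $h_0^{-1/2}\CH\oplus\CH$: indeed $\CE\subset\dot\CE$ continuously and densely, $D(K)=\CH^1\oplus\CH^{1/2}\subset (h_0^{-1/2}\CH\cap h_0^{-1}\CH)\oplus\CH^{1/2}=D(\dot K)$ since $\CH^1\subset h_0^{-1}\CH$ and $\CH^1\subset h_0^{-1/2}\CH$, and on this common core both operators act as $\hat K$. Next, $\dot K$ is closed: by Proposition \ref{lemmaresHdot} (its $\dot K$-version established in the proof) $\rho(\dot K)\setminus\R=\rho(h,k)\setminus\R\neq\emptyset$ under \eqref{Hyp1}, \eqref{A2}, so $\dot K$ is a closed operator. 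Hence $\overline{K}\subset\dot K$ in $\dot\CE$ (closure taken in the homogeneous space), and it remains to prove the reverse inclusion, i.e. that $D(K)=\CH^1\oplus\CH^{1/2}$ is a core for $\dot K$.

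To show $D(K)$ is a core for $\dot K$, fix $z\in\rho(h,k)\setminus\R$ and use that $(\dot K-z)^{-1}$ is the bounded operator $S$ of \eqref{resdotH1} on $h_0^{-1/2}\CH\oplus\CH$. It suffices to show that $(\dot K-z)$ maps $D(K)$ onto a dense subspace of $h_0^{-1/2}\CH\oplus\CH$, equivalently that $(K-z)(D(K))=\CH\oplus\CH^{1/2}$ — wait, more carefully: it suffices that $(\dot K - z)(D(K))$ is dense in $h_0^{-1/2}\CH\oplus\CH$. Now $(\dot K-z)(D(K))=(K-z)(D(K))=\CH\oplus\CH^{1/2}$ since $z\in\rho(h,k)=\rho(K)$, and $\CH\oplus\CH^{1/2}$ is dense in $h_0^{-1/2}\CH\oplus\CH$ because $\CH$ is dense in $h_0^{-1/2}\CH$ (as $\CH^{1/2}=\<h_0\>^{-1/2}\CH\supset\CH^1$ is dense in $h_0^{-1/2}\CH$, and $\CH\supset\CH^{1/2}$) and $\CH^{1/2}$ is dense in $\CH$. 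Then for any $u\in D(\dot K)$, put $g=(\dot K-z)u\in h_0^{-1/2}\CH\oplus\CH$, pick $g_n\in\CH\oplus\CH^{1/2}$ with $g_n\to g$ in $h_0^{-1/2}\CH\oplus\CH$, set $u_n=S g_n\in D(K)$; then $u_n\to Sg=u$ in $h_0^{-1/2}\CH\oplus\CH$ and $(\dot K-z)u_n=g_n\to g$, which shows $u\in\overline{K-z}+z=\overline{K}$. Transporting back via $\Phi(k)$ gives $\dot H=\overline{H}$ in $\dot\CE$.

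The main obstacle is purely bookkeeping: one must be careful that the relevant topologies are those of $\dot\CE$ (equivalently $h_0^{-1/2}\CH\oplus\CH$) and not of $\CE$, and that the density statement $\CH\oplus\CH^{1/2}$ dense in $h_0^{-1/2}\CH\oplus\CH$ is correct — this uses only that $D(h_0^{1/2})$ is dense in the homogeneous space $h_0^{-1/2}\CH$, which is immediate from the construction of $h_0^{-1/2}\CH$ as a completion of $D(h_0^{1/2})=\CH^{1/2}$. No estimates beyond the already-established formula $(\dot K - z)^{-1}=S\in\CB(h_0^{-1/2}\CH\oplus\CH)$ are needed, so the proof is short.
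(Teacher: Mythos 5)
Your route is the same one the paper takes, just unpacked in full: pass to resolvents, observe that the resolvent on $\dot\CE$ is the continuous (hence closed) extension of the resolvent on $\CE$, and transfer this back to the unbounded operators by inverting. Where the paper disposes of the last step with ``by thinking in terms of graphs,'' you make it concrete via the approximation $u_n = Sg_n$, and you add a harmless detour through $K,\dot K$ by $\Phi(k)$-conjugation that the paper does not bother with (its argument runs on $H,\dot H$ directly). So conceptually this is not a different proof; it is the same mechanism written out at a finer granularity, which is perfectly acceptable.

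There is, however, a bookkeeping slip that turns into a genuinely false intermediate claim. You write $(K-z)(D(K))=\CH\oplus\CH^{1/2}$; with the paper's conventions $\CE=\CH^{1/2}\oplus\CH$, and since $z\in\rho(K)$ the correct identity is $(K-z)(D(K))=\CE=\CH^{1/2}\oplus\CH$ -- you have reversed the two components. Because of this swap you are then forced to assert that ``$\CH$ is dense in $h_0^{-1/2}\CH$,'' which is not correct: as the paper stresses, $\CH$ and $h_0^{-1/2}\CH$ are in general \emph{not comparable} (neither is a subspace of the other), so the statement is not even well posed, and your parenthetical justification -- that $\CH^{1/2}$ is dense in $h_0^{-1/2}\CH$ and $\CH\supset\CH^{1/2}$ -- does not give it: containing a dense subset does not make a space a subspace. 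Once you restore the right order of components, the only density you actually need is exactly the one you already have: $\CH^{1/2}$ is dense in $h_0^{-1/2}\CH$ by construction of the homogeneous scale, and $\CH$ is trivially dense in $\CH$, so $\CE=\CH^{1/2}\oplus\CH$ is dense in $h_0^{-1/2}\CH\oplus\CH$. With that correction the argument is complete.
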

\proof

If $z\in\rho(h,k)\setminus\R$  then $z$ belongs to $\rho(H)\cap \rho(\dot{H})$ and the resolvents $R=(H-z)^{-1}$ and $\dot R=(\dot H-z)^{-1}$
are bounded operators in $\CE$ and $\dot\CE$ respectively.  Moreover, $\dot R$ 
is clearly a continuous extension of $R$ to $\dot\CE$, so it is the closure of $R$
in $\dot\CE$. By thinking in terms of graphs one easily sees that $\dot H-z=\dot{R}^{-1}$
is the closure of $H-z=R^{-1}$ in $\dot\CE$. 
\qed

We will often consider the case where \eqref{A3} is fulfilled.  In
this case we have that
\begin{equation*}
D(\dot{H})=\big( h^{-1/2}\CH\cap h^{-1}\CH \big)\oplus\CH^{1/2}
\end{equation*}
and $\dot{H}$ is selfadjoint (see e.g. \cite[Lemma
2.1.1]{Ha03}). Note also that in this case the resolvent of $\dot{H}$
is given by:  (see \cite[Proposition 5.7]{GGH2}
\begin{equation}
\label{resdotHself}
\dot{R}(z)=\left(\begin{array}{cc} z^{-1}p^{-1}(z)h-z^{-1} & p^{-1}(z) \\ p^{-1}(z)h &
    zp^{-1}(z) \end{array}\right).
\end{equation} 
Moreover, if we assume \eqref{A3}, then $\Vert
\dot{R}(z)\Vert_{\CB(\dot{\CE})}\le |{\rm Im}z|^{-1}.$ Using
\cite[Proposition 5.10]{GGH2} we obtain the following resolvent
estimate for $H$:

\begin{proposition}
\label{propbasicreest}
Assume \eqref{Hyp1}-\eqref{A2}. Then:
\begin{equation}
\label{1.16}
\Vert R(z)\Vert_{\CB(\CE)}\lesssim(1+|z|^{-1})\Vert
\dot{R}(z)\Vert_{\CB(\dot{\CE})}+|z|^{-1}.
\end{equation}
Assume in addition \eqref{A3}. Then:
\begin{equation}
\Vert R(z)\Vert_{\CB(\CE)}\lesssim(1+|z|^{-1})|\mathrm{Im} z|^{-1}.    
\end{equation}
\end{proposition}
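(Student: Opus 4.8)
The plan is to prove Proposition \ref{propbasicreest} by relating $R(z)=(H-z)^{-1}$ on the inhomogeneous energy space $\CE$ to $\dot R(z)=(\dot H-z)^{-1}$ on the homogeneous space $\dot\CE$, using the explicit resolvent formulas \eqref{resH} and \eqref{resdotH}, together with the identity $\dot H=\Phi(k)\dot K\Phi(-k)$ and its inhomogeneous counterpart. The key structural fact is that both resolvents are built from the \emph{same} ingredient $p(z)^{-1}$ (conjugated by $\Phi(k)$); the difference between $\CE$ and $\dot\CE$ is only the replacement of the weight $(h_0+1)$ by $h_0$ in the first component of the norm, so controlling $R(z)$ in terms of $\dot R(z)$ amounts to controlling the "low-frequency'' part of $p(z)^{-1}$ — the part where $h_0+1$ and $h_0$ genuinely differ — and this is exactly where the extra $|z|^{-1}$ factors will come from via the basic estimate \eqref{basicp1}.

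Concretely, first I would use $\Phi(k):\CE\tilde\to\Phi(k)(\<h_0\>^{-1/2}\CH\oplus\CH)$ (and similarly for $\dot\CE$) to reduce everything to the charge Klein-Gordon operators $K$ and $\dot K$, whose resolvents are given by \eqref{resK} and \eqref{resdotH1}; since $\Phi(\pm k)$ are uniformly bounded isomorphisms between the corresponding spaces, estimates for $(K-z)^{-1}$ transfer directly to estimates for $R(z)$. Then, writing $\<h_0\>^{-1/2}=\<h_0\>^{-1/2}h_0^{1/2}\cdot h_0^{-1/2}$ and noting that $\<h_0\>^{-1/2}h_0^{1/2}$ is a bounded operator $h_0^{-1/2}\CH\to\CH$ with norm $\le 1$, one sees that the $\dot\CE\to\dot\CE$ norm dominates the $\CE\to\CE$ norm \emph{up to} the error caused by the constant $1$ in $h_0+1$; quantitatively, $\|\<h_0\>^{1/2}h_0^{-1/2}\|$ is \emph{not} bounded, and this is precisely the term one must estimate separately. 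This missing piece is $\|u_0\|$-type control, i.e.\ one needs to bound the $\CH$-norm (not just the $h_0^{1/2}\CH$-norm) of the first component of $(K-z)^{-1}v$; inspecting \eqref{resK} this reduces to bounding $\|p^{-1}(z)\|$ and $\|p^{-1}(z)(z-k)\|$ as operators on $\CH$, for which \eqref{basicp1} gives $\|p^{-1}(z)\|\le C|z|^{-1}|\mathrm{Im}\,z|^{-1}$ and hence (since $z-k$ is a bounded perturbation) the $|z|^{-1}$ gain in \eqref{1.16}.

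Assembling these pieces: decompose $(K-z)^{-1}v = $ (piece estimated by $\dot R$) $+$ (piece estimated by \eqref{basicp1}), the first contributing $(1+|z|^{-1})\|\dot R(z)\|_{\CB(\dot\CE)}$ (the $|z|^{-1}$ accounting for the interplay between the $h_0^{1/2}$ weight in $\dot\CE^*$ and the need to recover a $\CH$-bound), and the second contributing the pure $|z|^{-1}$ term, which gives \eqref{1.16}. For the second assertion, one simply invokes the already-quoted bound $\|\dot R(z)\|_{\CB(\dot\CE)}\le|\mathrm{Im}\,z|^{-1}$ valid under \eqref{A3} (where $\dot H$ is selfadjoint on $\dot\CE$, cf.\ \eqref{resdotHself}) and substitutes into \eqref{1.16}, yielding $\|R(z)\|_{\CB(\CE)}\lesssim(1+|z|^{-1})|\mathrm{Im}\,z|^{-1}$. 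The main obstacle I anticipate is the careful bookkeeping in the first step: one must track exactly which components of the $2\times 2$ resolvent matrices land in which Sobolev space, and verify that the only genuine loss of control between $\dot\CE$ and $\CE$ is the rank-one-like discrepancy between $h_0$ and $h_0+1$, so that \eqref{basicp1} — rather than the weaker \eqref{basicp2} — is both available and sufficient; this is bookkeeping rather than a deep difficulty, and indeed the cited \cite[Proposition 5.10]{GGH2} does precisely this, so the proof here can be short, essentially a reduction to that statement combined with Proposition \ref{pr:est}.
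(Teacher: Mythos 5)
Your high-level plan is the same as the paper's: \eqref{1.16} is cited from \cite[Proposition 5.10]{GGH2}, and the second bound is obtained by substituting the selfadjoint estimate $\|\dot R(z)\|_{\CB(\dot\CE)}\le|\mathrm{Im}\,z|^{-1}$ (valid once \eqref{A3} holds), which is exactly what you say in your last step. You also correctly identify that the whole content is to control $\|(R(z)u)_0\|_\CH$, i.e.\ the discrepancy $\|u\|_\CE^2-\|u\|_{\dot\CE}^2=\|u_0\|_\CH^2$.

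The mechanism you propose for producing the pure $|z|^{-1}$ term, however, does not work as written. First, \eqref{basicp1} is only stated for $|z|\ge(1+\epsilon)\|k\|_{\CB(\CH)}$, whereas \eqref{1.16} makes no restriction on $z$. Second, and more importantly, \eqref{basicp1} gives $\|p^{-1}(z)\|\le C|z|^{-1}|\mathrm{Im}\,z|^{-1}$; your decomposition would therefore produce an error term $|z|^{-1}|\mathrm{Im}\,z|^{-1}$, not the $|z|^{-1}$ appearing in \eqref{1.16}. You cannot silently drop the $|\mathrm{Im}\,z|^{-1}$, and without \eqref{A3} there is no a priori lower bound $\|\dot R(z)\|\gtrsim|\mathrm{Im}\,z|^{-1}$ to absorb it into the first term. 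In fact, the $|z|^{-1}$ gain should come not from any estimate on $p^{-1}(z)$ in $\CB(\CH)$ but from the algebraic identity encoded in the first row of $(\hat H-z)R(z)=\one$: with $w=R(z)u$ one has $-zw_0+w_1=u_0$, hence $(z-k)w_0=(w_1-kw_0)-u_0$, and $\|(w_1-kw_0)-u_0\|\le(\|\dot R(z)\|_{\CB(\dot\CE)}+1)\|u\|_\CE$; inverting $z-k$ for $|z|\ge 2\|k\|$ then gives $\|w_0\|\lesssim|z|^{-1}(\|\dot R(z)\|+1)\|u\|_\CE$, which is exactly \eqref{1.16} on that range with no reference to $p^{-1}$ at all. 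For $|z|<2\|k\|$ the absorption fails and a separate argument is required (which is where the cited \cite[Proposition 5.10]{GGH2} carries the real weight), so the reduction you invoke at the end is doing more work than your sketch suggests.
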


\subsubsection{Gauge transformations}
\label{SecGT}
Let us recall that our starting point was the Klein-Gordon equation
\begin{equation}
\label{KGcomp}
(\partial_t-ik)^2u+h_0u=0.
\end{equation}
If $u$ is solution of \eqref{KGcomp} and $\ell\in \rr$, then $v=e^{-it\ell}u$ solves :
\begin{equation}
\label{KGcompl}
(\partial_t-i(k-\ell))^2v+h_0v=0.
\end{equation}
Let us formulate this in terms of generators: if 
\[\Phi(\ell)H\Phi^{-1}(\ell)=:H_{\ell}+\ell,\]
then
\[H_{\ell}=\left(\begin{array}{cc} 0 & 1 \\
p(\ell) & 2(k-\ell)\end{array}\right),\, p(\ell)=h_0-(k-\ell)^2.\]
It follows that if there exists $\ell\in \R$ such that \eqref{A3} is
fulfilled with $h$ replaced by $p(\ell)$ and $k$ by $k-\ell$, then
$\dot{H}$ is selfadjoint on the homogeneous energy space
\[
\dot{\CE}=\Phi(\ell)(p(\ell)^{-1/2}\CH\oplus\CH).
\]

\subsection{Existence of the dynamics}

From \cite[Corollary 8.6]{GGH1} we obtain :
\begin{lemma}
$H$ is the generator of a $C_0$-group $e^{-itH}$ on $\CE$.
\end{lemma}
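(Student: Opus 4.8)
The claim to prove is that $H$ generates a $C_0$-group $e^{-itH}$ on the inhomogeneous energy space $\CE = \CH^{1/2}\oplus\CH$. The statement explicitly cites \cite[Corollary 8.6]{GGH1}, so the expected proof is a citation-plus-verification that the hypotheses of that corollary hold in the present setting.

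The plan is to apply the Hille--Yosida / Lumer--Phillips machinery via the resolvent estimates already assembled in this section. First I would note that under \eqref{Hyp1} and \eqref{A2} we have $\rho(h,k)\neq\emptyset$ by Proposition \ref{pr2.2}, so by Proposition \ref{propspecH} the operator $H$ is closed and densely defined in $\CE$ with $\rho(H)=\rho(h,k)$, and its resolvent is given explicitly by \eqref{resH}. Next I would invoke Proposition \ref{propbasicreest}: combined with the basic pencil estimates of Proposition \ref{pr:est} (specifically \eqref{basicp1} and \eqref{basicp2}, which control $\|p^{-1}(z)\|$ and $\|h_0^{1/2}p^{-1}(z)\|$ by $C|z|^{-1}|\mathrm{Im}\,z|^{-1}$ and $C|\mathrm{Im}\,z|^{-1}$ respectively for $|z|\ge(1+\varepsilon)\|k\|_{\CB(\CH)}$), the bound \eqref{1.16} yields $\|R(z)\|_{\CB(\CE)}\lesssim |\mathrm{Im}\,z|^{-1}$ for $|\mathrm{Re}\,z|$ bounded and $|\mathrm{Im}\,z|$ large, hence also for all $z$ with $|\mathrm{Im}\,z|$ large (the region $|z|$ large, $|\mathrm{Re}\,z|$ large being handled by \eqref{basicp1} directly, and the region $|z|$ bounded by continuity of the resolvent on the compact set where it is defined). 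This is precisely the two-sided resolvent bound $\|(H-z)^{-1}\|\le C|\mathrm{Im}\,z|^{-1}$ for $|\mathrm{Im}\,z|\ge a$ that, by the Hille--Yosida theorem for groups, characterizes generators of $C_0$-groups; equivalently it says $iH$ and $-iH$ both generate $C_0$-semigroups.

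Concretely the argument would run: fix $\varepsilon>0$ and apply Proposition \ref{pr:est} to get constants $C,M$; for $|z|\ge(1+\varepsilon)\|k\|_{\CB(\CH)}$ feed \eqref{basicp1}--\eqref{basicp2} into \eqref{1.16} to obtain $\|R(z)\|_{\CB(\CE)}\le C'(1+|z|^{-1})|\mathrm{Im}\,z|^{-1}+|z|^{-1}$; restricting to $|\mathrm{Im}\,z|\ge a$ for suitable $a>0$ this gives $\|R(z)\|\le C''|\mathrm{Im}\,z|^{-1}$. On the complementary bounded region $\{|\mathrm{Im}\,z|\ge a,\ |z|\le(1+\varepsilon)\|k\|\}$, which is contained in $\rho(h,k)=\rho(H)$ once $a$ is chosen larger than $c_0$ of Proposition \ref{pr2.2}, the resolvent is continuous hence bounded, and one absorbs this into the constant. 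Then \cite[Corollary 8.6]{GGH1} (or directly Hille--Yosida for groups) applies and produces the $C_0$-group $e^{-itH}$.

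I expect the only mild obstacle to be bookkeeping the different frequency regimes so that a single estimate $\|R(z)\|_{\CB(\CE)}\le C|\mathrm{Im}\,z|^{-1}$ valid for all sufficiently large $|\mathrm{Im}\,z|$ emerges; this is routine given Propositions \ref{pr:est} and \ref{propbasicreest}. No new analytic input is needed beyond what the section already provides, which is why the statement is simply deduced from \cite[Corollary 8.6]{GGH1}.
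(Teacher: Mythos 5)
Your proposal tries to reconstruct the proof from the resolvent estimates of this section and then close it by ``Hille--Yosida for groups.'' The key step fails: a bound of the form $\Vert (H-z)^{-1}\Vert_{\CB(\CE)}\le C\,|\mathrm{Im}\,z|^{-1}$ for $|\mathrm{Im}\,z|\ge a$ with some constant $C>1$ does \emph{not} imply that $H$ generates a $C_0$-group. The Hille--Yosida theorem (in either the semigroup or the group form) requires the full family of bounds $\Vert (H-z)^{-n}\Vert\le M\,(|\mathrm{Im}\,z|-a)^{-n}$ uniformly in $n\ge 1$, or the $n=1$ bound with constant exactly $1$ (contraction case, Lumer--Phillips). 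A single first-power bound with $C>1$ only gives $\Vert (H-z)^{-n}\Vert\le C^n|\mathrm{Im}\,z|^{-n}$, which is useless. There is no general principle that upgrades the $n=1$ estimate, so the sentence ``this is precisely the two-sided resolvent bound\dots that\dots characterizes generators of $C_0$-groups'' is simply incorrect, and the argument as written is not a proof.

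The paper does not take this route at all: it invokes \cite[Corollary 8.6]{GGH1} outright and gives no argument. The result in that reference is obtained structurally, not from a quantitative resolvent bound. Concretely, via $\Phi(k)$ the operator $\hat{H}$ is conjugate to $\hat{K}=\left(\begin{smallmatrix} k & 1\\ h_0 & k\end{smallmatrix}\right)$ acting on $\langle h_0\rangle^{-1/2}\CH\oplus\CH\cong\CE$; here $\hat{K}$ splits as $\left(\begin{smallmatrix} 0 & 1\\ h_0 & 0\end{smallmatrix}\right)+\left(\begin{smallmatrix} k & 0\\ 0 & k\end{smallmatrix}\right)$, the first summand is the generator of the free wave group (skew-adjoint for the Hilbert norm on the inhomogeneous energy space built from $h_0+1$, hence a generator with $M=1$ via Stone/Lumer--Phillips), and the second summand is bounded because $k\in\CB(\CH)$ and $k\in\CB(\langle h_0\rangle^{-1/2}\CH)$. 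A bounded perturbation of the generator of a $C_0$-group is again a generator of a $C_0$-group, and conjugating back by $\Phi(k)$ transfers the conclusion to $H$ on $\CE$. If you want a self-contained proof rather than the citation, this bounded-perturbation argument is the one to write out; the resolvent estimates you assembled give a correct and useful bound on $\Vert R(z)\Vert$ but they are not, by themselves, a proof of generation.
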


Now we show that  $e^{-itH}$ extends to a $C_0$-group on $\dot{\CE}$.

\begin{lemma}
$\dot{H}$ is the generator of a $C_0-$ group on $\dot{\CE}$ and for each real $t$
the operator $e^{-it\dot{H}}$ coincides with the continuous extension of 
$e^{-itH}$ to $\dot\CE$.
\end{lemma}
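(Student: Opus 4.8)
The plan is to transfer the group property from the inhomogeneous energy space $\CE$ to the homogeneous one $\dot\CE$ by a density-and-closure argument, using that $\CE\subset\dot\CE$ continuously and densely and that, by Lemma \ref{lm:closure}, $\dot H$ is the closure of $H$ in $\dot\CE$. First I would fix $z\in\rho(h,k)\setminus\R$, which by Proposition \ref{propspecH} and Proposition \ref{lemmaresHdot} lies in $\rho(H)\cap\rho(\dot H)$, and observe from the explicit resolvent formulas \eqref{resH} and \eqref{resdotH} that $\dot R(z)=(\dot H-z)^{-1}$ is the continuous extension to $\dot\CE$ of $R(z)=(H-z)^{-1}$. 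This is the key rigidity: the two resolvents agree on the dense subspace $\CE$, so any bound we can prove for $\dot R(z)$ controls the behaviour of $R(z)$ in the $\dot\CE$-norm.

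The main point is to show that the group $e^{-itH}$, which exists on $\CE$ by the previous lemma, is bounded on bounded $t$-intervals \emph{for the $\dot\CE$-norm}, so that it extends by continuity to a strongly continuous family of bounded operators on $\dot\CE$. The natural route is the Hille–Yosida / Feller–Miyadera–Phillips theorem: it suffices to produce constants $M,\omega$ such that $\|(\dot H-z)^{-n}\|_{\CB(\dot\CE)}\le M(\Re z-\omega)^{-n}$ for all $n\ge1$ and all real $z>\omega$ — or, more conveniently, the analogous bound for $z$ with large imaginary part rotated appropriately, since $\pm i\dot H$ should generate the group. Concretely I would estimate $\dot R(z)$ directly from \eqref{resdotH}: the entries are built from $p(z)^{-1}$, $p(z)^{-1}(k-z)$ and $1+(k-z)p(z)^{-1}(k-z)$ conjugated by $\Phi(\pm k)$, and Proposition \ref{pr:est} (estimates \eqref{basicp1}–\eqref{basicp3}) together with the identity \eqref{eq:pm}, $1+mp^{-1}m=m^{-1}p^{-1}h_0m^{-1}$, and assumption \eqref{A2} on $(k-z)^{-1}$ give exactly the $|\Im z|^{-1}$-type decay of $\|\dot R(z)\|_{\CB(\dot\CE)}$ needed to apply the generation theorem to $\pm i\dot H$ on $\dot\CE$. (When \eqref{A3} holds this is immediate since $\dot H$ is then selfadjoint with $\|\dot R(z)\|\le|\Im z|^{-1}$, but the point is to cover the general superradiant case.)

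Once $\dot H$ is known to generate a $C_0$-group $T(t)$ on $\dot\CE$, the identification $T(t)=$ closure of $e^{-itH}$ follows formally: for $u\in\CE$ and $z\in\rho(h,k)\setminus\R$, the vector $R(z)u=\dot R(z)u$ lies in $D(H)\cap D(\dot H)$ and $\frac{d}{dt}e^{-itH}R(z)u=-iHe^{-itH}R(z)u=-i\dot H e^{-itH}R(z)u$ in $\dot\CE$, so $t\mapsto e^{-itH}R(z)u$ solves the abstract Cauchy problem for $\dot H$ with initial value $R(z)u$; by uniqueness it equals $T(t)R(z)u$. Since $R(z)\CE$ is dense in $\dot\CE$ and both families are uniformly bounded on compact $t$-intervals, $e^{-itH}=T(t)$ on $\CE$ extends to $T(t)=e^{-it\dot H}$ on all of $\dot\CE$, as claimed.

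I expect the resolvent estimate in the genuinely non-selfadjoint regime — getting $\|\dot R(z)\|_{\CB(\dot\CE)}$ to decay like $|\Im z|^{-1}$ uniformly, rather than only the $|\Im z|^{-n}$ one reads off naively from \eqref{A2} — to be the one delicate step; here the gain in \eqref{basicp1}–\eqref{basicp3} over the cruder bounds, i.e. the cancellations exploited in Lemmas \ref{lm:est1}–\ref{lm:est3}, is exactly what saves the generation argument. Everything else (the closure identification, strong continuity on the dense set, passage to the limit) is routine semigroup bookkeeping.
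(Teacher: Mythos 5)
Your plan to apply the Feller--Miyadera--Phillips generation theorem hits a wall at the higher resolvent powers. You correctly state that one needs $\|(\dot H-z)^{-n}\|_{\CB(\dot\CE)}\le M(|\Im z|-\omega)^{-n}$ for \emph{all} $n\ge1$, but your proposal only supplies the $n=1$ bound: Proposition \ref{pr:est} and the identity \eqref{eq:pm} do give $\|\dot R(z)\|_{\CB(\dot\CE)}\le C|\Im z|^{-1}$ for $|z|$ large (this is in fact Lemma \ref{lem4.6}), but iterating that estimate only yields $C^n|\Im z|^{-n}$, which is useless when $C>1$ --- and outside the selfadjoint case \eqref{A3} there is no reason for $C\le 1$. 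You do not indicate how to get uniform control of all powers, and without it the generation theorem simply does not apply, so the first (and crucial) half of your argument does not close.

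The paper argues in the reverse direction and much more cheaply. Since $e^{-itH}$ already exists as a $C_0$-group on $\CE$, it is enough to prove the a priori estimate $\|e^{-itH}\varphi\|_{\dot\CE}\le Ce^{\omega|t|}\|\varphi\|_{\dot\CE}$ for $\varphi\in\CE$, and this is done directly: for $\varphi\in D(H)$, writing $u=e^{-itH}\varphi$ one computes $\frac{d}{dt}\|u\|^2_{\dot\CE}=([ik,h]u_0|u_0)$, notes that $[k,h]=[k,h_0]$ is $h_0$-form-bounded thanks to $k\in\CB(h_0^{-1/2}\CH)$ from \eqref{A2}, so the right-hand side is $\lesssim(h_0u_0|u_0)\lesssim\|u\|^2_{\dot\CE}$, and applies Gronwall. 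Density then gives a family of extensions $V_t$ on $\dot\CE$ which is automatically a $C_0$-group, and Nelson's invariant domain theorem together with Lemma \ref{lm:closure} identifies the generator as $\dot H$. Your closing identification step (uniqueness for the abstract Cauchy problem on the dense set $R(z)\CE$) would be a perfectly acceptable substitute for the Nelson argument, so the only genuine defect in your proposal is the generation step itself; replace the Hille--Yosida route by the direct Gronwall estimate and the rest goes through.
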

\proof

We start by proving that for some constants $C,\omega>0$ we have 
\begin{equation}
\label{2.6.1}
\Vert e^{-itH}\varphi\Vert_{\dot{\CE}}\le Ce^{\omega |t|}\Vert\varphi\Vert_{\dot{\CE}}
\quad \forall \varphi\in \CE .
\end{equation}
Let first $\varphi\in D(H)$. We compute by using \eqref{eq:hnorm} for
$u=(u_0,u_1)=e^{-itH}\varphi$
\begin{eqnarray*}
\frac{d}{dt}\Vert u\Vert_{\dot{\CE}}^2&=&2 {\rm Re}
(ihu_0+iku_1|u_1-ku_0)+2{\rm Re}(h_0u_0|iu_1)\\
&=&([ik,h]u_0|u_0)\lesssim (h_0u_0|u_0)\lesssim \Vert
u\Vert^2_{\dot{\CE}}.
\end{eqnarray*}
The inequality \eqref{2.6.1} then follows for $\varphi\in D(H)$ by the
Gronwall's lemma and for $\varphi\in \CE$ by density. From
\eqref{2.6.1} we see that $e^{-itH}$ extends to a continuous operator $V_t$
on $\dot{\CE}$ such that $\|V_t\|_{\dot\CE} \leq C e^{\omega |t|}$. This clearly 
implies that $V_t$ is a $C_0$-group on $\dot\ce$ and from Nelson's invariant domain theorem 
it follows that its generator is the closure of $H$ in $\dot\ce$, which by 
Lemma \ref{lm:closure} is just $\dot{H}$.
\qed

\section{Meromorphic extensions}
\label{SecME}
In this Section we discuss various facts related to meromorphic extensions of quadratic pencils.
\subsection{Background and definitions}
\label{SecMEBG}

\begin{definition}
Let $\CH$ be a Hilbert space. For $z_0\in \C$, let $\CU$ be a neighborhood of $z_0$, and let 
$F:\CU\setminus\{z_0\}\rightarrow B(\CH)$ be a holomorphic
function. We say that $F$ is {\em finite meromorphic} at $z_0$ if the Laurent expansion  of $F$ at $z_0$ has the form 
\[ F(z)=\sum_{n=m}^{+\infty}(z-z_0)^nA_n,\quad m>-\infty,\]
the operators $A_m,...,A_{-1}$ being of finite rank, if $m<0$. If, in addition, $A_0$ is a Fredholm operator, then $F$ is called Fredholm at $z_0$.
\end{definition}

We will need the following fact, cf. \cite[Proposition 4.1.4]{GoLe09}:

\begin{proposition}
\label{Prop5}
Let $\CD\subset\C$ be a connected open set, let $Z\subset\CD$ be a 
discrete and closed subset of $\CD$, and let $F:\CD\setminus  Z \rightarrow B(\CH)$ 
be a holomorphic function. Assume that
\begin{itemize}
\item[--] $F$ is finite meromorphic and Fredholm at each point  of $\CD$;
\item[--] there exists $z_0\in \CD\setminus Z$ such that $F(z_0)$ is invertible.
\end{itemize}
Then there exists a discrete closed subset $Z'$ of $\CD$ such that 
$Z\subset Z'$ and:
\begin{itemize}
\item[--] $F(z)$ is invertible for each $z\in \CD\setminus Z'$;
\item[--] $F^{-1}:\CD\setminus Z'\rightarrow\CL(\CH)$ is finite
  meromorphic and Fredholm at each point of $\CD$.
\end{itemize} 
\end{proposition}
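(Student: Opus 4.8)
\proof

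\emph{The plan} is to analyse $F$ locally near an arbitrary point $z_1\in\CD$, where I will reduce the invertibility of $F(z)$ to the non‑vanishing of a single scalar meromorphic function, and then to patch these local descriptions together using the connectedness of $\CD$ and the hypothesis that $F(z_0)$ is invertible. First I would fix $z_1\in\CD$ and write the Laurent expansion $F(z)=\sum_{n\ge m}(z-z_1)^nA_n$, with $A_m,\dots,A_{-1}$ of finite rank and $A_0$ Fredholm by hypothesis. A preliminary observation is that $\mathrm{ind}\,A_0=0$: the set $\CD\setminus Z$ is connected and on it $F$ is norm‑continuous and Fredholm‑valued, so $z\mapsto\mathrm{ind}\,F(z)$ is constant, equal to its value $0$ at $z_0$; and modulo compact operators $F(z)$ differs from $A_0$ by a term of norm $\to0$ as $z\to z_1$ (the principal part of the Laurent series is finite‑rank, hence compact), so $\mathrm{ind}\,F(z)=\mathrm{ind}\,A_0$ for $z$ near $z_1$, whence $\mathrm{ind}\,A_0=0$. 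This is what makes the local normal form below take the right shape.

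The technical heart, and the step I expect to be the main obstacle, is a \emph{local normal form}: there exist an open neighbourhood $\CU\ni z_1$, holomorphic maps $E,G:\CU\to B(\CH)$ with $E(z),G(z)$ invertible for every $z\in\CU$, a finite‑rank projection $\Pi$, and a finite‑meromorphic map $M:\CU\setminus\{z_1\}\to B(\CH)$ with $M(z)=\Pi M(z)\Pi$, such that
\[
F(z)=E(z)\big(\one-\Pi+M(z)\big)G(z),\qquad z\in\CU\setminus\{z_1\}.
\]
This is precisely the reduction lemma of Gohberg--Leiterer underlying \cite[Proposition 4.1.4]{GoLe09}. I would obtain it by first adding to $A_0$ a finite‑rank operator so as to make it invertible (possible because $\mathrm{ind}\,A_0=0$), then absorbing the finite‑rank principal part of the Laurent series, and finally removing the remaining holomorphic corrections by a convergent Neumann‑type iteration of finite‑rank modifications; the bookkeeping of these successive corrections is the delicate part.

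Granting the normal form, $\one-\Pi+M(z)$ is block‑diagonal relative to $\CH=\Ker\Pi\oplus\Ran\Pi$, equal to the identity on $\Ker\Pi$ and to $M(z)|_{\Ran\Pi}$ on the finite‑dimensional space $\Ran\Pi$. Hence for $z\in\CU\setminus\{z_1\}$, $F(z)$ is invertible if and only if $d_{z_1}(z):=\det\!\big(M(z)|_{\Ran\Pi}\big)\ne0$, and $d_{z_1}$ is a scalar function, meromorphic on $\CU$. So either $d_{z_1}\equiv0$ near $z_1$, and then $F$ is nowhere invertible near $z_1$, or the zeros of $d_{z_1}$ are isolated, and then $F$ is invertible on $\CU$ off a discrete set. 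Let $B$ be the set of points of $\CD$ admitting a neighbourhood on which $F$ is nowhere invertible; $B$ is open, and by this dichotomy $\CD\setminus B$ is open as well. Since $z_0\in\CD\setminus B$ and $\CD$ is connected, $B=\emptyset$. Consequently $Z_0':=\{z\in\CD\setminus Z:F(z)\text{ is not invertible}\}$ is discrete, and I would take $Z':=Z\cup Z_0'$; it is a discrete closed subset of $\CD$ containing $Z$, and $F(z)$ is holomorphic and invertible for $z\in\CD\setminus Z'$.

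Finally, on $\CU$ one has $\big(\one-\Pi+M(z)\big)^{-1}=\one-\Pi+d_{z_1}(z)^{-1}\,\mathrm{adj}\!\big(M(z)|_{\Ran\Pi}\big)$, the last summand being finite‑rank and finite‑meromorphic at $z_1$, so that
\[
F(z)^{-1}=G(z)^{-1}(\one-\Pi)E(z)^{-1}+G(z)^{-1}\Big(d_{z_1}(z)^{-1}\,\mathrm{adj}\!\big(M(z)|_{\Ran\Pi}\big)\Big)E(z)^{-1}.
\]
The first term is holomorphic on $\CU$ with Fredholm values, being a conjugate of the Fredholm operator $\one-\Pi$ by invertible operators, and the second is finite‑rank and finite‑meromorphic at $z_1$; hence $F^{-1}$ is finite meromorphic and Fredholm at $z_1$. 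Since $z_1\in\CD$ was arbitrary, this holds at every point of $\CD$, which is the assertion. \qed
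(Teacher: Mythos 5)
The paper gives no proof of this proposition at all: it is quoted directly from Gohberg--Leiterer, \cite[Proposition 4.1.4]{GoLe09}, so there is no ``paper's proof'' to compare against. Your argument is the standard one underlying that reference: establish $\mathrm{ind}\,A_0=0$ via constancy of the Fredholm index on the connected set $\CD\setminus Z$ together with compactness of the principal Laurent part; invoke a local Smith/Gohberg--Sigal normal form $F(z)=E(z)\bigl(\one-\Pi+M(z)\bigr)G(z)$; reduce invertibility of $F(z)$ near $z_1$ to nonvanishing of the scalar meromorphic determinant $d_{z_1}(z)=\det\bigl(M(z)|_{\Ran\Pi}\bigr)$; use the resulting local dichotomy plus connectedness of $\CD$ and invertibility at $z_0$ to show the noninvertible set is discrete; and read off finite meromorphy and Fredholmness of $F^{-1}$ from the explicit Cramer's-rule formula for $\bigl(\one-\Pi+M(z)\bigr)^{-1}$. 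The surrounding logic (openness of both $B$ and $\CD\setminus B$, discreteness and closedness of $Z'=Z\cup Z_0'$, the decomposition of $F^{-1}$ into a holomorphic Fredholm part plus a finite-rank finite-meromorphic part) is correct. The only place that is genuinely incomplete is the local normal form itself, which you explicitly flag: your outline (correct $A_0$ to an invertible operator by a finite-rank term using $\mathrm{ind}\,A_0=0$, absorb the finite-rank principal part, eliminate the holomorphic remainder by an iteration of finite-rank corrections) is the right strategy, but it is exactly the content of the Gohberg--Leiterer/Gohberg--Sigal factorization lemma, and making the iteration converge is where a self-contained proof would have to do real work. Since the paper itself simply cites the result, invoking that lemma with a sketch is a reasonable level of detail here; just be aware that it is the crux, not a routine step.
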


\subsection{Meromorphic extensions of weighted resolvents}

Let $w$ be a positive selfadjoint operator on $\CH$ with bounded inverse
$w^{-1}$. One should think of $w$ as a {\em weight function}. $w$ and
$w^{-1}$ will act on $\CE,\, \dot{\CE}$ by $w(u_0,u_1)=(wu_0,wu_1)$ etc. 
In this subsection we will require \eqref{A3}. 

We need the following hypotheses:
\begin{equation}
\tag{ME1}\label{HypME}
\left\{\begin{array}{cl} a) & 
wkw\in \CB(\CH).\\
b) & [k,w]=0\\
c) & h^{-1/2}[h,w^{-\epsilon}]w^{\epsilon/2} \in\CB(\CH) \, \forall \,  0<\epsilon\leq 1 \\
d) & \text{if } \epsilon>0
\text{ then } \Vert w^{-\epsilon}u\Vert\lesssim \Vert h^{1/2}u\Vert \quad
\forall u\in h^{-1/2}\CH\\
e) & w^{-1}\<h\>^{-1}\in \CB_{\infty}(\CH) . 
\end{array}\right.
\end{equation}
Note that part $d)$ of \eqref{HypME} is a Hardy type inequality and it implies the boundedness
of the operators $w^{-\epsilon}h^{-1/2}$ and $h^{-1/2}w^{-\epsilon}$. Later on we shall 
see that these two operators are compact if \eqref{HypME} is satisfied (see the proof of
Lemma \ref{merextp-1}). 

Observe that from part $c)$ we also get 
$w^{\epsilon/2}[h,w^{-\epsilon}]h^{-1/2} \in\CB(\CH)$. Moreover, we shall have
$w^{-\epsilon}\<h\>^{-\tilde{\epsilon}}\in \CB_{\infty}(\CH)$ for all 
$\epsilon,\tilde{\epsilon}>0$. Indeed, $w^{-z}\<h\>^{-z}\in \CB_{\infty}(\CH)$ is an
analytic function of $z$ in the region ${\rm Re}\, z>0$ and by $e)$ above this is a
compact operator for ${\rm Re}\, z \geq1$, hence for any $z$.

We also need the assumption
\begin{equation}
\tag{ME2}\label{ME2}
\left\{\begin{array}{cl} & 
\forall \, \epsilon>0 \,  \exists \, \delta_{\epsilon}>0  \text{ such that }
w^{-\epsilon}(h-z^2)^{-1}w^{-\epsilon} 
\text{ extends from } {\rm Im}z>0 \\
&   
 \text{to } {\rm Imz}>-\delta_{\epsilon} 
\text{ as a finite meromorphic function with values in } \CB_{\infty}(\CH). 
\end{array}\right.
\end{equation}

\begin{lemma}
\label{merextp-1}
Assume \eqref{Hyp1}-\eqref{A3}, \eqref{HypME}-\eqref{ME2} and let
$0<\epsilon\leq 1$. Then the operators
\begin{align*}
& {\mathrm(i)} \ \  \ w^{-\epsilon}p^{-1}(z)w^{-\epsilon}, \\
& {\mathrm(ii)} \  \ (h+1)^{1/2}w^{-\epsilon}p^{-1}(z)w^{-\epsilon}, \\
& {\mathrm(iii)} \    w^{-\epsilon}p^{-1}(z)hw^{-\epsilon}h^{-1/2},  \\
& {\mathrm(iv)} \    h^{1/2}w^{-\epsilon}p^{-1}(z)(z-2k)w^{-\epsilon}h^{-1/2},
\end{align*}
extend from $\{{\rm Im\,}z>0$ to ${\rm  Im\,}z>-\delta_{\epsilon/2}\}$ as finite meromorphic  functions with values in $\CB_{\infty}(\CH)$. 
\end{lemma}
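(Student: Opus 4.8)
The plan is to express each of the four operators as a product of blocks that are individually controlled, and to reduce everything to the meromorphic extension of $w^{-\epsilon}(h-z^2)^{-1}w^{-\epsilon}$ guaranteed by \eqref{ME2}, using the resolvent identity to pass from the ``free'' resolvent $(h-z^2)^{-1}$ to $p(z)^{-1}=(h+z(2k-z))^{-1}=(h-z^2+2zk)^{-1}$. Concretely, write $p(z)=(h-z^2)(1+(h-z^2)^{-1}2zk)$ formally, so that
\[
p(z)^{-1}=\big(1+(h-z^2)^{-1}2zk\big)^{-1}(h-z^2)^{-1},
\]
and then conjugate by $w^{-\epsilon}$. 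The point of \eqref{HypME} b) is that $k$ commutes with $w$, so inserting $w^{\epsilon}w^{-\epsilon}=1$ in the right places lets us build the factor
\[
A(z):=\big(1+w^{-\epsilon/2}(h-z^2)^{-1}w^{-\epsilon/2}\cdot w^{\epsilon/2}2zk w^{-\epsilon/2}\big),
\]
in which the first term is compact and finite meromorphic by \eqref{ME2} (with $\epsilon/2$ in place of $\epsilon$, whence the $\delta_{\epsilon/2}$ in the statement), and $w^{\epsilon/2}kw^{-\epsilon/2}$ is bounded because $[k,w]=0$ makes it literally equal to $k$. Thus $A(z)=1+K(z)$ with $K(z)$ finite meromorphic Fredholm (indeed compact-valued), and Proposition \ref{Prop5} applied on the half-plane ${\rm Im}\,z>0$ (where $p(z)^{-1}$ is genuinely bounded by Proposition \ref{pr:est} / the $h$-selfadjointness, so $A(z)$ is invertible for, say, large imaginary $z$) shows $A(z)^{-1}$ is finite meromorphic and Fredholm down to ${\rm Im}\,z>-\delta_{\epsilon/2}$. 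This is the structural heart of the argument; item (i) then follows by writing $w^{-\epsilon}p^{-1}(z)w^{-\epsilon}$ as $A(z)^{-1}$ times $w^{-\epsilon}(h-z^2)^{-1}w^{-\epsilon}$ after absorbing one $w^{-\epsilon/2}$ on each side into the Fredholm factor, and one checks compactness of the product using the last remark before the lemma, namely $w^{-\epsilon}\langle h\rangle^{-\tilde\epsilon}\in\CB_\infty(\CH)$ for all positive exponents.

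For (ii)--(iv) the strategy is the same but one must commute the extra factors $(h+1)^{1/2}$, $h$, $h^{1/2}$, $(z-2k)$ past the weights, and this is exactly where \eqref{HypME} a), c), d) enter. For (ii) one writes
\[
(h+1)^{1/2}w^{-\epsilon}p^{-1}(z)w^{-\epsilon}
=(h+1)^{1/2}w^{-\epsilon}(h+1)^{-1/2}\cdot(h+1)^{1/2}p^{-1}(z)w^{-\epsilon},
\]
bounds the first factor using c) (which, as the text notes, gives $h^{-1/2}[h,w^{-\epsilon}]w^{\epsilon/2}$ and hence $(h+1)^{1/2}w^{-\epsilon}(h+1)^{-1/2}=w^{-\epsilon}+[\,(h+1)^{1/2},w^{-\epsilon}](h+1)^{-1/2}$ bounded), and then feeds the remaining piece into the item-(i) machinery after another $w^{-\epsilon/2}$ split; the gain of the $h_0^{1/2}\sim h^{1/2}$ derivative in front of $p^{-1}$ comes from estimate \eqref{basicp2} / Lemma \ref{lm:est2}, which ensures the ``numerator'' identity underlying \eqref{ME2} still closes. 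For (iii) one uses $p^{-1}(z)h=1+m^2p^{-1}(z)$-type identities (as in the proof of Lemma \ref{lm:p-1}) together with the Hardy bound d) $\Vert w^{-\epsilon}u\Vert\lesssim\Vert h^{1/2}u\Vert$, i.e. $w^{-\epsilon}h^{-1/2}\in\CB(\CH)$, to move $h$ through the weight at the cost of an $h^{1/2}w^{-\epsilon}h^{-1/2}$-type bounded factor, and the text already flags that this operator will in fact be compact (proved here via $w^{-\epsilon}\langle h\rangle^{-\tilde\epsilon}\in\CB_\infty$). Item (iv) is handled by splitting $z-2k=(z-k)-k$; the $(z-k)=m(z)$ piece is bounded on the $h$-scale by \eqref{A2}, the $k$ piece is bounded on $\CH$, and one invokes \eqref{basicp3} / Lemma \ref{lm:est3} to control $h_0^{1/2}p^{-1}(z)(k-z)$, then reinserts weights as before.

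The anticipated main obstacle is bookkeeping rather than conceptual: one must arrange, in each of (i)--(iv), that every $w^{-\epsilon}$ gets used in one of three distinct roles — (a) as part of the compact, finite-meromorphic core coming from \eqref{ME2}, (b) as a commutator-absorber turning an unbounded factor like $h^{1/2}$ into a bounded one via \eqref{HypME} c)/d), or (c) as the source of compactness at the end via $w^{-\epsilon}\langle h\rangle^{-\tilde\epsilon}\in\CB_\infty$ — and there is only a total weight of $2\epsilon$ to distribute, forcing the halving to $\epsilon/2$ and explaining the appearance of $\delta_{\epsilon/2}$. One also has to be careful that all the algebraic identities (e.g. $p(z)=(h-z^2)(1+(h-z^2)^{-1}2zk)$, or $1+mp^{-1}m=m^{-1}p^{-1}h_0m^{-1}$) are first established for ${\rm Im}\,z>0$ on the appropriate Sobolev scale and only then continued; since all the operator-valued functions in sight are holomorphic on ${\rm Im}\,z>0$ and the meromorphic continuations are unique, identities proved on the half-plane persist, so the continuation of a product equals the product of continuations. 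Finally, invertibility of $A(z)=1+K(z)$ at some base point is needed to invoke Proposition \ref{Prop5}: this is supplied by taking $z$ with large imaginary part, where $\Vert K(z)\Vert<1$ by \eqref{basicp1}--\eqref{basicp2} together with the decay $\|w^{-\epsilon}(h-z^2)^{-1}w^{-\epsilon}\|\to0$.
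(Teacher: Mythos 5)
Your argument for item (i) is in essence the paper's: factor $p(z)=(h-z^2)\bigl(1+(h-z^2)^{-1}2zk\bigr)$, conjugate by the weight, and invoke Proposition \ref{Prop5} for the Fredholm factor once it is known to be invertible for $\operatorname{Im}z$ large. (The precise form of your $A(z)$ is slightly off -- conjugating $1+(h-z^2)^{-1}2zk$ by $w^{-\epsilon/2}$ produces $w^{\epsilon/2}\,2zk\,w^{\epsilon/2}=2z\,w^{\epsilon}k$ in the middle, not $w^{\epsilon/2}\,2zk\,w^{-\epsilon/2}=2zk$; the paper avoids the issue by keeping full weights and using \eqref{HypME}\,a), i.e.\ $w^{\epsilon}kw^{\epsilon}$ bounded. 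Either way this is a bookkeeping slip, not a gap.)

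For (ii)--(iv), however, your decompositions do not close. In (ii) you propose
$(h+1)^{1/2}w^{-\epsilon}p^{-1}(z)w^{-\epsilon}=(h+1)^{1/2}w^{-\epsilon}(h+1)^{-1/2}\cdot(h+1)^{1/2}p^{-1}(z)w^{-\epsilon}$
and claim the second factor can be ``fed into the item-(i) machinery.'' It cannot: $(h+1)^{1/2}p^{-1}(z)w^{-\epsilon}$ carries \emph{no} weight on the left, so neither \eqref{ME2} nor item (i) applies to it, and for $\operatorname{Im}z>0$ this operator is merely bounded, not compact, so there is nothing to continue in $\CB_\infty$. Moreover the boundedness of $(h+1)^{1/2}w^{-\epsilon}(h+1)^{-1/2}$ does not follow directly from \eqref{HypME}\,c): that hypothesis controls $h^{-1/2}[h,w^{-\epsilon}]w^{\epsilon/2}$, i.e.\ the commutator with $h$ itself, not with the fractional power $(h+1)^{1/2}$. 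The paper sidesteps both difficulties by first writing $h\,w^{-\epsilon}p^{-1}w^{-\epsilon}=w^{-2\epsilon}+z(z-2k)w^{-\epsilon}p^{-1}w^{-\epsilon}+[h,w^{-\epsilon}]w^{\epsilon/2}\cdot w^{-\epsilon/2}p^{-1}w^{-\epsilon}$ -- note that \emph{every} term keeps a weight on each side -- and only then hitting the identity with $(h+1)^{-1/2}$ on the left, so that c) and e) of \eqref{HypME} are used exactly as stated.

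Items (iii) and (iv) have the same problem at a larger scale: you list the right ingredients (the $p^{-1}h=1+zp^{-1}(z-2k)$ identity, the Hardy bound d), Lemma \ref{lm:est3}) but do not exhibit a term-by-term decomposition in which each factor is either a bounded operator covered by \eqref{HypME} or a compact-meromorphic block of the form $w^{-\alpha}p^{-1}w^{-\beta}$ coming from (i). For (iv), invoking \eqref{basicp3} is a dead end: it is a quantitative bound for $|z|\ge M\|k\|$ with $\operatorname{Im}z\ne0$; it says nothing about a meromorphic extension across $\mathbb R$ and, being a boundedness estimate, cannot produce compactness. The paper's (iv) instead uses the commutator expansion $h^{1/2}w^{-\epsilon}\cdot=h^{-1/2}[h,w^{-\epsilon}]\cdot+h^{-1/2}w^{-\epsilon}h\cdot$ together with the already-established compact-meromorphic block $w^{-\epsilon/2}hp^{-1}(z)w^{-\epsilon/2}$ (a variant of (iii)); the split $z-2k=(z-k)-k$ plays no role. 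You should redo (ii)--(iv) with explicit commutator expansions keeping weights symmetric, as the paper does.
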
 
\proof
The resolvent identity yields
\[
w^{-\epsilon}p^{-1}(z)w^{-\epsilon}=
w^{-\epsilon}(h-z^2)^{-1}w^{-\epsilon}
(1+2z w^\epsilon k w^\epsilon 
\cdot w^{-\epsilon}(h-z^2)^{-1}w^{-\epsilon})^{-1}.
\]
Applying Proposition \ref{Prop5} to
\[
F(z)= 1+2z w^\epsilon k w^\epsilon \cdot w^{-\epsilon}(h-z^2)^{-1}w^{-\epsilon}
\]
proves $(i)$. We now write
\begin{eqnarray*}
hw^{-\epsilon}p^{-1}(z)w^{-\epsilon}&=&w^{-\epsilon}hp^{-1}(z)w^{-\epsilon}+[h,w^{-\epsilon}]w^{\epsilon/2}w^{-\epsilon/2}p^{-1}(z)w^{-\epsilon}\nonumber\\
&=&w^{-2\epsilon}+z(z-2k)w^{-\epsilon}p^{-1}(z)w^{-\epsilon}+[h,w^{-\epsilon}]w^{\epsilon/2}w^{-\epsilon/2}p^{-1}(z)w^{-\epsilon}.
\end{eqnarray*}
This allows us to compute the second operator
\begin{align*}
 &(h+1)^{1/2}w^{-\epsilon}p^{-1}(z)w^{-\epsilon} \\=&
(h+1)^{-1/2}w^{-2\epsilon}+(h+1)^{-1/2}z(z-2k)
\cdot w^{-\epsilon}p^{-1}(z)w^{-\epsilon}\\
&+ (h+1)^{-1/2} [h,w^{-\epsilon}]w^{\epsilon/2}
\cdot w^{-\epsilon/2}p^{-1}(z)w^{-\epsilon}+(h+1)^{-1/2}w^{-\epsilon}p^{-1}(z)w^{-\epsilon} .
\end{align*}

By using $(i)$ and hypotheses $c), e)$ of \eqref{HypME} we get $(ii)$.

Let us now prove $(iii)$. Let $\chi\in C_0^{\infty}(\R),\, \chi=
1$ in a neighborhood of $0$. We write
\begin{align*}
w^{-\epsilon}p^{-1}(z)hw^{-\epsilon}h^{-1/2}
=&w^{-\epsilon}p^{-1}(z)hw^{-\epsilon}h^{-1/2}(1-\chi(h))\\
&+w^{-\epsilon}p^{-1}(z)hw^{-\epsilon}h^{-1/2}\chi(h)=:T_1+T_2.
\end{align*}
We have
\[
T_1=w^{-2\epsilon}h^{-1/2}(1-\chi(h))+
w^{-\epsilon} p^{-1}(z) w^{-\epsilon} \cdot z(z-2k)h^{-1/2}(1-\chi(h)).
\]
The first term is compact by a comment after hypothesis \eqref{HypME}, 
the second is compact outside the poles of
$w^{-\epsilon}p^{-1}(z)w^{-\epsilon}$ by part $(i)$.  We have
\[
T_2 = w^{-\epsilon}p^{-1}(z)w^{-\epsilon/2}  \cdot 
w^{\epsilon/2}[h,w^{-\epsilon}]h^{-1/2}\chi(h)
+ w^{-\epsilon}p^{-1}(z)w^{-\epsilon} \cdot h^{1/2}\chi(h).
\]
By the same comment we see that both terms here extend to finite 
meromorphic  functions  with values in $B_{\infty}(\CH)$ in 
$\{{\rm Im}\, z>-\delta_{\epsilon/2}\}$. Thus $(iii)$ is proved. 

Note that since $h=p+z(z-2k)$ we have 
\[
w^{-\epsilon}p^{-1}(z)hw^{-\epsilon}h^{-1/2} = 
w^{-2\epsilon}h^{-1/2} + 
w^{-\epsilon}p^{-1}(z)w^{-\epsilon/2} \cdot z(z-2k) w^{-\epsilon/2}h^{-1/2} .
\]
The left hand side here is a compact operator by what we just proved 
and the last term is also compact by $(i)$ and because $
w^{-\epsilon/2}h^{-1/2}$ is bounded by hypotheses $d)$ of \eqref{HypME}. Since 
$0<\epsilon\leq 1$ is arbitrary, we see that 
$w^{-\epsilon}h^{-1/2}$ and $h^{-1/2}w^{-\epsilon}$ are compact operators 
if $0<\epsilon\leq1$. 

Finally, we prove $(iv)$. We have
\begin{align*}
&h^{1/2}w^{-\epsilon}p^{-1}(z)(z-2k)w^{-\epsilon}h^{-1/2} \\
=& h^{-1/2}[h,w^{-\epsilon}]w^{\epsilon/2} \cdot 
w^{-\epsilon/2}p^{-1}(z)w^{-\epsilon/2} \cdot
(z-2k)w^{-\epsilon/2}h^{-1/2}\\
&+ h^{-1/2}w^{-\epsilon/2} \cdot
w^{-\epsilon/2}hp^{-1}(z)w^{-\epsilon/2}  \cdot
(z-2k) w^{-\epsilon/2}h^{-1/2}.
\end{align*}
For the first term of the right hand side we use $c)$ of \eqref{HypME} as well as $(i)$ and 
the boundedness of $w^{-\epsilon/2}h^{-1/2}$. For the last term we note that it is equal to
\begin{align*}
& h^{-1/2}w^{-\epsilon}  (z-2k) w^{-\epsilon}h^{-1/2} \\
 +& h^{-1/2}w^{-\epsilon/2} z(z-2k) \cdot w^{-\epsilon/2}
p^{-1}(z)w^{-\epsilon/2}  
\cdot  (z-2k) w^{-\epsilon/2}h^{-1/2} .
\end{align*}
The first term is a holomorphic function with values in $B_{\infty}(\CH)$ because
$w^{-\epsilon}h^{-1/2}$,  $h^{-1/2}w^{-\epsilon}\in B_{\infty}(\CH)$ . The last 
line is treated as before. This proves $(iv)$.
\qed

Using this lemma we obtain a meromorphic extension of the  truncated resolvent of $H$.

\begin{proposition}
\label{merexttrresH}
Assume the hypotheses of Lemma \ref{merextp-1} and let $\epsilon>0$. 
Then $w^{-\epsilon}R(z)w^{-\epsilon}$ and $w^{-\epsilon}\dot{R}(z)w^{-\epsilon}$ 
extend finite meromorphically to $\{{\rm Im}z>-\delta_{\epsilon/2}\}$ as a operator valued
functions with values in $\CB_{\infty}(\CE)$ and $\CB_{\infty}(\dot{\CE})$
respectively.
\end{proposition}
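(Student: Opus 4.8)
The plan is to expand $R(z)$ and $\dot R(z)$ into $2\times 2$ operator matrices by means of the explicit resolvent formulas \eqref{resH} and \eqref{resdotHself}, and to identify each entry, after conjugation by a suitable diagonal weight, as a finite sum of terms $B_{1}X(z)B_{2}$ with $B_{1},B_{2}$ fixed bounded operators and $X(z)$ one of the four operators produced by Lemma \ref{merextp-1}. The conclusion then follows from two soft observations: the operator-valued functions on $\{{\rm Im}\,z>-\delta_{\epsilon/2}\}$ that are finite meromorphic with values in $\CB_{\infty}$ of a fixed Hilbert space form a class stable under finite sums and under left and right composition with fixed bounded operators, since the finite rank operators and the compact operators are two-sided ideals; and a $2\times 2$ matrix of operators lies in $\CB_{\infty}$ of a direct sum of two Hilbert spaces exactly when each of its four entries is compact between the corresponding summands. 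It is enough to treat $0<\epsilon\le 1$, the general case reducing to this because negative powers of $w$ are bounded.

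For $\dot{R}(z)$ I would argue directly. Since \eqref{A3} holds, $\dot{\CE}=h^{-1/2}\CH\oplus\CH$ with equivalent norms, so $\mathrm{diag}(h^{1/2},1)$ is an isomorphism $\dot{\CE}\tilde\to\CH\oplus\CH$, and $\dot{R}(z)$ is given by \eqref{resdotHself}, whose $(1,1)$ entry may be rewritten as $p^{-1}(z)(z-2k)$ using $h-p(z)=z(z-2k)$. Commuting $z-2k$ past the weights (legitimate because $[k,w]=0$ by part $b)$ of \eqref{HypME}), the entries of $\mathrm{diag}(h^{1/2},1)\,w^{-\epsilon}\dot{R}(z)w^{-\epsilon}\,\mathrm{diag}(h^{-1/2},1)$ become: $(1,1)$ is the operator $(iv)$ of Lemma \ref{merextp-1}; $(2,1)$ is the operator $(iii)$; $(1,2)$ is the operator $(ii)$ composed on the left with the bounded operator $h^{1/2}(h+1)^{-1/2}$; and $(2,2)$ is $z$ times the operator $(i)$. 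This settles $\dot{R}(z)$.

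For $R(z)$ I would proceed in the same way, now using \eqref{resH} and the isomorphism $\mathrm{diag}(\langle h_{0}\rangle^{1/2},1)\colon\CE\tilde\to\CH\oplus\CH$ --- an isomorphism because, $k$ being bounded on $\CH$ and $h_{0}\ge 0$, the norm \eqref{eq:norm} is equivalent to $\|u_{1}\|^{2}+\|\langle h_{0}\rangle^{1/2}u_{0}\|^{2}$. After commuting $z-2k$ past the weights, the $(2,2)$ entry of the conjugated operator is $z$ times the operator $(i)$; the $(1,2)$ entry is the operator $(ii)$ composed on the left with the bounded operator $\langle h_{0}\rangle^{1/2}(h+1)^{-1/2}$; the $(1,1)$ entry is this last operator composed in addition on the right with $(z-2k)\langle h_{0}\rangle^{-1/2}$; and the $(2,1)$ entry, $w^{-\epsilon}p^{-1}(z)h\,w^{-\epsilon}\langle h_{0}\rangle^{-1/2}$, is the operator $(iii)$ composed on the right with $h^{1/2}\langle h_{0}\rangle^{-1/2}$, a bounded operator since $h\le h_{0}\le\langle h_{0}\rangle$. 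All the scalar equivalences used here, namely $h+1\simeq h_{0}+1\simeq\langle h\rangle\simeq\langle h_{0}\rangle$, come from \eqref{A3}, Lemma \ref{lm:A3} and Corollary \ref{co:A3}. This settles $R(z)$.

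The only genuine work is the bookkeeping in the last two paragraphs: keeping track, entry by entry, of which homogeneous or non-homogeneous Sobolev scale each factor lives on, and checking that all auxiliary factors are bounded --- this is precisely where the exact shape of \eqref{HypME} (through Lemma \ref{merextp-1}), \eqref{A2} and \eqref{A3} is used. I do not anticipate any conceptual difficulty; in particular the meromorphic Fredholm argument of Proposition \ref{Prop5} has already done its work inside Lemma \ref{merextp-1} and does not reappear here.
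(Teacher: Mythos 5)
Your proof is correct and follows essentially the same route as the paper's: both rest entirely on the four operators produced in Lemma \ref{merextp-1} together with the explicit resolvent matrices \eqref{resH} and \eqref{resdotHself}, and both reduce the statement to entry-by-entry compactness/meromorphy after suitable weighting. The one cosmetic difference is in the treatment of $R(z)$: the paper peels off the constant matrix $\bigl(\begin{smallmatrix}0&0\\w^{-2\epsilon}&0\end{smallmatrix}\bigr)$ (compact by \eqref{HypME}{\it e)}) and then multiplies $w^{-\epsilon}p^{-1}(z)w^{-\epsilon}$ by a bounded $z$-dependent matrix, using only parts $(i)$, $(ii)$ of Lemma \ref{merextp-1}, whereas you conjugate $\CE$ to $\CH\oplus\CH$ via $\mathrm{diag}(\langle h_0\rangle^{1/2},1)$ and read off each of the four entries directly (needing $(iii)$ as well); both are equally elementary and your version has the minor virtue of making the $\epsilon>1$ reduction explicit. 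No gap.
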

\proof

We first prove the assertion concerning $R(z)$. Using \eqref{resH} we see that
\begin{eqnarray*}
w^{-\epsilon}R(z)w^{-\epsilon}&=&w^{-\epsilon} p(z)^{-1}\left(\begin{array}{cc}
    z-2k & 1 \\ h & z \end{array}\right)w^{-\epsilon}\\
&=&\left(\begin{array}{cc} 0 & 0\\ w^{-2\epsilon} &
    0 \end{array}\right)+w^{-\epsilon}p^{-1}w^{-\epsilon}\left(\begin{array}{cc}
    z-2k & 1 \\ z(z - 2k) & z\end{array}\right).
\end{eqnarray*}
We then use $(i),\, (ii)$ of Lemma \ref{merextp-1} as well as assumption 
\eqref{HypME}{\it e)}.

Let us now treat $\dot{R}(z)$. Recall that under hypothesis \eqref{A3} we
have
\[\dot{R}(z)=\left(\begin{array}{cc} z^{-1}p^{-1}(z)h-z^{-1} & p^{-1}(z)\\ p^{-1}(z)h &
  zp^{-1}(z)\end{array}\right).\]
Using that $w^{-\epsilon}h^{-1/2}$ is bounded by hypothesis \eqref{HypME}{\it d)} we can write
\[w^{-\epsilon}\dot{R}(z)w^{-\epsilon}=w^{-\epsilon}p^{-1}(z)\left(\begin{array}{cc} z-2k & 1 \\ h &
  z\end{array}\right)w^{-\epsilon}.\]
We therefore have to show that
\begin{align*}
&h^{1/2}w^{-\epsilon}p^{-1}(z)(z-2k)w^{-\epsilon}h^{-1/2},\,
h^{1/2}w^{-\epsilon}p^{-1}(z)w^{-\epsilon},\\ 
&w^{-\epsilon}p^{-1}(z)hw^{-\epsilon}h^{-1/2},\, w^{-\epsilon}p^{-1}(z)zw^{-\epsilon}
\end{align*}
extend finite meromorphically with values in $\CB_{\infty}(\CH)$. This follows from Lemma \ref{merextp-1}.
\qed

\section{Klein-Gordon operators with ``two ends''}
\label{SecTE}
In this section we discuss an abstract framework corresponding to Klein-Gordon operators on manifolds with ``two ends''. The essential condition is that the asymptotic Hamiltonians in both ends are selfadjoint for a positive energy norm, modulo some gauge transformation.
\subsection{Assumptions}
\label{asympham}

We assume that there exists a selfadjoint operator $x$ on $\CH$ with 
$\sigma(x)=\sigma_{ac}(x)=\R$
such that $w$ is a smooth function of $x$, such that $k$ commutes with $x$
and such that $h_0$ is {\em local} in $x$ in the following sense: 

if 
$\chi_1,\chi_2\in C^{\infty}(\R)$ are bounded together with all their derivatives 
and if $\supp\chi_1\cap\supp\chi_2=\emptyset$, then $\chi_1(x)h_0\chi_2(x)=0$.
To summarize we assume:
\begin{equation}
\tag{TE1}\label{TE1}
\left\{\begin{array}{l}
{[}x,k{]}=0,\\
w=w(x) \quad\text{with } w\in C^{\infty}(\R),\\
h_0 \text{ is local in } x .
\end{array}\right.
\end{equation}
Let $\chi_{\pm}\in C^{\infty}(\R)$ such that  $\chi_+^2+\chi_-^2=1$,
$0\le\chi_{\pm}\le 1$, and
\begin{eqnarray*}
\supp \chi_-&\subset&(-\infty,1),\quad\chi_-=
1\quad \text{on}\quad(-\infty,0],\\
\supp\chi_+&\subset&(-1,\infty),\quad \chi_+=
1\quad \text {on}\quad[0,\infty). 
\end{eqnarray*}
If $\epsilon>0$ is a real number, let 
\[
i_{\pm}=\chi_{\pm}(\epsilon^{-1}x),\quad
j_{\pm}=\chi_{\pm}(\epsilon^{-1}x\mp 3).
\]
We then have
\[
j_{\pm}i_{\pm}=j_{\pm},\quad i_+j_-=i_-j_+=0.
\]
Let $\tilde{\chi}\in C_0^{\infty}((-2,2))$ with $\tilde{\chi}= 1$ on $[-1,1]$.
We set $\tilde{i}=\tilde{\chi}(R^{-1}x)$.  Let
\beq\label{tutu}
k_{\pm}:=k\mp\ell j_{\mp}^2,\ 
 h_{\pm}:=h_0-k_{\pm}^2.
\eeq
We also set:
\beq\label{titi}
\tilde{h}_-:=h_-+2\ell k_--\ell^2=h_0-(\ell-k_-)^2.
\eeq
We require
\begin{equation}
\tag{TE2}\label{TE2}
\text{there are } \ell\in \R,\, R>0 
\text{ such that } (h_+,k_+)
\text{ and }(\tilde{h}_-,k_--\ell)
\text{ satisfy \eqref{A3}.}
\end{equation} 
We also set:
\(
p_{\pm}(z):=h_{\pm}+z(2k_{\pm}-z).
\)
Note that $\tilde{h}_-=p_-(\ell).$

\subsection{Asymptotic Hamiltonians}\label{Harlem Désir}

We introduce the homogeneous energy spaces
\[
\dot{\CE}_+:=h_+^{-1/2}\CH\oplus\CH,\,
\dot{\CE}_-:=\Phi(\ell)(\tilde{h}_-^{-1/2}\CH\oplus\CH).
\]
Then the operators 
\begin{equation}
\dot{H}_{\pm}=\left(\begin{array}{cc} 0 & 1 \\ 
h_{\pm} & 2k_{\pm}\end{array}\right)
\end{equation}
are selfadjoint with domains 
\begin{eqnarray*}   
D(\dot{H}_+)&=&h_+^{-1/2}\CH\cap h_+^{-1}\CH\oplus\<h_+\>^{-1/2}\CH,\\
\label{domdotH-}
D(\dot{H}_-)&=&\Phi(\ell)((\tilde{h}_-^{-1/2}\CH\cap\tilde{h}_-^{-1}\CH)\oplus\<\tilde{h}_-\>^{-1/2}\CH).
\end{eqnarray*}
We denote \( \dot{R}_{\pm}(z):=(\dot{H}_{\pm}-z)^{-1} \). 

We will also need the following assumption for $\ell$ as in assumption \eqref{TE2}:
\begin{equation}
\tag{TE3}\label{TE3}
\left\{\begin{array}{ll} a)& wi_+ki_+w,\,  wi_-(k-\ell)i_-w\in\CB(\CH),\\ [1mm]
b) & {[}h,i_{\pm}]=\tilde{i}{[}h,i_{\pm}{]}\tilde{i},\\ [1mm]
c) & (h_+,k_+,w)\text{ and } (\tilde{h}_-,k_- -\ell,w)
\text{ fulfill }    \eqref{HypME}, \eqref{ME2},
\\ [1mm]
d) &  h_{\pm}^{1/2}i_{\pm}h_{\pm}^{-1/2},\,
  h^{1/2}_0i_{\pm}h_{0}^{-1/2}\in\CB(\CH),\\ [1mm]
e) & \text{the operators }w[h,i_{\pm}]wh_{\pm}^{-1/2},\, w[h,i_{\pm}]wh_{0}^{-1/2},\,
[h,i_{\pm}]h_{\pm}^{-1/2}, \\ [1mm]
& [h,i_{\pm}]h_{0}^{-1/2},\, h_0^{-1/2}[w^{-1},h_0]w \text{ are bounded on } \CH,\\ [1mm]
f) & \text{if } \epsilon>0
\text{ then } \Vert w^{-\epsilon}u\Vert\lesssim \Vert h_0^{1/2}u\Vert \quad
\forall u\in h_0^{-1/2}\CH .
\end{array}\right.
\end{equation}
As a direct consequence of Proposition \ref{merexttrresH} we obtain
\begin{proposition}
\label{prop4.1}
For any $\epsilon>0$ the functions $w^{-\epsilon}R_{\pm}(z)w^{-\epsilon}$ and
$w^{-\epsilon}\dot{R}(z)w^{-\epsilon}$ extend finite meromorphically to 
$\{{\rm Im}z>-\delta_{\epsilon/2}\}$  with values 
in $\CB_{\infty}(\CE_{\pm})$ and  $\CB(\dot{\CE}_{\pm}$) respectively. 
\end{proposition}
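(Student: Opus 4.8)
The statement is designed to be a bookkeeping corollary of Proposition \ref{merexttrresH}, applied once to the asymptotic triple at the $+$ end and once, after a gauge transformation, to the asymptotic triple at the $-$ end. So the plan is: (i) check that $(h_+,k_+,w)$ satisfies the full package of hypotheses of Proposition \ref{merexttrresH} and quote that proposition; (ii) at the $-$ end, pass through the gauge $\Phi(\ell)$ of Section \ref{SecGT} to reduce to the selfadjoint pair $(\tilde{h}_-,k_--\ell)$, apply Proposition \ref{merexttrresH} to $(\tilde{h}_-,k_--\ell,w)$, and transport the conclusion back; (iii) take the smaller of the two radii $\delta_{\epsilon/2}$ produced.

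\textbf{The $+$ end.} First I would verify that $(h_+,k_+,w)$ satisfies \eqref{Hyp1}--\eqref{ME2}. Condition \eqref{Hyp1} holds because $h_+ + k_+^2 = h_0 > 0$ by \eqref{tutu}; \eqref{A3} for $(h_+,k_+)$ is part of \eqref{TE2}; \eqref{A2} for $(h_+,k_+)$ follows from \eqref{A3} together with the facts that $k_+ = k - \ell j_-^2$ is bounded on $\CH$, commutes with $x$, and that $w = w(x)$, $j_\pm$ are smooth in $x$ while $h_0$ is local in $x$, i.e. \eqref{TE1}; finally \eqref{HypME} and \eqref{ME2} for $(h_+,k_+,w)$ are exactly part $c)$ of \eqref{TE3}. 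Since $(h_+,k_+)$ satisfies \eqref{A3}, $\dot H_+$ is selfadjoint on $\dot{\CE}_+ = h_+^{-1/2}\CH\oplus\CH$ (which is the homogeneous energy space of the pair $(h_+,k_+)$), and $\CE_+ = \<h_+\>^{-1/2}\CH\oplus\CH = \<h_0\>^{-1/2}\CH\oplus\CH$. Proposition \ref{merexttrresH} then gives a $\delta^{+}_{\epsilon/2} > 0$ so that $w^{-\epsilon}R_+(z)w^{-\epsilon}$ and $w^{-\epsilon}\dot{R}_+(z)w^{-\epsilon}$ extend finite meromorphically to $\{{\rm Im}\,z > -\delta^{+}_{\epsilon/2}\}$ with values in $\CB_\infty(\CE_+)$, resp. $\CB_\infty(\dot{\CE}_+)\subset\CB(\dot{\CE}_+)$.

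\textbf{The $-$ end.} Here I would first recall from \eqref{titi} and Section \ref{SecGT} that $\dot{\CE}_- = \Phi(\ell)(\tilde{h}_-^{-1/2}\CH\oplus\CH)$ with $\Phi(\ell)$ acting as an isometry onto $\dot{\CE}_-$, and that $\Phi(-\ell)\dot H_-\Phi(\ell) = \dot{\tilde{H}}_- + \ell$, where $\dot{\tilde{H}}_-$ is the homogeneous Klein--Gordon operator of the pair $(\tilde{h}_-,k_--\ell)$ on $\tilde{h}_-^{-1/2}\CH\oplus\CH$; by \eqref{TE2} this pair satisfies \eqref{A3}, so $\dot{\tilde{H}}_-$ is selfadjoint. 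Exactly as at the $+$ end, the triple $(\tilde{h}_-,k_--\ell,w)$ satisfies \eqref{Hyp1}--\eqref{ME2} by \eqref{TE1}, \eqref{TE2} and part $c)$ of \eqref{TE3}, so Proposition \ref{merexttrresH} yields $\delta^{-}_{\epsilon/2}>0$ for which $w^{-\epsilon}(\tilde{H}_--z)^{-1}w^{-\epsilon}$ and $w^{-\epsilon}(\dot{\tilde{H}}_--z)^{-1}w^{-\epsilon}$ extend finite meromorphically into $\{{\rm Im}\,z > -\delta^{-}_{\epsilon/2}\}$ with compact, resp. bounded, values. Since $\ell$ is a scalar, $w^{-\epsilon} = \diag(w^{-\epsilon},w^{-\epsilon})$ commutes with $\Phi(\pm\ell)$, so
\[
w^{-\epsilon}\dot{R}_-(z)w^{-\epsilon} = \Phi(\ell)\,\big(w^{-\epsilon}(\dot{\tilde{H}}_- - (z-\ell))^{-1}w^{-\epsilon}\big)\,\Phi(-\ell),
\]
and likewise for $R_-$; as $z\mapsto z-\ell$ preserves each half-plane $\{{\rm Im}\,z>-\delta\}$ and conjugation by the isometry $\Phi(\ell)$ preserves $\CB_\infty$ and $\CB$, the desired extensions of $w^{-\epsilon}R_-(z)w^{-\epsilon}$ and $w^{-\epsilon}\dot{R}_-(z)w^{-\epsilon}$ follow. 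Setting $\delta_{\epsilon/2} := \min(\delta^{+}_{\epsilon/2},\delta^{-}_{\epsilon/2})$ completes the argument.

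\textbf{Main obstacle.} There is no real analytic difficulty: the entire substance of the meromorphic continuation sits in Proposition \ref{merexttrresH} (hence ultimately in the Fredholm theory of Section \ref{SecME}), and it has already been done for an abstract Klein--Gordon triple. The only points that need genuine care are the routine but not entirely automatic verification that the abstract hypotheses \eqref{Hyp1}--\eqref{ME2} really do transfer to the two \emph{modified} triples $(h_+,k_+,w)$ and $(\tilde{h}_-,k_--\ell,w)$ --- in particular deducing \eqref{A2} for these pairs from \eqref{A3} plus the $x$-locality and boundedness content of \eqref{TE1} and \eqref{TE3} --- and the elementary observation that the gauge operator $\Phi(\ell)$ commutes with the weight $w$, which is exactly what lets one move the continuation from the $(\tilde{h}_-,k_--\ell)$-picture back to the actual operator $\dot H_-$ on $\dot{\CE}_-$.
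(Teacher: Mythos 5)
Your proof is correct and takes the same route as the paper: the paper itself gives no argument beyond the phrase ``as a direct consequence of Proposition~\ref{merexttrresH},'' and you simply spell out what that consequence consists of --- apply Proposition~\ref{merexttrresH} to the triple $(h_{+},k_{+},w)$, use the observation that $h_{+}+k_{+}^{2}=h_{0}$ so the Sobolev scale is unchanged, and at the $-$ end pass through the gauge $\Phi(\ell)$ (whose sign you in fact have right; the paper's formula in Section~\ref{SecGT} has the conjugation the other way round) to reduce to the selfadjoint pair $(\tilde{h}_{-},k_{-}-\ell)$, exploiting that the diagonal weight $w^{-\epsilon}$ commutes with $\Phi(\pm\ell)$, and finally take the smaller of the two widths.

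The one soft spot you already flag is the transfer of \eqref{A2} to the modified pairs $(h_{+},k_{+})$ and $(\tilde{h}_{-},k_{-}-\ell)$. Your assertion that this ``follows from \eqref{A3} together with'' \eqref{TE1} is not quite a proof: \eqref{A3} only yields $k_{\pm}h_{0}^{-1/2}\in\CB(\CH)$, not $k_{\pm}\in\CB(h_{0}^{-1/2}\CH)$, and one really needs $j_{\mp}^{2}\in\CB(h_{0}^{-1/2}\CH)$ together with the resolvent bounds for $k_{\pm}$ on that space, for which the natural input is \eqref{TE3}~\emph{d)} (stated there for $i_{\pm}$, with the analogue for $j_{\pm}$ tacitly assumed) combined with the original \eqref{A2} for $k$. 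The paper is equally silent on this point --- \eqref{TE2}--\eqref{TE3} explicitly record only \eqref{A3}, \eqref{HypME}, \eqref{ME2} for the asymptotic triples --- so this is a shared, not a new, gap, and the intent of the hypotheses is clear. With that caveat, the argument is a faithful and more detailed rendition of what the paper leaves implicit.
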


\subsection{Construction of the resolvent}

We will need the following lemma:
\begin{lemma}
\label{lemma4.3.1}
The linear maps
\begin{eqnarray*}
i_{\pm}:\dot{\CE}\rightarrow\dot{\CE},\, i_{\pm}:\dot{\CE}_{\pm}\rightarrow
\dot{\CE}_{\pm},\, i_{\pm}:\dot{\CE}_{\pm}\rightarrow\dot{\CE},\,
i_{\pm}:\dot{\CE}\rightarrow\dot{\CE}_{\pm}
\end{eqnarray*}
are bounded..
\end{lemma}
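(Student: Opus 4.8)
The plan is to reduce everything to mapping properties of $i_\pm=\chi_\pm(\epsilon^{-1}x)$ acting on the model spaces $h_0^{-1/2}\CH\oplus\CH$, $h_+^{-1/2}\CH\oplus\CH$ and $\tilde h_-^{-1/2}\CH\oplus\CH$, using the factorizations $\dot\CE=\Phi(k)(h_0^{-1/2}\CH\oplus\CH)$ from \eqref{eq:dote}, $\dot\CE_+=h_+^{-1/2}\CH\oplus\CH$ and $\dot\CE_-=\Phi(\ell)(\tilde h_-^{-1/2}\CH\oplus\CH)$. Since $i_\pm$ is a bounded operator on $\CH$ that commutes with $k$ (by \eqref{TE1}, as $i_\pm$ is a function of $x$), conjugation by $\Phi(k)$ or $\Phi(\ell)$ turns the action of $i_\pm$ on $\dot\CE$, $\dot\CE_-$ into its componentwise action $\mathrm{diag}(i_\pm,i_\pm)$ on the corresponding direct sum, up to the commutator $[\Phi(k),\mathrm{diag}(i_\pm,i_\pm)]$; but $\Phi(k)\,\mathrm{diag}(i_\pm,i_\pm)\,\Phi(-k)=\mathrm{diag}(i_\pm,i_\pm)$ exactly because $[k,i_\pm]=0$. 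So it suffices to show that $i_\pm$ is bounded from each of $h_0^{-1/2}\CH$, $h_+^{-1/2}\CH$, $\tilde h_-^{-1/2}\CH$ into each relevant one of these three homogeneous Sobolev spaces, and trivially bounded on $\CH$ in the second component.

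The core of the argument is then the claim that on the first-component level the four maps
\[
i_\pm: h_0^{-1/2}\CH\to h_0^{-1/2}\CH,\quad
i_\pm: h_\pm^{-1/2}\CH\to h_\pm^{-1/2}\CH,\quad
i_\pm: h_\pm^{-1/2}\CH\to h_0^{-1/2}\CH,\quad
i_\pm: h_0^{-1/2}\CH\to h_\pm^{-1/2}\CH
\]
are bounded (with the obvious replacement of $h_-$ by $\tilde h_-$ in the $-$ case). First I would record that $h_0$ and $h_\pm$ (resp. $\tilde h_-$) define equivalent norms on the relevant localized regions: indeed $k_\pm=k\mp\ell j_\mp^2$ differs from $k$ only on $\supp j_\mp$, and $j_+i_-=j_-i_+=0$, $j_\pm i_\pm=j_\pm$ wait — more to the point, on $\supp i_+$ one has $j_-=0$ so $k_+=k$ there, and similarly $k_-=k$ on $\supp i_+$, while near $\supp i_-$ one controls the difference using \eqref{TE2}/\eqref{A3} which says $bh_0\le h_+\le h_0$ and $b\tilde h_0\le \tilde h_-\le \cdots$ — so the norms $\|h_0^{1/2}\cdot\|$ and $\|h_\pm^{1/2}\cdot\|$ are globally equivalent by Lemma \ref{lm:A3}. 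Hence all three spaces $h_0^{-1/2}\CH$, $h_+^{-1/2}\CH$, $\tilde h_-^{-1/2}\CH$ coincide as topological vector spaces, and the four maps collapse to the single question: \emph{is $i_\pm$ bounded on $h_0^{-1/2}\CH$?} That in turn is exactly the estimate $\|h_0^{1/2}i_\pm u\|\lesssim\|h_0^{1/2}u\|$, which I would prove by writing $h_0^{1/2}i_\pm h_0^{-1/2}=i_\pm+h_0^{-1/2}[h_0,i_\pm]h_0^{-1/2}\cdot h_0^{1/2}h_0^{-1/2}$ — more carefully, $h_0^{1/2}i_\pm u = i_\pm h_0^{1/2}u + [h_0^{1/2},i_\pm]u$ and one estimates the commutator using assumption \eqref{TE3}\,d) together with the locality of $h_0$ in $x$ from \eqref{TE1}: locality gives $[h_0,i_\pm]=[h_0,i_\pm]\tilde i$ supported in a bounded $x$-region (cf. \eqref{TE3}\,b) for $h$, and the analogous statement for $h_0$), and the bracket $h_0^{1/2}i_\pm h_0^{-1/2}\in\CB(\CH)$ is precisely hypothesis \eqref{TE3}\,d).

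Putting it together: for $i_\pm:\dot\CE_\pm\to\dot\CE$, an element $u=\Phi(\ell_\pm)(u_0,u_1)$ with $(u_0,u_1)\in h_\pm^{-1/2}\CH\oplus\CH$ (here $\ell_+=0$, $\ell_-=\ell$) is sent to $\Phi(k)\big(\mathrm{diag}(i_\pm,i_\pm)\Phi(k-\ell_\pm)(u_0,u_1)\big)$; but $\Phi(k-\ell_\pm)$ acts as $(u_0,u_1)\mapsto(u_0,(k-\ell_\pm)u_0+u_1)$ with $(k-\ell_\pm)\in\CB(h_0^{-1/2}\CH)$ near $\supp i_\pm$ and $i_\pm$ bounded on $h_0^{-1/2}\CH=h_\pm^{-1/2}\CH$, so the composite is bounded into $h_0^{-1/2}\CH\oplus\CH$, whence $\Phi(k)$ of it lies in $\dot\CE$. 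The other three mapping statements are the same computation read in the appropriate direction. The main obstacle is bookkeeping: making sure that wherever a ``wrong'' operator ($k$ vs.\ $k_\pm$ vs.\ $k-\ell$, $h_0$ vs.\ $h_\pm$) appears it is either multiplied by a cutoff that kills the discrepancy ($j_\pm i_\pm=j_\pm$, $i_+j_-=i_-j_+=0$) or the discrepancy is controlled by the norm equivalence from \eqref{TE2} and Lemma \ref{lm:A3}; once the locality in $x$ and \eqref{TE3}\,d) are in hand there is no analytic difficulty.
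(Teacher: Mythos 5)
Your structural reduction (commute the cutoff through the $\Phi$ twists, then reduce to mapping properties between the homogeneous first-component Sobolev spaces) is a reasonable way to think about the lemma, and the observation that \eqref{TE2} plus Lemma \ref{lm:A3} force $h_+^{-1/2}\cH=h_0^{-1/2}\cH=\tilde h_-^{-1/2}\cH$ with equivalent norms is correct and does collapse the four first-component statements to the single bound $h_0^{1/2}i_\pm h_0^{-1/2}\in\CB(\cH)$, which is \eqref{TE3}\,d). But the argument has a genuine gap at the second component, and it is exactly the part that carries the analytic content of the lemma.

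Unwinding your own formula, $\Phi(-k)\,i_\pm u=\bigl(i_\pm u_0,\ (\ell_\pm-k)i_\pm u_0+i_\pm u_1\bigr)$, so for $i_\pm:\dot\CE_\pm\to\dot\CE$ you must show the second component is in $\cH$, i.e.
\[
\bigl\|(k-\ell_\pm)\,i_\pm u_0\bigr\|_{\cH}\ \lesssim\ \bigl\|h_\pm^{1/2}u_0\bigr\|_{\cH}.
\]
Your justification — that $k-\ell_\pm\in\CB\bigl(h_0^{-1/2}\cH\bigr)$ near $\supp i_\pm$ — proves $(k-\ell_\pm)i_\pm u_0\in h_0^{-1/2}\cH$, which is not the same thing and is strictly weaker: $h_0^{-1/2}\cH$ is a completion, not a subspace of $\cH$, so membership there does not give an $\cH$-norm bound. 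In fact $k^2\lesssim h_0$ fails globally (that failure \emph{is} superradiance), so no amount of rearranging $\Phi$'s can avoid the localization. The paper instead proves the quadratic-form inequalities \eqref{TE4.3.1}--\eqref{TE4.3.2}, $i_\pm(h_0+(k-\ell_\pm)^2)i_\pm\lesssim i_\pm h_\pm i_\pm$, and the two ingredients it uses are precisely the ones you never invoke: \eqref{TE3}\,a), which gives $i_\pm(k-\ell_\pm)^2 i_\pm\lesssim w^{-2}$, and the Hardy-type inequality in \eqref{HypME}\,d) (available through \eqref{TE3}\,c)), which gives $w^{-2}\lesssim h_\pm$. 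Combining these with \eqref{TE3}\,d) yields the missing estimate; without them the claim that "once \eqref{TE3}\,d) is in hand there is no analytic difficulty" is incorrect. Your "trivially bounded on $\cH$ in the second component" statement is also only true for the map $i_\pm:\dot\CE\to\dot\CE$, where the twists are equal; for the cross-maps the second component carries the $(k-\ell_\pm)i_\pm u_0$ contribution and is not trivial.
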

\proof

First note that the condition \eqref{TE3}{\it d)} and the relation
$[k,i_{\pm}]=0$ give the continuity of the maps
$i_{\pm}:\dot{\CE}\rightarrow\dot{\CE}$ and 
$i_{\pm}:\dot{\CE}_{\pm}\rightarrow\dot{\CE}_{\pm}$. 
Then note that
\[
i_+(h_++k^2)i_+=i_+h_0i_+,\quad i_-(\tilde{h}_-+(k-\ell)^2)i_-=i_-h_0i_-.
\]
This together with \eqref{TE3}{\it d)} gives the continuity of
$i_{\pm}:\dot{\CE}\rightarrow\dot{\CE}_{\pm}$.
We then claim that               
\begin{eqnarray}
\label{TE4.3.1}
i_+(h_0+k^2)i_+\lesssim i_+h_+i_+,  \\
\label{TE4.3.2}
i_-(h_0+(k-\ell)^2)i_-\lesssim i_-\tilde{h}_-i_-.
\end{eqnarray}
Indeed, \eqref{TE4.3.1} follows from
\[
i_+(h_0+k^2)i_+\lesssim i_+h_+i_++w^{-2}i_+^2\lesssim i_+h_+i_+.
\]
Here we have used \eqref{TE3}{\it c)}. Then \eqref{TE4.3.2} follows from
\[
i_-(h_0+(k-\ell)^2)i_-\lesssim
i_-\tilde{h}_-i_-+i_-(k-\ell)^2i_-\lesssim
i_-\tilde{h}_-i_-+w^{-2}i_-^2\lesssim i_-\tilde{h}_-i_-.
\]
Finally, \eqref{TE4.3.1}, \eqref{TE4.3.2} ans \eqref{TE3}{\it d)} give the continuity of
$i_+:\dot{\CE}_+\rightarrow\dot{\CE}$ and $i_-:\dot{\CE}_-\rightarrow\dot{\CE}$.
\qed

\medskip

We introduce now a new operator:
\begin{equation}\label{defdeQ}
Q(z):=i_-(\dot{H}_--z)^{-1}i_-+i_+(\dot{H}_+-z)^{-1}i_+.
\end{equation}
Thanks to  Lemma \ref{lemma4.3.1} $ Q(z)$ is well defined as a
bounded operator on $\dot{\CE}$. 
We now compute~:
\begin{eqnarray*}
(\dot{H}-z)Q(z)=1+[\dot{H},i_-](\dot{H}_--z)^{-1}i_-+[\dot{H},i_+](\dot{H}_+-z)^{-1}i_+.
\end{eqnarray*}
Note that
\begin{equation*}
[\dot{H},i_{\pm}]=\left(\begin{array}{cc} 0 & 0 \\ {[}h,i_{\pm}{]} &
    0 \end{array}\right).
\end{equation*}
Let 
\begin{equation}
\label{Kpm}
K_{\pm}(z):=\left(\begin{array}{cc} 0 & 0 \\ {[}h,i_{\pm}{]} & 0 \end{array} \right)\dot{R}_{\pm}(z)i_{\pm},\quad \tilde{K}_{\pm}(z):=i_{\pm}\dot{R}_{\pm}(z)\left(\begin{array}{cc} 0 & 0 \\ {[}h,i_{\pm}{]} & 0 \end{array} \right).
\end{equation}\label{kplusmoins}
Note that by assumptions \eqref{TE1}, \eqref{TE3} the operators
\begin{equation}
\label{HypA10}
[\dot{H},i_{\pm}]w^{\epsilon}\quad\mbox{and}\quad
i_{\pm}(1-j_{\pm})w^{\epsilon}\quad\mbox{are bounded on $\dot{\CE}$ for all $\epsilon>0$}.
\end{equation}
We put
\begin{eqnarray*}
A(z)=K_-(z)(1-j_-)+K_+(z)(1-j_+): \C^+\rightarrow {\mathcal B}(\dot{\CE})
\end{eqnarray*}
Using Proposition \ref{prop4.1} and Lemma \ref{lemma4.3.1}
we see that $A(z)$ extends meromorphically to ${\rm
Im}  z>-\delta$ with values in $\CB_{\infty}(\dot{\CE})$ for some
$\delta>0$. As $\dot{H}_{\pm}$ are selfadjoint it follows that 
\[\Vert A(z)\Vert_{{\mathcal B}({\dot{\CE}})}\le 1/2\]
for ${\rm Im}z$ sufficiently large. Thus $(1+A(z))^{-1}$ exists for ${\rm Im}z$
large enough. By Proposition \ref{Prop5} there exists a closed
discrete subset $Z^+$ of the half plane $\{{\rm Im}z>-\delta\}$ such
that $(1+A(z))^{-1}$ exists if ${\rm Im}z>-\delta$ and $z\nin Z^+$
and $(1+A(z))^{-1}$ is finitely meromorphic in $\{{\rm
  Im}z>-\delta\}$ and analytic in $\{{\rm Im}>-\delta\}\setminus
Z^+$. Let
\[
K(z)=K_-(z)+K_+(z).
\]
Now observe that we have $j_{a}K_{b}=0$ for $a=\pm,\, b=\pm$
by assumption \eqref{TE3}{\it b)}.  We therefore have
\begin{align*}
& 1+K(z) =(1+K_-(z)j_-+K_+(z)j_+)(1+K_-(z)(1-j_-)+K_+(z)(1-j_+)),\\
& (1+K_-(z)j_-+K_+(z)j_+)^{-1} =1-K_-(z)j_--K_+(z)j_+.
\end{align*}
We can now construct the resolvent of $\dot{H}$ by
\begin{align}
\label{RB} 
R_{\dot{H}} (z) & := Q(z)(1+K(z))^{-1} \nonumber \\
&=Q(z) \big(1+K_-(z)(1-j_-)+K_+(z)(1-j_+) \big)^{-1}
\big( 1-K_-(z)j_--K_+j_+ \big). 
\end{align}
The same considerations are valid in the lower half plane, we obtain
a set of poles $Z^-$. The set $(Z^-\cap\C^-)\cup (Z^+\cap\C^+)$ is
clearly finite. 
\begin{proposition}
\label{ResH}
If the conditions \eqref{Hyp1}-\eqref{A2} and
\eqref{TE1}-\eqref{TE3} are satisfied then there is a finite set
$Z\subset\C\setminus\R$ with $\bar{Z}=Z$ such that the spectra of
$H$ and $\dot{H}$ are included in $\R\cup Z$ and such that the
resolvents $R$ and $\dot{R}$ are finite meromorphic functions on
$\C\setminus \R$. Moreover, the point spectrum of $H$ coincides with
the point spectrum of $\dot{H}$ and the set $Z$ consists of
eigenvalues of finite multiplicity of $H$ and $\dot{H}$.
\end{proposition}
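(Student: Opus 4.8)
\noindent\textit{Proof strategy.} The plan is to show that the operator $R_{\dot H}(z)$ built in \eqref{RB} is the resolvent of $\dot H$ off a finite non-real set, and then to deduce all the stated consequences from the structural results already available for $H$, $\dot H$ and the pencil $p$. First I would check that $R_{\dot H}(z)$ is a \emph{right} inverse of $\dot H-z$. Since $Q(z)$ maps $\dot\CE$ into $D(\dot H)$ (as already used in the identity $(\dot H-z)Q(z)=1+K(z)$ preceding \eqref{RB}), and since the factorization $1+K(z)=(1+K_-(z)j_-+K_+(z)j_+)(1+A(z))$ together with the explicit inverse $(1+K_-(z)j_-+K_+(z)j_+)^{-1}=1-K_-(z)j_--K_+(z)j_+$ shows that $1+K(z)$ is invertible exactly where $1+A(z)$ is, formula \eqref{RB} gives $(\dot H-z)R_{\dot H}(z)=(1+K(z))(1+K(z))^{-1}=1$ on $\dot\CE$ for every $z\in(\C\setminus\R)\setminus Z$, where $Z:=(Z^-\cap\C^-)\cup(Z^+\cap\C^+)$; moreover $R_{\dot H}$ is holomorphic there with values mapping into $D(\dot H)$, so $\dot H-z$ is onto for such $z$.

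Next I would upgrade this to $(\C\setminus\R)\setminus Z\subseteq\rho(\dot H)$ by a connectedness argument, which avoids constructing a separate left parametrix. The key remark is that $\rho(\dot H)\setminus\R\neq\emptyset$: by \eqref{Hyp1} and $k\in\CB(\CH)$ one has $h=h_0-k^2\geq-\Vert k\Vert^2_{\CB(\CH)}$, so $h$ is bounded below, and Propositions \ref{pr2.2} and \ref{lemmaresHdot} then place a whole cone $\{z:|\mathrm{Im}\,z|>|\mathrm{Re}\,z|+c_0\}$ in $\rho(\dot H)$. Fix a connected component $U$ of $(\C\setminus\R)\setminus Z$ (one of $\C^\pm$ minus the finite set $Z$, hence connected), and set $S:=\{z\in U:z\in\rho(\dot H)\}$. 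Then $S$ is open in $U$, it is non-empty because $U$ meets the cone, and it is closed in $U$: if $z_n\in S$ with $z_n\to z_*\in U$, then $(\dot H-z_n)^{-1}=R_{\dot H}(z_n)\to R_{\dot H}(z_*)$ by holomorphy of $R_{\dot H}$ off $\R\cup Z$, so $\sup_n\Vert(\dot H-z_n)^{-1}\Vert<\infty$ and the standard Neumann-series argument gives $z_*\in\rho(\dot H)$. Hence $S=U$, so $\sigma(\dot H)\subseteq\R\cup Z$ and $\dot R(z)=R_{\dot H}(z)$ off $\R\cup Z$.

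It then remains to extract the finer statements. Put $Z:=\sigma(\dot H)\setminus\R$, a finite set; by Proposition \ref{lemmaresHdot} it equals $\sigma(h,k)\setminus\R$, which is invariant under complex conjugation by Lemma \ref{lm:kg}, so $\overline Z=Z$. Each $z_0\in Z$ is isolated in $\sigma(\dot H)$, and near $z_0$ we have $\dot R(z)=Q(z)(1+A(z))^{-1}(1-K_-(z)j_--K_+(z)j_+)$ with $Q$ and the outer factor holomorphic and $(1+A(z))^{-1}$ finite meromorphic by Proposition \ref{Prop5}; hence $\dot R$ is finite meromorphic at $z_0$, its principal part — in particular the residue, the Riesz projection — has finite rank, and $z_0$ is an eigenvalue of $\dot H$ of finite algebraic multiplicity. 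For $H$: Proposition \ref{propspecH} gives $\rho(H)=\rho(h,k)$, so with Proposition \ref{lemmaresHdot} one gets $\rho(H)\setminus\R=\rho(\dot H)\setminus\R$, whence $\sigma(H)\subseteq\R\cup Z$, and $R$ inherits finite meromorphy on $\C\setminus\R$ with finite-rank residues through the resolvent formulas \eqref{resH} and \eqref{resdotH} (the block $p(z)^{-1}$ being recoverable from the finite-meromorphic $\dot R(z)$, and the $h$-entries of \eqref{resH} rewritten via $h=p(z)+z(z-2k)$), so the points of $Z$ are finite-multiplicity eigenvalues of $H$ as well. Finally, $\spp(H)\subseteq\spp(\dot H)$ is immediate from $H\subseteq\dot H$ (Lemma \ref{lm:closure}); conversely, if $\dot H u=zu$ with $u\neq0$, then writing $u'=\Phi(-k)u$ the equation becomes $\dot Ku'=zu'$, i.e. $ku_0'+u_1'=zu_0'$ and $h_0u_0'+ku_1'=zu_1'$, hence $u_1'=(z-k)u_0'$ and $h_0u_0'=(z-k)^2u_0'$; since $u_0'\in h_0^{-1}\CH$ the right-hand side lies in $\CH$, and for $z\notin\sigma(k)$ (the real exceptional values handled by a direct check) $z-k$ is boundedly invertible on $\CH$, so $u_0'\in\CH$, hence $u_0'\in D(h_0)=\CH^1$ and, with $u_1'\in\CH^{1/2}$, $u'\in D(K)$; thus $u\in D(H)$ and $Hu=zu$, giving $\spp(\dot H)\subseteq\spp(H)$.

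I expect the main obstacle to be the verification underpinning the first step: that $Q(z)$ genuinely maps $\dot\CE$ into $D(\dot H)$, and that the error term in $(\dot H-z)Q(z)=1+K(z)$ is controlled only through the \emph{compact} and meromorphically extendable operator $A(z)$ and not through the individual $K_\pm(z)$, which are not compact. This is precisely where the localization hypotheses \eqref{TE1} and \eqref{TE3} (the cut-offs $\tilde i$ and $i_\pm(1-j_\pm)$, and the commutator conditions) together with the weighted-resolvent estimates of Lemma \ref{merextp-1} and Proposition \ref{prop4.1} are essential; it also explains why the right inverse is the natural starting point and why the left-inverse property is recovered only a posteriori, via the connectedness argument above.
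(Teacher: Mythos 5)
Your proposal is correct and follows essentially the same strategy as the paper's: both exploit the factorization $1+K(z)=(1+K_-j_-+K_+j_+)(1+A(z))$, the finite meromorphy of $(1+A(z))^{-1}$ from Proposition \ref{Prop5}, and formula \eqref{RB} to realize $\dot R(z)$ as a finite meromorphic function on $\C\setminus\R$ agreeing with $R_{\dot H}(z)$ away from the finite pole set. Where you genuinely add detail is in establishing $(\C\setminus\R)\setminus Z\subset\rho(\dot H)$: the paper invokes $\rho(\dot H)\setminus\R=\rho(h,k)\setminus\R$ (Proposition \ref{lemmaresHdot}) together with the pencil estimates of Proposition \ref{pr:est} to place the non-real spectrum in a finite ball and then appeals to "an analyticity argument"; your open-and-closed argument on each half-plane — using the right-inverse property of $R_{\dot H}(z)$, the cone $\{|\mathrm{Im}\,z|>|\mathrm{Re}\,z|+c_0\}\subset\rho(h,k)$ from Proposition \ref{pr2.2} to seed $S\neq\emptyset$, and the connectedness of $\C^\pm\setminus Z$ — is precisely the missing content behind that phrase, so it is a more self-contained rendition of the same step rather than a new route. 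Similarly, your derivation of $\spp(\dot H)\subset\spp(H)$ through $\dot K$ makes explicit what the paper leaves as a one-line remark; note only that for $z\in Z$ one has $z\notin\sigma(k)$ automatically, since $k$ is selfadjoint and $Z\subset\C\setminus\R$, so the real "exceptional values" you flag play no role in the part of the statement concerning $Z$.
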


\begin{proof}

  From the previous arguments it follows that if we define
  $R_H(z):=R_{\dot{H}}(z)|_{\CE}$ then $R_H(z)=R(z)$ and
  $R_{\dot{H}}(z)=\dot{R}(z)$ for $z$ with sufficiently large
  (positive or negative) imaginary part.  We know by Proposition
  \ref{lemmaresHdot} that $\rho(\dot{H})\cap(\C\setminus
  \R)=\rho(h,k)\cap(\C\setminus \R)$.  Then we use Proposition
  \ref{pr:est} to see that all the poles of $\dot{H}$ in
  $\C\setminus \R$ are in a finite ball.  But in this ball
  $(1+A(z))^{-1}$ has only a finite number of poles. By using
  \eqref{RB} and an analyticity argument we see that $\dot{R}(z)$
  has only a finite number of poles in $\C\setminus \R$. From the
  analyticity properties of a resolvent family it follows then that
  the non real spectrum $Z$ of $\dot{H}$ coincides with the set of
  non real poles of its resolvent, in particular is finite.
 
  From $\rho(\dot{H})\backslash \rr=\rho(h,k)\backslash \rr$ and Lemma \ref{lm:kg} we see that the complex spectrum is
  invariant under conjugation. Note also that every eigenvector of
  $\dot{H}$ for a non-zero eigenvalue  is in $D(H)$ and thus  an eigenvector of $H$. It remains to
 show  that the complex point spectrum consists exactly of
  complex eigenvalues of $\dot{H}$ and that the corresponding
  eigenspaces are finite dimensional.  If $z_0$ is a pole of
  $\dot{R}(z)$, then on a neighborhood of $z_0$ we may write
  \(\dot{R}(z)= \sum_{n=1}^{N}(z_0-z)^{-n}S_n + S(z)\) with $S$
  holomorphic near $z_0$ and the $S_n\neq0$ of finite rank because
  the function $A(z)$ is finitely meromorphic. From this it follows
  that $z_0$ is an eigenvalue of finite multiplicity of $\dot{H}$,
  see \cite[Ch.\ VIII Sec.\ 8]{Yo} for details.
\end{proof}

From now on we shall denote $\sigma_{pp}^{\C}(\dot{H})$ the set of
non  real eigenvalues of $\dot{H}$. For $z\in
\sigma_{pp}^{\C}(\dot{H})$ the Riesz projector is defined by
\[E(z,\dot{H})=\frac{i}{2\pi}\ointctrclockwise_{\gamma}(\dot{H}-z)^{-1}dz,\]
where $\gamma$ is a small curve in $\rho(\dot{H})$ surrounding
$z$. Let
\[
\one_{pp}^{\C}(\dot{H}):=\oplus_{z\in\sigma_{pp}^{\C}(H)}E(z,\dot{H})
\quad\text{and}\quad 
\CE^{\C}_{pp}(\dot{H})=\one^{\C}_{pp}(\dot{H})\dot\CE  .
\]
Let furthermore 
\begin{equation*}
\one_{\R}(\dot{H})=\one-\one_{pp}^{\C}(\dot{H}),\quad
\CE_{\R}(\dot{H})=\one_{\R}(\dot{H})\dot{\CE}.
\end{equation*}
We clearly have
\begin{equation*}
\dot{\CE}=\CE_{\R}(\dot{H})\oplus\CE_{pp}^{\C}(\dot{H})
\end{equation*}
and both spaces are invariant under $e^{-it\dot{H}}$.

\subsection{Resolvent estimates} \label{resest}

For $R,\, \delta>0$ we put
\begin{eqnarray*}
\CU_0(R,\delta)&=&\{z\in \C\,:\, 0<\vert\mbox{Im} z\vert\le \delta,\,
\vert\mbox{Re}z\vert\le R\}.
\end{eqnarray*}
We have
\begin{lemma}
\label{lemma4.5}
Assume \eqref{Hyp1}, \eqref{A2}, \eqref{TE1}-\eqref{TE3}. Then for
each $R>0$ there are $M,\delta>0$ such that $\sigma(H)\setminus \R$
does not intersect $\CU_0(R,\delta)$ and such that for all $z\in \CU_0(R,\delta)$
\begin{align}
\label{4.5.1}
\Vert \dot{R}(z)\Vert_{\CB(\dot{\CE})} &\lesssim \vert {\rm Im}\, z\vert^{-M} , \\
\label{4.5.2}
\Vert R(z)\Vert_{\CB(\CE)} & \lesssim \left(1+|z|^{-1}\right)\vert {\rm Im}\, z\vert^{-M}
+|z|^{-1} . 
\end{align}
\end{lemma}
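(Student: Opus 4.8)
The plan is to feed the explicit expression \eqref{RB} for the resolvent into the general bound of Proposition \ref{propbasicreest} and to estimate separately the three factors appearing in \eqref{RB}.

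\emph{Reduction.} By Proposition \ref{propbasicreest} (whose hypotheses \eqref{Hyp1}--\eqref{A2} are in force) the estimate \eqref{4.5.2} is an immediate consequence of \eqref{4.5.1}, so only the statement about the spectrum and \eqref{4.5.1} have to be proved. For the spectral part, Proposition \ref{ResH} shows that $\sigma(H)\setminus\R=\sigma(\dot H)\setminus\R$ is a \emph{finite} set $Z\subset\C\setminus\R$; since $\CU_0(R,\delta)$ contains no real number, it suffices to choose $\delta\le 1$ with $\delta<\min_{z\in Z}|\mathrm{Im}\,z|$ (any $\delta\le1$ if $Z=\emptyset$) to get $\CU_0(R,\delta)\subset\rho(H)\cap\rho(\dot H)$; in particular $\dot R(z)$ is a bounded operator on $\dot\CE$ there.

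\emph{The outer factors.} For $z\in\C\setminus\R$ with $|\mathrm{Im}\,z|$ large, $\dot R(z)=Q(z)(1+A(z))^{-1}(1-K_-(z)j_--K_+(z)j_+)$ by \eqref{RB}, and by analytic continuation and uniqueness of the resolvent this persists wherever the right-hand side is meromorphically defined; the analogous formula constructed in the lower half-plane handles $\mathrm{Im}\,z<0$. Since $\dot H_\pm$ are selfadjoint one has $\|\dot R_\pm(z)\|_{\CB(\dot\CE_\pm)}\le|\mathrm{Im}\,z|^{-1}$, so the boundedness of the maps $i_\pm$ (and, by the same computations as in Lemma \ref{lemma4.3.1}, of $j_\pm$) between $\dot\CE$ and $\dot\CE_\pm$, together with the boundedness of $[h,i_\pm]h_\pm^{-1/2}$ from \eqref{TE3}, give $\|Q(z)\|_{\CB(\dot\CE)}\lesssim|\mathrm{Im}\,z|^{-1}$ and $\|K_\pm(z)j_\pm\|_{\CB(\dot\CE)}\lesssim|\mathrm{Im}\,z|^{-1}$, hence $\|1-K_-(z)j_--K_+(z)j_+\|_{\CB(\dot\CE)}\lesssim|\mathrm{Im}\,z|^{-1}$ on $\CU_0(R,\delta)$ (the constant $1$ being harmless because $|\mathrm{Im}\,z|\le\delta\le1$).

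\emph{The middle factor and conclusion.} This is where the actual growth comes from. By Proposition \ref{prop4.1}, Lemma \ref{lemma4.3.1} and the boundedness statements \eqref{HypA10} one may write $A(z)=\sum_\pm([\dot H,i_\pm]w^\epsilon)\,(w^{-\epsilon}\dot R_\pm(z)w^{-\epsilon})\,(w^\epsilon i_\pm(1-j_\pm))$, so $A(z)$ extends finite meromorphically to a one-sided neighbourhood of $\R$ with values in $\CB_\infty(\dot\CE)$; by Proposition \ref{Prop5} the operator $(1+A(z))^{-1}$ is then finite meromorphic there, with poles forming a discrete closed set, only finitely many of which meet the compact rectangle $\overline{\CU_0(R,\delta)}$. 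After shrinking $\delta$ we may assume that all these poles (and those of the lower half-plane analogue) are real; let $m$ be the largest of their orders. Away from fixed discs around the poles $(1+A(z))^{-1}$ is uniformly bounded on $\overline{\CU_0(R,\delta)}$, while near a real pole $z_0$ of order $\le m$ the finiteness of the principal part gives $\|(1+A(z))^{-1}\|\lesssim|z-z_0|^{-m}\le|\mathrm{Im}\,z|^{-m}$ (using $z_0\in\R$); hence $\|(1+A(z))^{-1}\|_{\CB(\dot\CE)}\lesssim|\mathrm{Im}\,z|^{-m}$ throughout $\CU_0(R,\delta)$. Multiplying the three bounds gives \eqref{4.5.1} with $M=m+2$, and then \eqref{4.5.2} by Proposition \ref{propbasicreest}. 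The main obstacle is precisely this last step: one must justify that, after shrinking $\delta$, the poles of $(1+A(z))^{-1}$ and of its lower half-plane counterpart inside $\overline{\CU_0(R,\delta)}$ lie on the real axis, and that the norm of the truncated inverse is there controlled by a fixed negative power of $|\mathrm{Im}\,z|$ --- the elementary inequality $|z-z_0|\ge|\mathrm{Im}\,z|$ for real $z_0$ being the mechanism that converts the pole into a power of $|\mathrm{Im}\,z|^{-1}$. By contrast, the bounds on $Q(z)$ and on the last factor are routine, resting only on the selfadjointness of $\dot H_\pm$ and on Lemma \ref{lemma4.3.1}.
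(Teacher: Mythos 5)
Your proof is correct and follows essentially the same route as the paper: bound the three factors in the decomposition $\dot R(z)=Q(z)(1+A(z))^{-1}(1-K_-(z)j_--K_+(z)j_+)$, using selfadjointness of $\dot H_\pm$ for the outer factors, the meromorphic Fredholm structure of $(1+A(z))^{-1}$ (with only finitely many real poles in the compact rectangle, giving $|\mathrm{Im}\,z|^{-m}$) for the middle factor, and then Proposition \ref{propbasicreest} for \eqref{4.5.2}. The only additional detail you supply is the explicit mechanism $|z-z_0|\ge|\mathrm{Im}\,z|$ for $z_0\in\R$, which the paper leaves implicit.
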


\proof

Recall that 
\begin{equation*}
\dot{R}(z)=Q(z)(1+A(z))^{-1}(1-K_-(z)j_--K_+(z)j_+).
\end{equation*}
We choose $\delta>0$ sufficiently small such that $(1+A(z))^{-1}$ has no poles in $\CU_0(R,\delta)$. We have:
\begin{equation*}
\Vert Q(z)\Vert_{B(\dot{\CE})}\lesssim |{\rm Im} z|^{-1}.
\end{equation*}
The meromorphic extension of $(1+A(z))^{-1}$ has only a finite number of real poles in 
$\overline{\CU_0(R,\delta)}$, hence we have:
\begin{equation*}
\Vert (1+A(z))^{-1}\Vert_{B(\dot{\CE)}}\lesssim |{\rm Im} z|^{-M_1},\quad M_1>0.
\end{equation*}
Noting that $[H,i_{\pm}]:\dot{\CE}\rightarrow \dot{\CE}$ is bounded by
assumption \eqref{TE3} we obtain :
\begin{equation*}
\Vert(1-K_-(z)j_--K_+(z)j_+)\Vert_{B(\dot{\CE})}\lesssim |{\rm Im} z|^{-1}.
\end{equation*}
This gives \eqref{4.5.1} with $M=M_1+2.$ \eqref{4.5.2} now follows
from Proposition \ref{propbasicreest}.
\qed
\begin{remark}
\label{remlem4.4}
If $\sigma^{\C}_{pp}(\dot{H})=\emptyset$, then we can choose $\delta$
independently of $R$.
\end{remark}
\begin{lemma}
\label{lem4.6}
Let $R\ge M\Vert k\Vert_{\CB(\CH)}$ with $M$ as in Proposition \ref{pr:est}. Then we have 
\begin{equation*}
\Vert \dot{R}(z)\Vert_{\CB(\dot{\CE})}\lesssim |{\rm Im}\, z|^{-1} \quad \text{if } |z|\geq R.
\end{equation*}
\end{lemma}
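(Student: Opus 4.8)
The plan is to reduce to the auxiliary charge operator $\dot K$ and the explicit resolvent formula, and then bound the four matrix entries one at a time. Recall $\dot H=\Phi(k)\dot K\Phi(-k)$ and, by \eqref{eq:dote}, $\Phi(k)\colon h_0^{-1/2}\CH\oplus\CH\tilde\to\dot\CE$ is a fixed, $z$-independent topological isomorphism with inverse $\Phi(-k)$; together with \eqref{resdotH} this shows it is enough to prove
\[
\|S(z)\|_{\CB(h_0^{-1/2}\CH\oplus\CH)}\lesssim|{\rm Im}\,z|^{-1}\quad\text{for }|z|\ge R,\qquad
S(z)=\begin{pmatrix} p^{-1}(z)m & p^{-1}(z)\\ 1+mp^{-1}(z)m & mp^{-1}(z)\end{pmatrix},
\]
where $m:=z-k$. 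Since the norm on the first summand of $h_0^{-1/2}\CH\oplus\CH$ is $\|h_0^{1/2}\cdot\|$ and on the second $\|\cdot\|$, the task is to bound each entry of $S(z)$, as an operator between the appropriate summands, by $C|{\rm Im}\,z|^{-1}$. I fix $\varepsilon=1$ and take $C,M$ as in Proposition \ref{pr:est}; we may assume in addition that $M\ge 2$ and that $M$ is at least the threshold appearing in the last part of assumption \eqref{A2} (enlarging $M$ does not spoil \eqref{basicp1}--\eqref{basicp3}), so that $|z|\ge R\ge M\|k\|_{\CB(\CH)}$ forces $|z|\ge 2\|k\|_{\CB(\CH)}$, hence $\|z-k\|_{\CB(\CH)}\le 2|z|$ and $\bigl||z|-\|k\|_{\CB(\CH)}\bigr|\ge|z|/2$.

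Three of the four entries are immediate. For $p^{-1}(z)m=-p^{-1}(z)(k-z)\colon h_0^{-1/2}\CH\to h_0^{-1/2}\CH$, estimate \eqref{basicp3} gives $\|h_0^{1/2}p^{-1}(z)(k-z)v_0\|\le C|{\rm Im}\,z|^{-1}\|h_0^{1/2}v_0\|$. For $p^{-1}(z)\colon\CH\to h_0^{-1/2}\CH$, estimate \eqref{basicp2} gives $\|h_0^{1/2}p^{-1}(z)v_1\|\le C|{\rm Im}\,z|^{-1}\|v_1\|$. For $mp^{-1}(z)=(z-k)p^{-1}(z)\colon\CH\to\CH$, one bounds $\|(z-k)p^{-1}(z)v_1\|\le\|z-k\|_{\CB(\CH)}\,\|p^{-1}(z)\|_{\CB(\CH)}\|v_1\|$ and uses \eqref{basicp1} together with $\|z-k\|_{\CB(\CH)}\le 2|z|$.

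The $(2,1)$-entry $1+mp^{-1}(z)m\colon h_0^{-1/2}\CH\to\CH$ is the crux, and a crude bound is not available: $h_0>0$ need not be bounded below, so $\|v_0\|$ is not controlled by $\|h_0^{1/2}v_0\|$ and one cannot simply split off the ``$1$''. Instead I would use the identity
\[
1+mp^{-1}(z)m=m\,p^{-1}(z)\,h_0\,m^{-1},
\]
an easy computation from $h_0=p(z)+m^2$ and $m^2m^{-1}=m$ (cf. \eqref{eq:pm}). By the last part of \eqref{A2} and $|z|\ge 2\|k\|_{\CB(\CH)}$, $m^{-1}=(z-k)^{-1}$ is bounded on $h_0^{-1/2}\CH$ with norm $\lesssim|z|^{-1}$; hence for $v_0\in h_0^{-1/2}\CH$ we may write $h_0\,m^{-1}v_0=h_0^{1/2}w$ with $w:=h_0^{1/2}m^{-1}v_0\in\CH$, $\|w\|\lesssim|z|^{-1}\|h_0^{1/2}v_0\|$. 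Taking adjoints in \eqref{basicp2} and using $p(z)^*=p(\bar z)$ gives $\|p^{-1}(z)h_0^{1/2}\|_{\CB(\CH)}=\|h_0^{1/2}p(\bar z)^{-1}\|_{\CB(\CH)}\lesssim|{\rm Im}\,z|^{-1}$. Therefore
\[
\|(1+mp^{-1}(z)m)v_0\|=\|(z-k)\,p^{-1}(z)h_0^{1/2}w\|\lesssim|z|\cdot|{\rm Im}\,z|^{-1}\cdot\|w\|\lesssim|{\rm Im}\,z|^{-1}\|h_0^{1/2}v_0\| .
\]

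Combining the four estimates yields $\|S(z)\|_{\CB(h_0^{-1/2}\CH\oplus\CH)}\lesssim|{\rm Im}\,z|^{-1}$ for $|z|\ge R$, and hence the claimed bound on $\dot R(z)$. The step I expect to be the main obstacle is the $(2,1)$-entry: it maps the homogeneous space $h_0^{-1/2}\CH$ into $\CH$, so its norm is not directly controlled by the pencil estimates of Proposition \ref{pr:est}; one needs the identity $1+mp^{-1}m=mp^{-1}h_0m^{-1}$, the high-energy invertibility of $k-z$ on the homogeneous $h_0$-scale from \eqref{A2}, and the adjoint form of \eqref{basicp2} to close the argument.
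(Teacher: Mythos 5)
Your proof is correct and follows essentially the same route as the paper: conjugate by $\Phi(k)$ to reduce to the explicit $2\times2$ matrix $S(z)$, bound the three easy entries using \eqref{basicp1}--\eqref{basicp3}, and handle the $(2,1)$-entry via the identity $1+mp^{-1}(z)m=mp^{-1}(z)h_0m^{-1}$ together with the $\CB(h_0^{-1/2}\CH)$ bound on $(k-z)^{-1}$ from \eqref{A2} and the adjoint form of \eqref{basicp2}. This is exactly the paper's argument, including the harmless enlargement of $M$ to ensure $|z|\ge 2\|k\|_{\CB(\CH)}$.
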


\proof 

Recall from \eqref{resdotH} that
\begin{eqnarray*}
\dot{R}(z)&:=&(\dot{H}-z)^{-1}=\Phi(k)\left(\begin{array}{cc}
    -p^{-1}(z)(k-z) & p^{-1}(z) \\ 1+(k-z)p^{-1}(z)(k-z) &
    -(k-z)p^{-1}(z) \end{array}\right)\Phi(-k).
\end{eqnarray*}
Therefore it is sufficient to show
\begin{eqnarray}
\label{4.6.1}
\Vert h_0^{1/2}p^{-1}(z)(k-z)u\Vert&\lesssim& \frac{1}{|{\rm Im}z|}\Vert
h_0^{1/2}u\Vert,\\
\label{4.6.2}
\Vert h_0^{1/2} p^{-1}(z)u\Vert &\lesssim& \frac{1}{|{\rm Im}z|}\Vert
u\Vert,\\
\label{4.6.3}
\Vert (k-z)p^{-1}(z)u\Vert&\lesssim& \frac{1}{|{\rm
    Im}z|}\Vert u\Vert,\\
\label{4.6.4}
\Vert (1+(k-z)p^{-1}(z)(k-z))u\Vert&\lesssim& \frac{1}{|{\rm Im}z|}\Vert
h_0^{1/2}u\Vert.
\end{eqnarray}
for $|z|\ge R$. \eqref{4.6.1}-\eqref{4.6.3} follow from Proposition \ref{pr:est}.  To show
\eqref{4.6.4} we use \eqref{eq:pm} and write
\begin{equation*}
1+(k-z)p^{-1}(z)(k-z)=(k-z)p^{-1}(z)h_0^{1/2}h_0^{1/2}(k-z)^{-1}h_0^{-1/2}h_0^{1/2}.
\end{equation*}
Then using \eqref{basicp2}, \eqref{A2} we obtain
\begin{equation*}
\Vert (1+(k-z)p^{-1}(z)(k-z))u\Vert\lesssim (\Vert
k\Vert_{\CB(\CH)}+|z|)\frac{1}{|{\rm Im}z|}\frac{1}{|z|-\Vert
  k\Vert_{\CB(\CH)}}\Vert h_0^{1/2}u\Vert.
\end{equation*}
We can suppose $M\ge 2$ and obtain
\[ (\Vert
k\Vert_{\CB(\CH)}+|z|)\frac{1}{|z|-\Vert
  k\Vert_{\CB(\CH)}}\lesssim 1,\]
which finishes the proof of the lemma.
\qed
  
\subsection{Smooth functional calculus}
The resolvent estimates in Lemma \ref{lemma4.5} easily allow to construct a smooth functional calculus for $\dot{H}$.
For $f\in \coinf(\rr)$ we denote by $\tilde{f}\in \coinf(\cc)$ an almost analytic extension of $f$, satisfying 
\begin{eqnarray}
\tilde{f}\vert_{\R}&=&f,\nonumber\\
\left\vert\frac{\partial \tilde{f}(z)}{\partial \bar{z}}\right\vert
&\le& C_N \vert {\rm Im}z\vert^N  \quad N\in \nn.\nonumber 
\end{eqnarray}
\begin{proposition}
\label{prop16}
Assume \eqref{Hyp1}-\eqref{A2},
\eqref{TE1}-\eqref{TE3}.

(i) Let $f\in C_0^{\infty}(\R)$. Let $\tilde{f}$ be an almost analytic extension of $f$ with 
$\supp\tilde{f}\cap \sigma_{pp}^{\C}(\dot{H})=\emptyset$.  Then the integral 
\begin{eqnarray*}
f(\dot{H}):=\frac{1}{2\pi i}\int_{\C}\frac{\partial \tilde{f}}{\partial \bar{z}}(z)\dot{R}(z)dz\wedge d\bar{z}
\end{eqnarray*}
is norm convergent in $\CB(\dot{\CE})$ and is independent of the choice of $\tilde f$. 
\\
(ii) The map $C_0^{\infty}(\R)\ni f\mapsto f(\dot{H})\in \CB(\dot{\CE})$ is an  continuous algebra morphism if we equip $\coinf(\rr)$ with its canonical topology.
\end{proposition}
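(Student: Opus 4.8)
The plan is to establish the two assertions by the standard Helffer–Sjöstrand (almost analytic extension) machinery, using as the only analytic input the resolvent bounds of Lemma~\ref{lemma4.5}. First I would fix $f\in\coinf(\rr)$ and an almost analytic extension $\tilde f\in\coinf(\cc)$ with $\supp\tilde f\cap\sigma_{pp}^{\C}(\dot H)=\emptyset$; such extensions exist by the usual construction (take a Borel extension of the Taylor data and cut off), and their existence together with the bound $|\partial_{\bar z}\tilde f(z)|\le C_N|{\rm Im}\,z|^N$ is recalled just above the statement. For the norm convergence in $\CB(\dot\CE)$ I would split the $z$-integral into the region $|z|\ge R_0$, where $R_0$ is large enough that $\|\dot R(z)\|_{\CB(\dot\CE)}\lesssim|{\rm Im}\,z|^{-1}$ by Lemma~\ref{lem4.6}, and the region $|z|\le R_0$, where Lemma~\ref{lemma4.5} gives $\|\dot R(z)\|_{\CB(\dot\CE)}\lesssim|{\rm Im}\,z|^{-M}$ for $|{\rm Im}\,z|\le\delta$ (and a trivial bound $|{\rm Im}\,z|^{-1}$ for $|{\rm Im}\,z|\ge\delta$). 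Since $\supp\tilde f$ is compact and avoids $\sigma_{pp}^{\C}(\dot H)$, the contribution of the nonreal eigenvalues is excluded and one only meets the real axis; choosing $N>M$ in the derivative estimate makes $|\partial_{\bar z}\tilde f(z)|\,\|\dot R(z)\|$ integrable against $dz\wedge d\bar z$ near $\R$, so the integral converges absolutely in operator norm.

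Independence of the choice of $\tilde f$ is the Cauchy–Pompeiu/Stokes argument: if $\tilde f_1,\tilde f_2$ are two such extensions then $g:=\tilde f_1-\tilde f_2$ vanishes to infinite order on $\R$, so for any $\varepsilon>0$ Stokes' theorem on the domain $\{|{\rm Im}\,z|\ge\varepsilon\}\cap\supp g$ together with the decay $|g(z)|\lesssim|{\rm Im}\,z|^{N}$ lets one pass to the limit $\varepsilon\to0$ and conclude $\int_\cc\partial_{\bar z}g(z)\,\dot R(z)\,dz\wedge d\bar z=0$; here one also uses that $\dot R$ is holomorphic off $\R\cup\sigma_{pp}^{\C}(\dot H)$ and that $\supp g$ avoids the latter. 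This proves~(i).

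For~(ii), linearity in $f$ is immediate, and continuity follows because the above estimates bound $\|f(\dot H)\|_{\CB(\dot\CE)}$ by a finite sum of seminorms of $f$ (through the constants $C_N$ in the chosen extension, which can be taken to depend continuously on the $\coinf$-seminorms of $f$). The multiplicativity $f(\dot H)g(\dot H)=(fg)(\dot H)$ is the one genuinely non-formal point: I would prove it by the resolvent-identity trick, writing the product of the two integrals, using $\dot R(z)\dot R(w)=(\dot R(z)-\dot R(w))/(w-z)$, and then performing one of the two $\bar z$-integrations by the Cauchy formula $\frac{1}{2\pi i}\int_\cc\partial_{\bar z}\tilde f(z)\,(z-w)^{-1}\,dz\wedge d\bar z=\tilde f(w)$, valid for $w$ off $\supp$ of the relevant set, the boundary terms again vanishing by the infinite-order flatness of $\partial_{\bar z}\tilde f$ on $\R$. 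The main obstacle is purely bookkeeping: one must choose the two extensions with slightly separated supports (or argue by a limiting/Fubini argument justifying the interchange of integrations), and keep track of the fact that all contour manipulations take place away from $\R\cup\sigma_{pp}^{\C}(\dot H)$ so that the polynomial resolvent bound of Lemma~\ref{lemma4.5} always controls the integrand. Once multiplicativity is in hand, $f\mapsto f(\dot H)$ is a continuous algebra morphism, which is assertion~(ii).
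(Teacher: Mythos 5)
Your proposal is correct and is essentially the argument the paper intends: the paper gives no explicit proof of Proposition \ref{prop16}, only the preceding remark that the resolvent estimates of Lemma \ref{lemma4.5} (together with Lemma \ref{lem4.6} at high energy) ``easily allow'' the standard Helffer--Sj\"ostrand construction, and your write-up is exactly that construction, with the only non-standard inputs being the polynomial bound $\|\dot R(z)\|\lesssim|\mathrm{Im}\,z|^{-M}$ near the real axis, the $|\mathrm{Im}\,z|^{-1}$ bound for large $|z|$, holomorphy of $\dot R$ off $\rr\cup\sigma_{pp}^{\cc}(\dot H)$, and the support condition excluding the nonreal eigenvalues. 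One small cosmetic point: for multiplicativity you do not actually need to separate the supports of the two almost analytic extensions; the double integral is already absolutely convergent because $\partial_{\bar z}\tilde f$ and $\partial_{\bar w}\tilde g$ vanish to infinite order on $\rr$, which dominates the $1/(z-w)$ singularity uniformly, so Fubini plus the Cauchy--Pompeiu identity $\frac{1}{2\pi i}\int_{\cc}\partial_{\bar z}\tilde f(z)(z-w)^{-1}\,dz\wedge d\bar z=-\tilde f(w)$ for $w\notin\supp\partial_{\bar z}\tilde f$ applied to each half of the resolvent identity already closes the argument.
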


\begin{remark} 
(i) The condition $\supp\tilde{f}\cap
  \sigma^{\C}_{pp}(\dot{H})=\emptyset$  can always be satisfied by choosing $\supp\tilde{f}$ close enough to the real axis.\\[1mm] 
(ii) If $\chi\in C^{\infty}(\R)$ with $\chi= 1$ on $\R\setminus ]-R,R[$ then we define 
\( \chi(\dot{H}):=\one_{\R}(\dot{H})-(1-\chi)(\dot{H}) \).  \\[1mm]
(iii) We define in the same way a smooth functional calculus for $H,
  H_{\pm}, \dot{H}_{\pm}$. For $\dot{H}_{\pm}$ this coincides with the
  smooth functional calculus for selfadjoint operators.
\end{remark}

\begin{proposition}
\label{prop4.9}
If $\sigma^{\C}_{pp}(\dot{H})=\emptyset$ and $\chi\in C_0^{\infty}(\R),\,
\chi= 1$ in a neighborhood of zero, then 
\begin{equation*}
\slim_{L\rightarrow \infty}\chi\left(L^{-1}\dot{H}\right)=1.
\end{equation*}
\end{proposition}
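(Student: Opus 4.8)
### Proof proposal for Proposition \ref{prop4.9}

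The plan is to combine the smooth functional calculus from Proposition \ref{prop16} with the resolvent estimates of Lemmas \ref{lemma4.5} and \ref{lem4.6}, exploiting the hypothesis $\sigma_{pp}^{\C}(\dot H)=\emptyset$ which, by Remark \ref{remlem4.4}, lets us take $\delta$ uniform in $R$. Since $\sigma_{pp}^{\C}(\dot H)=\emptyset$, Proposition \ref{prop16} applies to every $f\in C_0^\infty(\R)$ without the support restriction, and $f\mapsto f(\dot H)$ is a continuous algebra morphism; so it suffices to establish a uniform bound $\sup_{L\ge 1}\Vert\chi(L^{-1}\dot H)\Vert_{\CB(\dot\CE)}<\infty$ and strong convergence on a dense set, from which the full statement follows by a standard $\varepsilon/3$ argument.

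First I would prove the uniform bound. Fix an almost analytic extension $\tilde\chi_0$ of $\chi_0:=1-\chi\in C_0^\infty(\R)$, supported in a thin strip $|{\rm Im}\,z|\le\delta$ around $[-R_0,R_0]\supset\supp\chi_0$; then $\chi(L^{-1}\dot H)=\one_\R(\dot H)-\chi_0(L^{-1}\dot H)=1-\chi_0(L^{-1}\dot H)$ because $\sigma_{pp}^\C(\dot H)=\emptyset$, and
\[
\chi_0(L^{-1}\dot H)=\frac{1}{2\pi\i}\int_\C \frac{\partial\widetilde{\chi_0}}{\partial\bar z}(L^{-1}z)\,L^{-1}\,\dot R(z)\,dz\wedge d\bar z .
\]
The integrand is supported in $\{|{\rm Re}\,z|\le LR_0,\ |{\rm Im}\,z|\le L\delta\}$. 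Split this region into a bounded part near the origin, where Lemma \ref{lemma4.5} gives $\Vert\dot R(z)\Vert\lesssim|{\rm Im}\,z|^{-M}$, and the part $|z|\ge R$ (with $R$ as in Lemma \ref{lem4.6}), where $\Vert\dot R(z)\Vert\lesssim|{\rm Im}\,z|^{-1}$. Using the almost-analyticity bound $|\partial_{\bar z}\widetilde{\chi_0}|\le C_N|{\rm Im}\,z|^N$ with $N$ large enough to absorb the powers of $|{\rm Im}\,z|^{-M}$ on the bounded piece, and with the scaling $z\mapsto L^{-1}z$ accounted for (each factor $|{\rm Im}\,z|^{-1}$ comes with a compensating $L$ from $d z\wedge d\bar z$ and the $L^{-1}$ prefactor), one gets a bound independent of $L\ge1$. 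This is essentially the Helffer--Sjöstrand estimate; the only subtlety is checking that the two regimes patch and that the contribution of the large-$|z|$ region is $L$-uniform, which is exactly what the $|{\rm Im}\,z|^{-1}$ bound of Lemma \ref{lem4.6} provides.

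Next, strong convergence on a dense set. Take $\varphi\in D(\dot H)$; then for any fixed $g\in C_0^\infty(\R)$ with $g\equiv 1$ near $0$ one has $(1-g(L^{-1}\dot H))\varphi\to 0$, because $\dot R(\i)\varphi\in D(\dot H^2)$ is irrelevant — more directly, write $(1-g(L^{-1}\dot H))\varphi=\dot R(\i)(1-g(L^{-1}\dot H))(\dot H-\i)\varphi$ (using that $g(L^{-1}\dot H)$ commutes with the resolvent, which follows from the integral formula) and estimate $\Vert(1-g(L^{-1}\dot H))\psi\Vert_{\dot\CE}\lesssim L^{-1}\Vert\psi\Vert_{\dot\CE}$ for $\psi=(\dot H-\i)\varphi\in\dot\CE$, the $L^{-1}$ gain coming from the extra factor $L^{-1}$ in the contour integral together with the fact that $1-g$ vanishes near $0$ so the integrand avoids a neighbourhood of the origin where the bounds degenerate. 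Choosing $g$ adapted to $\chi$ (so that $(1-\chi)(1-g)=1-\chi$, i.e. $g=1$ on $\supp(1-\chi)$) gives $\chi(L^{-1}\dot H)\varphi-\varphi=-\chi_0(L^{-1}\dot H)\varphi\to 0$. Since $D(\dot H)$ is dense in $\dot\CE$ and the operators $\chi(L^{-1}\dot H)$ are uniformly bounded, strong convergence to $1$ on all of $\dot\CE$ follows.

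The main obstacle is the first step: verifying carefully that the Helffer--Sjöstrand integral for $\chi_0(L^{-1}\dot H)$ is bounded uniformly in $L$, because the rescaled cutoff forces the contour out to $|{\rm Re}\,z|\sim L$ where the polynomial resolvent bound of Lemma \ref{lemma4.5} is not available and one genuinely needs the sharper estimate of Lemma \ref{lem4.6}, together with a correct bookkeeping of the powers of $L$ and $|{\rm Im}\,z|$ under scaling. Everything else is routine once the uniform bound is in hand.
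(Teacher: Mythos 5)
Your plan---a uniform operator bound via Helffer--Sj\"ostrand, then strong convergence on a dense set---is a legitimate strategy and genuinely different from the paper's, which instead shows that the operator-norm difference $\chi(L^{-1}\dot{H}) - i_-\chi(L^{-1}\dot{H}_-)i_- - i_+\chi(L^{-1}\dot{H}_+)i_+$ tends to $0$ and then exploits the selfadjointness of $\dot{H}_\pm$ together with $i_-^2 + i_+^2 = 1$. Both approaches rest on the combined resolvent estimate \eqref{4.5.19} (which the paper derives at the start of its proof from Lemmas \ref{lemma4.5}, \ref{lem4.6} and Remark \ref{remlem4.4}), and your uniform-bound step can be made to work from it. Note however that $1-\chi$ is \emph{not} in $C_0^\infty(\R)$: it equals $1$ outside $\supp\chi$, so its support is unbounded; the Helffer--Sj\"ostrand formula should be applied directly to the compactly supported $\chi$.

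The serious gap is in the dense-set step. The estimate $\Vert(1-g(L^{-1}\dot{H}))\psi\Vert_{\dot\CE}\lesssim L^{-1}\Vert\psi\Vert_{\dot\CE}$, read as a uniform statement in $\psi$, is false: for $\psi$ with high-energy spectral content $(1-g(L^{-1}\dot{H}))\psi\approx\psi$, so $\Vert 1-g(L^{-1}\dot{H})\Vert_{\CB(\dot\CE)}$ stays of order one. What you actually need is the \emph{combined} estimate $\Vert(1-g(L^{-1}\dot{H}))\dot{R}(\mathrm{i})\Vert_{\CB(\dot\CE)}\lesssim L^{-1}$, and proving it requires ingredients not in your proposal: feed the resolvent identity $\dot{R}(Lz)\dot{R}(\mathrm{i})=(Lz-\mathrm{i})^{-1}\big(\dot{R}(Lz)-\dot{R}(\mathrm{i})\big)$ into the Helffer--Sj\"ostrand integral for $g(L^{-1}\dot{H})\dot{R}(\mathrm{i})$, identify the scalar contribution as $\tilde{g}(\mathrm{i}/L)\dot{R}(\mathrm{i})\to\dot{R}(\mathrm{i})$ via the Cauchy--Pompeiu formula, and apply Lemma \ref{lem4.6} to the remaining term (where $|Lz|$ is large on $\supp\bar{\partial}\tilde{g}$, since $g\equiv 1$ near $0$) to extract the $L^{-1}$ decay. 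Your ``choice of $g$ adapted to $\chi$'' is also incoherent: if $g=1$ on $\supp(1-\chi)$ then $(1-\chi)(1-g)=0$, not $1-\chi$; in fact $g$ and $\chi$ play the same role, so you should simply take $g=\chi$. The paper's route sidesteps all of this by reducing, via the $Q(z)$ parametrix, to the selfadjoint asymptotic operators $\dot{H}_\pm$, for which $\slim_{L\to\infty}\chi(L^{-1}\dot{H}_\pm)=1$ is standard.
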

\proof
First note that we have for some $M>0$ the estimate
\begin{equation}
\label{4.5.19}
\Vert\dot{R}(z)\Vert_{\CB(\dot{\CE})}\lesssim \frac{1}{|{\rm
    Im}z|}+\frac{1}{|{\rm Im}z|^M},\quad  |{\rm Im}z|>0.
\end{equation}
Indeed we first choose $R>0$ as in Lemma \ref{lem4.6}. Then we have
\begin{equation*}
\Vert \dot{R}(z)\Vert_{\CB(\dot{\CE})}\lesssim \frac{1}{|{\rm
    Im}z|},\quad \forall z\in \C\setminus B(0,R).
\end{equation*}
By Remark \ref{remlem4.4} we can choose $\delta=R$ in Lemma
\ref{lemma4.5} and obtain
\begin{equation*}
\Vert \dot{R}(z)\Vert_{\CB(\dot{\CE})}\lesssim \frac{1}{|{\rm
    Im}z|^M},\quad \forall z\in B(0,R)\subset\CU_0(R,R).
\end{equation*}
In particular we can choose the same almost analytic extension of
$\chi$ to define $\chi\left(L^{-1}\dot{H}\right)$ for all $L>0$. 
We now show that
\begin{equation}
\label{5.4.14}
\lim_{L\rightarrow\infty}\chi\left(L^{-1}\dot{H}\right)-i_-\chi\left(\frac{\dot{H}_-}{L}\right)i_--i_+\chi\left(\frac{\dot{H}_+}{L}\right)i_+=0.
\end{equation}

We have 
\begin{eqnarray*}
\chi\left(L^{-1}\dot{H}\right)-i_-\chi\left(\frac{\dot{H}_-}{L}\right)i_--i_+\chi\left(\frac{\dot{H}_+}{L}\right)i_+=\frac{1}{2\pi
  i}\int\bar{\partial}\tilde{\chi}(z)L(\dot{R}(Lz)-Q(Lz))dz\wedge
d\bar{z}.
\end{eqnarray*}
Now recall that $\dot{R}(z)=Q(z)(1+K(z))^{-1}$, thus
\begin{equation*}
\dot{R}(Lz)-Q(Lz)=-\dot{R}(Lz)K(Lz).
\end{equation*}
Thanks to \eqref{4.5.19}  we have the estimate for $L\ge 1$
\begin{equation*}
\Vert \bar{\partial}\tilde{\chi}(z)L\dot{R}(Lz)K(Lz)\Vert\lesssim
\frac{1}{L}\rightarrow 0. 
\end{equation*}
This implies \eqref{5.4.14}. As $\dot{H}_{\pm}$ is selfadjoint in
$\dot{\CE}_{\pm}$ we find using Lemma \ref{lemma4.3.1}
\begin{equation*}
\slim_{L\rightarrow\infty}i_{\pm}\chi\left(L^{-1}\dot{H}_{\pm}\right)i_{\pm}=i_{\pm}^2.
\end{equation*}
Thus
\begin{equation*}
\slim_{L\rightarrow\infty}\chi\left(L^{-1}\dot{H}\right)=i_-^2+i_+^2=1.
\end{equation*}
\qed

\section{Propagation estimates}
\label{SecAC1}
In this section we derive resolvent and propagation estimates for $\dot{H}$, similar to those obtained for selfadjoint operators. The key ingredients are the meromorphic extension of $\dot{R}(z)$ in Sect. \ref{SecME} and the fact that the asymptotic Hamiltonians $\dot{H}_{\pm}$ are selfadjoint for their energy norms. There is however a new difficulty not present in the selfadjoint case: in addition to resolvent poles and thresholds, additional spectral singularities may appear. In the theory of selfadjoint operators on Krein spaces used in our previous works  \cite{GGH1, GGH2} these spectral singularities are known as {\em critical points}.
\subsection{Resonances and boundary values of the resolvent}\label{fai}
By the usual arguments the operator 
\begin{eqnarray*}
A_{w}(z)=w^{\epsilon}K_-(z)(1-j_-)w^{-\epsilon}+w^{\epsilon}K_+(z)(1-j_+)w^{-\epsilon}.
\end{eqnarray*} 
can also be extended meromorphically from the upper half plane to
$\{{\rm Im}z>-\delta_{\epsilon/2}\}$ with values in $\CB_{\infty}(\dot{\CE})$. By the same argument as in the construction of the resolvent we have for ${\rm Im}z$ large enough 
\[\Vert A_{w}(z)\Vert_{\CB(\dot{\CE})}\le 1/2.\]
Using Proposition \ref{Prop5} we see that $(1+A_{w}(z))^{-1}$ is meromorphic in $\{{\rm Im}z>-\delta_{\epsilon/2}\}$. Let $S_{w}$ be the set of its poles. Now we have~:
\begin{eqnarray}
\label{Resweight-x}
w^{-\epsilon}\dot{R}(z)w^{-\epsilon}
&=&w^{-\epsilon}Q(z)w^{-\epsilon}(1+A_{w}(z))^{-1}\nonumber\\
&\times&(1-w^{\epsilon}K_-(z)j_-w^{-\epsilon}-w^{\epsilon}K_+(z)j_+w^{-\epsilon}).
\end{eqnarray}
Using \eqref{Resweight-x} we see that
$w^{-\epsilon}\dot{R}(z)w^{-\epsilon}$ can be extended meromorphically from the upper half
plane to $\{{\rm Im}z>-\delta_{\epsilon/2}\}$ with values in $\CB_{\infty}(\dot{\CE})$.  The same result holds also for the resolvents of $\dot{H}_{\pm}$, by assumption  (\eqref{TE3}){\it c)}.
\begin{definition}\label{faila}
\begin{enumerate}
 \item[i)] the poles in $\{{\rm Im}z\leq 0\}$ of  the meromorphic extension of $w^{-\epsilon}\dot{R}(z)w^{-\epsilon}$ are called {\em resonances} of $\dot{H}$. 
 \item[ii)] the set of {\em real resonances} of $\dot{H}$, resp. $\dot{H}_{\pm}$ is  denoted by $\CT$, resp. $\CT_{\pm}$.
 \end{enumerate}
\end{definition} 
Note that $\CT$, $\CT_{\pm}$ are obviously closed, discrete sets.
As a consequence of the meromorphic
extensions of $w^{-\epsilon}\dot{R}(z)w^{-\epsilon}$ and  $w^{-\epsilon}\dot{R}_{\pm}(z)w^{-\epsilon}$ we obtain:
\begin{proposition}
\label{prop5.1}
Assume \eqref{Hyp1}-\eqref{A2}, \eqref{TE1}-\eqref{TE3}. Let $\epsilon>0$. 
\begin{enumerate}
\item[--] there exists $\nu>0$ such that for
all 
$\chi\in C_0^{\infty}(\R\setminus \CT)$ and all $k\in \N$  we have 
\begin{equation}
\label{5.1}
\sup_{\nu\ge \delta>0,\, \lambda\in \R}\Vert
w^{-\epsilon}\chi(\lambda)\dot{R}^k(\lambda\pm i\delta)w^{-\epsilon}\Vert_{\CB(\dot{\CE})}<\infty.
\end{equation}
\item[--] 
 for
all 
$\chi\in C_0^{\infty}(\R\setminus \CT_{\pm})$ and all $k\in \N$  we have 
\begin{equation}
\label{5.1pm}
\sup_{\delta>0,\, \lambda\in \R}\Vert
w^{-\epsilon}\chi(\lambda)\dot{R}_{\pm}^k(\lambda\pm i\delta)w^{-\epsilon}\Vert_{\CB(\dot{\CE}_{\pm})}<\infty.
\end{equation}
\end{enumerate}
\end{proposition}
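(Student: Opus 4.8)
The plan is to read the two estimates off the meromorphic continuations of the weighted resolvents constructed just before Definition~\ref{faila}, and to pass to powers by Cauchy's formula. Since $\chi$ has compact support, the supremum over $\lambda\in\R$ in \eqref{5.1} and \eqref{5.1pm} is really a supremum over $\supp\chi$, so it is enough to bound $\|w^{-\epsilon}\dot R(\lambda\pm i\delta)^k w^{-\epsilon}\|$ uniformly for $\lambda\in\supp\chi$ and $\delta$ in the relevant range. Note also that for $\delta>0$ the points $\lambda\pm i\delta$ are non-real, so $\dot R(\lambda\pm i\delta)$ is the genuine bounded resolvent and only its behaviour as $\delta\downarrow0$ is at issue.

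For \eqref{5.1}, let $F(z)$ be the continuation of $w^{-\epsilon}\dot R(z)w^{-\epsilon}$ from $\{{\rm Im}\,z>0\}$ to $\{{\rm Im}\,z>-\delta_{\epsilon/2}\}$ with values in $\CB_\infty(\dot{\CE})$, and let $\check F(z)$ be the analogous continuation from $\{{\rm Im}\,z<0\}$ to $\{{\rm Im}\,z<\delta_{\epsilon/2}\}$, built as in \eqref{RB}--\eqref{Resweight-x} with the lower half plane as starting point. By the conjugation symmetry of that construction (ultimately $\dot R_\pm(\bar z)=\dot R_\pm(z)^*$ and $w=w^*$), the pole set of $\check F$ is the complex conjugate of the pole set of $F$, so by Definition~\ref{faila} both $F$ and $\check F$ are holomorphic at every point of $\R\setminus\CT$. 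Moreover the poles of $F$ in $\{{\rm Im}\,z\ge0\}$ lie in $\CT$ together with the finitely many non-real eigenvalues of $\dot H$ in $\C^+$ (Proposition~\ref{ResH}), which stay at a positive distance $d_Z$ from $\R$; hence one may fix once and for all, depending on $\epsilon$ only, a number $\nu\in(0,\min(\delta_{\epsilon/2},d_Z))$. For any $\chi\in C_0^\infty(\R\setminus\CT)$ the compact tube $\{\lambda+is:\lambda\in\supp\chi,\ 0\le s\le\nu\}$ then contains no pole of $F$, and the tube $\{\lambda-is:\lambda\in\supp\chi,\ 0\le s\le\nu\}$ contains no pole of $\check F$; thus $F$, resp.\ $\check F$, is holomorphic and bounded on an open neighbourhood of it, and by Cauchy's formula all its derivatives are bounded on a slightly smaller tube. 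Finally, from $\frac{d^{k-1}}{dz^{k-1}}\dot R(z)=(k-1)!\,\dot R(z)^k$ and the $z$-independence of $w^{\pm\epsilon}$ we get $w^{-\epsilon}\dot R(z)^k w^{-\epsilon}=\frac{1}{(k-1)!}\frac{d^{k-1}}{dz^{k-1}}\big(w^{-\epsilon}\dot R(z)w^{-\epsilon}\big)$, whose continuation is $\frac{1}{(k-1)!}F^{(k-1)}$ on the upper side and $\frac{1}{(k-1)!}\check F^{(k-1)}$ on the lower side; evaluating at $\lambda\pm i\delta$ with $0<\delta\le\nu$ and multiplying by $\chi(\lambda)$ gives \eqref{5.1}.

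For \eqref{5.1pm} the same argument applies to $w^{-\epsilon}\dot R_\pm(z)w^{-\epsilon}$, whose continuations to $\{{\rm Im}\,z>-\delta_{\epsilon/2}\}$ and $\{{\rm Im}\,z<\delta_{\epsilon/2}\}$ exist by assumption~\eqref{TE3}\,c) and Proposition~\ref{merexttrresH} and whose real poles are $\CT_\pm$; since $\dot H_\pm$ is self-adjoint there are no non-real eigenvalues to avoid, so the tube argument bounds the sup over $0<\delta\le\nu$. For the range $\delta\ge\nu$ one uses only that $\dot H_\pm$ is self-adjoint on the Hilbert space $\dot{\CE}_\pm$, so that $\|\dot R_\pm(\lambda\pm i\delta)^k\|_{\CB(\dot{\CE}_\pm)}\le\delta^{-k}\le\nu^{-k}$, together with the boundedness of $w^{-\epsilon}$; combining the two ranges gives the supremum over all $\delta>0$ in \eqref{5.1pm}. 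The one genuinely delicate point is the middle paragraph: locating the poles of the two one-sided continuations and checking that a single $\nu$ depending on $\epsilon$ alone keeps the thin tubes over every admissible $\supp\chi$ pole-free. Once that is in place, the passage to powers via Cauchy estimates and the uniformity in $\lambda$ are routine.
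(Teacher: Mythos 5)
Your argument is correct and fills in, in the natural way, what the paper treats as an immediate consequence of the meromorphic extensions established in Proposition~\ref{merexttrresH} and Subsection~\ref{fai}: holomorphy on a pole-free tube plus Cauchy's formula for the powers $\dot R^k=\tfrac{(-1)^{k-1}}{(k-1)!}\partial_z^{k-1}\dot R$. The two points you single out as delicate --- the choice of a single $\nu$ (bounded by $\delta_{\epsilon/2}$ and the distance of $\sigma^{\C}_{pp}(\dot H)$ to $\R$, both independent of $\chi$) and the need for a lower-half-plane continuation with the same real pole set --- are indeed the only ones that require care, and you handle them correctly.
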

We apply \cite[Thm. 4.3.1]{Ya} to obtain:
\begin{corollary}
\label{corollary5.2}
Assume \eqref{Hyp1}-\eqref{A2}, \eqref{TE1}-\eqref{TE3}. Let $\epsilon>0$ and $\CT_{\pm}$ be as in Proposition
\ref{prop5.1} $ii)$. Then we have for all $\chi\in
C_0^{\infty}(\R\setminus\CT_{\pm})$
\begin{equation}
\label{5.2}
\sup_{\Vert u\Vert_{\dot{\CE}_{\pm}}=1,\, \delta\neq 0}\int_{\R}(\Vert
w^{-\epsilon}\dot{R}_{\pm}(\lambda+i\delta)\chi(\lambda)u\Vert^2_{\dot{\CE}_{\pm}}d\lambda<\infty.
\end{equation}
\end{corollary}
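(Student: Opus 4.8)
The whole statement is the standard Kato $\dot{H}_{\pm}$\nobreakdash-smoothness estimate, so the plan is to extract from Proposition~\ref{prop5.1}~$ii)$ the hypothesis of \cite[Thm.~4.3.1]{Ya} and then feed it in, using in an essential way that $\dot{H}_{\pm}$ is self\nobreakdash-adjoint on $\dot{\CE}_{\pm}$.

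First I would fix $\chi\in C_0^{\infty}(\R\setminus\CT_{\pm})$ and pick a compact interval $I\subset\R\setminus\CT_{\pm}$ with $\supp\chi\subset\mathrm{int}\,I$, together with $\chi_0\in C_0^{\infty}(\R\setminus\CT_{\pm})$ equal to $1$ on a neighbourhood of $I$. Applying \eqref{5.1pm} with $\chi_0$ in place of $\chi$ and $k=1$, and using $\chi_0\equiv1$ on $I$, gives
\begin{equation*}
\sup_{\delta\neq0,\ \lambda\in I}\ \|w^{-\epsilon}\dot{R}_{\pm}(\lambda+\mathrm{i}\delta)w^{-\epsilon}\|_{\CB(\dot{\CE}_{\pm})}<\infty .
\end{equation*}
Since $w^{-\epsilon}$ is bounded on $\dot{\CE}_{\pm}$, this is exactly the uniform boundedness on compact subsets of $\R\setminus\CT_{\pm}$ of the weighted boundary values of the resolvent of the self\nobreakdash-adjoint operator $\dot{H}_{\pm}$, which is the input required by \cite[Thm.~4.3.1]{Ya} (applied to the operator $w^{-\epsilon}$ and its $\dot{\CE}_{\pm}$\nobreakdash-adjoint). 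That theorem then yields that $w^{-\epsilon}\one_{I}(\dot{H}_{\pm})$ is $\dot{H}_{\pm}$\nobreakdash-smooth, i.e.
\begin{equation*}
\sup_{\delta\neq0}\int_{\R}\|w^{-\epsilon}\one_{I}(\dot{H}_{\pm})\,\dot{R}_{\pm}(\lambda+\mathrm{i}\delta)\,v\|_{\dot{\CE}_{\pm}}^{2}\,d\lambda\lesssim\|v\|_{\dot{\CE}_{\pm}}^{2},\qquad v\in\dot{\CE}_{\pm}.
\end{equation*}
(The passage between the time\nobreakdash-integral definition of smoothness and this resolvent form is Plancherel, via $\dot{R}_{\pm}(\lambda+\mathrm{i}\delta)=\mathrm{i}\int_{0}^{\infty}\mathrm{e}^{\mathrm{i}t(\lambda+\mathrm{i}\delta)}\mathrm{e}^{-\mathrm{i}t\dot{H}_{\pm}}\,dt$ for $\delta>0$ and the analogous formula for $\delta<0$, together with the fact that $\one_{I}(\dot{H}_{\pm})$ commutes with $\dot{R}_{\pm}$.)

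It remains to remove the projection $\one_{I}(\dot{H}_{\pm})$ while inserting the scalar weight $\chi(\lambda)$. For $u\in\dot{\CE}_{\pm}$ I would split $u=\one_{I}(\dot{H}_{\pm})u+(1-\one_{I}(\dot{H}_{\pm}))u$. Since $\dot{R}_{\pm}(\lambda+\mathrm{i}\delta)\chi(\lambda)\one_{I}(\dot{H}_{\pm})u=\chi(\lambda)\one_{I}(\dot{H}_{\pm})\dot{R}_{\pm}(\lambda+\mathrm{i}\delta)u$, the first term is controlled by the previous display and $|\chi(\lambda)|\le\|\chi\|_{\infty}$. For the second term, when $\lambda\in\supp\chi$ the function $\mu\mapsto(\mu-\lambda-\mathrm{i}\delta)^{-1}(1-\one_{I}(\mu))$ has supremum norm at most $\dist(\supp\chi,\R\setminus I)^{-1}$, hence $\|\chi(\lambda)w^{-\epsilon}\dot{R}_{\pm}(\lambda+\mathrm{i}\delta)(1-\one_{I}(\dot{H}_{\pm}))u\|_{\dot{\CE}_{\pm}}\lesssim|\chi(\lambda)|\,\|u\|_{\dot{\CE}_{\pm}}$, and $\int_{\R}|\chi(\lambda)|^{2}\,d\lambda<\infty$ because $\chi$ is compactly supported. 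Adding the two estimates and recalling $\dot{R}_{\pm}(\lambda+\mathrm{i}\delta)\chi(\lambda)u=\chi(\lambda)\dot{R}_{\pm}(\lambda+\mathrm{i}\delta)u$ yields \eqref{5.2}.

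The main obstacle is the middle step, namely turning the uniform boundedness of the weighted boundary values of the resolvent into the quadratic (in the spectral parameter) integral estimate: this is precisely Kato's smoothness theorem, and it is here that the self\nobreakdash-adjointness of $\dot{H}_{\pm}$ is indispensable — the analogous implication is false for $\dot{H}$ itself, which is exactly why singular points must be dealt with separately in the non\nobreakdash-self\nobreakdash-adjoint setting. Everything else is routine bookkeeping with the cut\nobreakdash-offs $\chi$, $\chi_0$, $\one_{I}$.
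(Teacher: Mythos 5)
Your proposal is correct and takes the same route as the paper: both reduce the estimate to Yafaev's $H$-smoothness criterion \cite[Thm.~4.3.1]{Ya}, applied to the self-adjoint operator $\dot H_\pm$ on $\dot\CE_\pm$. The paper states the deduction in a single line; your elaboration — pass from Prop.~\ref{prop5.1}~$ii)$ to a uniform bound on $w^{-\epsilon}\dot R_\pm(z)w^{-\epsilon}$ over a compact $I\supset\supp\chi$, invoke Yafaev to get smoothness of $w^{-\epsilon}\one_I(\dot H_\pm)$, then split $u=\one_I(\dot H_\pm)u+(1-\one_I(\dot H_\pm))u$ and bound the spectrally far part by $\dist(\supp\chi,\R\setminus I)^{-1}$ together with $\chi\in L^2$ — is exactly the standard way to fill in the local-to-global reduction the paper leaves implicit.
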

Note that we can not apply directly \cite[Thm. 4.3.1]{Ya} to
$\dot{H}$, because the self-adjointness of the operator is crucial in
this theorem.  To discuss this further let is introduce a definition.
\begin{definition}
We call $\lambda\in \R$ a {\em regular point} of $\dot{H}$ if there exists
$\chi\in C_0^{\infty}(\R),\, \chi(\lambda)=1$  and $\nu>0$ such that:
\begin{equation}
\label{regpoints}
\sup_{\Vert u\Vert_{\dot{\CE}_{\pm}}=1,\, \nu>|\delta|> 0}\int_{\R}(\Vert
w^{-\epsilon}\dot{R}(\lambda+i\delta)\chi(\lambda)u\Vert^2_{\dot{\CE}_{\pm}}d\lambda<\infty.
\end{equation}

 Otherwise we call it a {\em singular point}. We denote by $\mathcal{S}$ the set of singular points of $\dot{H}$.
\end{definition}
\begin{remark}\label{remdeb}
 Denoting by $\mathcal{S}_{\pm}$ the analog of $\mathcal{S}$ for $\dot{H}_{\pm}$ we see that  $\CS_{\pm}=\CT_{\pm}$ by Kato's
theory of $H$-smoothness (see \cite{Ya}).
\end{remark}

In our situation  it is still possible to control the set of singular points. Recall that
\begin{equation*}
Q(z)=(1-\tilde{K}_-(z)-\tilde{K}_+(z))\dot{R}(z).
\end{equation*}
We then have
\begin{equation*}
w^{-\epsilon}Q(z)=(1-w^{-\epsilon}\tilde{K}_-(z)w^{\epsilon}-w^{-\epsilon}\tilde{K}_+(z)w^{\epsilon})w^{-\epsilon}\dot{R}(z).
\end{equation*}
Let 
\[\tilde{A}_{w}(z):=-w^{-\epsilon}\tilde{K}_-(z)w^{\epsilon}-w^{-\epsilon}\tilde{K}_+(z)w^{\epsilon}.\]
By the usual arguments $\tilde{A}_w$ is meromorphic  in $\{{\rm
  Im}z>-\delta_{\epsilon/2}\}$ with values in $\CB_{\infty}(\dot{\CE})$. Also $\Vert
\tilde{A}_w(z)\Vert_{\CB(\dot{\CE})}\le 1/2$ for ${\rm Im}z$
sufficiently large. We can therefore apply again Proposition
\ref{Prop5} to see that $(1+\tilde{A}_w(z))^{-1}$ is meromorphic for
$\{{\rm Im z}>-\delta_{\epsilon/2}\}$. We then have
\begin{equation}
\label{PE2.3}
w^{-\epsilon}\dot{R}(z)=(1+\tilde{A}_w(z))^{-1}w^{-\epsilon}Q(z).
\end{equation}
\begin{proposition}
\label{propPE2}
Assume \eqref{Hyp1}-\eqref{A2}, \eqref{TE1}-\eqref{TE3}. Let $\N_w$ be the set of real poles $\tilde{A}_w(z)$. Then 
\[
\mathcal{S}\subset \tilde{N}_{w}\cup \CT_{+}\cup \CT_{-}.
\]
 It follows that $\mathcal{S}$ is a closed and discrete set.

\end{proposition}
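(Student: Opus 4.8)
The plan is to establish the equivalent assertion that every $\lambda_{0}\in\R$ lying outside $\tilde{N}_{w}\cup\CT_{+}\cup\CT_{-}$ is a regular point of $\dot{H}$; the inclusion $\mathcal{S}\subset\tilde{N}_{w}\cup\CT_{+}\cup\CT_{-}$ is then just the contrapositive. The mechanism is to feed the factorisation \eqref{PE2.3},
\[
w^{-\epsilon}\dot{R}(z)=(1+\tilde{A}_{w}(z))^{-1}\,w^{-\epsilon}Q(z),\qquad Q(z)=i_{-}\dot{R}_{-}(z)i_{-}+i_{+}\dot{R}_{+}(z)i_{+},
\]
into the square-integral estimate \eqref{regpoints} defining a regular point, and to control the two factors separately: the prefactor $(1+\tilde{A}_{w}(z))^{-1}$ by its local boundedness away from the real poles $\tilde{N}_{w}$, and the piece $w^{-\epsilon}Q(z)$ by reduction to the selfadjoint operators $\dot{H}_{\pm}$ together with the smoothness bound of Corollary \ref{corollary5.2}.

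First I would fix $\lambda_{0}\notin\tilde{N}_{w}\cup\CT_{+}\cup\CT_{-}$ and, using that this set is closed, choose $\chi\in C_{0}^{\infty}(\R)$ with $\chi(\lambda_{0})=1$ and $\supp\chi$ in a small interval around $\lambda_{0}$ disjoint from it, so in particular $\chi\in C_{0}^{\infty}(\R\setminus\CT_{\pm})$. By Proposition \ref{Prop5} the operator $(1+\tilde{A}_{w}(z))^{-1}$ is meromorphic on $\{\mathrm{Im}\,z>-\delta_{\epsilon/2}\}$ with a discrete pole set whose intersection with $\R$ is $\tilde{N}_{w}$; since $\supp\chi$ is compact and disjoint from $\tilde{N}_{w}$, there is $\nu>0$ such that $(1+\tilde{A}_{w}(z))^{-1}$ is holomorphic and uniformly bounded on $\{\lambda\pm i\delta:\lambda\in\supp\chi,\ 0<\delta\le\nu\}$, where for the sign $-$ one uses the continuation from the lower half plane. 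Hence, for every $u\in\dot{\CE}$ and $0<|\delta|\le\nu$, \eqref{PE2.3} together with $w^{-\epsilon}Q(z)=\sum_{\pm}w^{-\epsilon}i_{\pm}\dot{R}_{\pm}(z)i_{\pm}$ gives
\[
\|w^{-\epsilon}\dot{R}(\lambda+i\delta)\chi(\lambda)u\|_{\dot{\CE}}\lesssim\sum_{\pm}\|w^{-\epsilon}i_{\pm}\dot{R}_{\pm}(\lambda+i\delta)\,i_{\pm}\chi(\lambda)u\|_{\dot{\CE}}
\]
uniformly in $\lambda$ and $\delta$.

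Next I would reduce each summand to a quantity for $\dot{H}_{\pm}$. By \eqref{TE1} the weight $w^{-\epsilon}=w(x)^{-\epsilon}$ commutes with $i_{\pm}=\chi_{\pm}(\epsilon^{-1}x)$, and $\chi(\lambda)$ is a scalar, so each summand equals $i_{\pm}\big(w^{-\epsilon}\dot{R}_{\pm}(\lambda+i\delta)\chi(\lambda)(i_{\pm}u)\big)$, where $i_{\pm}u\in\dot{\CE}_{\pm}$ is a \emph{fixed} vector with $\|i_{\pm}u\|_{\dot{\CE}_{\pm}}\lesssim\|u\|_{\dot{\CE}}$ by Lemma \ref{lemma4.3.1}. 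Applying the bounded map $i_{\pm}:\dot{\CE}_{\pm}\to\dot{\CE}$ (again Lemma \ref{lemma4.3.1}), integrating in $\lambda$, and invoking Corollary \ref{corollary5.2} — legitimate since $\chi\in C_{0}^{\infty}(\R\setminus\CT_{\pm})$ and the supremum there runs over all $\delta\neq0$ — one obtains
\[
\int_{\R}\|w^{-\epsilon}\dot{R}(\lambda+i\delta)\chi(\lambda)u\|_{\dot{\CE}}^{2}\,d\lambda\lesssim\sum_{\pm}\int_{\R}\|w^{-\epsilon}\dot{R}_{\pm}(\lambda+i\delta)\chi(\lambda)(i_{\pm}u)\|_{\dot{\CE}_{\pm}}^{2}\,d\lambda\lesssim\sum_{\pm}\|i_{\pm}u\|_{\dot{\CE}_{\pm}}^{2}\lesssim\|u\|_{\dot{\CE}}^{2},
\]
uniformly over $\|u\|_{\dot{\CE}}=1$ and $0<|\delta|\le\nu$. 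This is exactly \eqref{regpoints}, so $\lambda_{0}$ is a regular point and the inclusion follows. For the last assertion one notes that $\tilde{N}_{w}$ is the trace on $\R$ of the discrete closed pole set of a meromorphic function, that $\CT_{\pm}$ are closed and discrete (as observed after Definition \ref{faila}), that a union of finitely many closed discrete subsets of $\R$ is again closed and discrete, and that any subset of such a set inherits these properties.

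Most of the analytic content is already in place — the meromorphic Fredholm machinery of Proposition \ref{Prop5}, the selfadjointness of $\dot{H}_{\pm}$ feeding Corollary \ref{corollary5.2}, and the mapping properties of $i_{\pm}$ in Lemma \ref{lemma4.3.1} — so the proof is mostly bookkeeping and I expect no genuine analytic obstacle. The one place where some care is needed is the handling of the weight: one must use that $w^{-\epsilon}$ commutes with $i_{\pm}$ and acts on $\dot{\CE}_{\pm}$, which is exactly what \eqref{TE1} and the boundedness hypotheses packaged in \eqref{TE3} ensure; and one should keep in mind that the identity \eqref{PE2.3}, a priori an identity of meromorphic functions, is used here only for $\delta\neq0$, where every resolvent that appears is the genuine one, the continuation entering solely through the uniform bound on $(1+\tilde{A}_{w}(z))^{-1}$ in a neighbourhood of $\lambda_{0}$ avoiding $\tilde{N}_{w}$.
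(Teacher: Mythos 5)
Your argument reproduces, and fills in the details of, the first \emph{sentence} of the paper's proof: the factorisation \eqref{PE2.3}, local boundedness of $(1+\tilde{A}_{w}(z))^{-1}$ in a complex neighbourhood of $\supp\chi$ once $\supp\chi\cap\tilde{N}_{w}=\emptyset$, commutation of $w^{-\epsilon}$ with $i_{\pm}$, the mapping properties in Lemma \ref{lemma4.3.1}, and Corollary \ref{corollary5.2} together give
\[
\sup_{\|u\|_{\dot\CE}=1,\;0<|\delta|\le\nu}\int_\R\big\|w^{-\epsilon}\dot{R}(\lambda+i\delta)\,\chi(\lambda)\,u\big\|^{2}_{\dot\CE}\,d\lambda<\infty .
\]
You then declare this to be exactly \eqref{regpoints} and stop. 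That is where the gap lies. The paper's proof of this very proposition opens with the statement that the estimate with the \emph{scalar} cut-off $\chi(\lambda)$ follows from \eqref{PE2.3} and Corollary \ref{corollary5.2}, and that ``we therefore only have to show that we can replace $\chi(\lambda)$ by $\chi(\dot{H})$''; i.e.\ the operative form of the regular-point estimate involves the \emph{operator} cut-off $\chi(\dot H)$. This is also what Proposition \ref{propPE3} requires: the Plancherel identity used there produces $\int_\R\|w^{-\epsilon}(\dot{R}(\lambda+i\delta)-\dot{R}(\lambda-i\delta))\chi(\dot H)f\|^{2}\,d\lambda$, and this must be uniformly bounded in $\delta$. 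The ``$\chi(\lambda)$'' in the displayed definition \eqref{regpoints} is a misprint, and the main content of the paper's proof --- the part you have omitted --- is precisely the passage from $\chi(\lambda)$ to $\chi(\dot H)$.

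This is a genuine missing step, not cosmetics: $\chi(\dot H)$ does not commute with $\dot R(\lambda+i\delta)$ in any way that would let you pull the scalar $\chi(\lambda)$ out for free. The paper proceeds by choosing $\tilde\chi\in C_0^\infty$ with $\tilde\chi\chi=\chi$, writing $\chi(\dot H)=\tilde\chi(\lambda)\chi(\dot H)+(1-\tilde\chi(\lambda))\chi(\dot H)$, treating the first summand by the $\chi(\lambda)$ estimate you established, and for the second proving the uniform bound
\[
\big\|w^{-\epsilon}\dot{R}(\lambda\pm i\delta)\big(1-\tilde\chi(\lambda)\big)\chi(\dot H)\big\|_{\CB(\dot\CE)}\lesssim\langle\lambda\rangle^{-1},
\]
using the smooth functional calculus of Proposition \ref{prop16} applied to $f_\lambda^\delta(x)=\langle\lambda\rangle\,(x-(\lambda+i\delta))^{-1}(1-\tilde\chi(\lambda))\chi(x)$, whose Schwartz semi-norms are bounded uniformly in $(\lambda,\delta)$ because $(1-\tilde\chi(\lambda))\chi(x)$ vanishes to all orders on the diagonal $x=\lambda$. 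You should incorporate this second step to have a complete proof.
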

\proof
The estimate (\ref{regpoints}) with $\chi(\lambda)$ instead of $\chi(\dot{H})$ follows from
\eqref{PE2.3} and Corollary \ref{corollary5.2}. We therefore only have to show that we can replace
$\chi(\lambda)$ by $\chi(\dot{H})$. We choose $\tilde{\chi}\in C_0^{\infty}(I)$ with 
$\tilde{\chi}\chi=\chi$ and  write~:
\begin{align}
\label{estwithchi}
\Vert w^{-\epsilon}\dot{R}(\lambda\pm i\delta) \chi(\dot{H})  f \Vert^2_{\dot{\CE}}
&\lesssim \Vert w^{-\epsilon}\dot{R}(\lambda\pm i\delta)\tilde{\chi}(\lambda)\chi(\dot{H})  f\Vert^2_{\dot{\CE}}\nonumber\\
+& \Vert w^{-\epsilon}\dot{R}(\lambda\pm i\delta)(1-\tilde{\chi}(\lambda))\chi(\dot{H}) f\Vert^2_{\dot{\CE}}.
\end{align}
The estimate for the first term follows from the estimate with
$\chi(\lambda)$. Let us treat the second term. We claim
\begin{eqnarray*}
\Vert
w^{-\epsilon}\dot{R}(\lambda\pm\i\delta)(1-\tilde{\chi}(\lambda))\chi(\dot{H})
\Vert_{\CB(\dot{\CE})}\lesssim \langle\lambda\rangle^{-1},
\end{eqnarray*}
uniformly in $\delta$. In fact let
\begin{eqnarray*}
f^{\epsilon}_{\lambda}(x)=\langle\lambda\rangle\frac{1}{x-(\lambda+\i\delta)}(1-\tilde{\chi}(\lambda))\chi(x).
\end{eqnarray*}
It is sufficient to show that all the semi-norms $\Vert f^{\epsilon}_{\lambda}\Vert_m$ are uniformly bounded with respect to $\lambda,\, \delta$. Note that $g_{\lambda}(x)=(1-\tilde{\chi}(\lambda))\chi(x)$ vanishes to all orders at $x=\lambda$. If $\supp\chi\subset [-C,C]$ this is enough to ensure that $\Vert f^{\epsilon}_{\lambda}\Vert_m$ is uniformly bounded in $\lambda\in [-2C,2C]$ and $\delta>0$. For $\vert\lambda\vert\ge 2C$ we observe that 
\begin{eqnarray*}
\left\vert\langle\lambda\rangle\frac{1}{x-(\lambda+\i\delta)}\right\vert\lesssim 1
\end{eqnarray*}
with analogous estimates for the derivatives. This gives the
integrability of the second term in
\eqref{estwithchi}. 
\qed

\subsection{Propagation estimates}
As an immediate consequence of Proposition \ref{prop5.1} we obtain :
\begin{proposition}
\label{prop5.2}
Assume \eqref{Hyp1}-\eqref{A2}, \eqref{TE1}-\eqref{TE3}. Let $\epsilon>0$. 
\begin{enumerate}
\item[--]
for all $\chi\in C_0^{\infty}(\R\setminus \CT)$ and  $k\in \N$  we have 
\begin{equation}
\label{5.1a)pm}
\Vert w^{-\epsilon}e^{-it\dot{H}}\chi(\dot{H})w^{-\epsilon}\Vert_{\CB(\dot{\CE})}\lesssim
\<t\>^{-k}.
\end{equation}
\item[--] 
 for all  $\chi\in C_0^{\infty}(\R\setminus \CT_{\pm})$ and $k\in \N$  we
have 
\begin{equation}
\label{5.1a)}
\Vert w^{-\epsilon}e^{-it\dot{H}_{\pm}}\chi(\dot{H}_{\pm})w^{-\epsilon}\Vert_{\CB(\dot{\CE}_{\pm})}\lesssim
\<t\>^{-k}.
\end{equation}
\end{enumerate}
\end{proposition}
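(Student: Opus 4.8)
\proof

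\textbf{Strategy.} Both estimates instantiate the classical mechanism by which a limiting absorption principle for the powers of the resolvent yields polynomial time decay (compare the proof of \cite[Thm.~4.3.1]{Ya} in the self-adjoint case). The plan is: (a) represent $w^{-\epsilon}e^{-it\dot{H}}\chi(\dot{H})w^{-\epsilon}$ by a Stone-type formula involving only the boundary values $\dot{R}(\lambda\pm i0)$ of the resolvent on $\supp\chi\subset\R\setminus\CT$; (b) integrate by parts $k$ times in $\lambda$, each integration producing a factor $(it)^{-1}$ and raising the power of the resolvent by one; (c) bound the resulting integrand, uniformly in $t$ and on a fixed compact $\lambda$-set, by Proposition \ref{prop5.1}. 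I carry this out for $\dot{H}$; the statement for $\dot{H}_{\pm}$ follows verbatim, with $\CT_{\pm}$ in place of $\CT$ and Proposition \ref{prop5.1} $ii)$ used in step (c) — it is anyway a special case of Kato's theory of $H$-smoothness.

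\textbf{Step 1: a Stone-type formula.} Fix $\chi\in\coinf(\R\setminus\CT)$ and choose an almost analytic extension $\tilde{\chi}\in\coinf(\C)$ of $\chi$ whose support is a sufficiently thin complex neighborhood of $\supp\chi$, in particular disjoint from $\CT$ and from the finite set $\sigma_{pp}^{\C}(\dot{H})$. By Proposition \ref{prop16},
\[
w^{-\epsilon}\chi(\dot{H})w^{-\epsilon}=\frac{1}{2\pi i}\int_{\C}\bar\partial\tilde{\chi}(z)\,w^{-\epsilon}\dot{R}(z)w^{-\epsilon}\,dz\wedge d\bar{z}.
\]
On $\supp\tilde{\chi}$ the operator-valued function $z\mapsto w^{-\epsilon}\dot{R}(z)w^{-\epsilon}$ is holomorphic for ${\rm Im}\,z>0$ (the only non-real poles of $\dot{R}$ lie in $\sigma_{pp}^{\C}(\dot{H})$, which $\supp\tilde{\chi}$ avoids), and by the construction of Sect.~\ref{SecME} (see \eqref{Resweight-x} and Proposition \ref{prop5.1}) it extends across $\R\setminus\CT$ to a function $G_{+}$ holomorphic on a one-sided neighborhood; the continuation from ${\rm Im}\,z<0$ gives a function $G_{-}$, and on $\R\setminus\CT$ one has $G_{\pm}(\lambda)=w^{-\epsilon}\dot{R}(\lambda\pm i0)w^{-\epsilon}$. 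Splitting the integral into $\{{\rm Im}\,z>0\}$ and $\{{\rm Im}\,z<0\}$, using $\bar\partial(\tilde{\chi}\,G_{\pm})=\bar\partial\tilde{\chi}\cdot G_{\pm}$ on each half, and applying Stokes' formula (the integrand has compact support), the two area integrals collapse onto the real axis:
\[
w^{-\epsilon}\chi(\dot{H})w^{-\epsilon}=\frac{1}{2\pi i}\int_{\R}\chi(\lambda)\,w^{-\epsilon}\bigl(\dot{R}(\lambda+i0)-\dot{R}(\lambda-i0)\bigr)w^{-\epsilon}\,d\lambda.
\]
Since $\lambda\mapsto e^{-it\lambda}\chi(\lambda)$ again lies in $\coinf(\R\setminus\CT)$, and since the functional calculus of Proposition \ref{prop16} is multiplicative and $e^{-it\dot{H}}$ is a $C_{0}$-group, so that this function applied to $\dot{H}$ reproduces $e^{-it\dot{H}}\chi(\dot{H})$, the same computation gives
\[
w^{-\epsilon}e^{-it\dot{H}}\chi(\dot{H})w^{-\epsilon}=\frac{1}{2\pi i}\int_{\R}e^{-it\lambda}\chi(\lambda)\,w^{-\epsilon}\bigl(\dot{R}(\lambda+i0)-\dot{R}(\lambda-i0)\bigr)w^{-\epsilon}\,d\lambda.
\]

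\textbf{Step 2: integration by parts and conclusion.} On $\R\setminus\CT$ the boundary values are smooth in $\lambda$, with $\tfrac{d^{j}}{d\lambda^{j}}G_{\pm}(\lambda)=j!\,w^{-\epsilon}\dot{R}^{\,j+1}(\lambda\pm i0)w^{-\epsilon}$; this follows from $\tfrac{d}{dz}\dot{R}(z)=\dot{R}(z)^{2}$ in each half-plane by uniqueness of holomorphic continuation, the right-hand sides extending across $\R\setminus\CT$ by Proposition \ref{prop5.1}. Integrating by parts $k$ times in the last display of Step 1 (all boundary terms vanish since $\chi$ has compact support in $\R\setminus\CT$) yields
\[
w^{-\epsilon}e^{-it\dot{H}}\chi(\dot{H})w^{-\epsilon}=\frac{1}{2\pi i\,(it)^{k}}\int_{\R}e^{-it\lambda}\,\frac{d^{k}}{d\lambda^{k}}\Bigl[\chi(\lambda)\bigl(G_{+}(\lambda)-G_{-}(\lambda)\bigr)\Bigr]\,d\lambda.
\]
By the Leibniz rule the integrand is a finite sum of terms proportional to $\chi^{(k-j)}(\lambda)\,w^{-\epsilon}\bigl(\dot{R}^{\,j+1}(\lambda+i0)-\dot{R}^{\,j+1}(\lambda-i0)\bigr)w^{-\epsilon}$, $0\le j\le k$. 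As $\chi^{(k-j)}\in\coinf(\R\setminus\CT)$, Proposition \ref{prop5.1} (with power $j+1$) bounds $\Vert w^{-\epsilon}\chi^{(k-j)}(\lambda)\dot{R}^{\,j+1}(\lambda\pm i0)w^{-\epsilon}\Vert_{\CB(\dot{\CE})}$ uniformly in $\lambda$, and all these operators are supported in one fixed compact interval; hence the integral is bounded uniformly in $t$ and $\Vert w^{-\epsilon}e^{-it\dot{H}}\chi(\dot{H})w^{-\epsilon}\Vert_{\CB(\dot{\CE})}\lesssim|t|^{-k}$ for $|t|\ge 1$. For $|t|\le 1$ the left-hand side is $\lesssim 1$ (take $k=0$ above, or use that $e^{-it\dot{H}}$ is a $C_{0}$-group and $\chi(\dot{H})$ is bounded); combining the two ranges gives \eqref{5.1a)pm}, and the same argument with Proposition \ref{prop5.1} $ii)$ gives \eqref{5.1a)}.

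\textbf{Main obstacle.} The delicate point is Step 1: for the non-self-adjoint $\dot{H}$ one must justify turning the Helffer--Sjöstrand representation into a Stone-type formula. This needs precisely the \emph{two-sided} holomorphic extendability of $w^{-\epsilon}\dot{R}(z)w^{-\epsilon}$ across $\supp\chi\subset\R\setminus\CT$ supplied by Sect.~\ref{SecME}, so that the contour can be collapsed to the real axis with the two boundary values entering with opposite signs. Once this and the uniform bounds of Proposition \ref{prop5.1} are available, the remainder is the standard ``limiting absorption $\Rightarrow$ decay'' computation, which uses no self-adjointness of $\dot{H}$.
\qed
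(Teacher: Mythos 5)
Your proof is correct and follows essentially the same route as the paper's: a Stone-type representation for $w^{-\epsilon}e^{-it\dot{H}}\chi(\dot{H})w^{-\epsilon}$ in terms of the jump of the resolvent across $\R$, followed by $k$-fold integration by parts in $\lambda$, with the resulting resolvent powers controlled uniformly by Proposition \ref{prop5.1}. The paper simply asserts the Stone formula, whereas you derive it from the Helffer--Sj\"ostrand representation (Proposition \ref{prop16}) and Stokes' theorem, exploiting the two-sided holomorphic extension of $w^{-\epsilon}\dot{R}(z)w^{-\epsilon}$ across $\R\setminus\CT$; this is a genuine clarification of a step the paper leaves implicit and is the right way to see it. One minor point worth tightening: your appeal to ``multiplicativity'' to identify $f_t(\dot{H})=e^{-it\dot{H}}\chi(\dot{H})$ for $f_t(\lambda)=e^{-it\lambda}\chi(\lambda)$ is not literally covered by the stated multiplicativity (which holds for pairs of $\coinf$ functions, and $e^{-it\lambda}$ is not compactly supported); the clean justification is that both sides satisfy the same first-order ODE in $t$ with the same initial value $\chi(\dot{H})$, using that $\dot{H}$ commutes with the functional calculus and generates the $C_0$-group.
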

\proof

We only prove  $i)$, the proof of  $ii)$ being analogous. We have 
\begin{equation*}
w^{-\epsilon}e^{-it\dot{H}}\chi(\dot{H})w^{-\epsilon}=\frac{1}{2\pi
  i}\int\chi(\lambda)e^{-it\lambda}w^{-\epsilon}(\dot{R}(\lambda+i0)-\dot{R}(\lambda-i0))w^{-\epsilon}d\lambda.
\end{equation*}
Integration by parts gives :
\begin{equation*}
w^{-\epsilon}e^{-it\dot{H}}\chi(\dot{H})w^{-\epsilon}=\frac{1}{2\pi
  i}\frac{1}{(it)^k}\sum_{\pm}\sum_{j=1}^{k+1}\pm
C^{j-1}_k\int\chi_j(\lambda)e^{-it\lambda}w^{-\epsilon}\dot{R}^j(\lambda\pm
i0)w^{-\epsilon}d\lambda
\end{equation*}
with $\chi_j=\chi^{(k+1-j)}$. The estimate then follows from
Proposition \ref{prop5.1}.
\qed 

\begin{proposition}
\label{propPE3}
Assume \eqref{Hyp1}-\eqref{A2}, \eqref{TE1}-\eqref{TE3}. Let
$\epsilon>0$. Then we have for all $\chi\in
C_0^{\infty}(\R\setminus\CS)$:
\begin{equation}
\label{5.10}
\int_{\R}\Vert
w^{-\epsilon}e^{-it\dot{H}}\chi(\dot{H})\varphi\Vert^2_{\dot{\CE}}dt\lesssim
\Vert\varphi\Vert^2_{\dot{\CE}}.
\end{equation}
\end{proposition}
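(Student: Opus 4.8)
The plan is to deduce \eqref{5.10} from a Kato‑type smoothness estimate for $\dot{H}$, exactly as in the selfadjoint case, the role of the singular points $\CS$ being to isolate where the required weighted resolvent bound outside the thresholds can fail. The proof would go in three steps: a uniform weighted resolvent estimate, a Plancherel identity turning it into a weighted $L^2_t$ bound for the regularized evolution $e^{-\delta|t|}e^{-it\dot{H}}\chi(\dot{H})$, and removal of the regularization.

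\emph{Step 1 (uniform weighted resolvent estimate).} I would first show that there are $\nu>0$ and $C$ with
\[
\sup_{0<|\delta|<\nu}\int_{\R}\|w^{-\epsilon}\dot{R}(\lambda+i\delta)\chi(\dot{H})\varphi\|^2_{\dot{\CE}}\,d\lambda\le C\|\varphi\|^2_{\dot{\CE}},\qquad\varphi\in\dot{\CE}.
\]
Since $\supp\chi$ is a compact subset of $\R\setminus\CS$, every $\lambda_0\in\supp\chi$ is a regular point of $\dot{H}$, so by definition (the bound being the one extracted in the proof of Proposition \ref{propPE2} from \eqref{PE2.3}, Corollary \ref{corollary5.2} and the $\langle\lambda\rangle^{-1}$ estimate there) there are a cut‑off $\chi_0\in C_0^{\infty}(\R)$, an open set $U_0\ni\lambda_0$ with $\chi_0\equiv 1$ on $U_0$, and $\nu_0>0$ such that $\sup_{0<|\delta|<\nu_0}\int_{\R}\|w^{-\epsilon}\dot{R}(\lambda+i\delta)\chi_0(\dot{H})u\|^2_{\dot{\CE}}\,d\lambda\lesssim\|u\|^2_{\dot{\CE}}$ for all $u\in\dot{\CE}$. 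Covering $\supp\chi$ by finitely many such $U_1,\dots,U_N$ and picking a subordinate partition of unity $\{\theta_j\}$ with $\sum_j\theta_j\equiv 1$ on $\supp\chi$ and $\chi_j\equiv 1$ on $\supp\theta_j$, one has $\chi=\sum_j\chi\theta_j$ and $\chi_j(\dot{H})(\chi\theta_j)(\dot{H})=(\chi\theta_j)(\dot{H})$ by the algebra morphism property of Proposition \ref{prop16}; applying the local estimate with $u=(\chi\theta_j)(\dot{H})\varphi$, summing and using Cauchy–Schwarz gives the claim with $\nu=\min_j\nu_j$ (recall $w^{-\epsilon}\in\CB(\dot{\CE})$ and $\chi(\dot{H})\in\CB(\dot{\CE})$).

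\emph{Step 2 (Plancherel).} Fix $\delta\in(0,\nu)$ and set $g_\delta(\lambda):=w^{-\epsilon}\big(\dot{R}(\lambda-i\delta)-\dot{R}(\lambda+i\delta)\big)\chi(\dot{H})\varphi$, which lies in $L^2(\R;\dot{\CE})$ by Step 1. Choosing the almost analytic extension $\tilde\chi$ with support near $\R$ so that $\supp\tilde\chi\cap\sigma_{pp}^{\C}(\dot{H})=\emptyset$, the Poisson‑kernel limit $\frac1{2\pi i}\int_{-M}^M e^{-it\lambda}\big(\frac1{s-\lambda+i\delta}-\frac1{s-\lambda-i\delta}\big)\,d\lambda\to -e^{-its}e^{-\delta|t|}$ in $C_0^{\infty}(\R_s)$ as $M\to\infty$ (the tails being controlled by one integration by parts in $\lambda$, as the term $f^{\epsilon}_\lambda$ is treated in the proof of Proposition \ref{propPE2}), together with the continuity of the functional calculus of Proposition \ref{prop16} and the identity $(e^{-it\cdot}\chi)(\dot{H})=e^{-it\dot{H}}\chi(\dot{H})$, yields the pointwise‑in‑$t$ identity
\[
-\frac1{2\pi i}\int_{\R}e^{-it\lambda}g_\delta(\lambda)\,d\lambda=e^{-\delta|t|}\,w^{-\epsilon}e^{-it\dot{H}}\chi(\dot{H})\varphi .
\]
Hence $t\mapsto e^{-\delta|t|}w^{-\epsilon}e^{-it\dot{H}}\chi(\dot{H})\varphi$ is, up to a constant, the vector‑valued inverse Fourier transform of $g_\delta\in L^2(\R;\dot{\CE})$, and Plancherel together with Step 1 gives
\[
\int_{\R}e^{-2\delta|t|}\|w^{-\epsilon}e^{-it\dot{H}}\chi(\dot{H})\varphi\|^2_{\dot{\CE}}\,dt=\frac1{2\pi}\int_{\R}\|g_\delta(\lambda)\|^2_{\dot{\CE}}\,d\lambda\le\frac{2C}{\pi}\|\varphi\|^2_{\dot{\CE}},
\]
uniformly in $\delta\in(0,\nu)$.

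\emph{Step 3 and the main difficulty.} Letting $\delta\downarrow 0$, $e^{-2\delta|t|}\uparrow 1$, so monotone convergence gives \eqref{5.10} with constant $\tfrac{2C}{\pi}$. The only real work is Step 1: turning the pointwise statement ``$\lambda\notin\CS\Rightarrow\lambda$ regular'' into a single estimate for the given $\chi$, which rests on $\CS$ being closed and discrete (Proposition \ref{propPE2}), on compactness of $\supp\chi$, and on the commutation of the scalar cut‑offs $\chi_j(\dot{H})$; the behaviour as $|\lambda|\to\infty$ is absorbed by the same $\langle\lambda\rangle^{-1}$ functional‑calculus bound used in the proof of Proposition \ref{propPE2}. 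The manipulations in Step 2 (the $C_0^\infty$‑limit of the Poisson kernel and the reduction of $e^{-\delta|t|}e^{-it\dot{H}}\chi(\dot{H})$ to the functional calculus) are routine and parallel the resolvent representation used in the proof of Proposition \ref{prop5.2}.
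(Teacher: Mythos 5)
Your proof is correct and follows essentially the same approach as the paper: one writes the resolvent difference $w^{-\epsilon}(\dot{R}(\lambda+i\delta)-\dot{R}(\lambda-i\delta))\chi(\dot{H})\varphi$ as the Fourier transform in $t$ of the damped evolution $e^{-\delta|t|}w^{-\epsilon}e^{-it\dot{H}}\chi(\dot{H})\varphi$, applies Plancherel, and observes that the $\lambda$-integral is uniformly bounded in $\delta$ since $\supp\chi$ consists of regular points. The only material difference is that you spell out in Step~1 the compactness-and-partition-of-unity argument needed to turn the pointwise definition of a regular point into a single uniform estimate for the given $\chi$; the paper's proof leaves this implicit in the sentence ``the lhs \ldots is uniformly bounded in $\delta$ with $\delta$ small enough.'' Your Step~2 is likewise a more elaborate justification of the same Plancherel identity the paper states directly.
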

\proof
We write~:
\begin{eqnarray*}
w^{-\epsilon}(\dot{R}(\lambda+\i\delta)-\dot{R}(\lambda-\i\delta))\chi(\dot{H})f=i\int_{\R}w^{-\epsilon}\e^{-\delta\vert
  t\vert}\e^{\i \lambda t}\e^{-\i \dot{H}t}\chi(\dot{H}) f dt.
\end{eqnarray*}
By Plancherel's formula this yields: 
\begin{equation*}
\int_{\R}\Vert
  w^{-\epsilon}(\dot{R}(\lambda+\i\delta)-\dot{R}(\lambda-\i\delta))\chi(\dot{H})
  f\Vert^2_{\dot{\CE}}d\lambda
  =\int_{\R}\e^{-2\delta \vert t\vert}\Vert  w^{-\epsilon}\e^{-\i t\dot{H}}\chi(\dot{H}) f\Vert^2_{\dot{\CE}}dt.
\end{equation*}
The lhs of this equation is uniformly bounded in $\delta$ with
$\delta$ small enough.
\qed
\begin{corollary}
If \eqref{Hyp1}-\eqref{A2}, \eqref{TE1}-\eqref{TE3} hold and
$\lambda$ is a real eigenvalue of $\dot{H}$ then $\lambda\in\CS$.
\end{corollary}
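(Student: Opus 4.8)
The plan is to show directly that a real eigenvalue $\lambda_{0}$ of $\dot{H}$ violates the estimate \eqref{regpoints} for every admissible cutoff, hence is a singular point. First I would fix $\psi\in D(\dot{H})$ with $\psi\neq 0$, $\dot{H}\psi=\lambda_{0}\psi$, normalized so that $\Vert\psi\Vert_{\dot{\CE}}=1$. By Propositions \ref{lemmaresHdot} and \ref{ResH} the non-real spectrum $Z$ of $\dot{H}$ is a finite set bounded away from $\R$, so I can pick $\nu>0$ with $\lambda+i\delta\in\rho(\dot{H})$ for all $\lambda\in\R$ and all $0<|\delta|<\nu$; for such $z=\lambda+i\delta$ the eigenvalue relation gives at once $\dot{R}(z)\psi=(\lambda_{0}-z)^{-1}\psi$.

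Next, given an arbitrary $\chi\in C_{0}^{\infty}(\R)$ with $\chi(\lambda_{0})=1$, I would choose an almost analytic extension $\tilde{\chi}$ with $\supp\tilde{\chi}\cap\sigma_{pp}^{\C}(\dot{H})=\emptyset$ (possible by the Remark following Proposition \ref{prop16}) and substitute $\dot{R}(z)\psi=(\lambda_{0}-z)^{-1}\psi$ into the Helffer--Sj\"ostrand formula of Proposition \ref{prop16}, obtaining $\chi(\dot{H})\psi=\chi(\lambda_{0})\psi=\psi$. Then, for $0<|\delta|<\nu$,
\[
\int_{\R}\Vert w^{-\epsilon}\dot{R}(\lambda+i\delta)\chi(\dot{H})\psi\Vert_{\dot{\CE}}^{2}\,d\lambda
=\Vert w^{-\epsilon}\psi\Vert_{\dot{\CE}}^{2}\int_{\R}\frac{d\lambda}{(\lambda-\lambda_{0})^{2}+\delta^{2}}
=\frac{\pi}{|\delta|}\,\Vert w^{-\epsilon}\psi\Vert_{\dot{\CE}}^{2}.
\]
Since $w$ is positive with bounded inverse, $w^{-\epsilon}$ is injective on $\dot{\CE}$, so $\Vert w^{-\epsilon}\psi\Vert_{\dot{\CE}}>0$ and the right-hand side blows up as $\delta\to 0$. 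Because $\psi$ is a unit vector of $\dot{\CE}$, this forces the supremum in \eqref{regpoints} to be infinite for this $\chi$ and every $\nu>0$; as $\chi$ was an arbitrary cutoff equal to $1$ at $\lambda_{0}$, no admissible pair exists and $\lambda_{0}\in\CS$.

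I do not anticipate any genuine difficulty; the whole point is that a bona fide bound state produces a simple pole of $\dot{R}(z)$ at $\lambda_{0}$, which is incompatible with the local smoothness bound defining regular points — the non-selfadjoint analogue of the classical fact that eigenvalues fail to be points of $H$-smoothness. The only items deserving a sentence of justification are the inclusion $\lambda+i\delta\in\rho(\dot{H})$ for small $\delta\neq 0$ (finiteness of $Z$ from Proposition \ref{ResH}), the identity $\chi(\dot{H})\psi=\psi$ (a one-line computation from the functional calculus, valid since $\dot{R}(\cdot)\psi$ is scalar-valued here because $\lambda_{0}$ is real), and the injectivity of $w^{-\epsilon}$ (immediate since $w>0$ has bounded inverse, whence $0\notin\sigma_{pp}(w^{-\epsilon})$).
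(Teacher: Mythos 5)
Your proof is correct, and it reaches the conclusion by the same underlying mechanism the paper relies on. The paper omits a written proof, but the placement of the corollary immediately after Proposition~\ref{propPE3} indicates the intended argument: if $\lambda_0\notin\CS$ one picks $\chi\in C_0^\infty(\R\setminus\CS)$ with $\chi(\lambda_0)=1$, notes $\chi(\dot{H})\psi=\psi$ and $e^{-it\dot{H}}\psi=e^{-it\lambda_0}\psi$, and then \eqref{5.10} forces $\int_\R\Vert w^{-\epsilon}\psi\Vert^2_{\dot\CE}\,dt<\infty$, a contradiction since $w^{-\epsilon}$ is injective. You argue at the resolvent level directly from the definition of regular points rather than invoking Proposition~\ref{propPE3}; since the latter is itself obtained from the boundary-value estimate by Plancherel, the two routes are interchangeable, and yours is marginally more self-contained (it does not need closedness of $\CS$ to select the cutoff). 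One small remark for precision: the quantity bounded in \eqref{regpoints} carries the scalar cutoff $\chi(\lambda)$ rather than $\chi(\dot{H})$, so the Helffer--Sj\"ostrand verification that $\chi(\dot{H})\psi=\psi$ is actually unnecessary for the strict form of the definition --- you may simply pull the scalar $\chi(\lambda)$ out of the norm and use that $\int_\R|\chi(\lambda)|^2\bigl((\lambda-\lambda_0)^2+\delta^2\bigr)^{-1}d\lambda\sim\pi|\delta|^{-1}$ as $\delta\to0$ because $\chi(\lambda_0)=1$. Either way the computation is sound; your identification $\chi(\dot{H})\psi=\chi(\lambda_0)\psi$ via the Cauchy--Pompeiu formula is correct, and it is the version needed to match Proposition~\ref{propPE3}.
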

\subsection{Estimates on singular points}
\label{sec5.3}
It will be important in applications  to prove that $\dot{H}$ has no singular points.  To do this we will use the following proposition.
\begin{proposition}
\label{thm5.3.1}
Assume \eqref{Hyp1}-\eqref{A2}, \eqref{TE1}-\eqref{TE3}. Then
\[
\CS\subset \CT\cup \CT_{-}\cup \CT_{+}.
\]
\end{proposition}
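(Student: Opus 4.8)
Throughout write $\hat R(z):=w^{-\epsilon}\dot R(z)w^{-\epsilon}$ and $\hat Q(z):=w^{-\epsilon}Q(z)w^{-\epsilon}$, both viewed as meromorphic $\CB(\dot\CE)$‑valued functions on $\{\mathrm{Im}\,z>-\delta_{\epsilon/2}\}$.

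\smallskip
\noindent\textbf{Reduction.} By Proposition~\ref{propPE2} we already have $\CS\subset\tilde N_w\cup\CT_+\cup\CT_-$, where $\tilde N_w$ is the set of real poles of $(1+\tilde A_w(z))^{-1}$. So the whole content is to prove that every real $z_0\in\tilde N_w$ with $z_0\notin\CT_+\cup\CT_-$ already lies in $\CT$, i.e.\ is a pole of $\hat R(z)$. Near such a $z_0$ the operators $\tilde A_w(z)$, $w^\epsilon K(z)w^{-\epsilon}$ and $\hat Q(z)$ are holomorphic — their only possible real poles lie in $\CT_\pm$ by \eqref{TE3}{\it c)}, Proposition~\ref{prop4.1} and Remark~\ref{remdeb} — and $\tilde A_w(z)$, $w^\epsilon K(z)w^{-\epsilon}$ take values in $\CB_\infty(\dot\CE)$; since $(1+\tilde A_w(z))^{-1}$ has a genuine pole at $z_0$, Proposition~\ref{Prop5} (the analytic Fredholm alternative) shows $1+\tilde A_w(z_0)$ is a non‑invertible, index‑zero Fredholm operator, with leading Laurent coefficient $P\neq0$ of finite rank satisfying $\operatorname{Ran}P\subset\ker(1+\tilde A_w(z_0))$ and $\ker P\supset\operatorname{Ran}(1+\tilde A_w(z_0))$.

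\smallskip
\noindent\textbf{The two factorizations and the argument by contradiction.} From \eqref{PE2.3} one gets, near $z_0$,
\[
\hat Q(z)=(1+\tilde A_w(z))\,\hat R(z),\qquad \hat R(z)=(1+\tilde A_w(z))^{-1}\hat Q(z),
\]
while from \eqref{Resweight-x}, together with the fact that $1-w^\epsilon K_\pm j_\pm w^{-\epsilon}$ is a holomorphic (off $\CT_\pm$) inverse of $1+w^\epsilon K_\pm j_\pm w^{-\epsilon}$, one gets
\[
\hat R(z)=\hat Q(z)\,(1+w^\epsilon K(z)w^{-\epsilon})^{-1},\qquad \hat Q(z)=\hat R(z)\,(1+w^\epsilon K(z)w^{-\epsilon}),
\]
hence the intertwining $(1+\tilde A_w(z))\,\hat R(z)=\hat R(z)\,(1+w^\epsilon K(z)w^{-\epsilon})$. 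Suppose $z_0\notin\CT$, so $\hat R(z)$ is holomorphic at $z_0$; passing $(\dot H-z)\dot R(z)=\dot R(z)(\dot H-z)=1$ to the boundary shows that $\dot R(z_0+i0)$ is a two‑sided inverse of $\dot H-z_0$ between the relevant weighted homogeneous energy spaces, so that $\hat R(z_0+i0)$ is injective with dense range. Feeding this into the intertwining at $z=z_0$: the range of the left‑hand side has closure $\operatorname{Ran}(1+\tilde A_w(z_0))$, a proper closed subspace, so if $1+w^\epsilon K(z_0)w^{-\epsilon}$ were invertible the right‑hand side would have range $\hat R(z_0+i0)(\dot\CE)$, dense in $\dot\CE$ — impossible. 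Hence $1+w^\epsilon K(z_0)w^{-\epsilon}$ is also non‑invertible and $(1+w^\epsilon K(z)w^{-\epsilon})^{-1}$ has a genuine pole at $z_0$, say with leading coefficient $\tilde P\neq0$. From $\hat R(z)=(1+\tilde A_w(z))^{-1}\hat Q(z)$ holomorphic one reads off $P\,\hat Q(z_0)=0$, and from $\hat R(z)=\hat Q(z)(1+w^\epsilon K(z)w^{-\epsilon})^{-1}$ holomorphic one reads off $\hat Q(z_0)\,\tilde P=0$; since $\hat Q(z_0)=\hat R(z_0+i0)(1+w^\epsilon K(z_0)w^{-\epsilon})$ with $\hat R(z_0+i0)$ injective, $\hat Q(z_0)\tilde P=0$ already gives $0\neq\hat R(z_0+i0)\tilde P\in\ker(1+\tilde A_w(z_0))\cap\operatorname{Ran}\hat R(z_0+i0)$. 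The plan is then to convert such a vector — equivalently, a suitable generalized kernel vector of $1+w^\epsilon K(z)w^{-\epsilon}$ at $z_0$ — into a genuine \emph{resonant state} $\Psi\in w^\epsilon\dot\CE\setminus\{0\}$ of $\dot H$ at $z_0$, using $(\dot H-z)Q(z)=1+K(z)$, the identities $\dot H i_\pm=\dot H_\pm i_\pm$ and $[\dot H_\pm,i_\pm]=[\dot H,i_\pm]$ (which follow from \eqref{TE1}--\eqref{TE3} since $k_\pm i_\pm=ki_\pm$ and $i_+j_-=i_-j_+=0$ force $h_\pm i_\pm=hi_\pm$) and the meromorphic boundary values of $\dot R_\pm$. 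A nonzero such $\Psi$ is incompatible with $\hat R(z)$ being holomorphic at $z_0$, because then $\dot R(z_0+i0)$ would invert $\dot H-z_0$ and force $\Psi=\dot R(z_0+i0)(\dot H-z_0)\Psi=0$. This contradiction proves $z_0\in\CT$, hence $\tilde N_w\setminus(\CT_+\cup\CT_-)\subset\CT$ and thus $\CS\subset\CT\cup\CT_+\cup\CT_-$.

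\smallskip
\noindent\textbf{Where the difficulty lies.} The soft part (the reduction, the compactness and holomorphy of the factors, and the range argument pinning down $z_0\in S_w$) is routine once the mapping properties between the weighted homogeneous energy spaces are bookkept using \eqref{TE1}--\eqref{TE3} and Lemma~\ref{lemma4.3.1}. The genuine obstacle is the last step: ruling out that the pole of $(1+\tilde A_w(z))^{-1}$ at $z_0$ is \emph{cancelled} against a zero of $\hat Q(z)$ — i.e.\ producing a \emph{nonzero} resonant state. In the self‑adjoint / Krein‑space situation of \cite{GGH1,GGH2} this non‑cancellation is automatic (critical points of a self‑adjoint operator on a Krein space coincide with the relevant real resonances), but here it is exactly the extra feature of the non‑self‑adjoint setting that must be dealt with; the robust way to do so is to run the construction with the finite‑rank leading Laurent coefficients $P$, $\tilde P$ rather than with a single kernel vector, and to exploit that $\hat R(z_0+i0)$ is both injective and of dense range whenever $z_0\notin\CT$.
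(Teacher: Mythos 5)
Your proposal is incomplete, and you say so yourself: the crux of your argument by contradiction --- converting a kernel vector of $1+\tilde A_w(z_0)$ or $1+w^{\epsilon}K(z_0)w^{-\epsilon}$ into a nonzero resonant state of $\dot H$ at $z_0$ --- is left as a ``plan'' and never carried out. This is not a cosmetic omission: it is the only step that could make the contradiction go through. Beyond that, the route you chose is intrinsically harder than what the proposition asks for. Starting from Proposition~\ref{propPE2}, you reduce to the inclusion $\tilde N_w\setminus(\CT_+\cup\CT_-)\subset\CT$, which is a strictly stronger claim than $\CS\subset\CT\cup\CT_+\cup\CT_-$: it forces you to exclude a cancellation between a pole of $(1+\tilde A_w(z))^{-1}$ and a zero of $\hat Q(z)$, which is precisely the phenomenon one cannot rule out a priori in this non-self-adjoint setting. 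The paper never needs that inclusion. Finally, some of the ``soft'' steps are also thin: injectivity with dense range of $\hat R(z_0+i0)=w^{-\epsilon}\dot R(z_0+i0)w^{-\epsilon}$ on $\dot\CE$ is not a formal consequence of $\dot R(z_0+i0)$ being a boundary value between weighted spaces, and passing $(\dot H-z)\dot R(z)=\one$ to $z\to z_0+i0$ requires a careful statement of between which spaces the identity survives.

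The paper's proof is short and entirely different in spirit: it is a direct estimate, with no Fredholm bookkeeping, no Laurent coefficients and no resonant states. From \eqref{RB} one rearranges $\dot R(z)=Q(z)(\one+K(z))^{-1}$ into
\[
w^{-\epsilon}\dot R(z)=w^{-\epsilon}Q(z)-\bigl(w^{-\epsilon}\dot R(z)w^{-\epsilon}\bigr)\bigl(w^{\epsilon}K(z)\bigr),
\]
and then uses \eqref{Kpm} together with \eqref{HypA10} to write $w^{\epsilon}K_{\pm}(z)=m_{\epsilon}\,\dot R_{\pm}(z)\,i_{\pm}$ with $m_{\epsilon}\in\CB(\dot\CE)$ carrying an implicit $w^{-\epsilon}$; likewise $w^{-\epsilon}Q(z)$ is built from $w^{-\epsilon}\dot R_{\pm}(z)$ by \eqref{defdeQ}. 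Near a $\lambda_0\notin\CT\cup\CT_{+}\cup\CT_{-}$, the factor $w^{-\epsilon}\dot R(\lambda+i\delta)w^{-\epsilon}$ is uniformly bounded by Proposition~\ref{prop5.1} (this uses $\lambda_0\notin\CT$), while the factors involving $w^{-\epsilon}\dot R_{\pm}(\lambda+i\delta)\chi(\lambda)$ satisfy the $L^2_\lambda$ bound of Corollary~\ref{corollary5.2} (equivalently Proposition~\ref{prop5.2} together with Remark~\ref{remdeb}, using $\lambda_0\notin\CT_{\pm}$ and the self-adjointness of $\dot H_{\pm}$). Putting the pieces together gives \eqref{regpoints} directly, hence $\lambda_0\notin\CS$. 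If you want to salvage your approach you would have to complete the resonant-state construction --- and it is not clear that the stronger inclusion $\tilde N_w\subset\CT\cup\CT_{+}\cup\CT_{-}$ is even true --- whereas the identity-based estimate bypasses the issue entirely.
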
 
\proof  From (\ref{RB}) we obtain for ${\rm Im}z\gg 1$: 
\[
\dot{R}(z)= Q(z)(\one + K(z))^{-1}= Q(z)- Q(z)(\one + K(z))^{-1}K(z),
\]
hence
\begin{align*}
 w^{-\epsilon}\dot{R}(z)=& w^{-\epsilon}Q(z)- w^{-\epsilon}Q(z)(\one + K(z))^{-1}w^{-\epsilon}w^{\epsilon}K(z)\\
 =&w^{-\epsilon}Q(z)- w^{-\epsilon}\dot{R}(z)w^{-\epsilon}w^{\epsilon}K(z).
\end{align*}
Next we write $w^{\epsilon}K(z)= w^{\epsilon}K_{-}(z)+ w^{\epsilon}K_{+}(z)$ and obtain from  the expression (\ref{Kpm}) of $K_{\pm}(z)$ that $w^{\epsilon}K_{\pm}(z)= m_{\epsilon}\dot{R}_{\pm}(z)i_{\pm}$ for $m_{\epsilon}\in B(\dot{\CE})$.   It suffices then to recall the expression (\ref{defdeQ}) of $Q(z)$,  and apply Prop. \ref{prop5.2} and Remark \ref{remdeb}. \qed

\subsection{Additional resolvent estimates}
In this subsection we  make the link between the poles of $\eta p^{-1}(z)\eta$ and those of 
$\eta\dot{R}(z)\eta$ for $\eta\in C_0^{\infty}(\R)$. 

We will  need the following hypothesis:
\begin{equation}
\tag{PE} \label{PE2}
\left\{\begin{array}{ll} 
a) & \psi\in C_0^{\infty}(\R) \Rightarrow h_0^{1/2}\psi(x)h_0^{-1/2}\in \CB(\CH),  \\
b) & \psi\in C_0^{\infty}(\R), \, \psi\geq0, \, \psi= 1 \text{ near 0 } \Rightarrow
\slim_{n\rightarrow \infty} \psi\left(\frac{x}{n}\right)= 1 \text{ in } h_0^{-1/2}\CH.
\end{array}\right.
\end{equation}
\begin{lemma}
\label{propresestlowfr}
Let $\eta,\tilde{\eta}\in C_0^{\infty}(\R)$ with
$\tilde{\eta}\eta=\eta$. If $z$ is not a pole of $\tilde{\eta}p^{-1}(z)\tilde{\eta}$
then $z$ is not a pole of $\eta R(z)\eta$ nor of $\eta\dot{R}(z)\eta$ and if
$\CP(z):=\Vert\tilde{\eta}p^{-1}(z)\tilde{\eta}\Vert_{\CB(\CH)}$ then we have the
estimates
\begin{eqnarray}
\label{10.2.1}
\Vert \eta R(z)\eta\Vert_{\CB({\CE})}\lesssim \<z\>^2(1+\<z\>\CP^2(z)), \\
\label{10.2.2}
\Vert \eta \dot{R}(z)\eta\Vert_{\CB({\dot{\CE})}}\lesssim  \<z\>^2(1+\<z\>\CP^2(z)).
\end{eqnarray}
\end{lemma}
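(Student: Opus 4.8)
The plan is to write $\eta R(z)\eta$ and $\eta\dot R(z)\eta$ out through the explicit resolvent formulas \eqref{resH} and \eqref{resdotH} and to reduce everything to $\CB(\CH)$-estimates on the pencil $p^{-1}(z)$ cut off by $\eta$, possibly decorated by a factor $h_0^{1/2}$ on one or both sides coming from the energy norms. Since $\Phi(k)$ commutes with $\eta(x)$ — because $[x,k]=0$ by \eqref{TE1} — and is an isomorphism of $\dot\CE$, it suffices to estimate $\eta\,((\dot K-z)^{-1})\,\eta$ on $h_0^{-1/2}\CH\oplus\CH$, i.e. the matrix of \eqref{resdotH1}. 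In each matrix entry the operator $h$ is removed by the algebraic identities $hp^{-1}(z)=1-z(2k-z)p^{-1}(z)$, $p^{-1}(z)h=1-zp^{-1}(z)(2k-z)$, $h_0p^{-1}(z)=1+(k-z)^2p^{-1}(z)$ and by \eqref{eq:pm}; this costs powers of $\langle z\rangle$ (up to $\langle z\rangle^2$) and replaces $h$ by bounded operators, and a bare $h_0^{1/2}$ left on an outer slot is moved past $\eta$ (and past auxiliary cutoffs below) using part a) of \eqref{PE2} together with \eqref{A3}. After this bookkeeping one is left with the model quantities $\eta p^{-1}(z)\eta$, $h_0^{1/2}\eta p^{-1}(z)\eta$ and $h_0^{1/2}\eta p^{-1}(z)\eta h_0^{1/2}$, to be controlled by $\CP(z)$, $\langle z\rangle\CP(z)^{1/2}$ and $\langle z\rangle^2\CP^2(z)$ respectively (modulo harmless bounded terms).

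The undecorated quantity is immediate and is the crux of the "no pole" assertion: since $\eta\tilde\eta=\tilde\eta\eta=\eta$ as functions of $x$, one has $\eta p^{-1}(z)\tilde\eta=\eta\,\bigl(\tilde\eta p^{-1}(z)\tilde\eta\bigr)$ and $\tilde\eta p^{-1}(z)\eta=\bigl(\tilde\eta p^{-1}(z)\tilde\eta\bigr)\eta$, hence $\eta p^{-1}(z)\eta=\eta\,\bigl(\tilde\eta p^{-1}(z)\tilde\eta\bigr)\,\eta$ and $\|\eta p^{-1}(z)\eta\|_{\CB(\CH)}\le\CP(z)$. In particular, as $w^{-\epsilon}R(z)w^{-\epsilon}$ and $w^{-\epsilon}\dot R(z)w^{-\epsilon}$ already extend finite meromorphically by Proposition \ref{prop4.1}, and $\eta=(\eta w^{\epsilon})\,w^{-\epsilon}$ with $\eta w^{\epsilon}\in\CB(\CH)$ ($\eta$ has compact $x$-support and $w\in C^\infty$), the extensions of $\eta R(z)\eta$ and $\eta\dot R(z)\eta$ exist and are holomorphic wherever $\tilde\eta p^{-1}(z)\tilde\eta$ is.

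For the decorated quantities one uses a quadratic-form argument combined with the locality of $h_0$ in $x$ from \eqref{TE1}. Fix cutoffs $\eta\prec\theta\prec\tilde\eta$ with $\tilde\eta\equiv 1$ near $\supp\theta$ and $\theta\equiv 1$ near $\supp\eta$. Writing $\|h_0^{1/2}\eta p^{-1}(z)\eta u\|^2=(h_0\,\eta p^{-1}(z)\eta u\mid\eta p^{-1}(z)\eta u)$ and commuting $h_0\eta=\eta h_0+[h_0,\eta]$, the term with $\eta h_0 p^{-1}(z)$ is reduced via $h_0p^{-1}(z)=1+(k-z)^2p^{-1}(z)$ to $\eta p^{-1}(z)\eta$ with a bounded coefficient of size $O(\langle z\rangle^2)$ — this is where $\CP$ gets squared. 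The commutator $[h_0,\eta]=[h,\eta]$ is, by locality, supported in the transition region of $\eta$, which lies inside $\{\tilde\eta=1\}$; hence $[h_0,\eta]=\tilde\eta[h_0,\eta]\tilde\eta$, and since this operator is of lower order (the analogues of \eqref{TE3} $e)$ provide $[h_0,\eta]h_0^{-1/2}$ and $\tilde\eta[h_0,\eta]h_0^{-1/2}$ bounded) the contribution again factors through $\tilde\eta p^{-1}(z)\tilde\eta$. The same scheme, with an extra $h_0^{1/2}$ carried across $\eta$ and $\theta$ by part a) of \eqref{PE2}, treats $h_0^{1/2}\eta p^{-1}(z)\eta h_0^{1/2}$; part b) of \eqref{PE2} is used to perform these commutations and form the quadratic forms on the dense subspace on which the cut-off vectors lie in the $h_0$-domain, then to pass to the limit. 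Assembling the matrix entries of \eqref{resdotH1} and collecting the powers of $\langle z\rangle$ yields \eqref{10.2.2}; then \eqref{10.2.1} follows either by the same computation from \eqref{resH}, or from the cut-off version of Proposition \ref{propbasicreest} — namely $\|\eta R(z)\eta\|_{\CB(\CE)}\lesssim(1+|z|^{-1})\|\tilde\eta\dot R(z)\tilde\eta\|_{\CB(\dot\CE)}+|z|^{-1}$, proved exactly as there but keeping the weights.

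The delicate point — and the reason both hypotheses \eqref{TE1} and \eqref{PE2} are needed — is to keep the unbounded weights $h_0^{1/2}$ and the locality bookkeeping compatible: one must arrange that every commutator produced involves only $\eta$ (or the intermediate cutoffs $\theta$), never $\tilde\eta$ itself, so that it lives inside $\{\tilde\eta=1\}$ and can be re-expressed through $\tilde\eta p^{-1}(z)\tilde\eta$, and that no bare, un-localized operator $h_0^{1/2}p^{-1}(z)$ survives in the estimate. This is precisely what the quadratic-form manipulation above achieves, at the cost of the loss $\langle z\rangle^2\bigl(1+\langle z\rangle\CP^2(z)\bigr)$.
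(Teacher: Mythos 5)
Your proposal follows essentially the same route as the paper: strip off $\Phi(\pm k)$ using $[k,x]=0$, reduce to the four matrix entries of the pencil form of the resolvent, remove $h$ (and $h_0$) by the algebraic identities $hp^{-1}=1-z(2k-z)p^{-1}$, $h_0p^{-1}=1+(k-z)^2p^{-1}$, move $h_0^{1/2}$ across compactly supported cutoffs via \eqref{PE2}~a) (equivalently \eqref{TE3}~d)) and Hardy \eqref{TE3}~f), exploit locality \eqref{TE1} so commutators $[h_0,\eta]$ stay in $\{\tilde\eta=1\}$, and observe that $\eta p^{-1}(z)\eta=\eta\bigl(\tilde\eta p^{-1}(z)\tilde\eta\bigr)\eta$ is what makes $\CP(z)$ appear and the $\CP^2$ arise on the entry $1+(k-z)p^{-1}(k-z)$. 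The one thing I would correct is your second alternative for passing between \eqref{10.2.1} and \eqref{10.2.2}: the proposed cut-off version of Proposition \ref{propbasicreest} with the factor $(1+|z|^{-1})$ would spoil the stated bound $\langle z\rangle^2(1+\langle z\rangle\CP^2(z))$ near $z=0$ (where the target is finite whenever $0$ is not a pole). For functions sandwiched between compactly supported cutoffs the $\CE$- and $\dot\CE$-norms are comparable by \eqref{PE2}~a) and Hardy \eqref{TE3}~f), so the correct cut-off relation is $\Vert\eta R(z)\eta\Vert_{\CB(\CE)}\lesssim\Vert\tilde\eta\dot R(z)\tilde\eta\Vert_{\CB(\dot\CE)}$ and conversely with no $|z|^{-1}$-loss; this is exactly how the paper deduces \eqref{10.2.2} from \eqref{10.2.1} (it happens to go in the opposite order from yours, but either works). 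Your first alternative — doing the same computation directly from \eqref{resH} — is fine. Finally, the reference to \eqref{A3} is unnecessary (and not among the standing hypotheses here): the inhomogeneous scales $\CH^s=\langle h\rangle^{-s}\CH=\langle h_0\rangle^{-s}\CH$ are identified without it, and for localized vectors the homogeneous and inhomogeneous norms are already comparable by Hardy.
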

\proof
We choose functions $\eta_1,\, \eta_0\in C_0^{\infty}(\R)$ with
$\eta_0\eta=\eta,\, \eta_1\eta_0=\eta_0$ and $\tilde{\eta}\eta_1=\eta_1.$
We first notice that \eqref{10.2.2} follows from \eqref{10.2.1} because
\begin{eqnarray*}
\Vert \eta\dot{R}(z)\eta u\Vert_{\dot{\CE}}\lesssim \Vert \eta
R(z)\eta u\Vert_{\CE}\lesssim \<z\>^2(1+\<z\>\CP^2(z))\Vert \eta_0
u\Vert_{\CE}\lesssim \<z\>^2(1+\<z\>\CP^2(z))\Vert u\Vert_{\dot{\CE}}, 
\end{eqnarray*}
where we have used Hardy's inequality, \eqref{TE3}{\it f)}. Now recall from \eqref{resdotH} that
\begin{equation*}
\dot{R}(z):=(\dot{H}-z)^{-1}=\Phi(k)\left(\begin{array}{cc}
    -p^{-1}(z)(k-z) & p^{-1}(z) \\ 1+(k-z)p^{-1}(z)(k-z) &
    -(k-z)p^{-1}(z) \end{array}\right)\Phi(-k).
\end{equation*}
It is therefore sufficient to show:
\begin{eqnarray}
\label{10.2.2a}
\Vert \eta p^{-1}(z)(k-z)\eta u\Vert_{\CH^1}&\lesssim& \<z\>(1+\<z\>^2\CP(z))\Vert
u\Vert_{\CH^1},\\
\label{10.2.3}
\Vert \eta p^{-1}(z)\eta u\Vert_{\CH^1}&\lesssim&(1+\<z\>^2\CP(z))\Vert
u\Vert_{\CH},\\
\label{10.2.4}
\Vert \eta(1+(k-z)p^{-1}(z)(k-z)\eta
u\Vert_{\CH}&\lesssim&\<z\>(1+\<z\>^2\CP^2(z))\Vert u\Vert_{\CH^1},\\
\label{10.2.5}
\Vert \eta (k-z)p^{-1}(z)\eta u\Vert_{\CH}&\lesssim&\<z\>\CP(z)\Vert
u\Vert_{\CH}.
\end{eqnarray}
\eqref{10.2.5} is clear, let us consider \eqref{10.2.3}. By complex
interpolation \eqref{10.2.3} will follow from 
\begin{equation}
\label{10.2.7} 
\Vert \eta p^{-1}(z)\eta u\Vert_{\CH^2}\lesssim (\<z\>^2\CP(z)+1)\Vert u\Vert_{\CH}.
\end{equation}
We compute
\begin{align*}
h_0\eta p^{-1}(z)\eta=&[h_0,\eta]p^{-1}(z)\eta+\eta h_0p^{-1}(z)\eta\\
=&(h_0+1)^{-1}[h_0,\eta]\eta_0(h_0+1)p^{-1}(z)\eta\\
&+(h_0+1)^{-1}[h_0,[h_0,\eta]]\eta_0p^{-1}(z)\eta
+\eta h_0p^{-1}(z)\eta,\\
\eta_0 h_0p^{-1}(z)\eta=&\eta+(k-z)^2\eta_0 p^{-1}(z)\eta.
\end{align*}
Thus we have 
\begin{eqnarray*}
\Vert
\eta_0h_0 p^{-1}(z)\eta u\Vert&\lesssim&(1+\<z\>^2\CP(z))\Vert
u\Vert_{\CH}.
\end{eqnarray*}
Using that $(h_0+1)^{-1}[h_0,[h_0,\eta]]$ is bounded this gives \eqref{10.2.7} and thus \eqref{10.2.3}. Let us now consider
\eqref{10.2.2a}. First note that 
$\Vert (k-z)u\Vert_{\CH^1}\lesssim\<z\>\Vert u\Vert_{\CH^1}$. 
We then estimate using \eqref{10.2.3}
\begin{eqnarray*}
\Vert \eta p^{-1}(z)\eta u\Vert_{\CH^1}&\lesssim& (\<z\>^2\CP(z)+1)\Vert
u\Vert_{\CH^1}.
\end{eqnarray*}
This gives \eqref{10.2.2a}. Let us now show \eqref{10.2.4}. We write
\begin{align*}
&\eta (1+(k-z)p^{-1}(z)(k-z))\eta\\= & \eta p^{-1}(z)\eta_1[h_0,k\eta_0]h_0^{-1/2}h_0^{1/2}\eta_1p^{-1}(z)(k-z)\eta 
 + \eta p^{-1}(z)(k-z)^2\eta.
\end{align*}
We have using \eqref{10.2.2a} :
\begin{align*}
&\Vert \eta
p^{-1}(z)\eta_1[h_0,k\eta_0]h_0^{-1/2}h_0^{1/2}\eta_1p^{-1}(z)(k-z)\eta
u\Vert_{\CH}\\
\lesssim& \CP(z)\Vert
h_0^{1/2}\eta_1p^{-1}(z)(k-z)\eta\Vert_{\CB(\CH)}\Vert u\Vert_{\CH}\\
\lesssim& \<z\>\CP(z)(1+\<z\>^2\CP(z))\Vert u\Vert_{\CH^1}.
\end{align*}
This proves \eqref{10.2.4}.
\qed

\begin{corollary} \label{Cor10.2.4}
If $w^{-\epsilon}p^{-1}(z)w^{-\epsilon}$ has no real poles then $w^{-\epsilon}\dot{R}(z)w^{-\epsilon}$ has no real poles.
\end{corollary}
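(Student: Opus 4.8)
The plan is to deduce the corollary from Lemma~\ref{propresestlowfr} by localising the weight $w^{-\epsilon}$ with cut-offs that are compactly supported in $x$. The two structural facts I would use are: by \eqref{TE1}, $w=w(x)$ is a smooth, strictly positive function of $x$ commuting with $k$ (hence with $\Phi(k)$), so $w^{\pm\epsilon}$ commutes with every $\psi(x)$, $\psi\in C_0^{\infty}(\R)$; and, by Proposition~\ref{merexttrresH}, $w^{-\epsilon}\dot R(z)w^{-\epsilon}$ already extends finite meromorphically to $\{{\rm Im}\,z>-\delta_{\epsilon/2}\}$ with values in $\CB_{\infty}(\dot\CE)$, so its Laurent coefficients at a real point are honest bounded (in fact finite rank) operators on $\dot\CE$.

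First I would note that for every $\psi\in C_0^{\infty}(\R)$ the family $\psi(x)p^{-1}(z)\psi(x)$ has no real poles. Indeed, $\supp\psi$ is compact and $w\in C^{\infty}(\R)$ with $\inf w>0$, so $\psi w^{\epsilon}$ and $w^{\epsilon}\psi$ are bounded on $\CH$ and
\[
\psi(x)\,p^{-1}(z)\,\psi(x)=(\psi w^{\epsilon})\,\big(w^{-\epsilon}p^{-1}(z)w^{-\epsilon}\big)\,(w^{\epsilon}\psi),
\]
so the poles of the left-hand side are among those of $w^{-\epsilon}p^{-1}(z)w^{-\epsilon}$, which by hypothesis are non-real. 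Applying Lemma~\ref{propresestlowfr} with any $\eta,\tilde\eta\in C_0^{\infty}(\R)$ such that $\tilde\eta\eta=\eta$ — using that $\tilde\eta p^{-1}(z)\tilde\eta$ then has no real poles — I obtain that $\eta\dot R(z)\eta$ has no real poles, for every $\eta\in C_0^{\infty}(\R)$.

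Next I would pick $\psi\in C_0^{\infty}(\R)$ with $\psi\ge 0$ and $\psi=1$ near $0$ as in \eqref{PE2}, set $\eta_n:=\psi(x/n)\in C_0^{\infty}(\R)$, and apply the previous step to $\eta:=\eta_n w^{-\epsilon}$ (which lies in $C_0^{\infty}(\R)$ since $w>0$ is smooth), together with a $\tilde\eta\in C_0^{\infty}(\R)$ equal to $1$ on a neighbourhood of $\supp\eta_n$. Since $w^{-\epsilon}$ commutes with $\eta_n$,
\[
(\eta_n w^{-\epsilon})\,\dot R(z)\,(\eta_n w^{-\epsilon})=\eta_n\,\big(w^{-\epsilon}\dot R(z)w^{-\epsilon}\big)\,\eta_n ,
\]
hence $\eta_n\big(w^{-\epsilon}\dot R(z)w^{-\epsilon}\big)\eta_n$ has no real poles for every $n$. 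Suppose, for contradiction, that the meromorphic extension $F(z):=w^{-\epsilon}\dot R(z)w^{-\epsilon}$ had a pole at some $\lambda_0\in\R$, with principal part $\sum_{j=1}^{m}(z-\lambda_0)^{-j}A_j$ and $A_m\neq 0$. Then $\eta_n A_j\eta_n=0$ for all $n$ and all $j$, in particular $\eta_n A_m\eta_n=0$. By \eqref{PE2} (and the elementary fact that $\psi(x/n)\to\one$ strongly on $\CH$, using $\sigma(x)=\R$), together with $[\eta_n,\Phi(k)]=0$, the operators $\eta_n$ converge strongly to $\one$ on $\dot\CE=\Phi(k)(h_0^{-1/2}\CH\oplus\CH)$; since $A_m$ is bounded this forces $A_m=\slim_{n\to\infty}\eta_n A_m\eta_n=0$, a contradiction. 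Therefore $F$ has no real poles.

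The only non-formal point is the last step: one must check that a hypothetical real pole of the globally weighted resolvent survives the localisations $\eta_n$, and this is exactly where the Hardy-type strong convergence $\psi(x/n)\to\one$ in $h_0^{-1/2}\CH$ from \eqref{PE2} is used (the finite-meromorphy in Proposition~\ref{merexttrresH} guaranteeing that the $A_j$ are genuine operators on $\dot\CE$). An alternative, somewhat longer route would bypass the limit entirely and repeat the proof of Lemma~\ref{propresestlowfr} verbatim with $w^{-\epsilon}$ in place of $\eta$, the commutator terms $[h_0,w^{-\epsilon}]$, $[h,w^{-\epsilon}]$ there absorbed by the compact support of $\eta$ now being controlled instead by \eqref{HypME} and \eqref{TE3}.
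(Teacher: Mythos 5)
Your proposal is correct and follows essentially the same route as the paper: both first deduce from Lemma~\ref{propresestlowfr} (together with the hypothesis, via insertion of the bounded operators $\psi w^{\epsilon}$, $w^{\epsilon}\psi$) that $\eta\dot R(z)\eta$ has no real poles for every $\eta\in C_0^{\infty}(\R)$, then argue by contradiction on the Laurent coefficients, using the commutation of $w^{-\epsilon}$ with cut-offs in $x$ and the strong convergence from \eqref{PE2} to conclude that the finite-rank coefficients vanish. The paper states this more tersely (it passes directly from $A_j\eta=0$ for all $\eta\in C_0^{\infty}(\R)$ to $A_j=0$ by invoking \eqref{PE2}), while you spell out the $\eta_n=\psi(x/n)\to\one$ limit explicitly; the underlying argument is the same.
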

\proof
By the preceding lemmas $\eta\dot{R}(z)\eta$ has no real poles for all
$\eta\in C_0^{\infty}(\R)$.
Suppose that $w^{-\epsilon}\dot{R}(z)w^{-\epsilon}$ has a pole at
$z=z_0\in \R$. In a neighborhood of $z=z_0$ we have
\begin{equation*}
w^{-\epsilon}\dot{R}(z)w^{-\epsilon}=\sum_{j=1}^m\frac{A_j}{(z-z_0)^j}+H(z),
\end{equation*}
where $H(z)$ is holomorphic and $A_j$ are of finite rank. Let
$\eta_1,\, \eta_2\in C_0^{\infty}(\R)$. We have 
\begin{equation*}
w^{-\epsilon}\eta_1\dot{R}(z)\eta_2w^{-\epsilon}=\sum_{j=1}^m\frac{\eta_1A_j\eta_2}{(z-z_0)^j}+\eta_1H(z)\eta_2.
\end{equation*} 
As $\eta_1\dot{R}(z)\eta_2$ doesn't have a pole at $z=z_0$, we have
\begin{equation*}
\eta_1A_j\eta_2=0,\quad \forall \eta_1,\eta_2\in C_0^{\infty}(\R),\,
j=1,...,m.
\end{equation*}
It follows 
\begin{equation*}
A_j\eta=0,\quad \forall \eta\in C_0^{\infty}(\R),\,
j=1,...,m.
\end{equation*}
Using \eqref{PE2} this implies that $A_{j}=0$.
\qed
\section{Boundedness of the evolution 1 : abstract setting}
\label{secbound1}
The aim of this section is to show that the evolution is bounded
outside the complex eigenvalues and the singular points of $\dot{H}$. 
We assume
\begin{equation}
\tag{B} \label{Hyp15}
w^{-1}: \ D(h_{0})\to D(h_{0}), \ 
[-ik,h]\lesssim w^{-1}h_0w^{-1}\mbox{ as quadratic forms on }D(h_0).
\end{equation}
For $\chi\in C^{\infty}(\R)$ and $\mu>0$ we put
$\chi_{\mu}(\lambda)=\chi\left(\frac{\lambda}{\mu}\right).$
\begin{theorem}
\label{thboundedness}
Assume \eqref{Hyp1}, \eqref{A2},
\eqref{TE1}-\eqref{TE3}, \eqref{PE2}, \eqref{Hyp15}. Assume furthermore
$\sigma_{pp}^{\C}(\dot{H})=\emptyset$. Then 
\begin{enumerate}
\item Let $\chi\in C^{\infty}(\R)$ with $\chi= 0$ on $[-1,1]$ and $\chi= 1$
outside $[-2,2]$. Then there are $\mu_0>0, C_1>0$ such that for $\mu\ge \mu_0$,
 and $t\in \R$:
\begin{equation}
\label{unifboundhfr}
\Vert e^{-it\dot{H}}\chi_{\mu}(\dot{H})u\Vert_{\dot{\CE}} \le C_1\Vert
\chi_{\mu}(\dot{H})u\Vert_{\dot{\CE}}, \ u\in \dot{\CE}.
\end{equation}
\item
If $\chi\in
C_0^{\infty}(\R\setminus \CS)$ then there is $C_2>0$ such
that for all $u\in \dot{\CE}$ and $t\in \R$ we have
\begin{equation}
\Vert e^{-it\dot{H}}\chi(\dot{H})u\Vert_{\dot{\CE}}\le C\Vert
u\Vert_{\dot{\CE}}.
\end{equation}
\end{enumerate}
\end{theorem}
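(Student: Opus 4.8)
The plan is to deduce both statements from the conservation of the quadratic forms $\<\cdot|\cdot\>_\ell$ of \eqref{eq:ell}. The one preliminary fact I would record is that for every $\ell\in\R$ the number $\<\dot Hv|v\>_\ell$ is real (a short computation using that $h,k$ are self-adjoint and $p(\ell)=h+\ell(2k-\ell)$), so that $t\mapsto\<u(t)|u(t)\>_\ell$ is constant along $u(t)=e^{-it\dot H}u$. Everything then reduces to producing, on the relevant spectral subspace, a conserved quantity comparable to $\|\cdot\|_{\dot\CE}^2$.

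For the high-frequency statement (1) I would take $\mu_0=R$ with $R$ as in Lemma~\ref{lem4.6}, fix $\mu\ge\mu_0$ and set $v=\chi_\mu(\dot H)\varphi$. The main point is that $\Ran\chi_\mu(\dot H)$ is spectrally concentrated for $h_0$ at energies $\gtrsim\mu^2$: using $p(z)=h_0-(k-z)^2$ together with the resolvent bounds of Proposition~\ref{pr:est} and Lemma~\ref{lem4.6}, one shows that $\bigl(1-\psi(h_0/\mu^2)\bigr)\chi_\mu(\dot H)$ has small norm on $\dot\CE$ for $\psi\in C^\infty(\R)$ vanishing near $0$ and equal to $1$ near infinity. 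Hence on $\Ran\chi_\mu(\dot H)$ one has $\|kv_0\|^2\le\tfrac12(h_0v_0|v_0)$ once $\mu$ is large, the norms of $\CE$ and $\dot\CE$ are equivalent there, and $\<v|v\>_0=\|v_1\|^2+(h_0v_0|v_0)-\|kv_0\|^2$ is well defined with $\<v|v\>_0\simeq\|v\|_{\dot\CE}^2$, uniformly in $\mu\ge\mu_0$. Since $\chi_\mu(\dot H)$ commutes with $e^{-it\dot H}$, the orbit $u(t)=e^{-it\dot H}\chi_\mu(\dot H)\varphi$ stays in $\Ran\chi_\mu(\dot H)$; conservation of $\<\cdot|\cdot\>_0$ then gives $\|u(t)\|_{\dot\CE}^2\simeq\<u(t)|u(t)\>_0=\<v|v\>_0\simeq\|\chi_\mu(\dot H)\varphi\|_{\dot\CE}^2$, which is \eqref{unifboundhfr}.

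For (2) I would fix $\chi\in C_0^\infty(\R\setminus\CS)$ and set $u(t)=e^{-it\dot H}\chi(\dot H)u$; as $\chi$ is compactly supported $\dot H\chi(\dot H)$ is bounded, so $u(\cdot)$ is $C^1$ into $\dot\CE$ with values in $D(\dot H)$ and $\chi(\dot H)$ commutes with $e^{-it\dot H}$. Now one cannot use a single gauge, so I would use the two end energies simultaneously, setting $E(t):=\|i_+u(t)\|_{\dot\CE_+}^2+\|i_-u(t)\|_{\dot\CE_-}^2$. By Lemma~\ref{lemma4.3.1} the maps $i_\pm:\dot\CE\to\dot\CE_\pm$ and $i_\pm:\dot\CE_\pm\to\dot\CE$ are bounded, and since $i_+^2+i_-^2=1$ this gives $E(t)\simeq\|u(t)\|_{\dot\CE}^2$ with constants independent of $t$. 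Differentiating: using $[k,i_\pm]=0$ and the locality of $h_0$ in $x$ (\eqref{TE1}) one has $i_\pm(\dot H-\dot H_\pm)=0$ and $[h,i_\pm]=\tilde i[h_0,i_\pm]\tilde i$ supported in a fixed compact $x$-region (\eqref{TE3} $b)$); assumption \eqref{TE3} $e)$ then yields $i_\pm u(t)\in D(\dot H_\pm)$, and self-adjointness of $\dot H_\pm$ in $\dot\CE_\pm$ kills the contribution of $\dot H_\pm(i_\pm u(t))$, leaving a sum of terms $\re\bigl(i[i_\pm,\dot H]u(t)\mid i_\pm u(t)\bigr)_{\dot\CE_\pm}$ with $[i_\pm,\dot H]u(t)=(0,-[h,i_\pm]u_0(t))$. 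Inserting the compact cutoff $\tilde i$ (so $[h,i_\pm]=\tilde i[h,i_\pm]\tilde i$ and $\tilde i\lesssim w^{-2\epsilon}$) and commuting $w^{-\epsilon}$, $h_0^{1/2}$, $[h,i_\pm]$ past each other by \eqref{TE3} $e)$, $f)$ and the Hardy inequality \eqref{TE3} $f)$, each term is bounded by $C\|w^{-\epsilon}u(t)\|_{\dot\CE}\,\|w^{-\epsilon/2}u(t)\|_{\dot\CE}$. By the propagation estimate of Proposition~\ref{propPE3} (applicable as $\chi\in C_0^\infty(\R\setminus\CS)$) both factors lie in $L^2(\R_t)$, so $\tfrac{d}{dt}E\in L^1(\R)$, whence $\sup_tE(t)\le E(0)+\|\tfrac{d}{dt}E\|_{L^1}\lesssim\|\chi(\dot H)u\|_{\dot\CE}^2\lesssim\|u\|_{\dot\CE}^2$ (the last step by Proposition~\ref{prop16}); since $E(t)\simeq\|u(t)\|_{\dot\CE}^2$ uniformly, this is the claim. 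The limit $t\to-\infty$ is identical.

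The delicate points, which I expect to be the main obstacles, are: (i) in (1), turning "superradiance is a low-frequency phenomenon" (Proposition~\ref{pr:est}, Lemma~\ref{lem4.6}) into the rigorous statement that $\<\cdot|\cdot\>_0$ is uniformly positive on $\Ran\chi_\mu(\dot H)$, while keeping track of the non-comparable homogeneous and inhomogeneous scales; and (ii) in (2), the bookkeeping in differentiating $E(t)$ and bounding $\tfrac{d}{dt}E(t)$ by weighted norms — one must repeatedly commute $i_\pm$, $w^{-\epsilon}$, $h_0^{1/2}$ and $[h,i_\pm]$, which is precisely what \eqref{TE3}, \eqref{HypME} and the Hardy inequality are designed for, and one must check that all the forms that occur are finite on the homogeneous spaces involved.
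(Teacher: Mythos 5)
Your proof is correct in outline and, for part~(2), takes a genuinely different route from the paper. For the low-frequency statement the paper sets $\psi(t)=e^{-itk}\Phi(-k)e^{-it\dot H}\chi(\dot H)u$ and observes that $\|u(t)\|_{\dot\CE}^2=\|\psi_1(t)\|^2+(h(t)\psi_0(t)|\psi_0(t))$ with $h(t)=e^{-itk}h_0e^{itk}$; differentiating yields $\tfrac{d}{dt}\|u(t)\|^2_{\dot\CE}=(h'(t)\psi_0|\psi_0)$ with $h'(t)=e^{-itk}[-ik,h]e^{itk}$, and assumption \eqref{Hyp15} is then used to control this by $\|w^{-1}u(t)\|^2_{\dot\CE}$ before integrating with Prop.~\ref{propPE3}. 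You instead glue the two exact end energies into $E(t)=\|i_+u(t)\|^2_{\dot\CE_+}+\|i_-u(t)\|^2_{\dot\CE_-}$, kill the main term in $\tfrac{d}{dt}E$ by self-adjointness of $\dot H_\pm$ (using $i_\pm(\dot H-\dot H_\pm)=0$ from locality), and reduce to the compactly supported commutator $[h,i_\pm]=\tilde i[h,i_\pm]\tilde i$; this is entirely natural in the two-ends geometry and, interestingly, does not invoke \eqref{Hyp15} at all — all the commutator control you need is already in \eqref{TE3}~$e),f)$ and \eqref{PE2}~$a)$. The price is precisely what you flag as the delicate point: to make $\tfrac{d}{dt}E$ integrable you must distribute the weight onto both factors of $\bigl([h,i_\pm]u_0\mid i_\pm u_1\bigr)$ (recall $(i_\pm u)_1=i_\pm(u_1-ku_0)+k_\pm i_\pm u_0$), using that $\tilde iw^N$ is bounded for any $N$, that $w[h,i_\pm]wh_0^{-1/2}$ and $[h,i_\pm]h_0^{-1/2}$ are bounded, and the Hardy estimate \eqref{TE3}~$f)$; the bound $\lesssim\|w^{-\epsilon}u(t)\|^2_{\dot\CE}$ does come out, so the argument closes. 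For the high-frequency statement~(1) your plan has the same skeleton as the paper's (show $\langle\cdot|\cdot\rangle_0$ is uniformly positive on $\Ran\chi_\mu(\dot H)$, then use its conservation), but the intermediate step differs: the paper's Lemma~\ref{unifbound1} bounds $\|(\chi_\mu(\dot H)u)_0\|_\CH\lesssim\mu^{-1}\|\chi_\mu(\dot H)u\|_{\dot\CE}$ directly by a Helffer--Sj\"ostrand representation and the pencil bounds of Prop.~\ref{pr:est}, whereas you propose first to show that $(1-\psi(h_0/\mu^2))\chi_\mu(\dot H)$ is small on $\dot\CE$ and then extract the $\mu^{-1}$ gain from the $h_0$-localization. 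This is plausible but would itself have to be proved from the same pencil estimates applied inside a Helffer--Sj\"ostrand integral (since $h_0$ and $\dot H$ do not commute), so it is a somewhat more indirect version of the same computation rather than a shortcut; you may find it cleaner to prove the paper's pointwise bound on $\|(\chi_\mu(\dot H)u)_0\|_\CH$ and dispense with the spectral cut $\psi(h_0/\mu^2)$.
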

\begin{remark}
If $\sigma_{pp}^{\C}(\dot{H})\neq\emptyset$, then  the theorem still holds
for $e^{-it\dot{H}}|_{\CE_{\R}(\dot{H})}$.
\end{remark}
The proof of the Thm. \ref{thboundedness} is divided into a high (part $i)$) and a low
frequency analysis (part $ii)$).

\subsection{High frequency analysis}

\begin{lemma} \label{unifbound1}
Assume \eqref{Hyp1}, \eqref{A2}, \eqref{TE1}-\eqref{TE3}, \eqref{PE2}, \eqref{Hyp15}. 
If $\chi$ is as in the statement of Thm. \ref{thboundedness} then for $\mu>0$ sufficiently
large we have:
\begin{equation*}
\Vert (\chi_{\mu}(\dot{H})u)_0\Vert_{\cH}\lesssim
  \frac{1}{\mu}\Vert \chi_{\mu}(\dot{H}) u \Vert_{\dot{\CE}}.
\end{equation*}
\end{lemma}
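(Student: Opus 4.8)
The plan is to estimate the zeroth component $(\chi_\mu(\dot H)u)_0$ by writing $\chi_\mu(\dot H)$ via the almost analytic functional calculus of Proposition \ref{prop16} and exploiting the extra decay in $|z|$ available for the first row of $\dot R(z)$ at high frequencies. Write $\chi_\mu(\dot H)u = \frac{1}{2\pi i}\int_{\C}\bar\partial\tilde\chi_\mu(z)\,\dot R(z)u\,dz\wedge d\bar z$, change variables $z = \mu\zeta$ so that the integral runs over a fixed compact annulus where $|\zeta|\sim 1$, hence $|z|\sim\mu$ is large. Using the resolvent formula \eqref{resdotH} together with the identity $\Phi(-k)u = (u_0, u_1-ku_0)$, one sees that $(\dot R(z)u)_0$ is, up to bounded operators in $k$, a combination of $p^{-1}(z)(k-z)$ applied to $u_0$ and $p^{-1}(z)$ applied to $u_1-ku_0$. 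Since $u_0\in h_0^{-1/2}\CH$ and $u_1-ku_0\in\CH$ for $u\in\dot\CE$, the relevant quantities to control in $\CH$-norm are $\|p^{-1}(z)(k-z)h_0^{-1/2}\|$ and $\|p^{-1}(z)\|$.

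Next I would invoke Proposition \ref{pr:est}: for $|z|\ge(1+\epsilon)\|k\|_{\CB(\CH)}$ we have $\|p^{-1}(z)\|\le C|z|^{-1}|\mathrm{Im}\,z|^{-1}$, and from \eqref{basicp3} together with $\|h_0^{-1/2}p^{-1}(z)(k-z)h_0^{1/2}\|$-type bounds (obtained by duality/adjoint from \eqref{basicp3}, using $p(z)^*=p(\bar z)$ and $(k-z)^*=k-\bar z$) one gets $\|p^{-1}(z)(k-z)h_0^{-1/2}\|\le C|z|^{-1}|\mathrm{Im}\,z|^{-1}$ as well — the crucial point being the gain of a full power $|z|^{-1}\sim\mu^{-1}$ beyond what is needed merely for the functional calculus to converge. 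After rescaling, $dz\wedge d\bar z$ contributes $\mu^2$, $\bar\partial\tilde\chi_\mu(z) = \mu^{-1}(\bar\partial\tilde\chi)(\zeta)$ contributes $\mu^{-1}$ (with the almost-analyticity estimate $|\bar\partial\tilde\chi(\zeta)|\le C_N|\mathrm{Im}\,\zeta|^N$ absorbing the singular factors $|\mathrm{Im}\,z|^{-1} = \mu^{-1}|\mathrm{Im}\,\zeta|^{-1}$), and each resolvent factor contributes $|z|^{-1}|\mathrm{Im}\,z|^{-1} = \mu^{-2}|\mathrm{Im}\,\zeta|^{-1}$; collecting powers of $\mu$ leaves an overall $\mu^{-1}$, and the $\zeta$-integral converges absolutely for $N$ large enough. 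This yields $\|(\chi_\mu(\dot H)u)_0\|_{\CH}\lesssim \mu^{-1}\|u\|_{\dot\CE}$, and then one replaces $\|u\|_{\dot\CE}$ on the right by $\|\chi_\mu(\dot H)u\|_{\dot\CE}$ by applying the estimate to $\chi_\mu(\dot H)u$ in place of $u$ after inserting a slightly larger cutoff $\tilde\chi$ with $\tilde\chi\chi=\chi$, using boundedness of $\tilde\chi_\mu(\dot H)$ on $\dot\CE$ (which is uniform in $\mu$ for $\mu\ge\mu_0$ by the high-frequency resolvent bound of Lemma \ref{lem4.6} and Remark \ref{remlem4.4}, since $\sigma_{pp}^{\C}(\dot H)=\emptyset$).

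The main obstacle is bookkeeping the $h_0^{1/2}$-weights so that all operators genuinely act between the right spaces: $u_0$ only lives in $h_0^{-1/2}\CH$, not $\CH$, so one cannot naively apply $\|p^{-1}(z)(k-z)\|_{\CB(\CH)}$ — instead one must use the weighted bound \eqref{basicp3} in the form that controls $h_0^{1/2}p^{-1}(z)(k-z)$ on $h_0^{1/2}\CH$, and then feed in that $(k-z)$ and $k$ behave well on the $h_0$-scale by assumption \eqref{A2}. A secondary subtlety is that the rescaled contour must avoid $\sigma_{pp}^{\C}(\dot H)$; but this set is empty by hypothesis, so any fixed almost analytic extension supported near the real axis works for all $\mu\ge\mu_0$, and the bound \eqref{4.5.19} (or directly Lemma \ref{lem4.6}) guarantees the integrand is integrable uniformly in $\mu$. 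Assembling these pieces gives the claim.
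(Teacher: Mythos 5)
Your plan runs into a genuine difficulty exactly at the point you flag as "the main obstacle," and I don't think the resolution you sketch works. After decomposing $\Phi(-k)u=(u_0,u_1-ku_0)$ and reading off the first row of $\dot R(z)$, the term you must bound in $\CH$-norm is $p^{-1}(z)(z-k)u_0$ with $u_0\in h_0^{-1/2}\CH$. The claimed bound $\|p^{-1}(z)(k-z)h_0^{-1/2}\|_{\CB(\CH)}\lesssim |z|^{-1}|{\rm Im}\,z|^{-1}$ does not follow from \eqref{basicp3}: what \eqref{basicp3} gives is control of $h_0^{1/2}p^{-1}(z)(k-z)h_0^{-1/2}$ in $\CB(\CH)$ with only a $|{\rm Im}\,z|^{-1}$ factor, and its adjoint gives $h_0^{-1/2}(k-z)p^{-1}(z)h_0^{1/2}$ with the same factor; neither lets you drop the outer $h_0^{1/2}$ weight, and since we only have $h_0>0$ (not $h_0\geq c>0$), there is no Hardy-type inequality available in the abstract setting to pass from the $h_0^{1/2}$-weighted norm to the plain $\CH$-norm. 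In fact, already in the model case $k=0$, $h_0=-\Delta$ on $\R^n$ the operator $p^{-1}(z)(k-z)h_0^{-1/2}=-z(-\Delta-z^2)^{-1}(-\Delta)^{-1/2}$ is \emph{unbounded} on $\CH$ because $\sup_{\lambda>0}\lambda^{-1/2}|\lambda-z^2|^{-1}=\infty$; so $(\dot R(z)u)_0$ need not even lie in $\CH$ for $u\in\dot\CE$, and estimating the integrand of the almost-analytic formula in $\CH$-norm cannot work directly.

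The paper circumvents this by a structural trick you don't reproduce. Setting $\varphi=\hat\chi-1\in C_0^\infty$ (also taking care of the fact that $\chi_\mu$ is not compactly supported, which your appeal to Proposition \ref{prop16} glosses over), one uses the Cauchy identity $1=-\varphi(0)=\frac{1}{2\pi i}\int\bar\partial\tilde\varphi(z)\,z^{-1}\,dz\wedge d\bar z$ to combine the constant $1$ with the resolvent:
\[
\Bigl(\tfrac{\dot H}{\mu}-z\Bigr)^{-1}+\tfrac1z \;=\;(\dot H-\mu z)^{-1}\,\tfrac{\dot H}{z}.
\]
The extra factor of $\dot H$ changes the first row decisively: $\bigl((\dot H-\mu z)^{-1}\dot H\,v\bigr)_0=p^{-1}(\mu z)\bigl(\mu z\,v_1+h\,v_0\bigr)$, and every piece is now digestible in $\CH$ — $\mu z(v_1-kv_0)$ and $\mu z\,kv_0$ go through \eqref{basicp1}, while $h v_0=h_0^{1/2}(h_0^{1/2}v_0)-k^2 v_0$ goes through the adjoint of \eqref{basicp2} since $h_0^{1/2}v_0\in\CH$. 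This produces $\frac{1}{\mu|{\rm Im}\,z|}\|v\|_{\dot\CE}+\frac{1}{\mu^2|{\rm Im}\,z|}\|v_0\|_\CH$, and the second term is absorbed for $\mu$ large. Your closing step (re-running with $\chi_\mu(\dot H)u$ in place of $u$ after inserting $\hat\chi$) is correct and is also what the paper does, but the core of the argument is the $\dot H/z$ trick, which your proposal is missing.
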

\proof
Let $\hat{\chi}$ be as  $\chi$ with  $\hat{\chi}\chi=\chi$. Set
 $\varphi=\hat{\chi}-1$ and observe that $\varphi= -1$ on $(-1,1)$. 
Let $\tilde{\varphi}$ be some (finite order) almost analytic extension of $\varphi$ given by ($N\ge 1$):
\[
\tilde{\varphi}(x+iy)= \sum_{r=0}^N\varphi^{(r)}(x)\frac{(iy)^r}{r!}
\tau\left(\frac{y}{\delta\<x\>}\right)
\]
with $\tau\in C_0^{\infty}(\R),\, \tau(s)=1$ in $|s|\le 1/2,\,
\tau(s)=0$ in $|s|\ge 1.$ Here $\delta$ is chosen such that
$\dot{R}(z)$ has no poles in $|{\rm Im}z|\le \delta\<x\>,\, x\in \supp
\varphi$.  We compute
\begin{align*}
\bar{\partial}\tilde{\varphi}(z)=&\hat{\chi}^{(N+1)}(x)\frac{(iy)^{(N+1)}}{(N+1)!}\tau\left(\frac{y}{\delta \<x\>}\right)+\left(\sum_{r=0}^N\varphi^{(r)}(x)\frac{(iy)^r}{r!}\right)\tau'\left(\frac{y}{\delta\<x\>}\right)\left(\frac{i}{\delta\<x\>}
+\frac{yx}{\delta\<x\>^2}\right)\\
=:&\tilde{\varphi}_1(x+iy)+\tilde{\varphi}_2(x+iy).
\end{align*}
Let $\mu\ge \mu_0= \max\{(1+\varepsilon) \Vert
k\Vert_{\CB(\CH)},\frac{2(1+\varepsilon)}{\delta}\Vert
k\Vert_{\CB(\CH)}\}$. We then have $\supp \bar{\partial}\tilde{\varphi}\subset K:=\{z\in \C;\, |\mu z|\ge
  (1+\varepsilon)\Vert k\Vert_{\CB(\CH))}\}\cap\{z\in \C;\, |z|\ge \min\{1,\frac{1}{2}\delta\}\}.$ Indeed on $\supp
  \tilde{\varphi}_1$ we have $|z|\ge 1$ and thus
\[|\mu z|\ge \mu_0= (1+\varepsilon)\Vert
k\Vert_{\CB(\CH)} .\]
On $\supp \tilde{\varphi}_2$ we have $|z|\ge \frac{\delta}{2}|z|$ and thus 
\[|\mu z|\ge \mu \frac{\delta}{2}\ge (1+\varepsilon)\Vert
k\Vert_{\CB(\CH)}.\]
Note that 
\begin{equation*}
1=-(\chi-1)(0)=\frac{1}{2\pi
  i}\int\bar{\partial}\tilde{\varphi}(z)\frac{1}{z}dz\wedge d\bar{z}.
\end{equation*}
We have
\begin{eqnarray}
\label{chimuH}
\hat{\chi}_{\mu}(\dot{H})&=&\varphi_{\mu}(\dot{H})+1\nonumber\\
&=&\frac{1}{2\pi
  i}\int\bar{\partial}\tilde{\varphi}(z)\left(\left(\frac{\dot{H}}{\mu}-z\right)^{-1}+\frac{1}{z}\right)dz\wedge
d\bar{z}\nonumber\\
&=&-\frac{1}{2\pi i}\int\bar{\partial}\tilde{\varphi}(z)(\dot{H}-\mu
z)^{-1}\frac{\dot{H}}{z}dz\wedge d\bar{z}.
\end{eqnarray}
Let $v^{\mu}=\chi_{\mu}(\dot{H})u$. We compute
\begin{equation*}
\left((\dot{H}-\mu z)^{-1}\frac{\dot{H}}{z}\left(\begin{array}{c} v_0^{\mu}\\
    v_1^{\mu}\end{array}\right)\right)_0=\frac{1}{z}p^{-1}(\mu z)(\mu z
v^{\mu}_1+hv^{\mu}_0).
\end{equation*}
We estimate for $z\in \supp \bar{\partial}\tilde{\varphi}(z)$ using
Proposition \ref{pr:est}:
\begin{eqnarray*}
\Vert p^{-1}(\mu z)z\mu v_1^{\mu}\Vert_{\cH}&\lesssim& \Vert p^{-1}(\mu
z)z\mu (v_1^{\mu}-kv_0^{\mu})\Vert_{\cH}
+\Vert p^{-1}(\mu
z)z\mu kv_0^{\mu}\Vert_{\cH}\\
&\lesssim&\frac{1}{|{\rm Im}z|\mu}\Vert
(v_1^{\mu}-kv_0^{\mu})\Vert_{\cH}+\frac{1}{\mu|{\rm Im}z|}\Vert v_0^{\mu}\Vert_{\cH},\\
\Vert p^{-1}(\mu z) hv_0^{\mu}\Vert_{\cH}&\lesssim& \Vert
p^{-1}(z)h_0v_0^{\mu}\Vert_{\cH}+\Vert p^{-1}(\mu z)k^2v_0^{\mu}\Vert_{\cH}\\
&\lesssim& \frac{1}{|{\rm Im}z|\mu}\Vert
h_0^{1/2}v^{\mu}_0\Vert_{\cH}+\frac{1}{\mu^2|{\rm Im}z|}\Vert v_0^{\mu}\Vert_{\cH}\\
&\lesssim& \frac{1}{|{\rm Im}z|\mu}\Vert
h_0^{1/2}v^{\mu}_0\Vert_{\cH}+\frac{1}{\mu^2|{\rm Im}z|}\Vert v_0^{\mu}\Vert_{\cH}
\end{eqnarray*}
Using \eqref{chimuH} we obtain
\begin{equation*}
\Vert (\chi_{\mu}(\dot{H}^n)u)_0\Vert_{\cH}\lesssim \frac{1}{\mu}\Vert
\chi_{\mu}(\dot{H}^n) u\Vert_{\dot{\CE}}+\frac{1}{\mu^2}\Vert
(\chi_{\mu}(\dot{H}^n)u)_0\Vert_{\cH}.
\end{equation*}
This gives the lemma for $\mu$ sufficiently large. 
\qed

\begin{corollary}
\label{corunifbound1}
Assume \eqref{Hyp1}, \eqref{A2},
\eqref{TE1}-\eqref{TE3}, \eqref{PE2}, \eqref{Hyp15}. Let $\chi$ be as in
Thm.  \ref{thboundedness}. Then for $\mu>0$ sufficiently large there exists
$\varepsilon>0$ such that for all $u\in \CE$:
\begin{equation*}
\<\chi_{\mu}(\dot{H})u, \chi_{\mu}(\dot{H})u\> _0\ge \varepsilon \Vert
\chi_{\mu}(\dot{H})u\Vert^2_{\dot{\CE}}.
\end{equation*}
\end{corollary}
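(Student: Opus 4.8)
The plan is to reduce the estimate to the single substantive ingredient already available, the bound $\|(\chi_{\mu}(\dot{H})u)_0\|_{\CH}\lesssim \mu^{-1}\|\chi_{\mu}(\dot{H})u\|_{\dot{\CE}}$ of Lemma \ref{unifbound1}, combined with the elementary relation between the energy form $\langle\cdot,\cdot\rangle_0$ and the homogeneous norm $\|\cdot\|_{\dot{\CE}}$ of \eqref{eq:hnorm}. Write $v=\chi_{\mu}(\dot{H})u=(v_0,v_1)$. By the description of $\dot{\CE}$ one has $v_0\in h_0^{-1/2}\CH$ and $v_1-kv_0\in\CH$; the first thing I would check is that, by Lemma \ref{unifbound1} (valid once $\mu$ exceeds some threshold), $v_0$ in fact lies in $\CH$, hence so does $v_1=(v_1-kv_0)+kv_0$. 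This is the point that makes $\langle v,v\rangle_0=\|v_1\|_{\CH}^2+(hv_0|v_0)$ literally meaningful — on $\dot{\CE}$ this form (the $\ell=0$ case of \eqref{eq:ell}) is in general only formally defined.

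Next I would record a pointwise identity. Using $h=h_0-k^2$, the self-adjointness of $k$ on $\CH$, and the expansion $\|v_1\|^2=\|v_1-kv_0\|^2+2\,{\rm Re}\,(v_1-kv_0\,|\,kv_0)+\|kv_0\|^2$, the two copies of $\|kv_0\|^2$ cancel and the computation underlying \eqref{eq:polar} (with $\ell=0$, which now applies verbatim to $v$) gives
\[
\langle v,v\rangle_0=\|v_1-kv_0\|_{\CH}^2+\|h_0^{1/2}v_0\|_{\CH}^2+2\,{\rm Re}\,(v_1-kv_0\,|\,kv_0)=\|v\|_{\dot{\CE}}^2+2\,{\rm Re}\,(v_1-kv_0\,|\,kv_0).
\]
The cross term I would estimate by Cauchy--Schwarz and the boundedness of $k$ on $\CH$: $\bigl|2\,{\rm Re}\,(v_1-kv_0|kv_0)\bigr|\le 2\|k\|_{\CB(\CH)}\|v_1-kv_0\|_{\CH}\|v_0\|_{\CH}$, and then bound $\|v_1-kv_0\|_{\CH}\le\|v\|_{\dot{\CE}}$ and use Lemma \ref{unifbound1} to reach $\bigl|2\,{\rm Re}\,(v_1-kv_0|kv_0)\bigr|\le 2C\|k\|_{\CB(\CH)}\mu^{-1}\|v\|_{\dot{\CE}}^2$, $C$ being the constant of that lemma.

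Combining, $\langle v,v\rangle_0\ge\bigl(1-2C\|k\|_{\CB(\CH)}\mu^{-1}\bigr)\|v\|_{\dot{\CE}}^2$, so any $\mu$ that is simultaneously large enough for Lemma \ref{unifbound1} and larger than $4C\|k\|_{\CB(\CH)}$ gives the claim with, e.g., $\varepsilon=1/2$. I do not anticipate a real obstacle beyond Lemma \ref{unifbound1}: all the work of the high-frequency analysis sits there, and once the $\mu^{-1}$ gain on $\|v_0\|_{\CH}$ is in hand the remainder is the one-line computation above. The only subtlety worth a sentence in the write-up is the well-definedness of $\langle\cdot,\cdot\rangle_0$ on the homogeneous space, which is exactly what that gain takes care of.
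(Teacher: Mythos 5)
Your proof is correct and follows the same strategy as the paper: everything is reduced to Lemma \ref{unifbound1}, which gives the $\mu^{-1}$ gain on $\|(\chi_{\mu}(\dot{H})u)_0\|_{\CH}$, and then the deviation of $\langle\cdot|\cdot\rangle_0$ from $\|\cdot\|_{\dot{\CE}}^2$ is absorbed. In fact your version is the more careful one: the paper's intermediate display claims $\langle v,v\rangle_0\ge\|v\|_{\dot{\CE}}^2-2\|kv_0\|_{\CH}^2$, which does not hold in general (take $v_1=-kv_0$), whereas your exact identity $\langle v,v\rangle_0=\|v\|_{\dot{\CE}}^2+2\,\mathrm{Re}(v_1-kv_0\,|\,kv_0)$ followed by Cauchy--Schwarz gives the clean bound $\big(1-2C\|k\|_{\CB(\CH)}\mu^{-1}\big)\|v\|_{\dot{\CE}}^2$, which suffices.
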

\proof 
By Lemma \ref{unifbound1} we have 
\begin{eqnarray*}
\<\chi_{\mu}(\dot{H})u, \chi_{\mu}(\dot{H})u\>_0&\ge& \Vert \chi_{\mu}(\dot{H})u\Vert_{\dot{\CE}}^2-2\Vert
k(\chi_{\mu}(\dot{H})u)_0\Vert_{\cH}^2\\
&\ge&(1-\frac{C}{\mu^2})\Vert (\chi_{\mu}(\dot{H})u)\Vert_{\dot{\CE}}^2,
\end{eqnarray*}
which gives the corollary for $\mu$ sufficiently large.
\qed

\begin{corollary} \label{corunifbound2}
Assume \eqref{Hyp1}, \eqref{A2},
\eqref{TE1}-\eqref{TE3}, \eqref{PE2}, \eqref{Hyp15}. Let $\chi$ be as in
Thm. \ref{thboundedness}. 
Then there exists $C_1>0$ such that for all $u\in \dot{\CE},\,  t\in \R$ 
\begin{equation*}
\Vert e^{-it\dot{H}}\chi_{\mu}(\dot{H})u\Vert_{\dot{\CE}}\le C_1\Vert
\chi_{\mu}(\dot{H}) u\Vert_{\dot{\CE}}.
\end{equation*}
\end{corollary}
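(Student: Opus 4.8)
The plan is to combine Corollary~\ref{corunifbound1} with the conservation of the energy form $\langle\cdot,\cdot\rangle_0$ along $e^{-it\dot{H}}$. The point is that, although $\langle\cdot,\cdot\rangle_0$ is not a continuous form on all of $\dot{\CE}$, it makes sense on the range of $\chi_\mu(\dot{H})$ for $\mu$ large — there, by Lemma~\ref{unifbound1}, the zero component is controlled in $\CH$ — and on that range it is comparable to $\Vert\cdot\Vert_{\dot{\CE}}^2$ from both sides. Since it is moreover conserved by the evolution, the claimed uniform bound is immediate.

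First I would note that, because $\sigma_{pp}^{\C}(\dot{H})=\emptyset$, one has $\chi_\mu(\dot{H})=\one-(1-\chi_\mu)(\dot{H})$ with $1-\chi_\mu\in C_0^\infty(\R)$, so $\chi_\mu(\dot{H})\in\CB(\dot{\CE})$ and it commutes with $e^{-it\dot{H}}$. For $u\in\dot{\CE}$ set $v:=\chi_\mu(\dot{H})u$. By Lemma~\ref{unifbound1}, $\Vert v_0\Vert_{\CH}\lesssim\mu^{-1}\Vert v\Vert_{\dot{\CE}}$; combined with $v_0\in h_0^{-1/2}\CH$ (from $v\in\dot{\CE}$) this gives $v_0\in\CH\cap h_0^{-1/2}\CH=\CH^{1/2}$, and then $v_1=(v_1-kv_0)+kv_0\in\CH$, so $v\in\CE$; thus $\chi_\mu(\dot{H}):\dot{\CE}\to\CE$ is bounded. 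Using \eqref{eq:polar} at $\ell=0$ together with \eqref{eq:hnorm} one obtains the identity $\langle v,v\rangle_0=\Vert v\Vert_{\dot{\CE}}^2+2\,{\rm Re}\,(v_1-kv_0\,|\,kv_0)$, whence, again by Lemma~\ref{unifbound1}, $|\langle v,v\rangle_0-\Vert v\Vert_{\dot{\CE}}^2|\lesssim\mu^{-1}\Vert v\Vert_{\dot{\CE}}^2$. In particular $\langle v,v\rangle_0\le 2\Vert v\Vert_{\dot{\CE}}^2$ for $\mu$ large, while the lower bound $\langle v,v\rangle_0\ge\varepsilon\Vert v\Vert_{\dot{\CE}}^2$ is exactly Corollary~\ref{corunifbound1}.

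Finally I would invoke conservation of the energy: for $\varphi\in\CE$ and all $t$ one has $\langle e^{-itH}\varphi,e^{-itH}\varphi\rangle_0=\langle\varphi,\varphi\rangle_0$ — the formal conservation explained in the Introduction, made rigorous by the continuity of $\langle\cdot,\cdot\rangle_0$ on $\CE$ (Lemma~\ref{lem2.3.3}) together with a density argument from $D(H)$, where the derivative of $\Vert\partial_t u\Vert^2+(hu|u)$ along solutions of \eqref{KGcomp} equals $4\,{\rm Re}\,(ik\,\partial_t u\,|\,\partial_t u)=0$ since $k=k^*$. Now $e^{-it\dot{H}}v=\chi_\mu(\dot{H})\bigl(e^{-it\dot{H}}u\bigr)\in\chi_\mu(\dot{H})\dot{\CE}\subset\CE$, and on $\CE$ the group $e^{-it\dot{H}}$ restricts to $e^{-itH}$ (Section~\ref{SecBG}), so
\[
\varepsilon\,\Vert e^{-it\dot{H}}v\Vert_{\dot{\CE}}^2\le\langle e^{-it\dot{H}}v,e^{-it\dot{H}}v\rangle_0=\langle v,v\rangle_0\le 2\,\Vert v\Vert_{\dot{\CE}}^2,
\]
i.e. $\Vert e^{-it\dot{H}}\chi_\mu(\dot{H})u\Vert_{\dot{\CE}}\le\sqrt{2/\varepsilon}\,\Vert\chi_\mu(\dot{H})u\Vert_{\dot{\CE}}$ for $u$ in a dense subset, hence for all $u\in\dot{\CE}$ by density. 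I expect the only genuine obstacle to be bookkeeping: $\langle\cdot,\cdot\rangle_0$ is a bona fide continuous form only on $\CE$, so every identity above and the conservation law itself must be applied on $\chi_\mu(\dot{H})\dot{\CE}\subset\CE$ rather than directly on $\dot{\CE}$; once this is organized, the statement is a direct combination of Lemma~\ref{unifbound1}, Corollary~\ref{corunifbound1} and the energy identity.
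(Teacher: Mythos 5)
Your proof is correct and follows essentially the same route as the paper: conservation of the energy form $\langle\cdot,\cdot\rangle_0$ combined with the two-sided comparison of $\langle v,v\rangle_0$ and $\Vert v\Vert_{\dot{\CE}}^2$ on the range of $\chi_\mu(\dot{H})$, the lower bound being Corollary~\ref{corunifbound1} and the upper bound coming from Lemma~\ref{unifbound1}. The extra bookkeeping you supply --- that $\chi_\mu(\dot{H})\dot{\CE}\subset\CE$ so that the form $\langle\cdot,\cdot\rangle_0$ and its conservation genuinely make sense along the flow --- is a detail the paper leaves implicit and is a worthwhile addition.
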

\proof
We use that $\<e^{-it\dot{H}}\chi_{\mu}(\dot{H})u, e^{-it\dot{H}}\chi_{\mu}(\dot{H})u\>_{0}$ is
conserved.  By Corollary \ref{corunifbound1} we have
\begin{eqnarray*}
\Vert e^{-itH}\chi_{\mu}(\dot{H})u\Vert^2_{\dot{\CE}}&\lesssim& \<
e^{-it\dot{H}}\chi_{\mu}(\dot{H})u, e^{-it\dot{H}}\chi_{\mu}(\dot{H})u\>_{0}\\
&=&\<
\chi_{\mu}(\dot{H})u, \chi_{\mu}(\dot{H})u\>_{0}\lesssim \Vert
\chi_{\mu}(\dot{H})u\Vert_{\dot{\CE}}^2,
\end{eqnarray*}
which finishes the proof. 
\qed 

\subsection{Low frequency analysis}

Part $ii)$ of Thm. \ref{thboundedness} follows from the following
\begin{lemma}
\label{unifbound}
Assume \eqref{Hyp1}, \eqref{A2},
\eqref{TE1}-\eqref{TE3}, \eqref{PE2}, \eqref{Hyp15}. Let $\chi\in
C_0^{\infty}(\R\setminus \CS)$. Then there exists $C>0$ such
that:
\begin{equation}
\Vert e^{-it\dot{H}}\chi(\dot{H})u\Vert_{\dot{\CE}}\le C\Vert
u\Vert_{\dot{\CE}}, \ u\in \dot{\CE}, \ t\in \rr.
\end{equation}
\end{lemma}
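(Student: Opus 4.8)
The plan is to bound the growth of the homogeneous energy norm $\Vert e^{-it\dot H}\chi(\dot H)u\Vert_{\dot\CE}^2$ along the flow by a quantity which is globally square integrable in $t$, the integrability being precisely the propagation estimate of Proposition \ref{propPE3}.

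First I would fix $u\in\dot\CE$ and set $v(t)=e^{-it\dot H}\chi(\dot H)u$, $v(t)=(v_0(t),v_1(t))$. Since $\chi\in C_0^\infty(\R)$, choosing an almost analytic extension $\tilde\chi$ with $\supp\tilde\chi$ close enough to $\R$ so that $\supp\tilde\chi\cap\sigma_{pp}^{\C}(\dot H)=\emptyset$, the smooth functional calculus of Proposition \ref{prop16} yields $\chi(\dot H)u\in D(\dot H^k)$ for every $k$; in particular $v\in C^1(\R;\dot\CE)\cap C^0(\R;D(\dot H))$, so that no density argument is needed. Using the explicit description of $D(\dot H)$ and the mapping properties of $k$ on the $h_0$-scale from \eqref{A2}, the components $v_0,v_1,\partial_tv_0,\partial_tv_1$ all lie in the spaces required to differentiate $g(t):=\Vert v(t)\Vert_{\dot\CE}^2=\Vert v_1(t)-kv_0(t)\Vert^2+(h_0v_0(t)|v_0(t))$, and the computation carried out in the proof that $\dot H$ generates a $C_0$-group on $\dot\CE$ gives, just as there,
\[
g'(t)=([ik,h]v_0(t)|v_0(t))=([ik,h_0]v_0(t)|v_0(t)),
\]
the second equality because $k$ commutes with $k^2$.

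Next I would invoke \eqref{Hyp15}: it gives $w^{-1}v_0(t)\in D(h_0)$ and controls the commutator form, so that, using also $[k,w]=0$,
\[
|g'(t)|\lesssim(w^{-1}h_0w^{-1}v_0(t)|v_0(t))=\Vert h_0^{1/2}w^{-1}v_0(t)\Vert^2\le\Vert w^{-1}v(t)\Vert_{\dot\CE}^2 .
\]
(Here \eqref{Hyp15} is used in the time direction at hand; the reverse form bound, needed for the opposite direction, is supplied by the time reversal symmetry recorded at the end of the introduction.) Integrating from $0$ to $t$ and applying Proposition \ref{propPE3} with $\epsilon=1$, together with the boundedness of $\chi(\dot H)$ on $\dot\CE$, we obtain for all $t\in\R$
\[
g(t)\le g(0)+\int_\R|g'(s)|\,ds\lesssim\Vert\chi(\dot H)u\Vert_{\dot\CE}^2+\int_\R\Vert w^{-1}e^{-is\dot H}\chi(\dot H)u\Vert_{\dot\CE}^2\,ds\lesssim\Vert u\Vert_{\dot\CE}^2,
\]
which is the assertion of the lemma, hence part $ii)$ of Theorem \ref{thboundedness}.

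The routine verification of the mapping properties that make the formula for $g'(t)$ legitimate occupies most of the writing, but the substantive input is Proposition \ref{propPE3}, which provides the global-in-time square integrability of $w^{-\epsilon}e^{-it\dot H}\chi(\dot H)u$ away from the singular set $\CS$. The one point to be careful about is the sign bookkeeping in \eqref{Hyp15}, i.e.\ that the one-sided form inequality there indeed dominates $|g'(t)|$ for the time direction under consideration; modulo this, the whole argument is a one-line Gronwall-type estimate fed by the already established propagation bound.
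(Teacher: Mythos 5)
Your argument is essentially the paper's: both compute the time derivative of the homogeneous energy $g(t)=\Vert e^{-it\dot H}\chi(\dot H)u\Vert_{\dot\CE}^2$, reduce it to a commutator form $([ik,h]v_0|v_0)$, dominate this via \eqref{Hyp15} by $\Vert w^{-1}v(t)\Vert_{\dot\CE}^2$, and integrate using the Kato-smoothness estimate of Proposition \ref{propPE3}. The only cosmetic difference is that the paper first conjugates by $e^{-itk}\Phi(-k)$ to cast the problem as a time-dependent wave equation $(\partial_t^2+h(t))\psi_0=0$ with $h(t)=e^{-itk}h_0e^{itk}$, whereas you differentiate the energy directly as in the proof that $\dot H$ generates a $C_0$-group on $\dot\CE$; these manipulations are equivalent.

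One small inaccuracy to flag: you write $g(t)\le g(0)+\int_\R|g'(s)|\,ds$, but \eqref{Hyp15} gives only a one-sided form inequality $[-ik,h]\lesssim w^{-1}h_0w^{-1}$, hence only a one-sided bound on $g'(s)$, so $|g'(s)|$ is not controlled. You already note this in the parenthetical, but the correct bookkeeping is that, for the time direction under consideration, only the one-sided estimate $\pm g'(s)\lesssim\Vert w^{-1}v(s)\Vert_{\dot\CE}^2$ is used in $\int_0^t g'(s)\,ds$; the other time direction is handled by the time-reversal remark, exactly as you indicate. The paper's own display (their $\int_0^t$) is written one-sidedly, so it is better to match that. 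Modulo this presentational point and a global sign convention (the paper's proof of this lemma in fact has a small internal sign slip in identifying $\psi_1$ with $-i\partial_t\psi_0$, which is harmless under time reversal), your proof is correct and captures the substance of the argument.
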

\begin{proof}
Let $u\in \dot{\CE}$. Let 
\begin{eqnarray*}
\psi(t):=(\psi_0(t),\psi_1(t)):=e^{-itk}\left(\begin{array}{cc} 1 & 0 \\ -k & 1\end{array}\right)e^{-it\dot{H}}\chi(\dot{H})u.
\end{eqnarray*}
Note that
\begin{eqnarray*}
\Vert e^{-it\dot{H}}\chi(\dot{H})u\Vert_{\dot{\CE}}^2=\Vert\psi_1\Vert^2+(h(t)\psi_0(t)|\psi_0(t))
=:\Vert\psi(t)\Vert^2_{\CE(t)}
\end{eqnarray*}
with $h(t)=e^{-itk}(h+k^2)e^{itk}$ and that $\psi(t)$ solves  the wave equation
\begin{eqnarray*}
(\partial_t^2+h(t))\psi_0(t)=0,\quad \psi_1(t)=-i\partial_t\psi_0(t).
\end{eqnarray*}
Thus
\begin{eqnarray*}
\frac{d}{dt}\Vert e^{-it\dot{H}}\chi(\dot{H})u\Vert^2_{\dot{\CE}}=(h'(t)\psi_0(t)|\psi_0(t)),
\end{eqnarray*}
where
\begin{eqnarray*}
h'(t)=e^{-itk}[-ik,h]e^{ikt}\lesssim w^{-1}e^{-ikt}h_0e^{ikt}w^{-1},
\end{eqnarray*}
 using \eqref{Hyp15}. Therefore~:
\begin{eqnarray}
\label{energyderiv}
\frac{d}{dt}\Vert e^{-it\dot{H}}\chi(\dot{H})u\Vert^2_{\dot{\CE}}\lesssim\Vert w^{-1}\psi(t)\Vert^2_{\CE(t)}\lesssim\Vert w^{-1}e^{-it\dot{H}}\chi(\dot{H})u\Vert_{\dot{\CE}}^2,
\end{eqnarray}
where we have used that $w^{-1}$ commutes with
$e^{-itk}\left(\begin{array}{cc} 1 & 0\\ k & 1\end{array}\right)$ (see
\eqref{TE1}).
Integrating \eqref{energyderiv} we obtain~:
\begin{eqnarray*}
\Vert e^{-it\dot{H}}\chi(\dot{H})u\Vert^2_{\dot{\CE}}\lesssim\Vert u\Vert_{\dot{\CE}}^2
+\int_0^{t}\Vert w^{-1}e^{-it\dot{H}}\chi(\dot{H})u\Vert_{\dot{\CE}}^2dt\lesssim\Vert u\Vert_{\dot{\CE}}^2,
\end{eqnarray*}
by Prop.\ref{propPE3}.
\end{proof}
 We end this section by proving a weak convergence result, which will be important in Sect. \ref{secAsympc1}.
 \begin{lemma}
\label{weakconv}
Assume \eqref{Hyp1}-\eqref{A2}, \eqref{TE1}-\eqref{TE3}, \eqref{PE2}-\eqref{Hyp15}.
Then \[
e^{-it\dot{H}}\chi(\dot{H})\rightharpoonup 0, \ \forall \ \chi\in \coinf(\rr\backslash\CS).
\]
\end{lemma}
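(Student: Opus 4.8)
The plan is to run a RAGE–type argument, combining the uniform boundedness of the evolution away from $\CS$ (Theorem~\ref{thboundedness}~$ii)$) with the propagation estimate of Proposition~\ref{propPE3}. First I would fix $\varphi\in\dot\CE$ and $\chi\in\coinf(\rr\setminus\CS)$, and choose $\hat\chi\in\coinf(\rr\setminus\CS)$ with $\hat\chi\chi=\chi$, so that $e^{-it\dot H}\chi(\dot H)\varphi=\hat\chi(\dot H)e^{-it\dot H}\chi(\dot H)\varphi$ for all $t$. By Theorem~\ref{thboundedness}~$ii)$ the set $\{e^{-it\dot H}\chi(\dot H)\varphi:t\ge0\}$ is bounded in the reflexive Hilbert space $\dot\CE$, so in order to prove $e^{-it\dot H}\chi(\dot H)\varphi\rightharpoonup0$ as $t\to+\infty$ it suffices to show that $0$ is the only weak subsequential limit. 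So suppose $t_n\to+\infty$ and $x_n:=e^{-it_n\dot H}\chi(\dot H)\varphi\rightharpoonup v$; since $\hat\chi(\dot H)\in\CB(\dot\CE)$ (Proposition~\ref{prop16}) is weakly continuous and $\hat\chi(\dot H)x_n=x_n$, I first record $v=\hat\chi(\dot H)v$.

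Next I would transport the decay to $v$ by time translation. For fixed $s\ge0$, weak continuity of the bounded operator $e^{-is\dot H}$ gives $e^{-is\dot H}x_n\rightharpoonup e^{-is\dot H}v$, and applying the operator $w^{-\epsilon}e^{-is\dot H}\hat\chi(\dot H)$, which is bounded on $\dot\CE$ for each $s$ (this is exactly the weighted bound that underlies Proposition~\ref{propPE3}, the weight being kept attached to a spectral cut-off), one obtains
\[
w^{-\epsilon}e^{-i(t_n+s)\dot H}\chi(\dot H)\varphi
=w^{-\epsilon}e^{-is\dot H}\hat\chi(\dot H)x_n
\ \rightharpoonup\ w^{-\epsilon}e^{-is\dot H}v \quad\text{in }\dot\CE .
\]
By weak lower semicontinuity of the norm, $\|w^{-\epsilon}e^{-is\dot H}v\|_{\dot\CE}^2\le\liminf_n\|w^{-\epsilon}e^{-i(t_n+s)\dot H}\chi(\dot H)\varphi\|_{\dot\CE}^2$ for every $s\ge0$. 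Integrating over $s\in[0,T]$, using Fatou's lemma and the substitution $\tau=t_n+s$,
\[
\int_0^T\|w^{-\epsilon}e^{-is\dot H}v\|_{\dot\CE}^2\,ds
\ \le\ \liminf_n\int_{t_n}^{t_n+T}\|w^{-\epsilon}e^{-i\tau\dot H}\chi(\dot H)\varphi\|_{\dot\CE}^2\,d\tau
\ =\ 0 ,
\]
the last equality because $\int_{\rr}\|w^{-\epsilon}e^{-i\tau\dot H}\chi(\dot H)\varphi\|_{\dot\CE}^2\,d\tau<\infty$ by Proposition~\ref{propPE3} while $t_n\to+\infty$.

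Hence $w^{-\epsilon}e^{-is\dot H}v=0$ for a.e.\ $s\ge0$; since $s\mapsto w^{-\epsilon}e^{-is\dot H}v=(w^{-\epsilon}\hat\chi(\dot H))e^{-is\dot H}v$ is continuous ($C_0$-group composed with a bounded operator, using $v=\hat\chi(\dot H)v$), it vanishes for all $s$, and at $s=0$ this reads $w^{-\epsilon}v=0$. As $w$ is a positive self-adjoint operator with bounded inverse, $w^{-\epsilon}$ is injective on $\CH$, hence so is its extension acting componentwise on $\dot\CE$ (which commutes with $\Phi(k)$); therefore $v=0$. Since $\varphi\in\dot\CE$ was arbitrary, this gives $e^{-it\dot H}\chi(\dot H)\rightharpoonup0$ as $t\to+\infty$. (If $\sigma_{pp}^{\C}(\dot H)\neq\emptyset$, the same argument on $\CE_{\R}(\dot H)$, where Theorem~\ref{thboundedness} and Proposition~\ref{propPE3} apply, yields the statement there.)

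The step I expect to require the most care is the handling of the weight $w^{-\epsilon}$ under weak limits: the estimates of Sections~\ref{SecME}--\ref{SecAC1} only control $w^{-\epsilon}$ when it comes with a second weight or with a spectral localization, so one cannot push $w^{-\epsilon}$ through a weak limit on its own; the device of always writing it as $w^{-\epsilon}\hat\chi(\dot H)\,(\,\cdot\,)$, with $w^{-\epsilon}\hat\chi(\dot H)\in\CB(\dot\CE)$, is what makes the argument run. The companion technical point, injectivity of the extension of $w^{-\epsilon}$ to $\dot\CE$, is harmless in the concrete situations of the later sections, where $w$ is a non-vanishing smooth function of the position variable~$x$.
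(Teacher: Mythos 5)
Your argument is correct in its essentials, but it takes a genuinely different route from the paper. You run a RAGE-type compactness argument: extract a weak subsequential limit $v$ from the bounded orbit, propagate the $L^{2}$-in-time decay of Proposition~\ref{propPE3} to $v$ via lower semicontinuity and Fatou, and then conclude $v=0$ from the vanishing of $w^{-\epsilon}e^{-is\dot H}v$. The paper's proof is much more economical: since $e^{-it\dot H}\chi(\dot H)$ is uniformly bounded in $t$ by Theorem~\ref{thboundedness}, weak convergence to $0$ only needs to be tested on a dense subspace of $\dot\CE$; by hypothesis~\eqref{PE2} the compactly supported vectors $\{u=\eta(x)u,\ \eta\in\coinf(\rr)\}$ are dense, such vectors lie in the range of $w^{-\epsilon}$ (as $w$ is smooth in $x$), and then the weighted decay of Proposition~\ref{prop5.2} finishes the proof without any passage to subsequences. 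The two proofs use the same two ingredients — uniform boundedness and the weighted propagation estimate — but the paper factors the weight into the test vectors by density, whereas you factor it into the initial datum and pass to the weak limit.

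Two technical points you assert deserve more scrutiny, and both are the kind of thing the density argument quietly sidesteps. First, you invoke $w^{-\epsilon}\hat\chi(\dot H)\in\CB(\dot\CE)$ to propagate a weak limit through the weight; this amounts to $h_0^{1/2}w^{-\epsilon}h_0^{-1/2}\in\CB(\CH)$ together with the commutation $[k,w]=0$, which is consistent with (and implicitly used in) the proof of Proposition~\ref{propPE3}, but is never stated as a standalone fact in the paper. Second, and more delicate, is the injectivity of the extension of $w^{-\epsilon}$ to $\dot\CE$: concluding $v=0$ from $w^{-\epsilon}v=0$ requires injectivity on the completion $h_0^{-1/2}\CH$, which is not an automatic consequence of injectivity on $\CH$, since $h_0^{-1/2}\CH$ is an abstract completion whose elements need not live in $\CH$. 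You flag this yourself and note it is harmless in the geometric setting; in the abstract setting you would do better to avoid it altogether, e.g.\ by testing $v$ against the compactly supported data of \eqref{PE2} — which is essentially what the paper's argument does from the outset, and why it is both shorter and cleaner.
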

\proof 
 Since $\e^{-i t\dot{H}}\chi(\dot{H})$ is uniformly bounded in $t$ by Thm. \ref{unifbound1}, it suffices to prove that $\langle v| e^{-i t\dot{H}}\chi(\dot{H})u\rangle_{\dot\cE}\to 0$ for $u, v$ in a dense subspace of $\dot\cE$, where $\langle \cdot | \cdot \rangle_{\dot\cE}$ is the scalar product associated to the norm of $\dot\cE$. By \eqref{PE2}  the space $\{u\in \dot\cE\ : \ u = \chi(x)u, \ \chi\in \coinf(\rr)\}$ is dense in $\dot\cE$. For such $u, v$ the convergence to $0$ follows from Prop. \ref{prop5.2}. \qed

\section{Asymptotic completeness 1: abstract setting}
In this section we prove existence and completeness of wave operators, comparing the full dynamics $\e^{-it\dot{H}}$ with  the two asymptotic dynamics $\e^{-i t\dot{H}_{\pm}}$, for energies away from the set $\mathcal{S}$ of singular points.
\label{secAsympc1}
We first define the spaces of {\em scattering states}.
\begin{definition}
We call $\chi\in C^{\infty}(\R)$ an
{\em admissible}  cut-off function for $\dot{H}$ if 
\begin{itemize}
\item[--] $\chi= 0$ in a neighborhood of $\CS$ and 
\item[--] $\chi= 0$ or $\chi= 1$ on $\R\setminus ]-R, R[$ for
  some $R>0$.
\end{itemize}
We note $\CC^H$ the set of all admissible 
cut-offs for $\dot{H}$.
\end{definition}
\begin{definition}
The spaces of {\em scattering states} are defined as
\begin{eqnarray*}
\dot{\CE}_{scatt}&:=&\{\chi(\dot{H})u;\, u\in \dot{\CE},\, \chi\in \CC^H\},\\
\dot{\CE}_{scatt\pm}&:=&\{\chi(\dot{H}_{\pm})u;\, u\in
\dot{\CE}_{\pm},\, \chi\in \CC^H\}.
\end{eqnarray*}
\end{definition}
We will need the following three lemmas :
\begin{lemma}
\label{lemma6.1}
Assume \eqref{Hyp1}-\eqref{A2} and \eqref{TE1}-\eqref{TE3}. Then
$w[\dot{H},i_{\pm}]w\in \CB(\dot{\CE};\dot{\CE}_{\pm})$.
\end{lemma}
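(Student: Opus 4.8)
The plan is to reduce the statement to the single boundedness hypothesis \eqref{TE3}{\it e)} by unwinding the matrix form of the commutator and the definition of the energy spaces $\dot{\CE}_{\pm}$.

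First, since $i_{\pm}=i_{\pm}(x)$ and $w=w(x)$ both commute with $k$ by \eqref{TE1}, the weight $w$ acts diagonally on $\dot{\CE}$, and using the formula $[\dot{H},i_{\pm}]=\left(\begin{smallmatrix}0&0\\ {[}h,i_{\pm}{]}&0\end{smallmatrix}\right)$ recorded just before \eqref{Kpm} one gets, for $u=(\begin{smallmatrix}u_0\\u_1\end{smallmatrix})\in\dot{\CE}$,
\begin{equation*}
w[\dot{H},i_{\pm}]w\,u=\big(0,\,w[h,i_{\pm}]w\,u_0\big),
\end{equation*}
so the output always sits in the second component. Next I would identify the $\dot{\CE}_{\pm}$-norm of such a vector. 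For the $+$ end, $\dot{\CE}_+=h_+^{-1/2}\CH\oplus\CH$ and clearly $\Vert(0,v_1)\Vert_{\dot{\CE}_+}=\Vert v_1\Vert_{\CH}$; for the $-$ end, $\dot{\CE}_-=\Phi(\ell)(\tilde{h}_-^{-1/2}\CH\oplus\CH)$ and since $\Phi(-\ell)(0,v_1)=(0,v_1)$ one again gets $\Vert(0,v_1)\Vert_{\dot{\CE}_-}=\Vert v_1\Vert_{\CH}$. Hence in both cases $\Vert w[\dot{H},i_{\pm}]w\,u\Vert_{\dot{\CE}_{\pm}}=\Vert w[h,i_{\pm}]w\,u_0\Vert_{\CH}$.

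It then remains to show $\Vert w[h,i_{\pm}]w\,u_0\Vert_{\CH}\lesssim\Vert u\Vert_{\dot{\CE}}$. From \eqref{eq:hnorm} we have $\Vert h_0^{1/2}u_0\Vert_{\CH}\le\Vert u\Vert_{\dot{\CE}}$, so it suffices to prove that $w[h,i_{\pm}]w$ extends to a bounded operator $h_0^{-1/2}\CH\to\CH$, i.e. that $w[h,i_{\pm}]w\,h_0^{-1/2}\in\CB(\CH)$; this is exactly part {\it e)} of \eqref{TE3}. If one wishes to make the composition through $h_0^{-1/2}$ completely explicit and avoid any domain issue, one first uses \eqref{TE3}{\it b)} to write $w[h,i_{\pm}]w=(w\tilde{i})[h,i_{\pm}](\tilde{i}w)$, where $w\tilde{i}$ is bounded because $\tilde{i}$ has compact support and $w$ is continuous. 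Combining the two displays gives $\Vert w[\dot{H},i_{\pm}]w\,u\Vert_{\dot{\CE}_{\pm}}\lesssim\Vert u\Vert_{\dot{\CE}}$, which is the claim.

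The argument is essentially bookkeeping; the only mildly delicate point is the correct identification of the homogeneous norms on $\dot{\CE}_{\pm}$ — in particular the $\Phi(\ell)$ twist at the $-$ end, which however drops out on vectors of the form $(0,v_1)$ — and the justification that the composition through $h_0^{-1/2}$ makes sense, handled by the localization \eqref{TE3}{\it b)}. There is no serious obstacle: the whole content of the lemma has been built into assumption \eqref{TE3}{\it e)}.
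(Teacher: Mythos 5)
Your proof is correct and follows the same route as the paper: compute the matrix form of $w[\dot{H},i_{\pm}]w$, observe the output lands in the second component (where the $\Phi(\ell)$ twist on $\dot{\CE}_-$ is trivial), and reduce to the boundedness of $w[h,i_{\pm}]wh_0^{-1/2}$ on $\CH$, which is precisely hypothesis \eqref{TE3}{\it e)}. You simply spell out the norm bookkeeping that the paper leaves implicit.
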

\proof
We have
\begin{eqnarray*}
w[\dot{H},i_{\pm}]w=\left(\begin{array}{cc} 0 & 0\\ w[h,i_{\pm}]w &
    0\end{array}\right)\in B(\dot{\CE}, \dot{\CE}_{\pm}),
\end{eqnarray*}
 by hypothesis \eqref{TE3}{\it e)}.
\qed

\begin{lemma} \label{lemma6.2}
Assume \eqref{Hyp1}-\eqref{A2} and \eqref{TE1}-\eqref{TE3}.
\begin{itemize}
\item[i)] Let $\chi\in C_0^{\infty}(\R)$. Then
\begin{equation*}
i_{\pm}\chi(\dot{H}_{\pm})-\chi(\dot{H})i_{\pm}\in
\CB_{\infty}(\dot{\CE}_{\pm};\dot{\CE}).
\end{equation*}
\item[ii)] Let $\chi\in C^{\infty}(\R)$ such that $\chi= 1$ outside $]-R, R[$ 
for some $R>0$. Then
\begin{equation*}
i_{\pm}\chi(\dot{H}_{\pm})-\chi(\dot{H})i_{\pm}\in
\CB_{\infty}(\dot{\CE}_{\pm};\dot{\CE}).
\end{equation*}
\end{itemize}
\end{lemma}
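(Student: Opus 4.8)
The plan is to reduce the difference to a second resolvent identity via the Helffer--Sj\"ostrand functional calculus. For $\chi\in C_0^\infty(\R)$, fix an almost analytic extension $\tilde\chi$ with $\supp\tilde\chi$ disjoint from $\sigma_{pp}^\C(\dot H)$ and contained in a thin complex neighbourhood of $\R$. Using Proposition \ref{prop16} for $\dot H$ and the selfadjoint version for $\dot H_\pm$,
\[
i_\pm\chi(\dot H_\pm)-\chi(\dot H)i_\pm=\frac{1}{2\pi i}\int_\C\bar\partial\tilde\chi(z)\,\big(i_\pm\dot R_\pm(z)-\dot R(z)i_\pm\big)\,dz\wedge d\bar z .
\]
Because $\CB_\infty(\dot\CE_\pm;\dot\CE)$ is norm closed and $\bar\partial\tilde\chi$ vanishes to infinite order on $\R$, part i) follows once $z\mapsto i_\pm\dot R_\pm(z)-\dot R(z)i_\pm$ is seen to be $\CB_\infty(\dot\CE_\pm;\dot\CE)$-valued off $\R$ with bounds $\le C_N|{\rm Im}\,z|^{-N}$ for all $N$. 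Part ii) then reduces to part i): for $\chi=1$ outside $]-R,R[$ the conventions $\chi(\dot H)=\one_\R(\dot H)-(1-\chi)(\dot H)$ and $\chi(\dot H_\pm)=\one-(1-\chi)(\dot H_\pm)$ give
\[
i_\pm\chi(\dot H_\pm)-\chi(\dot H)i_\pm=\one_{pp}^\C(\dot H)i_\pm-\big(i_\pm(1-\chi)(\dot H_\pm)-(1-\chi)(\dot H)i_\pm\big),
\]
the first summand being of finite rank by Proposition \ref{ResH} and the second compact by part i) applied to $1-\chi\in C_0^\infty(\R)$.

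The core algebraic step is the resolvent identity itself. Since $i_\pm j_\mp=0$ (cf. \eqref{TE1} and the relations following it) one has $i_\pm k_\pm=k i_\pm$, hence $i_\pm k_\pm^2=k^2 i_\pm$; as $h=h_0-k^2$ and $h_\pm=h_0-k_\pm^2$ this gives $i_\pm k_\pm-k i_\pm=0$ and $i_\pm h_\pm-h i_\pm=[i_\pm,h_0]=-[h,i_\pm]$, so that
\[
i_\pm\hat H_\pm-\hat H i_\pm=-[\dot H,i_\pm],
\]
the right side having only the lower-left entry $-[h,i_\pm]$. Using \eqref{TE3} d)--e) and Lemma \ref{lemma4.3.1} one checks that $i_\pm$ maps $D(\dot H_\pm)$ continuously into $D(\dot H)$; then for $z\in\rho(h,k)\setminus\R$
\[
i_\pm\dot R_\pm(z)-\dot R(z)i_\pm=\dot R(z)\,[\dot H,i_\pm]\,\dot R_\pm(z).
\]

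The hard part is the compactness of $\dot R(z)[\dot H,i_\pm]\dot R_\pm(z):\dot\CE_\pm\to\dot\CE$. I would localise the commutator by \eqref{TE3} b), $[\dot H,i_\pm]=\tilde\imath\,[\dot H,i_\pm]\,\tilde\imath$ with $\tilde\imath=\tilde\chi(R^{-1}x)$ acting diagonally, and combine this with the abstract Rellich-type compactness $w^{-\epsilon}\langle h\rangle^{-s},\ w^{-\epsilon}\langle h_\pm\rangle^{-s}\in\CB_\infty(\CH)$ for $0<s\le1$, $\epsilon>0$ (available from \eqref{TE3} c), i.e. \eqref{HypME} e) for $h_\pm$, using $\langle h\rangle^{-s}\CH=\langle h_0\rangle^{-s}\CH=\langle h_\pm\rangle^{-s}\CH$) and with the compactness of the weighted resolvents $w^{-\epsilon}\dot R(z)w^{-\epsilon}\in\CB_\infty(\dot\CE)$, $w^{-\epsilon}\dot R_\pm(z)w^{-\epsilon}\in\CB_\infty(\dot\CE_\pm)$ obtained around \eqref{Resweight-x}. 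The idea is that inserting powers of $w$ around $[\dot H,i_\pm]$ --- legitimate since $\tilde\chi(R^{-1}x)w^{\pm\epsilon}$ is a bounded function of $x$ on the support of $\tilde\imath$ --- together with Lemma \ref{lemma6.1} and the mapping property $\dot R_\pm(z):\dot\CE_\pm\to D(\dot H_\pm)$, exhibits $\dot R(z)[\dot H,i_\pm]\dot R_\pm(z)$ as a product of bounded operators in which a compact factor (a weighted resolvent, or a Rellich factor) appears; the $|{\rm Im}\,z|^{-N}$ bounds then follow from Lemmas \ref{lemma4.5}, \ref{lem4.6} and selfadjointness of $\dot H_\pm$. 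The genuine difficulty is bookkeeping --- keeping track of the three energy spaces $\dot\CE,\dot\CE_\pm$ and of the homogeneous versus inhomogeneous $h_0$-Sobolev scales so that every factor lands in the right space --- rather than any new idea beyond Sections \ref{SecME}--\ref{SecTE}.
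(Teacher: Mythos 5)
Your proof is correct and follows essentially the same route as the paper's: the Helffer--Sj\"ostrand formula, the resolvent identity $i_\pm\dot R_\pm(z)-\dot R(z)i_\pm=\dot R(z)[\dot H,i_\pm]\dot R_\pm(z)$ (which the paper states directly and you verify via $i_\pm k_\pm=k i_\pm$ and $[k,i_\pm]=0$), compactness of $[\dot H,i_\pm]\dot R_\pm(z)$ from \eqref{TE3} b) and e), and the polynomial resolvent bounds of Lemma \ref{lemma4.5}. Your reduction of ii) to i) is the same trick, merely spelling out the finite-rank term $\one_{pp}^\C(\dot H)i_\pm$ that the paper's phrase ``replacing $\chi$ by $1-\chi$'' leaves implicit.
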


\proof 
Note first that $ii)$ follows from $i)$, replacing $\chi$ by $1- \chi$.
We therefore only have to prove $i)$.
Let $\tilde{\chi}$ be an almost analytic extension of $\chi$  such that $\supp\tilde{\chi}$ doesn't
intersect the complex poles of $\dot{R}(z)$. We have
\begin{eqnarray*}
i_{\pm}\chi(\dot{H}_{\pm})-\chi(\dot{H})i_{\pm}=\frac{1}{2\pi
  i}\int\bar{\partial}\tilde{\chi}(z)\dot{R}(z)[\dot{H},i_{\pm}]\dot{R}_{\pm}(z)dz\wedge
d\bar{z}.
\end{eqnarray*}
By hypotheses \eqref{TE3}{\it b) }and \eqref{TE3}{\it e)} we have
\[[\dot{H},i_{\pm}]\dot{R}_{\pm}(z)\in
\CB_{\infty}(\dot{\CE}_{\pm};\dot{\CE}).
\]
Then we apply the estimates in Lemma \ref{lemma4.5}.
\qed

\begin{theorem}
\label{asympcompl}
Assume \eqref{Hyp1}, \eqref{A2},
\eqref{TE1}-\eqref{TE3}, \eqref{PE2}-\eqref{Hyp15}.

(i) For all $\varphi^{\pm}\in \dot{\CE}_{scatt\pm}$ there exist $\psi^{\pm}\in \dot{\CE}_{scatt}$ such that
\begin{eqnarray*}
e^{-it\dot{H}}\psi^{\pm}-i_{\pm}e^{-it\dot{H}_{\pm}}\varphi^{\pm}\rightarrow
0,\, t\rightarrow\infty\quad\mbox{in}\quad \dot{\CE}.
\end{eqnarray*}
(ii)  For all $\psi^{\pm}\in \dot{\CE}_{scatt}$ there exist $\varphi^{\pm}\in \dot{\CE}_{scatt\pm}$ such that
\begin{eqnarray*}
e^{-it\dot{H}_{\pm}}\varphi^{\pm}-i_{\pm}e^{-it\dot{H}}\psi^{\pm}\rightarrow
0,\, t\rightarrow\infty\quad\mbox{in}\quad \dot{\CE}_{\pm}.
\end{eqnarray*}
\end{theorem}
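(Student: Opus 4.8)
The statement is a Cook–Kuroda type existence-and-completeness result, and the standard strategy applies: reduce to an integral ($\int_0^\infty \|\frac{d}{dt}(\cdot)\|\,dt<\infty$) criterion, and control the integrand by the propagation estimate of Proposition \ref{propPE3} together with the compactness statements of Lemma \ref{lemma6.2}. I would first reduce to a dense set of scattering states. Every $\varphi^\pm\in\dot\CE_{scatt\pm}$ is of the form $\chi(\dot H_\pm)u$ with $\chi\in\CC^H$, and likewise on the full side; by density it suffices to prove the limits for $\varphi^\pm=\chi(\dot H_\pm)u$ with $\chi\in C_0^\infty(\R\setminus\CS)$ (the "$\chi=1$ near infinity" part of an admissible cut-off is handled by writing $\chi=\chi_1+\chi_2$, $\chi_1$ compactly supported away from $\CS$ and $\chi_2$ supported outside a large ball, and for $\chi_2$ one invokes Theorem \ref{thboundedness}(i) plus the fact that $i_\pm$ intertwines the free and full high-frequency dynamics up to compact, rapidly-decaying-in-energy errors).

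\textbf{Part (ii) (inverse wave operators).} Fix $\psi=\chi(\dot H)u$, $\chi\in C_0^\infty(\R\setminus\CS)$. Set $\Phi(t):=e^{it\dot H_\pm}i_\pm e^{-it\dot H}\psi$. Then
\[
\frac{d}{dt}\Phi(t)=e^{it\dot H_\pm}\,\big(i\dot H_\pm i_\pm-i\,i_\pm\dot H\big)\,e^{-it\dot H}\psi
= i\,e^{it\dot H_\pm}[\,\dot H_\pm,i_\pm]\,e^{-it\dot H}\psi,
\]
and since $[\dot H,i_\pm]=[\dot H_\pm,i_\pm]$ (both equal $\bigl(\begin{smallmatrix}0&0\\ [h,i_\pm]&0\end{smallmatrix}\bigr)$, using \eqref{tutu}), we may insert the weight: $[\dot H_\pm,i_\pm]=w^{-1}\bigl(w[\dot H,i_\pm]w\bigr)w^{-1}$. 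By Lemma \ref{lemma6.1} the middle factor is bounded $\dot\CE\to\dot\CE_\pm$, so
\[
\Big\|\tfrac{d}{dt}\Phi(t)\Big\|_{\dot\CE_\pm}\lesssim \big\|w^{-1}e^{-it\dot H}\chi(\dot H)u\big\|_{\dot\CE}\cdot\big\|w^{-1}e^{-it\dot H_\pm}\,\tilde\chi(\dot H_\pm)\,\big\|
\]
— more precisely one first commutes $\chi(\dot H)$ through $[\dot H_\pm,i_\pm]$ modulo a compact, energy-localizing remainder via Lemma \ref{lemma6.2}, producing a factor $\tilde\chi(\dot H_\pm)$ on the left. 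Then $\int_0^\infty\|\frac{d}{dt}\Phi(t)\|_{\dot\CE_\pm}\,dt<\infty$ follows from Cauchy–Schwarz and the two propagation estimates: $\int_0^\infty\|w^{-1}e^{-it\dot H}\chi(\dot H)u\|^2dt<\infty$ by Proposition \ref{propPE3}, and $\int_0^\infty\|w^{-1}e^{-it\dot H_\pm}\tilde\chi(\dot H_\pm)v\|^2dt<\infty$ by the selfadjoint version (Proposition \ref{prop5.2}(ii), valid since $\tilde\chi$ is supported away from $\CT_\pm\subset\CS$, using $\CS_\pm=\CT_\pm$ from Remark \ref{remdeb}). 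Hence $\Phi(t)$ converges in $\dot\CE_\pm$; call the limit $\varphi^\pm$. It lies in $\dot\CE_{scatt\pm}$ because $\varphi^\pm=\tilde\chi(\dot H_\pm)\varphi^\pm$ up to the usual compact-remainder manipulation (one checks the energy localization of the limit is inherited).

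\textbf{Part (i) (direct wave operators).} Symmetrically, for $\varphi^\pm=\chi(\dot H_\pm)u$ set $\Psi(t):=e^{it\dot H}i_\pm e^{-it\dot H_\pm}\varphi^\pm$; the same Cook computation gives $\frac{d}{dt}\Psi(t)=-i\,e^{it\dot H}[\dot H,i_\pm]e^{-it\dot H_\pm}\varphi^\pm$, and the identical weighted estimate plus the two propagation bounds (now with the roles of $\dot H$ and $\dot H_\pm$ in the two factors swapped) yields integrability, hence convergence of $\Psi(t)$ to some $\psi^\pm$; energy localization shows $\psi^\pm\in\dot\CE_{scatt}$. Finally one records the intertwining: $\psi^\pm$ constructed from $\varphi^\pm$ in (i) and $\varphi^\pm$ constructed from $\psi^\pm$ in (ii) are mutually inverse on the respective scattering subspaces, which is automatic from the construction once uniform boundedness of $e^{-it\dot H}\chi(\dot H)$ (Theorem \ref{thboundedness}) is invoked to pass to limits in both orders.

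\textbf{Main obstacle.} The delicate point is not the Cook argument itself but making the \emph{energy cut-offs commute with $i_\pm$ at an acceptable cost}: one needs $\chi(\dot H)i_\pm=i_\pm\tilde\chi(\dot H_\pm)+\text{(compact, supported away from }\CS)$, which is exactly Lemma \ref{lemma6.2}, and one must be sure the compact remainder, after the dynamics are applied and the $t$-integral is taken, still gives a convergent contribution — this uses that the remainder carries an extra energy localization outside $\CT_\pm$ and outside $\CS$, so Proposition \ref{propPE3} and Corollary \ref{corollary5.2} apply to it too. The second subtlety is handling the "$\chi=1$ at infinity" branch of admissible cut-offs, where one must patch the low-frequency argument above with the high-frequency uniform boundedness of Theorem \ref{thboundedness}(i); here superradiance is harmless precisely because it is confined to $|z|\lesssim\|k\|$.
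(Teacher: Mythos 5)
Your underlying idea — a Cook argument where the commutator $[\dot H,i_\pm]=w^{-1}(w[\dot H,i_\pm]w)w^{-1}$ distributes a weight to each side and one then invokes the two square-integral propagation estimates (Prop.~\ref{propPE3} for $\dot H$, Cor.~\ref{corollary5.2} for $\dot H_\pm$) — is a genuinely different route from the paper, which instead uses \emph{pointwise} decay. In the paper's argument one restricts first to $\varphi\in D(w)$ (dense by \eqref{PE2}), and the single extra weight $w^{-1}$ landing on $e^{-it\dot H_\pm}\chi(\dot H_\pm)\varphi$ is upgraded to $\langle t\rangle^{-2}\|w\varphi\|$ by Prop.~\ref{prop5.2}(ii); integrability is then immediate and only one of the two weights produced by Lemma~\ref{lemma6.1} is actually exploited for decay. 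Your approach is in principle weight-optimal (no need for $\varphi\in D(w)$ at this stage), but it is not executed correctly.

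The gap is the sentence ``$\int_0^\infty\|\frac{d}{dt}\Phi(t)\|_{\dot\CE_\pm}\,dt<\infty$ follows from Cauchy--Schwarz and the two propagation estimates.'' Cauchy--Schwarz cannot turn one $L^2_t$ factor into an $L^1_t$ bound: after the weight splitting you only obtain
\[
\Big\|\tfrac{d}{dt}\Phi(t)\Big\|_{\dot\CE_\pm}
\;\lesssim\; \big\|w^{-1}e^{-it\dot H}\chi(\dot H)u\big\|_{\dot\CE},
\]
since $e^{it\dot H_\pm}$ is unitary on $\dot\CE_\pm$ and $w^{-1}\,(w[\dot H,i_\pm]w)$ is merely a bounded operator — there is no second \emph{time-decaying} scalar factor to pair with. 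The correct form of the ``two-sided smoothness'' Cook criterion works on the \emph{pairing}: for $\|f\|_{\dot\CE_\pm}=1$,
\[
|\langle f,\Phi(t)-\Phi(s)\rangle|
\le \Big(\int_s^t\|w^{-1}e^{-it'\dot H_\pm}\tilde\chi(\dot H_\pm)f\|^2\,dt'\Big)^{1/2}
\Big(\int_s^t\|Bw^{-1}e^{-it'\dot H}\chi(\dot H)u\|^2\,dt'\Big)^{1/2},
\]
and then $\sup_{\|f\|=1}|\cdot|\to0$ gives $\|\Phi(t)-\Phi(s)\|\to0$ \emph{directly}, with no $L^1$ estimate on $\|\tfrac{d}{dt}\Phi\|$ (which in general is only $L^2$). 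Moreover, for the first factor to be finite uniformly in $f$ you must first insert an energy cut-off $\tilde\chi(\dot H_\pm)$ on the $\dot H_\pm$ side; this is not immediate from Lemma~\ref{lemma6.2} (which intertwines $\chi$'s across $i_\pm$, not across $[\dot H,i_\pm]$) — one must write $i_\pm\chi(\dot H)=\tilde\chi(\dot H_\pm)i_\pm\chi(\dot H)+K$ with $K$ compact (using $\tilde\chi\chi=\chi$ and Lemma~\ref{lemma6.2}), treat the main term as above, and dispose of the compact remainder via uniform boundedness (Thm.~\ref{thboundedness}) together with the weak convergence $e^{-it\dot H}\chi(\dot H)u\rightharpoonup0$ of Lemma~\ref{weakconv}. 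Once you do all this, the argument closes; but as written the ``integrable derivative'' step is false, and the cut-off insertion is asserted without justification. The paper's pointwise-decay route avoids both issues at the modest cost of the density reduction $\varphi\in D(w)$.
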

\proof

Let $\chi\in \CC^H$. We only prove $(i)$, the
proof of $(ii)$ being analogous. \\
We first show that the limit
\begin{eqnarray}
\label{Wpmtrunc}
W^{\pm}\varphi:=\lim_{t\rightarrow\infty}e^{it\dot{H}}\chi(\dot{H})i_{\pm}e^{-it\dot{H}_{\pm}}\chi(\dot{H}_{\pm})\varphi
\end{eqnarray}
exists for all $\varphi\in \dot{\CE}_{\pm}$. We first treat the case
$\chi= 0$ on $\R\setminus ]-R, R[$. Using  Thm. \ref{thboundedness} {\it i)}, Lemma \ref{lemma4.3.1} and the fact that $\dot{H}_{\pm}$ are selfadjoint,  we obtain:
\beq\label{toto}
\Vert
e^{it\dot{H}}\chi(\dot{H})i_{\pm}e^{-it\dot{H}_{\pm}}\chi(\dot{H}_{\pm})\varphi\Vert_{\dot{\CE}}\lesssim
\Vert \varphi\Vert_{\dot{\CE}_{\pm}}.
\eeq
By \eqref{toto} and assumption \eqref{PE2} we may assume that 
$\varphi\in D(w)$. We compute
\begin{eqnarray}
\label{HDHHpm}
\frac{d}{dt}e^{it\dot{H}}\chi(\dot{H})i_{\pm}e^{-it\dot{H}_{\pm}}\chi(\dot{H}_{\pm})=e^{it\dot{H}}\chi(\dot{H})[\dot{H},i_{\pm}]e^{-it\dot{H}_{\pm}}\chi(\dot{H}_{\pm}).
\end{eqnarray}
Integrating \eqref{HDHHpm} and
using Lemma \ref{lemma6.1} and Proposition \ref{prop5.2} we
obtain:
\begin{eqnarray*}
\lefteqn{\Vert e^{it\dot{H}}\chi(\dot{H})i_{\pm}e^{-it\dot{H}_{\pm}}\chi(\dot{H}_{\pm})-e^{is\dot{H}}\chi(\dot{H})i_{\pm}e^{-is\dot{H}_{\pm}}\chi(\dot{H}_{\pm})\varphi\Vert}\\
&\lesssim&\int_s^t\Vert
w^{-1}e^{-it'\dot{H}_{\pm}}\chi(\dot{H}_{\pm})\varphi\Vert_{\dot{\CE}_{\pm}}\lesssim\int_s^t\<t'\>^{-2}dt'\Vert
  w\varphi\Vert_{\dot{\CE}_{\pm}}\rightarrow 0,\, s,t\rightarrow \infty.
\end{eqnarray*}
This gives the existence of the limit \eqref{Wpmtrunc}. Let now
$\chi= 1$ on $\R\setminus ]-R, R[$. Let $\hat{\chi}\in
C_0^{\infty}(\rr),\, \hat{\chi}= 1$ in a neighborhood of $0$. Using
that $e^{it\dot{H}}\chi(\dot{H})$ is uniformly bounded  by Thm. \ref{thboundedness} {\it ii)},  
Lemma \ref{lemma6.2} and Lemma \ref{propresestlowfr} we see that
\begin{align*}
&\lefteqn{\slim_{t\rightarrow
    \infty}e^{it\dot{H}}\chi(\dot{H})i_{\pm}e^{-it\dot{H}_{\pm}}\hat{\chi}^2\left(L^{-1}\dot{H}_{\pm}\right)\chi(\dot{H}_{\pm})}\\
=&\slim_{t\rightarrow\infty}e^{it\dot{H}}\chi(\dot{H})\hat{\chi}\left(L^{-1}\dot{H}\right)
i_{\pm}e^{-it\dot{H}_{\pm}}\hat{\chi}\left(L^{-1}\dot{H}_{\pm}\right)\chi(\dot{H}_{\pm})
\end{align*}
exists, since $\hat{\chi}(L^{-1}\cdot)$ is compactly supported. Let $\epsilon>0$. We estimate
\begin{align*}
\Vert
  &e^{it\dot{H}}\chi(\dot{H})i_{\pm}e^{-it\dot{H}_{\pm}}\chi(\dot{H}_{\pm})\varphi^{\pm}-e^{is\dot{H}}\chi(\dot{H})i_{\pm}e^{-is\dot{H}_{\pm}}\chi(\dot{H}_{\pm})\varphi^{\pm}\Vert_{\dot{\CE}}\\
\le&\left\Vert
  e^{it\dot{H}}\chi(\dot{H})i_{\pm}e^{-it\dot{H}_{\pm}}\hat{\chi}^2
  (L^{-1}\dot{H}_{\pm})\chi(\dot{H}_{\pm})\varphi^{\pm}-e^{is\dot{H}}\chi(\dot{H})i_{\pm}e^{-is\dot{H}_{\pm}}\hat{\chi}^2
  (L^{-1}\dot{H}_{\pm})\chi(\dot{H}_{\pm})\varphi^{\pm}\right\Vert_{\dot{\CE}}\\
&+2\left\Vert (1-\hat{\chi}^2 (L^{-1}\dot{H}_{\pm}))\varphi^{\pm}\right\Vert_{\dot{\CE}}<\epsilon,
\end{align*} 
if we choose first $L$ and then $t,s$ large enough. This shows the existence of the limit (\ref{Wpmtrunc}) if $\chi=1$ on $\rr\backslash ]-R, R[$.

Let for $\phi^{\pm}\in \dot{\CE}_{\pm}$
\begin{eqnarray*}
\psi^{\pm}_t=e^{it\dot{H}}\chi(\dot{H})i_{\pm}e^{-it\dot{H}_{\pm}}\chi(\dot{H}_{\pm})\phi^{\pm},\, \psi^{\pm}=\lim_{t\rightarrow\infty}\psi^{\pm}_t.
\end{eqnarray*}
Let us write 
\begin{eqnarray*}
\psi^{\pm}_t=\psi^{\pm}+r(t),\, r(t)\rightarrow 0,\, t\rightarrow\infty.
\end{eqnarray*}
Let $\tilde{\chi}\in \CC^H$ with $\tilde{\chi}\chi=\chi.$
We clearly have $\tilde{\chi}(\dot{H})\psi^{\pm}_t=\psi^{\pm}_t$ and thus 
\begin{eqnarray*}
\tilde{\chi}(\dot{H})\psi^{\pm}+\tilde{\chi}(\dot{H})r(t)=\psi^{\pm}+r(t).
\end{eqnarray*}
Taking the limit $t\rightarrow\infty$ we find~:
\begin{eqnarray*}
\tilde{\chi}(\dot{H})\psi^{\pm}=\psi^{\pm}\quad\mbox{in particular}\quad
\psi^{\pm}\in \dot{\CE}_{scatt},
\end{eqnarray*}
hence $\e^{-i t\dot{H}}\psi^{\pm}$ is uniformly bounded by Thm. \ref{thboundedness}.
It follows 
\begin{equation*}
e^{-it\dot{H}}\psi^{\pm}-\chi(\dot{H})i_{\pm}e^{-it\dot{H}_{\pm}}\chi(\dot{H}_{\pm})\phi^{\pm}\rightarrow
0.
\end{equation*}
Applying Lemma \ref{lemma6.2} we find
\begin{eqnarray}
\label{asympcompl*}
e^{-it\dot{H}}\psi^{\pm}-i_{\pm}e^{-it\dot{H}_{\pm}}\chi^2(\dot{H}_{\pm})\phi^{\pm}\rightarrow 0,\, t\rightarrow\infty.
\end{eqnarray}
Applying once more a density argument we obtain {\it i)}.
\qed

\section{Geometric setting}
\label{SecGS}
We describe in this section an intermediate geometric framework corresponding to our abstract framework. The main example will of course be the Klein-Gordon equation on the Kerr-De Sitter spacetime.
The main part of the section will consist in checking that the geometric hypotheses introduced below imply the abstract hypotheses in Sects. \ref{SecAC1}, \ref{secAsympc1}.

We consider a $d$ dimensional manifold of the form
$\CM=]r_{-}, r_{+}[_{r}\times S^{d-1}_{\omega}$. Let 
\[P=\sum_{ij=1}^{d-1}D^*_i\alpha_{ij}(\omega)D_j\ge 0\] 
be a symmetric elliptic operator on $L^2(S^{d-1}_{\omega};d\omega)$. Then
$(P,H^2(S^{d-1}_{\omega};d\omega))$ is selfadjoint. We assume  that for a suitable
choice of $\theta_1, \,L^2(S^{d-1}_{\omega};d\omega)$ possesses a basis of
eigenfunctions of $D_{\theta_1}$. Let $Y^n$ be the eigenspace
corresponding to the eigenvalue $n$. Then we have
\begin{equation*}
L^2(S^{d-1}_{\omega};d\omega)=\oplus_{n\in \Z}Y^n.
\end{equation*}
Our first assumption is 
\begin{equation}
\tag{G1} \label{G1}
[P, D_{\theta_{1}}]
=0, \hbox{ ie }\alpha_{ij}\hbox{ are independent on }\theta_{1}. 
\end{equation}
Let $q(r):=\sqrt{(r_+-r)(r-r_-)}$. We will need the following function spaces
\begin{equation*}
T^{\sigma}=\{f\in
C^{\infty}(\CM):\quad \partial_r^{\alpha}\partial_{\omega}^{\beta}f\in
\CO(q(r)^{\sigma-2\alpha})\}.  
\end{equation*}
Let $i_{\pm}\in C^{\infty}([r_-,r_+])$, $i_-=0$ in a neighborhood of
$r_+$, $i_+=0$ in a neighborhood of $r_-$ and $i_-^2+i_+^2=1$.
\subsection{Separable Hamiltonians}
We will consider a Hamiltonian $h_{0,s}$ of the following form:
\begin{equation}
\label{sepham}
h_{0,s}=\alpha_1D_r\alpha_2^2D_r\alpha_1+\alpha_3^2P+\alpha^2_4.
\end{equation}
Here $\alpha_i,\, 1\leq i \leq 4$ are smooth functions depending only
on $r$. 
We suppose that there exist $\alpha_j^{\pm}\in \R,\,1\leq j \leq 4$ such that for some $\delta>0$
\begin{equation}
\tag{G2} \label{G2}
   \alpha_j-q(r)(i_-\alpha_j^-+i_+\alpha_j^+)\in
T^{1+\delta},\
\alpha_j\gtrsim q(r).
\end{equation}
Note that $\eqref{G2}$ implies
\begin{equation}
\label{totoblut}
\alpha_{j}\in T^{1}, \ \alpha_{j}\lesssim q(r).
\end{equation}
We will also need an operator $k_s$ of the form
\begin{equation}
\label{sepk}
k_s=k_{s,r}D_{\theta_1}+k_{s,v}.
\end{equation}
Here $k_{s,r}$ and $k_{s,v}$ are smooth functions depending only on $r$.
We suppose that there exist $k_{s,v}^-,\, k_{s,r}^-\in \R$ such that
for some $\delta>0$
\begin{equation}
\tag{G3} \label{G3}
\left\{\begin{array}{rcl}
i_+k_{s,r}, \, i_+k_{s,v}&\in&T^{2},\\
i_-(k_{s,r}-k_{s,r}^-)&\in&T^{2},\\
i_-(k_{s,v}-k_{s,v}^-)&\in&T^{2}.\end{array}\right.
\end{equation}
We put:
\begin{equation*}
h_s=h_{0,s}-k_s^2.
\end{equation*}
The associated separable Klein-Gordon equation is:
\begin{equation}
\label{wavesep}
(\partial_t^2-2ik_s\partial_t+h_s)u=0.
\end{equation}
We introduce the Hilbert spaces
$\CH=L^2(]r_{-}, r_{+}[_{r}\times S^{d-1}_{\omega};drd\omega)$ and
$\CH^n=\CH\cap Y^n$.
\subsection{Perturbed Hamiltonian}
\label{perthamil}
We consider a perturbation  of (\ref{wavesep}) of the form
\begin{equation}
\label{wavenonsep}
(\partial_t^2-2ik\partial_t+h)u=0.
\end{equation}
We first fix an operator $h_0$ of the form :
\beq
\label{perturh}
\begin{array}{rl}
h_0|_{C_0^{\infty}(\CM)}=&h_{0,s}+\sum_{i,j\in
  \{1,...,d-1\}}D_i^*g^{ij}D_j+\sum_{i\in\{1,...,d-1\}}(g^iD_i+D_i^*\bar{g}i)\\[2mm]
&+D_rg^{rr}D_r+g^rD_r+D_r\bar{g}^r+f\\[2mm]
=:&h_{0,s}+h_p.
\end{array}
\eeq 
We assume
\begin{equation}
\tag{G4}\label{intheta1}
\mbox{The functions $g^{ij},\, g^{i},\, g^{rr},\, g^{r},\, f$ are
  independent of $\theta_1$},
\end{equation}
and
\begin{equation}
\tag{G5} \label{bornehp}
\left\{\begin{array}{rcl} h_0&\gtrsim& \alpha_1(r)(D_rq^2(r)D_r+P+1) \alpha_1(r),\\
h_{0,s}&\gtrsim& \alpha_1(r)(D_rq^2(r)D_r+P+1)
\alpha_1(r).\end{array}
\right.
\end{equation}
Note that \eqref{bornehp} implies \eqref{Hyp1}. $(h_0,C_0^{\infty}(\CM))$ is symmetric, we denote by $(h_0,D(h_0))$ its
Friedrichs extension. 
The asymptotic behavior of the various  coefficients is assumed to be as follows:  we require that there exists $\delta>0$ such that
\begin{equation}
\tag{G6} \label{G7}
\begin{array}{l} g^{ij}\in T^{2+\delta},\ g^{rr}\in T^{4+\delta},\\[2mm]
g^{r}\in T^{2+\delta},\
g^i\in T^{2},\
f\in T^2.\end{array}
\end{equation}
We also set
\begin{equation*}
k:=k_rD_{\theta_1}+k_v,\, k_r=k_{s,r}+k_{p,r},\, k_v=k_{s,v}+k_{p,v}
\end{equation*}
and suppose
\begin{equation}
\tag{G7} \label{G8}
k_{p,r},\, k_{p,v}\in T^{2}.
\end{equation} 
Finally we set
\[
h:=h_0-k^2.
\]
All operators have natural restrictions to the space $\CH^n$, which we
denote by a subscript $n$, for example $h_0^n$ is the restriction of $h_0$
to $\CH^n$. The operator $k^n$ is bounded and $(h^n, D(h_0^n))$ is
selfadjoint. We will check the hypotheses for the restrictions of the
operators to $\CH^n$ and the corresponding energy spaces. We will drop the index $n$ in the following.  
\subsection{Asymptotic Hamiltonians}
Let us first introduce the change of variables given by
\[ \frac{dx}{dr}=\alpha_1^{-2}(r).\]
Note that there is a freedom in the choice of the integration
constant. The choice of this constant is however not important for
what follows. For $r\rightarrow r_-$ we find
\[x(r)-x(r_-)=\int_{r_-}^r\frac{1}{\alpha_-^2q^2}dr+\int_{r_-}^rh(r)
dr\]
with $h(r)\in \CO((r-r_-)^{-1+\delta})$. Here we have used
\eqref{G2}. Recalling that $q(r)=\sqrt{(r_+-r)(r-r_-)}$ we find for $r$ close to $r_-$
\[x(r)-x(r_-)\ge \frac{1}{\kappa_-}\ln (r-r_-)-C\]
with $\kappa_-=(\alpha_-)^2(r_+-r_-)$. It follows 
\[r-r_-\lesssim e^{\kappa_-x},\quad r\rightarrow r_-.\]
In a similar way we obtain
\[r_+-r\lesssim e^{-\kappa_+ x},\quad r\rightarrow r_+,\]
where $\kappa_+=(\alpha_+)^2(r_+-r_-)$. Using that
$\partial_x=\alpha_1^{-2}(r)\partial_r$ we find :
\begin{eqnarray*}
f(r)&\in& T^{\sigma}\quad\mbox{iff}\quad f(r(x))\in
T^{\sigma}_x\quad\mbox{for}\\
T^{\sigma}_x&=&\left\{f\in C^{\infty}(\R\times
  S^{d-1}_{\omega});\, \partial^{\alpha}_x\partial^{\beta}_{\omega}f\in
  \left\{\begin{array}{c} \CO (e^{\frac{\sigma \kappa_-}{2}x}),\,
        x\rightarrow -\infty,\\ \CO (e^{-\frac{\sigma
            \kappa_+}{2}x}),\quad x\rightarrow \infty. \end{array}\right\}\right\}.
\end{eqnarray*}
This change of variables gives rise to the unitary transformation
\begin{equation*}
\CU_1:\left. \begin{array}{rcl} L^2(]r_{-}, r_{+}[_{r}\times S^{d-1}_{\omega};drd\omega)&\rightarrow&
    L^2(\R\times S^{d-1}_{\omega};dxd\omega)=:\CH_1\\
v(r,\omega)&\mapsto&\alpha_1(r(x))v(x,\omega).\end{array}\right.
\end{equation*}
We put
\begin{equation*}
\CE_{\pm,1}=(\CU_1\oplus\CU_1)\CE_{\pm},\, h_{\pm}^1:=\CU_1h_{\pm}\CU_1^{-1},\, k^1_{\pm}=\CU_1k_{\pm}\CU_1^{-1}.
\end{equation*} 
We compute
\begin{align*}
h_0^1=\CU_1h_0\CU_1^{-1}=&\ \CU_1h_{0,s}\CU_1^{-1}+\sum_{i,j\in
  \{1,...,d-1\}}D_i^*g^{ij}D_j+\sum_{i\in\{1,...,d-1\}}(g^iD_i+D_i^*\bar{g}i)\\
&+\alpha_1^{-1}(r)D_x g^{rr}\alpha_1^{-2}(r)D_x\alpha_1^{-1}(r)+\alpha_1^{-1}(r)g^rD_x\alpha_1^{-1}(r)+\alpha_1^{-1}(r)D_x\bar{g}^r\alpha_1^{-1}(r)+f,\\
\CU_1h_{0,s}\CU_1=&D_x \alpha_2^2 \alpha_1^{-2}D_x+\alpha_3^2P+\alpha_4^2.
\end{align*}
We will often drop the  exponent $1$ when it is clear which coordinate
system is used. 
\subsubsection{Asymptotic Hamiltonians}\label{subitopresto}
The asymptotic Hamiltonians are now constructed as  in
Subsect. \ref{Harlem Désir}:  we set 
\[
\ell:=nk_{s,r}^-+k_{s,v}^-, 
\]
and define $k_{\pm}$, $h_{+}$, $\tilde{h}_{-}$ as in (\ref{tutu}), (\ref{titi}).  Recall that  since $l$ is fixed, these operators depend only on a cutoff scale $R$.  The following hypothesis is the analog of (\ref{TE2}): 
\begin{equation}
\tag{G8} \label{G9}
(h_{+}, k_{+}), \ (\tilde{h}_{-}, k_{-}-l)\hbox{ satisfy \eqref{bornehp} with }h_{0}\hbox{ replaced by } h_{+}, \ \tilde{h}_{-}.\end{equation}

\subsection{Meromorphic extensions}
In this subsection we will check that $h_+, \tilde{h}_-$ satisfy \eqref{ME2}. To do so we use
a result of Mazzeo and Melrose about the
meromorphic extension of the truncated resolvent for the Laplace
operator on asymptotically hyperbolic manifolds, see \cite{MM87}. We start by briefly recalling
this result.
\subsubsection{A result of Mazzeo-Melrose}
Let $Y$ be a compact
$n-$ dimensional manifold with boundary given by the defining function $y$:
\begin{equation*}
\partial Y=\{y=0\},\, dy|_{\partial Y}\neq 0,\, y|_{Y^0}>0.
\end{equation*} 
Let $g$ be a complete metric  on $Y$ of the form
\begin{equation}
\label{asymphypmetr}
g=\frac{h}{y^2},
\end{equation}
where $h$ is a $C^{\infty}$ metric on $Y$. One is usually interested
in the study of the Laplace-Beltrami operator $\Delta_g$. We have to
consider slightly more general operators. Let 
\begin{equation*}
\cV_0(Y)=\{V\in C^{\infty}(Y;TY);\, V|_{\partial Y}=0\}
\end{equation*}
the space of  vector fields vanishing on the boundary.  
In local coordinates $(y,x)$ near $\partial Y$ the vector fields  $y\partial_y,\, y\partial_{x_j}$ span $\cV_0(Y)$. 

We now need the
definition of the {\em normal operator}. For $p\in\partial Y$ the
tangent space, $T_pY$, is divided into two half-spaces by the
hypersurface $T_p\partial Y$. We will denote by $Y_p$ the half space
on the $"y"$ side (that is spanned by $T_p\partial Y$ and the inward normal
vector at $p$). Then any smooth coefficient polynomial in $\cV_0(Y)$ $Q$
defines a natural constant coefficient operator on $Y_p$:
\begin{equation}
\label{nomop}
N_p(Q)u=\lim_{r\rightarrow 0}R^*_rf^*Q(f^{-1})^*R^*_{\frac{1}{r}}u,
\end{equation}
where $u\in C^{\infty}(Y_p),\, R_r$ is the natural $\R_+$ action on
$Y_p\simeq N_+T_p\partial Y$ given by the multiplication by $r$ on the
fibers and $f$ is a local diffeomorphism from $\Omega\subset Y,\, p\in
\Omega$:
\begin{equation*}
f:\Omega\rightarrow \Omega',\, \Omega'\subset T_pY,\, f(p)=0,\,
df_p=I,\, f(\partial Y)\subset T_p\partial Y.
\end{equation*}  
The definition is independent of $f$. The normal operator freezes the
coefficients at a point $p$, one obtains a polynomial in the elements
of $\cV_0(Y_p)$. The following result is implicit in \cite{MM87}. We
use here the formulation in \cite{SaZw}.
\begin{proposition}[Mazzeo-Melrose 87]
\label{MaMe87}
Let $Q$ be a second order differential operator on $Y$ which is a
polynomial in $\cV_0(Y)$ with coefficients in $C^{\infty}(Y)$. Assume that
\begin{itemize}
\item[i)] the principal part of $Q$ is an elliptic polynomial in the
  elements of $\cV_0(Y)$ uniformly on $Y$,
\item[ii)] for every $p\in \partial Y$ the normal operator of $Q$ defined
  by \eqref{nomop} is given by
\begin{eqnarray*}
N_p(Q)&=&-K\left[z_1^2D_{z_1}^2+i(n-2)z_1D_{z_1}+z_1^2\sum_{i,j=2}^nh_{ij}(p)D_{z_i}D_{z_j}-\left(\frac{n-1}{2}\right)^2\right],\\
Y_p&=&\{z\in \R^n;\, z_1\ge 0\},\quad [h_{ij}]\geq C \one, C>0,
\end{eqnarray*}
where $K<0$ is constant on the components of $\partial Y$. 
\end{itemize}
Then for
any metric $g$ of the form \eqref{asymphypmetr}
\begin{equation*}
R_Q(z)=(Q-z^2)^{-1}:L^2(Y,dvol_g)\rightarrow  L^2(Y,dvol_g)
\end{equation*}
is holomorphic in $\{{\rm Im} z\gg 1\}$. For $N>0$ the operator
$y^NR_Q(z)y^N$ extends to a meromorphic operator in $\{{\rm
  Im}z>-\delta\}$ for some $\delta>0$.
\end{proposition}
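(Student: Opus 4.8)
The statement is a meromorphic continuation result for the weighted resolvent of a ``$0$--differential operator'' in the sense of Mazzeo--Melrose, and the natural route is a parametrix construction in the $0$--calculus combined with an analytic Fredholm argument. First I would record the two facts that make the argument start. By hypotheses i)--ii) the operator $Q$ is, modulo lower order terms, $-K$ times a Laplacian of a complete metric of the form \eqref{asymphypmetr}; in particular, after taking the Friedrichs extension, $Q$ is self--adjoint and bounded below, so $R_Q(z)=(Q-z^{2})^{-1}$ exists and is holomorphic for $z^{2}$ away from $\mathrm{Sp}(Q)$, in particular for $\mathrm{Im}\,z\gg 1$. The whole problem is thus to continue $y^{N}R_Q(z)y^{N}$ past the line $\mathrm{Im}\,z=0$. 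The plan is to build a $z$--dependent parametrix $G(z)$ with $(Q-z^{2})G(z)=I+E(z)$, where $E(z)$ is, after conjugation by a suitable power of $y$, a compact operator on $L^{2}(Y,\mathrm{dvol}_g)$ depending meromorphically on $z$ in $\{\mathrm{Im}\,z>-\delta\}$, and then to invoke meromorphic Fredholm theory as in Proposition \ref{Prop5}.

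The construction of $G(z)$ is carried out on the $0$--stretched product $Y\times_{0}Y$, the blow--up of $Y\times Y$ along the boundary diagonal, on which the degenerate vector fields $y\partial_y$, $y\partial_{x_j}$ become non--degenerate. In the interior and near the lifted diagonal one uses the standard elliptic pseudodifferential parametrix, available by assumption i). Near the front face the model is the normal operator $N_p(Q)$, which by hypothesis ii) is $-K$ times the Laplacian (minus $((n-1)/2)^{2}$) of a constant--coefficient hyperbolic metric on the half--space $Y_p$; the resolvent of the hyperbolic Laplacian is explicitly known in terms of Legendre resp. hypergeometric functions, it extends meromorphically in the spectral parameter to all of $\mathbb{C}$, and its Schwartz kernel is polyhomogeneous conormal on the relevant blow--up. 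Pulling this model resolvent back gives the front--face term of $G(z)$; patching it with the interior parametrix by a partition of unity subordinate to the boundary hypersurfaces of $Y\times_{0}Y$ produces $G(z)$, and the composition calculus shows that $E(z)$ is a $0$--smoothing operator whose kernel vanishes to positive order at the left and right faces. Choosing $N$ larger than the relevant indicial order, $y^{N}E(z)y^{N}$ maps $L^{2}$ into a weighted space compactly embedded in $L^{2}$, so $E(z)$ becomes compact; meromorphy in $z$ is inherited from the explicit hyperbolic model and from the holomorphic dependence of the interior parametrix. Since $G(z)$ and $R_Q(z)$ agree modulo smoothing errors for $\mathrm{Im}\,z\gg 1$, $I+E(z)$ is invertible there, so by analytic Fredholm theory $(I+E(z))^{-1}$ is finite meromorphic on $\{\mathrm{Im}\,z>-\delta\}$, and $y^{N}R_Q(z)y^{N}=y^{N}G(z)(I+E(z))^{-1}y^{N}$, up to a smoothing correction, is the desired meromorphic continuation.

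The delicate points, and where I expect the real work to lie, are: (a) identifying precisely the indicial roots of $Q$ at $\partial Y$, both to fix the power $N$ needed for $L^{2}$ boundedness of the weighted kernels and to see that the only extra poles beyond resonances are the ``trivial'' ones coming from the hyperbolic model; (b) the bookkeeping of the polyhomogeneous expansions of all kernels at the several boundary hypersurfaces of $Y\times_{0}Y$, which is the technical heart of the $0$--calculus and must be done with enough uniformity in $z$ to obtain a genuinely meromorphic, not merely continuous, family; and (c) using the hypothesis that $K<0$ is \emph{constant on each component of} $\partial Y$, so that the front--face model is a single rescaled hyperbolic resolvent rather than a variable--coefficient object. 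Once these are in place one gets the meromorphic continuation of $y^{N}R_Q(z)y^{N}$ to a strip $\{\mathrm{Im}\,z>-\delta\}$; this is exactly the content of \cite{MM87}, and for our purposes it suffices to quote it in the form stated above.
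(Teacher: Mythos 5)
The paper does not prove this proposition; it states that the result is ``implicit in \cite{MM87}'' and quotes the formulation from \cite{SaZw}, so there is no proof in the paper to compare against. Your sketch of the $0$-calculus parametrix construction on the stretched product $Y\times_{0}Y$, with the hyperbolic model resolvent at the front face and meromorphic Fredholm theory to close the argument, accurately reflects the method of Mazzeo--Melrose, and your conclusion that for the purposes of this paper the result should simply be cited is exactly what the authors do.
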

\begin{remark}
The width of the strip in which one can extend the truncated resolvent 
is of order $N$ if one removes some special points along the imaginary
axis. At these points which are given by
$(-K)^{1/2}(-i)\left(\frac{2k-1}{2}\right)^{1/2},\, k\in \N$ essential
singularities might occur. If the operator is the Laplacian associated
to an asymptotic hyperbolic metric and the metric is even, then no
essential singularities appear, see \cite{Gui} for a detailed discussion of these questions. In our
case it is sufficient to know  that there exists a
meromorphic extension in some strip and we won't study the type of
singularities. 
\end{remark}
\subsubsection{Meromorphic extensions of the resolvents of
  $h_{+},\tilde{h}_-$} 
\label{secmehpm}
\begin{lemma}
 Assume hypotheses \eqref{G1}---\eqref{G9}. Then $(h_+,k_+,w)$ and $(\tilde{h}_-,k_- -\ell,w)$
satisfy   \eqref{HypME}, \eqref{ME2} for $w= q(r)^{-1}$.
 \end{lemma}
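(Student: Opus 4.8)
The plan is to check all the conditions on the angular-momentum-$n$ sector $\CH^n$, where $D_{\theta_1}$ acts as the scalar $n$, so that $k$ and the $k_\pm$ become real multiplication operators depending on $r$ only; after conjugation by the Regge--Wheeler unitary $\CU_1$ (with $dx/dr=\alpha_1^{-2}$) one works on $L^2(\R_x\times S^{d-1}_\omega)$, and by the estimates of Subsect.~\ref{subitopresto} the two ends $r\to r_\mp$ become $x\to\mp\infty$ with $q(r(x))\asymp e^{-\kappa_\pm|x|/2}$. Hence $w^{-1}=q(r)$ is, up to a smooth positive factor, a boundary defining function for the compact manifold with boundary $Y=[r_-,r_+]\times S^{d-1}$, whose boundary has the two components $\{r=r_\pm\}$. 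I would treat $(h_+,k_+)$; the case $(\tilde h_-,k_--\ell)$ is the same after the gauge $\Phi(\ell)$, which makes $\tilde h_-=h_0-(k_--\ell)^2$ the asymptotic Hamiltonian of the $r_-$ end in the $\Phi(\ell)$-gauge.

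For \eqref{HypME} all five parts reduce to direct computations from the decay hypotheses \eqref{G2}, \eqref{G3}, \eqref{G7}, \eqref{G8} and the coercivity bound \eqref{bornehp}, which \eqref{G9} postulates also for $h_+$ and $\tilde h_-$. For $a)$: near $r_-$ one has $j_-^2=1$, so $k_+=k-\ell$, and since $\ell=nk_{s,r}^-+k_{s,v}^-$ is exactly the limit of $k$ there, $k_+\in T^2$ by \eqref{G3}, \eqref{G8}; near $r_+$, $j_-=0$ and $k_+=k\in T^2$ for the same reasons; hence $wk_+w=k_+q(r)^{-2}\in\CB(\CH)$, and likewise $(k_--\ell)q(r)^{-2}\in\CB(\CH)$. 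Part $b)$ is immediate, $k_+$ and $w$ being functions of $r$. For $c)$ only the second-order radial part of $h_+$ contributes to $[h_+,q(r)^\epsilon]$, giving a first-order operator whose coefficients, by the exponents in \eqref{G2}, \eqref{G7}, are exactly of the size needed for $h_+^{-1/2}[h_+,q(r)^\epsilon]q(r)^{\epsilon/2}$ to be bounded. Part $d)$: \eqref{G9}/\eqref{bornehp} gives $h_+\gtrsim\alpha_1^2\gtrsim q(r)^2=w^{-2}$, so $\|w^{-1}u\|\lesssim\|h_+^{1/2}u\|$, and for $\epsilon<1$ I would combine this with the one-dimensional weighted inequality for the radial part $D_x(q^2\alpha_1^{-2})D_x$, which near the ends is elliptic with asymptotically constant coefficients. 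Part $e)$ holds because $q(r)$ decays at both ends and $h_+$ is elliptic of order two, so $q(r)\langle h_+\rangle^{-1}\in\CB_{\infty}(\CH)$ by the standard factorisation.

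The main work is \eqref{ME2}, which I would obtain from Proposition~\ref{MaMe87}. The key assertion is that $h_+$, after conjugation by $\CU_1$ and by $q(r)^{(d-1)/2}$ (passing to the $L^2$-space of the asymptotically hyperbolic metric $g=h/q(r)^2$ on $Y$), extends to a second-order differential operator $Q$ on $Y$ that is a polynomial in $\cV_0(Y)$ with $C^\infty(Y)$ coefficients — the requisite smoothness up to $\partial Y$ being encoded in the $T^\sigma$-regularity — whose principal part is uniformly $\cV_0(Y)$-elliptic (this is exactly \eqref{G9} rewritten in the $0$-calculus) and whose normal operator at a boundary point over $r=r_\pm$ has the Mazzeo--Melrose form \[N_p(Q)=-K\big[z_1^2D_{z_1}^2+i(d-2)z_1D_{z_1}+z_1^2\textstyle\sum_{i,j}h_{ij}D_{z_i}D_{z_j}-((d-1)/2)^2\big]\] with $K=-\tfrac14\kappa_\pm^2(\alpha_2^\pm/\alpha_1^\pm)^2<0$ and $h_{ij}=(\alpha_3^\pm)^2\alpha_{ij}(p)\ge C\mathbf 1$ (by ellipticity of $P$); the remaining pieces of $h_+$ — the potentials $\alpha_4^2$, $-k_+^2$, $f$, the first-order terms $g^rD_r$, $g^iD_i$, and the perturbations $D_rg^{rr}D_r$, $D_i^*g^{ij}D_j$ of the principal part — all vanish at $\partial Y$ like $q(r)^{1+\delta}$ or faster by \eqref{G2}, \eqref{G3}, \eqref{G7}, \eqref{G8}, hence affect neither the $\cV_0$-ellipticity nor the normal operator. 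Proposition~\ref{MaMe87} then gives that $q(r)^N(h_+-z^2)^{-1}q(r)^N$, and so $w^{-\epsilon}(h_+-z^2)^{-1}w^{-\epsilon}$ with $N=\epsilon$, extends meromorphically from $\{{\rm Im}\,z\gg1\}$ to some $\{{\rm Im}\,z>-\delta_\epsilon\}$; the values are compact because they are so for ${\rm Im}\,z$ large by part $e)$ and interpolation, and this propagates to the continuation, the polar parts being finite rank.

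The hard part is the identification of $Q$ and its normal operator in the third paragraph: checking that the change of variables $r\leftrightarrow x$ together with the volume-element conjugation turns $-(\alpha_2^\pm/\alpha_1^\pm)^2\partial_x^2+\alpha_3^2P+\dots$ into $-K[\Delta_{\mathbb{H}^{d}}-((d-1)/2)^2]$ modulo terms vanishing at $\partial Y$, with $K<0$ and $[h_{ij}]\ge C\mathbf 1$, and — more delicate — the bookkeeping that the perturbative and lower-order terms, controlled through the spaces $T^\sigma$ with their margins $1+\delta$ in \eqref{G2} and $2+\delta$, $4+\delta$ in \eqref{G7}, genuinely do not spoil $\cV_0(Y)$-ellipticity or the form of the normal operator; everything else is routine.
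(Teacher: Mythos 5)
Your overall route is the paper's: for \eqref{ME2}, conjugate to the asymptotically hyperbolic $L^2$-space and invoke Prop.~\ref{MaMe87}; the \eqref{HypME} checks you carry out here are in the paper deferred to the proof of Prop.~\ref{propopo} (the verification of \eqref{TE3}$c)$, using Lemmas~\ref{lemHardy} and~\ref{lemfgcomp}). Your $q^{(d-1)/2}$-conjugation on $L^2(dx\,d\omega)$ and the paper's $((1-\chi)+\chi y^2)$-conjugation on $L^2(dr\,d\omega)$ are the same operation: $\CU_1$ already carries a factor $\alpha_1\asymp q$, so for $d=3$ the composite weight is $q\cdot q^{(d-1)/2}=q^2\asymp y^2$.

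The genuine gap is the assertion that $Q$ is a polynomial in $\cV_0(Y)$ with $C^\infty(Y)$ coefficients, ``the requisite smoothness up to $\partial Y$ being encoded in the $T^\sigma$-regularity.'' This is false on $Y=[r_-,r_+]\times S^{d-1}$ with its naive smooth structure: a $T^\sigma$-function decays like $q(r)^\sigma\sim (r-r_\pm)^{\sigma/2}$ with conormal estimates only, and $\sqrt{r-r_\pm}$ is not smooth there. The paper resolves this by explicitly changing the $C^\infty$-structure, passing to $Y_{1/2}$ in which $y=\sqrt{r-r_\pm}$ is a smooth boundary coordinate; only then do the $T^\sigma$-bounds give genuine $C^\infty(Y)$-coefficients and is $q(r)^{-2}$ the inverse square of a defining function, so that Prop.~\ref{MaMe87} applies. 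Your $x$-variable compactification with defining functions $\rho_\pm=e^{\mp\kappa_\pm x/2}\asymp q$ does implicitly produce exactly $Y_{1/2}$, so nothing actually fails, but this step must be stated, and you cannot identify the resulting compact manifold with the original smooth $[r_-,r_+]\times S^{d-1}$ as you do. Two smaller slips: in \eqref{HypME}$c)$ the weight on the right is $w^{\epsilon/2}=q(r)^{-\epsilon/2}$, which \emph{grows} at the horizons, not the decaying $q(r)^{\epsilon/2}$ you wrote — the correct statement is strictly stronger (it still holds, since $[h_+,q^\epsilon]$ is $O(q^\epsilon)$ times a first-order operator, leaving $q^{\epsilon/2}$ of decay to absorb $q^{-\epsilon/2}$); and the subleading terms, once written in $\cV_0$-form, do not all vanish like $q^{1+\delta}$ (e.g.\ $D_rg^{rr}D_r$ with $D_r=\alpha_1^{-2}D_x$ gains only $q^\delta$ over the principal part by \eqref{G7}), though they do all vanish on $\partial Y$, which is what the normal-operator computation actually requires.
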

\proof 
 
We will show that  $w^{-\epsilon}(h_{\pm}^n-z^2)^{-1}w^{-\epsilon}$ has  a meromorphic extension to a strip $\{{\rm Im}z>-\delta_{\epsilon}\},\,
\delta_{\epsilon}>0$. Let us start with $h_+$. 

We want to apply Prop.
\ref{MaMe87}, for $Y= \overline{\CM}= [r_{-}, r_{+}]\times S^{d-1}$. The principal part of $h_+^n$ is an elliptic  polynomial in the elements of $\cV_0(Y)$
and hypothesis $i)$ of Prop. \ref{MaMe87} is fulfilled.  Near each boundary component
we put $z=r-r_-$ and $z=r_+-r$ resp. We change the $C^{\infty}$
structure on $Y$ (as a manifold with boundary) and allow a new
smooth coordinate $y=\sqrt{z}$. We will denote the new manifold by
$Y_{1/2}$ and think of $Y_{1/2}$ as a conformally compact
manifold in the sense of having a metric of the form
\eqref{asymphypmetr}. 

The operator $h_+^n$ becomes near $r=r_-$
\begin{eqnarray*}
h_+&=&\frac{1}{4}(\alpha_1^+)^2(\alpha_2^+)^2(r_+-r_-)^4D_yyD_yy+(\alpha_3^+)^2(r_+-r_-)^2y^2\sum_{i,j=1}^{d-1}D_i^*\alpha_{ij}D_j\\
&-&(k^+_{s,r}n+k_{s,v}^+)^2+\CO(y^{\delta})(D_yy)^2+\CO(y^{\delta})y^2\sum_{i,j=1}^{d-1}D_i^*\alpha_{ij}D_j+\CO(y^{\delta}).
\end{eqnarray*}
We now conjugate $h_+$ by a weight function (see \cite{SaZw}) and set:
\begin{equation}
\label{weightQ}
Q=((1-\chi)+\chi y^2)h_+((1-\chi)+\chi y^2)^{-1},
\end{equation}
where $\chi\in C^{\infty}(Y),\, \chi=1$ for
$y<\epsilon<1/2$ and $\chi= 0$ for $y>2\epsilon$. 
It follows that the normal operator becomes
\begin{eqnarray*}
N_p(Q)&=&\frac{1}{4}(\alpha_1^+)^2(\alpha_2^+)^2(r_+-r_-)^4\left(y^2\left(D_y^2+\frac{4(\alpha_3^+)^2}{(\alpha_1^+)^2(\alpha_2^+)^2(r_+-r_-)^2}\sum_{i,j=1}^{d-1}D_i^*\alpha_{ij}(p)D_j\right)\right.\\
&+&\left.iyD_y-1-\frac{4(k_{s,r}^+n+k_{s,v}^+)^2}{(\alpha_1^+)^2(\alpha_2^+)^2(r_+-r_-)^4}\right).
\end{eqnarray*}
This operator is shifted with respect to the model operator in
Proposition \ref{MaMe87} and the points where essential singularities may occur
are now given by
$z^2=(-K)\left(\beta-\left(\frac{1-2k}{2}\right)^2\right),\, k\in \N$, where
$-K=\frac{1}{4}(\alpha_1^+)^2(\alpha_2^+)^2(r_+-r_-)^4,\quad
\beta=-\frac{4(k_{s,r}^-n+k_{s,v})^2}{(\alpha_1^+)^2(\alpha_é^+)^2(r_+-r_-)^4}$. As
$\beta$ is negative all these points have strictly negative imaginary
part. Hence we obtain a meromorphic
continuation of $w^{-\epsilon}(Q-z^2)^{-1}w^{-\epsilon}$, where $Q$ is given by
\eqref{weightQ}. Since for ${\rm Im} z\gg 0$ we have 
\[(h_+-z^2)^{-1}=((1-\chi)+\chi y^2)^{-1}(Q-z^2)^{-1}((1-\chi)+\chi
y),\]
we obtain a meromorphic continuation of $w^{-\epsilon}(h_+-z^2)^{-1}w^{-\epsilon}$.  The proofs near $r=r_+$ and for $\tilde{h}_-$
are similar. \qed

\subsection{Verification of the  abstract hypotheses}
\begin{proposition}\label{propopo}
 Assume  hypotheses \eqref{G1}---\eqref{G9}. Then conditions  \eqref{Hyp1}-\eqref{A2}, \eqref{TE1}-\eqref{TE3}, \eqref{PE2}-\eqref{Hyp15} are satisfied
\end{proposition}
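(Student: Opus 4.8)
The statement is a (lengthy but essentially routine) verification of the abstract hypotheses of Sections~\ref{SecTE}, \ref{SecAC1} and \ref{secbound1} from the geometric data, carried out coefficient by coefficient using the decay orders recorded in the spaces $T^{\sigma}$. Throughout one works on the fixed angular sector $\CH^{n}$, on which $D_{\theta_{1}}$ acts as the scalar $n$, so that $k=k^{n}=nk_{r}+k_{v}$ is multiplication by a bounded function of $r$ (by \eqref{G3} and \eqref{G8}, since $q(r)$ is bounded and $T^{2}$ consists of bounded functions); one takes $x$ to be multiplication by the Regge--Wheeler variable of Subsect.~\ref{subitopresto} and $w=q(r)^{-1}$, so that $w^{-1}$ is multiplication by $q(r)$. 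The \emph{algebraic conditions} are the easy ones. Condition \eqref{Hyp1} is exactly the lower bound \eqref{bornehp}. For \eqref{A2} one uses that $k$ is a bounded self-adjoint multiplication operator, so the resolvent estimates for $(k-z)^{-1}$ in $\CB(\CH)$ are immediate; to upgrade them to $\CB(h_{0}^{-1/2}\CH)$ it suffices that $k$ leave $D(h_{0}^{1/2})$ invariant, which — via the integral formula for $h_{0}^{1/2}$ — reduces to controlling the first-order commutator $[h_{0},k]$ relative to $h_{0}$, and this follows from the $T^{\sigma}$-decay of the coefficients of $h_{0}$ and $k$ together with the ellipticity lower bound \eqref{bornehp}. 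In \eqref{TE1}: locality of $h_{0}$ in $x$ holds because $h_{0}$ is a differential operator; $[x,k]=0$ because $x$ and $k$ are functions of $r$; and $w$ is a smooth function of $x$ since $r\mapsto x$ is a $C^{\infty}$ diffeomorphism of $(r_{-},r_{+})$ onto $\R$ and $q$ is smooth and non-vanishing there, with $q(r(x))$ decaying exponentially as $|x|\to\infty$ by the computations of Subsect.~\ref{subitopresto}.

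For the \emph{two-ends structure} one takes $\ell=nk_{s,r}^{-}+k_{s,v}^{-}$ as in Subsect.~\ref{subitopresto} and builds $i_{\pm},j_{\pm},\tilde{i}$ from $r$. Then $(h_{+},k_{+})$ and $(\tilde{h}_{-},k_{-}-\ell)$ satisfy \eqref{A3} by \eqref{G9} and Lemma~\ref{lm:A3}, since near the relevant end $h_{+}$ (resp.\ $\tilde{h}_{-}$) agrees with the $r$-independent model operator up to $T^{1+\delta}$-errors while $k_{+}$ (resp.\ $k_{-}-\ell$) vanishes there; this gives \eqref{TE2}. In \eqref{TE3}, part $b)$ holds because $[h,i_{\pm}]$ is a differential operator supported in a compact $r$-interval on which $\tilde{i}=1$; parts $a),d),e),f)$ are boundedness statements for operators of the shape $w^{a}\,(\text{differential operator})\,w^{b}h_{\star}^{-1/2}$ which follow from the ellipticity bounds $h_{0},h_{+},\tilde{h}_{-}\gtrsim\alpha_{1}(D_{r}q^{2}D_{r}+P+1)\alpha_{1}$ together with the fact that the powers of $q(r)$ carried by $w^{\pm\epsilon}$ exactly match the vanishing of the coefficients measured by the $T^{\sigma}$-classes (this is also the source of the Hardy-type inequalities in $f)$ and in \eqref{HypME}$d)$); and part $c)$ — that $(h_{+},k_{+},w)$ and $(\tilde{h}_{-},k_{-}-\ell,w)$ satisfy \eqref{HypME} and \eqref{ME2} — is precisely the lemma of Subsect.~\ref{secmehpm}, established there via the Mazzeo--Melrose result, Proposition~\ref{MaMe87}.

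The \emph{remaining conditions} are checked in the same spirit. Part $a)$ of \eqref{PE2} is the commutator estimate $h_{0}^{1/2}\psi(x)h_{0}^{-1/2}\in\CB(\CH)$ for $\psi\in C_{0}^{\infty}(\R)$, handled as for $k$ above; part $b)$ follows from $\psi(x/\nu)\to 1$ strongly on $\CH$ together with the uniform boundedness of $\psi(x/\nu)$ on $h_{0}^{-1/2}\CH$ (again controlled by the commutator with $h_{0}$ and \eqref{TE3}$f)$). For \eqref{Hyp15}: multiplication by $w^{-1}=q(r)$ maps $D(h_{0})$ into itself because $[h_{0},q(r)]$ is first order and $h_{0}$-bounded; and since $k$ commutes with $k^{2}$ one has $[-ik,h]=[-ik,h_{0}]$, a first-order self-adjoint differential operator whose coefficients, by \eqref{G2}, \eqref{G3}, \eqref{G7}, \eqref{G8}, vanish at \emph{both} ends — here it is essential that $k$ is asymptotically constant at each end — fast enough to be dominated as a quadratic form by $w^{-1}h_{0}w^{-1}=q(r)h_{0}q(r)$, once more using \eqref{bornehp} and $\alpha_{j}\gtrsim q(r)$.

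The only real difficulty is the bookkeeping: matching each abstract boundedness or commutator requirement to the precise $T^{\sigma}$-order of the corresponding geometric coefficient, and in particular checking that the \emph{single} weight $w=q(r)^{-1}$ serves simultaneously for the low-frequency resolvent estimates of Section~\ref{SecAC1}, the various Hardy-type inequalities, the input needed for the meromorphic continuation, and the energy-dissipation bound \eqref{Hyp15}. The one genuinely analytic ingredient — the meromorphic extension of the asymptotic resolvents underlying \eqref{TE3}$c)$ — has already been supplied in Subsect.~\ref{secmehpm}, and the conceptual point specific to superradiance — that the gauge shift by $\ell=nk_{s,r}^{-}+k_{s,v}^{-}$ renders the two asymptotic dynamics self-adjoint — is built into the abstract two-ends framework of Section~\ref{SecTE}; so no new idea is needed beyond careful verification.
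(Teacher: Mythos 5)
Your proposal is correct and follows essentially the same route as the paper's proof in Subsection~7.5: verification of \eqref{Hyp1} via \eqref{bornehp}, of \eqref{A2} by controlling $k$ relative to $h_0^{1/2}$ using the $T^{\sigma}$-decay of coefficients and Hardy-type inequalities, of \eqref{TE1}--\eqref{TE3} by term-by-term bookkeeping with the Mazzeo--Melrose continuation supplying \eqref{TE3}$c)$, and of \eqref{PE2}, \eqref{Hyp15} by commutator estimates. The only stylistic difference is that the paper establishes $h_0^{1/2}kh_0^{-1/2}\in\CB(\CH)$ directly from the quadratic-form inequality $kh_0k\lesssim h_0$ (proved coefficient by coefficient) rather than via an integral formula for $h_0^{1/2}$ and a commutator with $h_0$, but both routes reduce to the same $T^{\sigma}$-order accounting.
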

The rest of the subsection is devoted to the proof of Prop. \ref{propopo}. 
We start by some preparations.
\subsubsection{Some useful facts}
By \eqref{bornehp} we have the following
estimates 
\begin{eqnarray}
\label{7.5.1b}
\Vert q(r)D_r\alpha_1(r)u\Vert&\lesssim& \Vert h_0^{1/2}u\Vert,\\
\label{7.5.1c}
\Vert q(r)D_ju\Vert&\lesssim& \Vert h_0^{1/2}u\Vert,\, j=1,...,d-1,\\
\label{7.5.1d}
\Vert q(r)u\Vert&\lesssim& \Vert h_0^{1/2}u\Vert.
\end{eqnarray}
The estimates
\eqref{7.5.1b}-\eqref{7.5.1d} also hold with $h_0$ replaced by $h_+$
or $\tilde{h}_-$. We will also need the following Hardy type estimate.
\begin{lemma}
\label{lemHardy}
We have
\begin{align*}
i)& \ \Vert\<x(r)\>^{-1}u\Vert_{\CH}\lesssim \Vert h_0^{1/2}u\Vert_{\CH},\\
 ii)&\ \Vert fu\Vert_{\CH}\lesssim \Vert h_0^{1/2}u\Vert_{\CH}, \  f\in T^{\delta}, \ \delta>0.
\end{align*}
\end{lemma}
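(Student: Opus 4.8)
The plan is to transplant the inequality to the Regge--Wheeler variable $x$ via the unitary $\CU_1$ from Subsect.~\ref{subitopresto}, where it becomes a one-dimensional logarithmic Hardy estimate. Write $\tilde u:=\CU_1 u$, so that $\|\tilde u\|_{L^2(dx\,d\omega)}=\|u\|_\CH$; since $\CU_1$ commutes with multiplication by the $r$-dependent functions $\langle x(r)\rangle^{-1}$ and $q(r)$, also $\|\langle x(r)\rangle^{-1}u\|_\CH=\|\langle x\rangle^{-1}\tilde u\|_{L^2(dx\,d\omega)}$ and $\|q(r)u\|_\CH=\|q(r(x))\tilde u\|_{L^2(dx\,d\omega)}$. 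Using $D_r=\alpha_1^{-2}D_x$ and $dr=\alpha_1^2\,dx$, the estimate \eqref{7.5.1b} transplants to $\int q(r(x))^2\alpha_1(r(x))^{-2}\,|\partial_x\tilde u|^2\,dx\,d\omega\lesssim\|h_0^{1/2}u\|_\CH^2$; by \eqref{G2} and \eqref{totoblut} one has $q(r)\lesssim\alpha_1\lesssim q(r)$ throughout $]r_-,r_+[$, so the weight $q^2\alpha_1^{-2}$ is bounded above and below, whence $\|\partial_x\tilde u\|_{L^2(dx\,d\omega)}\lesssim\|h_0^{1/2}u\|_\CH$. Finally, recalling $r-r_-\lesssim e^{\kappa_- x}$ as $r\to r_-$, $r_+-r\lesssim e^{-\kappa_+ x}$ as $r\to r_+$, and $q(r)=\sqrt{(r_+-r)(r-r_-)}$, we get $q(r(x))\to0$ as $|x|\to\infty$, so for a fixed $R_0\ge1$ the set $\{|x|\le R_0+1\}$ is a compact subset of $]r_-,r_+[$ on which $q(r(x))\gtrsim1$; hence $\|\tilde u\|_{L^2(\{|x|\le R_0+1\}\times S^{d-1})}\lesssim\|q(r(x))\tilde u\|_{L^2(dx\,d\omega)}\lesssim\|h_0^{1/2}u\|_\CH$ by \eqref{7.5.1d}.

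With these two bounds in hand, i) reduces to the elementary one-dimensional estimate
\[
\|\langle x\rangle^{-1}v\|_{L^2(\R\times S^{d-1})}^2\ \lesssim\ \|\partial_x v\|_{L^2(\R\times S^{d-1})}^2+\|v\|_{L^2((-R_0-1,R_0+1)\times S^{d-1})}^2 .
\]
On $\{|x|\le R_0\}$ one bounds $\langle x\rangle^{-1}$ by a constant; on $(R_0,\infty)\times S^{d-1}$ one integrates by parts in $x$,
\[
\int_{R_0}^\infty\frac{|v|^2}{x^2}\,dx=\frac{|v(R_0)|^2}{R_0}+2\int_{R_0}^\infty\frac{\Re(\bar v\,\partial_x v)}{x}\,dx\le\frac{|v(R_0)|^2}{R_0}+2\Big(\int_{R_0}^\infty\frac{|v|^2}{x^2}\,dx\Big)^{1/2}\|\partial_x v\|_{L^2(R_0,\infty)} ,
\]
absorbs the last term, and controls the boundary trace by the one-dimensional Sobolev bound $|v(R_0)|^2\lesssim\|v\|_{L^2(R_0,R_0+1)}^2+\|\partial_x v\|_{L^2(R_0,R_0+1)}^2$; the half-line $(-\infty,-R_0)$ is symmetric, and performing the integrations in $\omega$ throughout gives the displayed inequality. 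Applying it with $v=\tilde u$ and inserting the two bounds of the first paragraph yields $\|\langle x(r)\rangle^{-1}u\|_\CH\lesssim\|h_0^{1/2}u\|_\CH$, which is i).

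For ii) it suffices to note that $f\in T^\delta$ with $\delta>0$ gives $|f|\lesssim q(r)^\delta$, and that $q(r)^\delta\langle x(r)\rangle$ is bounded on $]r_-,r_+[$: on the compact set $\{|x(r)|\le R_0\}$ this is clear, while near $r_\pm$ one has $q(r(x))^\delta\langle x\rangle\lesssim e^{-\delta\min(\kappa_-,\kappa_+)|x|/2}\langle x\rangle\to0$. Hence $|f|\lesssim\langle x(r)\rangle^{-1}$ and $\|fu\|_\CH\lesssim\|\langle x(r)\rangle^{-1}u\|_\CH\lesssim\|h_0^{1/2}u\|_\CH$ by i).

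I do not anticipate a serious obstacle: the heart of the matter is a textbook one-dimensional logarithmic Hardy inequality, and the only delicate points are bookkeeping — checking via $q(r)\lesssim\alpha_1\lesssim q(r)$ that the Jacobian factors appearing when transplanting \eqref{7.5.1b}--\eqref{7.5.1d} are two-sided bounded, and identifying the genuinely local error term in the Hardy inequality with exactly the quantity controlled by \eqref{7.5.1d}. That this local term cannot be dropped is visible from $v=e^{-\varepsilon|x|}$, which shows that $\|\langle x\rangle^{-1}v\|_{L^2(\R)}\lesssim\|\partial_x v\|_{L^2(\R)}$ fails on $\R$; this is precisely why one works with $\|h_0^{1/2}u\|$, whose lower bound $h_0\gtrsim\alpha_1^2$ furnishes the missing ingredient.
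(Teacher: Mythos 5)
Your proof is correct and takes essentially the same approach as the paper's: both transplant the estimate to the Regge--Wheeler variable $x$ via $\CU_1$, derive the one-dimensional Hardy bound $\|\langle x\rangle^{-1}v\|^2\lesssim\|\partial_xv\|^2+\|v\|^2_{L^2(\text{compact})}$, and close the local gap using the $q$-weighted $L^2$ bound from \eqref{7.5.1d} (equivalently $\alpha_1^2\gtrsim 1$ on compacts). The only difference is packaging — the paper uses a partition of unity $\chi_1+\chi_2=1$ and the half-line inequality \eqref{pr4}, while you integrate by parts on $(R_0,\infty)$ and control the boundary term by a Sobolev trace — and your version in fact keeps the exponent bookkeeping cleaner than the paper's display, which has a harmless typo ($\langle x(r)\rangle^{-4}$ where $\langle x(r)\rangle^{-2}$ is meant).
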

\proof
Since $\langle x(r)\rangle\sim \vert\ln(r-r_{\pm})\vert$ when $r\to r_{\pm}$, {\it ii)} follows from {\it i)}.
 We recall a version of Hardy's inequality:
\begin{equation}
\label{pr4}
\int_0^{\infty}|v(x)|^2x^{-2}dt\le 4\int_0^{\infty}|v'(x)|^2dx, \ v\in \coinf(\rr\backslash\{0\}).
\end{equation}
Let $\chi_1\in C_0^{\infty}(\R),\, \chi_1(0)=1$ and $\chi_2\in
C^{\infty}(\R)$ with $\chi_1+\chi_2=1$. We have 
\begin{equation*}
\Vert \<x\>^{-1}\chi_1u\Vert^2_{\CH_1}\lesssim \Vert \chi_1 u\Vert^2_{\CH_1}\lesssim
((-\partial_x^2+\alpha_1^2)u|u)_{\CH_1}
\end{equation*}
because $\alpha_1^2\gtrsim \chi_1^2$. Now applying \eqref{pr4} to $\chi_2u$
gives :
\begin{eqnarray*}
\Vert \<x\>^{-1}\chi_2u\Vert^2_{\CH_1}&\lesssim& \int_{\R\times S^2}
|\partial_x(\chi_2u)|^2dxd\omega\lesssim \int_{\R\times
  S^2}(\chi'_2)^2|u|^2dxd\omega+\int_{\R\times
  S^2}\chi_2^2|u'|^2dxd\omega\\
&\lesssim& ((-\partial_x^2+\alpha_1^2)u|u)_{\CH_1}
\end{eqnarray*}
because $(\chi_2')^2\lesssim \alpha_1^2$. It follows 
\begin{equation*}
\int\<x\>^{-2}|u|^2dxd\omega\lesssim \int (|D_xu|^2+\alpha_1^2|u|^2)dxd\omega.
\end{equation*}
Changing to coordinates $(r, \omega)$ yields, using $dx=\frac{1}{\alpha_1^2}dr$, $D_x=\alpha_1^2 D_r$:
\begin{equation*}
\int\<x(r)\>^{-4}|u|^2\frac{1}{\alpha_1^2}drd\omega\lesssim
\int (|\alpha_1D_ru|^2+|u|^2) drd\omega.
\end{equation*}
Putting $v=\frac{1}{\alpha_1}u$ gives 
\begin{eqnarray*}
\int\<x(r)\>^{-4}|v|^2drd\omega&\lesssim& \int
(|\alpha_1D_r\alpha_1v|^2+\alpha_1^2|v|^2)dr d\omega,\\
&\lesssim&  \int
(|qD_r\alpha_1v|^2+\alpha_1^2|v|^2)dr d\omega 
\end{eqnarray*}
which using  that $h_0\gtrsim \alpha_1(D_rq^2D_r+1)\alpha_1$ by (\ref{bornehp})  completes the proof of the lemma.
\qed

\begin{lemma}
\label{lemfgcomp}
Let $f,g\in C^{\infty}(\R)$ with
$\lim_{|x|\rightarrow\infty}f(x)=\lim_{|x|\rightarrow\infty}g(x)=0$. Then
$f(x)g(h_+)$ and $f(x)g(\tilde{h}_-)$ are compact.
\end{lemma}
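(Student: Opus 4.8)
The plan is to first isolate the only substantial point, namely a \emph{local} compactness statement: for every cutoff $\chi\in C_0^\infty(]r_-,r_+[)$, supported away from the two ends, the operator $\chi(r)(h_++1)^{-1}$ is compact on $\CH$, and likewise with $h_+$ replaced by $\tilde h_-$. I would deduce this from elliptic regularity. On any compact subinterval $K$ of $]r_-,r_+[$ one has $q(r)\gtrsim 1$ and $\alpha_1(r)\gtrsim 1$, so the lower bound \eqref{G9} — which is \eqref{bornehp} for $h_+$ — shows that $h_+$ is uniformly elliptic on $K\times S^{d-1}$; hence $(h_++1)^{-1}$ maps $\CH$ boundedly into the form domain of $h_+$, which restricted to $K\times S^{d-1}$ embeds continuously into $H^1(K\times S^{d-1})$, and Rellich's theorem then yields compactness of $\chi(r)(h_++1)^{-1}$. (Alternatively one can get this from the compactness of $q(r)^{\epsilon}(h_+-z^2)^{-1}q(r)^{\epsilon}$ established in Subsect.~\ref{secmehpm}, together with a commutator argument to pass to the one-sided weight.)

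Next I would upgrade the cutoff to the full multiplication operator $f(x)$. Since $r\mapsto x(r)$ is a smooth increasing bijection of $]r_-,r_+[$ onto $\R$ with $x(r)\to\mp\infty$ as $r\to r_\mp$, the function $r\mapsto f(x(r))$ is bounded and continuous and tends to $0$ at both ends, i.e.\ it extends continuously to $\overline{\CM}$ with vanishing boundary values. Choosing $\chi_n\in C_0^\infty(]r_-,r_+[)$ with $0\le\chi_n\le1$ and $\chi_n\to1$ locally uniformly, we get $\|(1-\chi_n)f(x)\|_{\infty}\to0$; since $\|(h_++1)^{-1}\|_{\CB(\CH)}\le1$, the compact operators $\chi_n(r)f(x)(h_++1)^{-1}$ converge in norm to $f(x)(h_++1)^{-1}$, which is therefore compact.

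Finally I would pass from the resolvent to $g(h_+)$ by a truncation/functional-calculus argument. Because $\sigma(h_+)\subset[0,\infty)$, only $g|_{[0,\infty)}$ is relevant; pick $\theta\in C_0^\infty(\R)$ with $\theta=1$ near $0$ and set $g_n(\lambda):=\theta(\lambda/n)g(\lambda)$, so that $g_n$ has compact support, $g_n\to g$ uniformly on $[0,\infty)$, and $\psi_n(\lambda):=(1+\lambda)g_n(\lambda)$ is bounded. Then $g_n(h_+)=(h_++1)^{-1}\psi_n(h_+)$, hence $f(x)g_n(h_+)=\big(f(x)(h_++1)^{-1}\big)\psi_n(h_+)$ is compact, while $\|f(x)g_n(h_+)-f(x)g(h_+)\|\le\|f\|_\infty\|g_n-g\|_\infty\to0$; thus $f(x)g(h_+)\in\CB_\infty(\CH)$. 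The proof for $\tilde h_-$ is identical, using that it too satisfies \eqref{bornehp} and is selfadjoint and nonnegative.

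The only genuinely non-trivial ingredient is the local compactness $\chi(r)(h_++1)^{-1}\in\CB_\infty(\CH)$ of the first step; everything afterwards is a soft approximation and functional-calculus manipulation. The care required there is precisely to check that the degeneration of $h_+$ at $r=r_\pm$ plays no role, which is guaranteed because the localisation $\chi(r)$ confines us to a compact subset of $]r_-,r_+[$ on which the ellipticity estimate \eqref{G9} is uniform.
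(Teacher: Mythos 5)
Your proof is correct and follows essentially the same route as the paper's: reduce to compactly supported data, exploit uniform ellipticity on compact subsets away from the degenerate ends to get Sobolev regularity, and conclude by Rellich plus a norm-approximation argument. The only cosmetic difference is that you work at the form-domain ($H^1$) level and approximate $g$ by functional calculus, while the paper passes directly to $H^2(\Omega)$; both yield the required compactness.
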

\proof

We only prove the lemma for $h_+$, the proof for $\tilde{h}_-$ being
analogous. We may assume that  $f,g\in C_0^{\infty}(\R)$. Let $\Omega$ be
a bounded domain which contains $\supp f$. Then $f(x)g(h_+)$ sends
$L^2(\CM)$ to $H^2(\Omega)$. But $H^2(\Omega)\hookrightarrow
L^2(\Omega)\hookrightarrow L^2(\CM)$ and the first embedding is
compact.

\qed
\subsubsection{Verification of hypotheses \eqref{Hyp1}, \eqref{A2}.}
We have already noticed that \eqref{bornehp} implies \eqref{Hyp1} (in
particular \eqref{bornehp} implies $0\notin \sigma_{pp}(h_0)$). Let
us check \eqref{A2}. We first check that $h_0^{1/2}kh_0^{-1/2}\in
\CB(\CH)$. This will follow from 
\begin{equation}
\label{7.5.1a}
kh_0k\lesssim h_0, \hbox{ on }C_0^{\infty}(\CM).
\end{equation}
Several terms have to be estimated. 
\begin{eqnarray*}
\lefteqn{i)\quad -k\alpha_1\partial_r\alpha_2^2\partial_r\alpha_1k}\\
&=&
-\alpha_1\partial_rk^2\alpha_2^2\partial_r\alpha_1-\alpha_1\partial_rk\alpha_2^2\alpha_1k'+\alpha_1^2k'^2\alpha_2^2+\alpha_1k'\alpha_2^2k\partial_r\alpha_1\\
&\lesssim&
-\alpha_1\partial_rq^2\partial_r\alpha_1-\alpha_1\partial_rk\alpha_2^2\alpha_1k'+\alpha_1^2k'^2\alpha_2^2+\alpha_1k'\alpha_22k\partial_r\alpha_1.\\
\lefteqn{ii)\quad -k\partial_rg^{rr}\partial_rk}\\
&=&-\alpha_1\partial_r\left(\frac{k}{\alpha_1}\right)^2g^{rr}\partial_r\alpha_1-\alpha_1\partial_rkg^{rr}\left(\frac{k}{\alpha_1}\right)'+\left(\frac{k}{\alpha_1}\right)'g^{rr}k\partial_r\alpha_1\\
&\lesssim&
-\alpha_1\partial_rq^2\partial_r\alpha_1-\alpha_1\partial_rk\left(\frac{k}{\alpha_1}\right)'g^{rr}+\left(\frac{k}{\alpha_1}\right)'g^{rr}k\partial_r\alpha_1.\\
iii)&\quad& kg^rD_rk=\frac{k^2}{\alpha_1}g^rD_r\alpha_1+\frac{1}{i}kg^r\left(\frac{k}{\alpha_1}\right)'\alpha_1.
\end{eqnarray*}
Summarizing and adding the angular terms we find
\begin{eqnarray*}
kh_0k&\lesssim&
\alpha_1D_rq^2D_r\alpha_1-\alpha_1\partial_rk\alpha_2^2\alpha_1k'+\alpha_1^2(k')^2\alpha_2^2+\alpha_1k'\alpha_2^2k\partial_r\alpha_1-\alpha_1\partial_rk\left(\frac{k}{\alpha_1}\right)'g^{rr}\\
&+&\left(\frac{k}{\alpha_1}\right)'kg^{rr}\partial_r\alpha_1+\frac{1}{i}kg^r\left(\frac{k}{\alpha_1}\right)'\alpha_1+\frac{k^2}{\alpha_1}g^rD_r\alpha_1+\alpha_1D_r\frac{k^2}{\alpha_1}\bar{g}^r-\frac{1}{i}\left(\frac{k}{\alpha_1}\right)'k\alpha_1\bar{g}^r\\
&+&\sum_{ij}\alpha_3D^*_i\alpha_{ij}k^2D_j\alpha_3+k^2\alpha_4^2+\sum_{ij}D_i^*k^2g^{ij}D_j+\sum_ig^ik^2D_i+D_i^*k^2\bar{g}^i+k^2f\\
&+&\sum_{ij}D_i^*k\alpha_3^2\alpha_{ij}(D_jk)-\sum_{ij}\alpha_3^2(D_i^*k)\alpha_{ij}kD_j-\sum_{ij}\alpha_3^2(D_i^*k)\alpha_{ij}(D_jk)\\
&+&\sum_{ij}D_i^*kg^{ij}(D_jk)-\sum_{ij}(D_i^*k)g^{ij}kD_j-\sum_{ij}(D_i^*k)g^{ij}(D_jk)\\
&+&\sum_ikg^i(D_ik)-\sum_i(D_i^*k)k\bar{g}^i+2{\rm Im}g^rkk'.
\end{eqnarray*}
We have 
\begin{eqnarray*}
k\alpha_2^2k'\in T^2,\, \alpha_1^2(k')^2\alpha_2^2\in T^4,\,
k'\alpha_2^2k\in T^2,\,
\frac{k}{q}\left(\frac{k}{\alpha_1}\right)'g^{rr}\in T^{\delta},\, k\alpha_1g^r\left(\frac{k}{\alpha_1}\right)'\in T^{\delta},\\
\frac{k^2}{\alpha_1}g^r\frac{1}{q}\in T^{\delta},\, g^ik^2\in
T^{2},\,
k\alpha_3^2\alpha_{ij}(D_ik)\in T^4,\,
\alpha_3^2(D_i^*k)\alpha_{ij}(D_jk)\in T^6,\\
kg^{ij}(D_jk)\in
T^{4+\delta},\, (D_i^*k)g^{ij}(D_jk)\in T^{6+\delta},\, kg^i(D_ik)\in
T^4,\, g^rkk'\in T^{2+\delta}.
\end{eqnarray*}
This gives (\ref{7.5.1a}),  using \eqref{bornehp},
\eqref{7.5.1b}-\eqref{7.5.1d} and  Lemma \ref{lemHardy}. The estimate for 
\[\Vert (k-z)^{-1}\Vert_{\CB(h_0^{-1/2}\CH)}\]
is exactly the same with the derivatives of $k$ replaced by
$\frac{\partial k}{(k-z)^{2}}$. Here we also use that
\begin{eqnarray*}
\Vert (k-z)^{-1}\Vert_{\CB(\CH)}&\le&\frac{1}{\vert {\rm Im} z\vert},\\
\Vert (k-z)^{-1}\Vert_{\CB(\CH)}&\le&\frac{1}{\vert z\vert-\Vert k\Vert_{\CB(\CH)}},
\end{eqnarray*}
the second inequality being valid for $|z|\ge (1+\epsilon)\Vert
k\Vert_{\CB(\CH)},\, \epsilon >0$.
\subsubsection{Verification of hypotheses\eqref{TE1}-\eqref{TE3}}
\eqref{TE1} is obvious, let us check \eqref{TE2}. We check \eqref{TE2}
for $h_+$, the proof for $\tilde{h}_-$ being analogous. First note
that \eqref{bornehp} for $h_+$ implies $0\notin \sigma_{pp}(h_+)$. We
have  $k_+^2\in T^{2}$. This implies the estimate
\[\Vert k_+u\Vert\lesssim \Vert h_+^{1/2}u\Vert.\]
We now check that \eqref{TE3} is fulfilled. Recall that $w=q^{-1}$.
\begin{itemize}
\item[--] \eqref{TE3}{\it a)} follows from \eqref{G3}.
\item[--] \eqref{TE3}{\it b)} is clear.
\item[--] \eqref{TE3}{\it c)}: we have already shown in Sect. \ref{secmehpm} that
  $h_+,\tilde{h}_-$ fulfill \eqref{ME2}. Let us check \eqref{HypME}. 
\begin{itemize}
\item[--] \eqref{HypME}{\it a)} follows from \eqref{G3}.
\item[--] \eqref{HypME}{\it b)} is clear.
\item[--]\eqref{HypME}{\it c)}: let us show that
  $h_+^{-1/2}[h_+,w^{-\epsilon}]w^{\epsilon/2}$ is bounded. We have 
\begin{eqnarray*}
[ih_+,w^{-\epsilon}]&=&\alpha_1D_r\alpha_2^2\alpha_1(w^{-\epsilon})'+D_rg^{rr}(w^{-\epsilon})'+g^r(w^{-\epsilon})'+hc\\
&=&\alpha_1D_r\alpha_2^2\alpha_1(w^{-\epsilon})'+\alpha_1D_r\frac{g^{rr}}{\alpha_1}(w^{-\epsilon})'-\frac{1}{i}\alpha_1\left(\frac{1}{\alpha_1}\right)'g^{rr}(w^{-\epsilon})'+g^r(w^{-\epsilon})'+hc\\
&=&\alpha_1D_rq\alpha+\beta
\end{eqnarray*}
with $\alpha\in T^{\epsilon},\, \beta\in T^{\epsilon+\delta}$. We have
$h_+^{-1/2}\beta w^{\epsilon/2}\in \CB(\CH)$ by 
Lemma \ref{lemHardy}. We have $h_+^{-1/2}\alpha_1D_rq\alpha w^{\epsilon/2}\in
\CB(\CH)$ by \eqref{7.5.1b}. The proof for $\tilde{h}_-$ is analogous.
\item[--] \eqref{HypME}{\it d)} follows from  Lemma \ref{lemHardy}.
\item[--]\eqref{HypME}{\it e)} follows from Lemma \ref{lemfgcomp}.

\end{itemize}  
\item[--] \eqref{TE3}{\it d)}: the proof is exactly the same as for \eqref{A2},
  we omit the details.  
\item[--] \eqref{TE3}{\it e)}.: we start with $w[h,i_+]wh_+^{-1/2}$. We have
\begin{equation*}
w[ih,i_+]w=w(\alpha_1D_r\alpha_2^2\alpha_1i_+'+D_rg^{rr}i_+'+g^ri_+'+hc)w=\alpha
qD_r\alpha_1+\beta 
\end{equation*}
with $\alpha\in T^{\infty},\, \beta\in T^{\infty}.$ This gives
$w[h,i_+]wh_+^{-1/2}\in \CB(\CH)$. The proof for the other operators
is the same, except for $h_0^{-1/2}[w^{-1},h_0]w^{1/2}$ for which it
is analogous to the proof for
$h_+^{-1/2}[h,w^{-\epsilon}]w^{\epsilon/2}$. We omit the details. 
\item[--] \eqref{TE3}{\it f)} follows from Hardy's inequality, Lemma \ref{lemHardy}.
\end{itemize}
\subsubsection{Verification of hypotheses \eqref{PE2},  \eqref{Hyp15}}
\begin{itemize}
\item[--] \eqref{PE2}: thanks to \eqref{7.5.1b}-\eqref{7.5.1d} we see that
  $\Vert h_0^{1/2}u\Vert^2$ is equivalent to 
\[\Vert D_x u\Vert^2+\sum_{j=1}^{d-1}\Vert q(r(x))D_ju\Vert^2+\Vert
q(r(x))u\Vert^2.\]
As $\psi\left(\frac{x}{n}\right)u\rightarrow u$ in $L^2(\R\times
  S^{d-1}_{\omega})$ we only have to show that 
\[\left[iD_x, \psi\left(\frac{x}{n}\right)\right]u\rightarrow 0\]
for $u\in h_0^{-1/2}\CH$. We have 
\[\left[iD_x,\psi\left(\frac{x}{n}\right)\right]u=\frac{x}{n}\psi'\left(\frac{x}{n}\right)\frac{1}{x}h_0^{-1/2}h_0^{1/2}u.\]
By Lemma \ref{lemHardy} it is sufficient to show
that 
\[\frac{x}{n}\psi'\left(\frac{x}{n}\right)v\rightarrow
0\quad\mbox{for}\quad v\in L^2(\R\times S^{d-1}_{\omega})\]
which is obvious.
\item[--] \eqref{Hyp15}:  first note that $w^{-\epsilon}$ clearly sends $D(h_0)$
  into itself. We show that \eqref{Hyp15} is fulfilled. We compute
\begin{equation*}
[ih_{0,s},k]=\alpha_1D_r\alpha_2^2k'\alpha_1+\sum_{ij}D_i^*\alpha_{ij}(\p_jk)\alpha^2_3+hc=:C_r+C_{\omega}.
\end{equation*}
We have
\begin{eqnarray*}
w^{\epsilon}C_rw^{\epsilon}&=&\alpha_1w^{\epsilon}D_r\alpha_2^2k'\alpha_1w^{\epsilon}+hc\\
&=&\alpha_1D_r\alpha_2^2k'\alpha_1w^{2\epsilon}k'\alpha_1-\frac{1}{i}(w^{\epsilon})'\alpha_2^2\alpha_1^2w^{\epsilon}+hc.
\end{eqnarray*}
Using that 
\begin{eqnarray*}
\alpha_2^2w^{2\epsilon}k'\in T^{2-2\epsilon},\,
(w^{\epsilon})'\alpha_2^2\alpha_1^2w^{\epsilon}\in T^{2-2\epsilon},\,
\alpha_3^2\alpha_{ij}(\partial_jk)w^{2\epsilon}\in T^{4-2\epsilon}
\end{eqnarray*}
we find for $\epsilon<1$
\[w^{\epsilon}C_rw^{\epsilon}\lesssim h_0, \ w^{\epsilon}C_{\omega}w^{\epsilon}\lesssim h_0,
\]
 by Lemma \ref{lemHardy}.
We now compute :
\begin{eqnarray*}
w^{\epsilon}[i(h_0-h_{0,s}),k]w^{\epsilon}&=&\sum_{ij}D_i^*q\frac{g^{ij}(iD_jk)w^{2\epsilon}}{q}+\sum_ig^i(D_ik)w^{2\epsilon}\\
&+&\alpha_1D_rw^{2\epsilon}\alpha_1^{-1}g^{rr}k'-\frac{1}{i}\alpha_1\left(\frac{w^{\epsilon}}{\alpha_1}\right)'g^{rr}k'w^{\epsilon}+hc.
\end{eqnarray*}
Noting that
\begin{eqnarray*}
\frac{g^{ij}}{q}(D_jk)w^{2\epsilon}&\in&T^{3+\delta-2\epsilon},\quad
g^i(iD_jk)w^{2\epsilon}\in T^{3+\delta-2\epsilon},\\
g^rk'w^{2\epsilon}&\in&T^{2+\delta-2\epsilon},\quad\frac{w^{2\epsilon}g^{rr}k'}{q\alpha_1}\in T^{2+\delta-2\epsilon},\\
\left(\frac{w^{\epsilon}}{\alpha_1}\right)'\alpha_1g^{rr}k'w^{\epsilon}&\in&T^{2+\delta-2\epsilon} ,
\end{eqnarray*}
we find for $\epsilon>0$ sufficiently small using \eqref{7.5.1b}-\eqref{7.5.1d} and 
Lemma \ref{lemHardy} that
\begin{equation*}
w^{\epsilon}[i(h_0-h_{0,s}),k]w^{\epsilon}\lesssim h_0.
\end{equation*}
Thus \eqref{Hyp15} is fulfilled. 
\end{itemize}
\section{Asymptotic completeness 2 : geometric setting}
\label{secACgeo}
In this section we will compare the  full dynamics to asymptotic
spherically symmetric dynamics. We put 
\begin{eqnarray*}
h_{+\infty}&:=&h_{0,s},\quad h_{-\infty}:=h_{+\infty}-\ell^2,\\
k_{+\infty}&:=&0,\quad k_{-\infty}:=\ell.
\end{eqnarray*}
The operators 
\[
\dot{H}_{+\infty}:=\left(\begin{array}{cc} 0 & \one\\ h_{+\infty} &
    0\end{array}\right),\
\dot{H}_{-\infty}:=\left(\begin{array}{cc} 0 & \one\\ h_{-\infty} &
    2 \ell\end{array}\right)
\]
are selfadjoint on 
\[
\dot{\CE}_{+\infty}=(h_{+\infty})^{-1/2}\CH\oplus\CH\quad \mbox{ resp. }
\dot{\CE}_{-\infty}=\Phi(\ell)(
(h_{+\infty})^{-1/2}\CH\oplus\CH)
\]with domains 
\[
D(\dot{H}_{+\infty})=(h_{+\infty})^{-1/2}\CH\cap(h_{+\infty})^{-1}\CH\oplus\<h_{+\infty}\>^{-1/2}\CH,\
D(\dot{H}_{-\infty})=\Phi(\ell)D(\dot{H}_{+\infty}).
\]
\begin{remark}
We have $\sigma_{pp}(\dot{H}_{\pm\infty})=\emptyset.$ This follows from
\cite[Lemme 4.2.1]{Ha03}.
\end{remark}
\begin{lemma}
\label{lemma7.6.1}
Assume \eqref{G1}-\eqref{G8}. Then we have $\dot{\CE}_{+\infty}=\dot{\CE}_+,\,
\dot{\CE}_{-\infty}=\dot{\CE}_-$ with equivalent norms. 
\end{lemma}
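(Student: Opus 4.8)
\proof
The plan is to reduce the two asserted identities of Hilbert spaces to an equivalence of quadratic forms. By definition $\dot{\CE}_{+\infty}=h_{+\infty}^{-1/2}\CH\oplus\CH=h_{0,s}^{-1/2}\CH\oplus\CH$ and $\dot{\CE}_{+}=h_{+}^{-1/2}\CH\oplus\CH$, while $\dot{\CE}_{-\infty}=\Phi(\ell)(h_{0,s}^{-1/2}\CH\oplus\CH)$ and $\dot{\CE}_{-}=\Phi(\ell)(\tilde{h}_{-}^{-1/2}\CH\oplus\CH)$. Since $\Phi(\ell)$ is a fixed algebraic isomorphism which transports the norms, and since the second summand $\CH$ is common to all four spaces, it is enough to prove that $h_{0,s}^{-1/2}\CH=h_{+}^{-1/2}\CH=\tilde{h}_{-}^{-1/2}\CH$ with equivalent norms; by the properties of the homogeneous Sobolev scales recalled at the beginning of Section \ref{SecBG} (see also \cite[Subsect. 2.1]{GGH2}), this will follow once we know that the quadratic forms of $h_{0,s}$, $h_{+}$ and $\tilde{h}_{-}$ are pairwise equivalent on the common form core $C_0^{\infty}(\CM)$ (which is a form core for each: $h_{0,s}$ is the Friedrichs extension of its restriction to $C_0^{\infty}(\CM)$, and $h_{+}$, $\tilde{h}_{-}$ are bounded perturbations of the Friedrichs extension $h_{0}$).

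Set $L:=D_rq^2D_r+P+1$ and, for $u\in C_0^{\infty}(\CM)$, let $\mathfrak{l}(u):=(\alpha_1 L\alpha_1 u\,|\,u)=\|qD_r\alpha_1 u\|^2+\|P^{1/2}\alpha_1 u\|^2+\|\alpha_1 u\|^2$. I would show that each of $h_{0,s},h_{0},h_{+},\tilde{h}_{-}$ is comparable to $\mathfrak{l}$. The lower bounds $h_{0,s}\gtrsim\mathfrak{l}$ and $h_{0}\gtrsim\mathfrak{l}$ are exactly \eqref{bornehp}, while $h_{+}\gtrsim\mathfrak{l}$ and $\tilde{h}_{-}\gtrsim\mathfrak{l}$ are \eqref{G9}, recalling $h_{+}=h_{0}-k_{+}^{2}$ and $\tilde{h}_{-}=h_{0}-(k_{-}-\ell)^{2}$. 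For the upper bound on $h_{0,s}$ one uses $\alpha_1\gtrsim q$ and $\alpha_2,\alpha_3,\alpha_4\lesssim q$, which follow from \eqref{G2} and \eqref{totoblut}; since the $\alpha_j$ and $q$ are functions of $r$ only and $P\geq0$,
\[
(h_{0,s}u\,|\,u)=\|\alpha_2 D_r\alpha_1 u\|^2+\|\alpha_3 P^{1/2}u\|^2+\|\alpha_4 u\|^2\lesssim\|qD_r\alpha_1 u\|^2+\|\alpha_1 P^{1/2}u\|^2+\|\alpha_1 u\|^2=\mathfrak{l}(u).
\]
The remaining, and only substantial, point is the estimate $|(h_{p}u\,|\,u)|\lesssim\mathfrak{l}(u)$ for the perturbation $h_{p}$ of \eqref{perturh}, which gives $h_{0}=h_{0,s}+h_{p}\lesssim\mathfrak{l}$. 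This is of exactly the same nature as the verification of \eqref{7.5.1a} in the proof of Proposition \ref{propopo}: writing $D_ru=\alpha_1^{-1}D_r(\alpha_1 u)-\alpha_1^{-1}(D_r\alpha_1)u$ and using $|D_r\alpha_1|\lesssim q^{-1}$, $\alpha_1^{-1}\lesssim q^{-1}$, the decay \eqref{G7} of the coefficients and the product rule $T^aT^b\subset T^{a+b}$, each term of $(h_{p}u\,|\,u)$ reduces to an integral of one of the types $\int q^{2}|D_r\alpha_1 u|^{2}$, $\int q^{2}|D_j\alpha_1 u|^{2}$, $\int q^{2}|u|^{2}$, $\int q^{\delta}|u|^{2}$; the first three are $\lesssim\mathfrak{l}(u)$ because $q\lesssim\alpha_1$ and $\sum_j\|D_j\alpha_1 u\|^{2}\lesssim\|P^{1/2}\alpha_1 u\|^{2}$ by the uniform ellipticity of $[\alpha_{ij}]$, and the last is $\lesssim\mathfrak{l}(u)$ by the Hardy inequality of Lemma \ref{lemHardy}, whose proof in fact bounds $\|fu\|_{\CH}$ by a constant times $\mathfrak{l}(u)^{1/2}$ for every $f\in T^{\delta}$, $\delta>0$.

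Finally, $k_{+}^{2}\geq0$ and $(k_{-}-\ell)^{2}\geq0$ give for free $h_{+}=h_{0}-k_{+}^{2}\leq h_{0}\lesssim\mathfrak{l}$ and $\tilde{h}_{-}=h_{0}-(k_{-}-\ell)^{2}\leq h_{0}\lesssim\mathfrak{l}$. Together with the lower bounds above and with $h_{0,s}\simeq\mathfrak{l}$ this yields $h_{+}\simeq\mathfrak{l}\simeq h_{0,s}$ and $\tilde{h}_{-}\simeq\mathfrak{l}\simeq h_{0,s}$ as quadratic forms with form core $C_0^{\infty}(\CM)$, whence $\dot{\CE}_{+}=\dot{\CE}_{+\infty}$ and $\dot{\CE}_{-}=\dot{\CE}_{-\infty}$ with equivalent norms by the first paragraph. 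The hard part is the single inequality $|(h_{p}u\,|\,u)|\lesssim\mathfrak{l}(u)$; it uses nothing beyond \eqref{G2}, \eqref{G7}, the estimates \eqref{7.5.1b}--\eqref{7.5.1d}, Lemma \ref{lemHardy} and the calculus of the spaces $T^{\sigma}$, i.e. exactly the tools already used to establish \eqref{A2} and \eqref{TE3}, while the direct summand $\CH$ plays no role, being identical in all four spaces.
\qed
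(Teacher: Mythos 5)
Your proof follows essentially the same route as the paper: reduce to an equivalence of quadratic forms, use $\alpha_1(D_rq^2D_r+P+1)\alpha_1$ as a common pivot, derive the lower bounds from \eqref{bornehp} and \eqref{G9}, and get the upper bounds from the comparison $h_{0,s}\lesssim\alpha_1(D_rq^2D_r+P+1)\alpha_1$ together with the bound on the perturbation $h_p$. The only difference is one of presentation — you spell out the pivot form $\mathfrak{l}$ and the estimate on $h_p$ explicitly, where the paper compresses this into a one-line citation of \eqref{bornehp}, \eqref{G7}, Lemma \ref{lemHardy} and \eqref{7.5.1b}--\eqref{7.5.1d}.
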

\proof

We have to show 
\begin{eqnarray}
\label{7.6.1}
h_{+\infty}\lesssim h_+\lesssim h_{+\infty},\
h_{+\infty}\lesssim \tilde{h}_-\lesssim h_{+\infty}.
\end{eqnarray}
Recalling that
$h_{0,s}= h_{+\infty}$, it is sufficient to show \eqref{7.6.1} for
$h_{+\infty}$ replaced by $h_{0,s}$. First note that 
\begin{eqnarray}
\label{7.6.3}
h_{0,s}\lesssim \alpha_1(D_rq^2D_r+P+1)\alpha_1.
\end{eqnarray}
\eqref{G9} then gives 
\[h_{0,s}\lesssim h_+,\quad h_{0,s}\lesssim \tilde{h}_-.\]
By \eqref{bornehp}, \eqref{G7}, the Hardy inequality, Lemma
\ref{lemHardy}, and the estimates \eqref{7.5.1b}-\eqref{7.5.1d} we have 
\begin{equation*}
h_0\lesssim h_{0,s}.
\end{equation*}
Now,
\begin{eqnarray*}
h_+=h_0-k_+^2\lesssim h_0&\lesssim& h_{0,s},\\
\tilde{h}_-=h_0-(k_--\ell)^2\lesssim h_0&\lesssim& h_{0,s},
\end{eqnarray*}
which finishes the proof.
\qed
\begin{lemma}
\label{lem7.6.2}
Assume \eqref{G1}-\eqref{G8}. We have for $\chi\in C_0^{\infty}(\R)$ 
\begin{equation*}
\chi(\dot{H}_{\pm\infty})-\chi(\dot{H}_{\pm})\in
B_{\infty}(\dot{\CE}_{\pm}).
\end{equation*}
\end{lemma}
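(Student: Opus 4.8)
\proof

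The plan is to deduce the statement, via the Helffer--Sj\"ostrand formula (cf. Proposition \ref{prop16}), from compactness of the resolvent difference. By Lemma \ref{lemma7.6.1} we identify $\dot{\CE}_{\pm\infty}$ with $\dot{\CE}_{\pm}$ as a Hilbert space (equivalent norms), so $\dot{R}_{\pm\infty}(z)-\dot{R}_{\pm}(z):=(\dot{H}_{\pm\infty}-z)^{-1}-(\dot{H}_{\pm}-z)^{-1}$ is a bounded operator on $\dot{\CE}_{\pm}$ for every $z\in\C\setminus\R$, and since $\dot{H}_{\pm\infty},\dot{H}_{\pm}$ are selfadjoint, $\|\dot{R}_{\pm\infty}(z)\|,\ \|\dot{R}_{\pm}(z)\|\lesssim|{\rm Im}\,z|^{-1}$. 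For an almost analytic extension $\tilde{\chi}\in C_0^\infty(\C)$ of $\chi$,
\[
\chi(\dot{H}_{\pm\infty})-\chi(\dot{H}_{\pm})=\frac{1}{2\pi i}\int_{\C}\bar{\partial}\tilde{\chi}(z)\,\bigl(\dot{R}_{\pm\infty}(z)-\dot{R}_{\pm}(z)\bigr)\,dz\wedge d\bar{z},
\]
the integral converging in operator norm because $\supp\tilde{\chi}$ is compact and $|\bar{\partial}\tilde{\chi}(z)|\lesssim|{\rm Im}\,z|^N$ for large $N$. Hence it suffices to prove $\dot{R}_{\pm\infty}(z)-\dot{R}_{\pm}(z)\in\CB_\infty(\dot{\CE}_{\pm})$ for each $z\in\C\setminus\R$; the integral is then a norm limit of Riemann sums of compact operators.

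I would first do the case of $\dot{H}_{+\infty},\dot{H}_+$. Since $(h_+,k_+)$ and $(h_{+\infty},k_{+\infty})=(h_{0,s},0)$ both satisfy \eqref{A3} (the second trivially), \eqref{resdotHself} expresses $\dot{R}_+(z),\dot{R}_{+\infty}(z)$ through $p_+^{-1}(z)$, resp. $p_{+\infty}^{-1}(z)=(h_{0,s}-z^2)^{-1}$, and through $h_+,h_{0,s}$; using $p_\bullet^{-1}(z)h_\bullet=1+z(z-2k_\bullet)p_\bullet^{-1}(z)$ and that on the fixed angular sector $\CH^n$ the operator $k_+=k-\ell j_-^2$ is a bounded multiplication operator, each entry of $\dot{R}_{+\infty}(z)-\dot{R}_+(z)$ becomes a finite sum of bounded operators composed with $p_+^{-1}(z)-p_{+\infty}^{-1}(z)$ or $k_+p_+^{-1}(z)$, so it is enough to show these two are compact between the spaces $\CH^s$, $s\in\{-\tfrac12,0,\tfrac12\}$. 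Writing
\[
p_+^{-1}(z)-p_{+\infty}^{-1}(z)=-p_+^{-1}(z)\,V\,p_{+\infty}^{-1}(z),\qquad V:=p_+(z)-p_{+\infty}(z)=h_p-k_+^2+2zk_+,\ \ h_p:=h_0-h_{0,s},
\]
one observes that $V$ is a second-order differential operator whose coefficients lie in classes $T^\sigma$ with $\sigma\ge2$ (the $D_rg^{rr}D_r$ part in $T^{4+\delta}$, the $D_i^*g^{ij}D_j$ part in $T^{2+\delta}$, the zeroth-order part in $T^2$, by \eqref{G7}, \eqref{G8}), hence vanish as $r\to r_\pm$, and similarly $k_+$ vanishes at the ends (by \eqref{G3}, \eqref{G8} and $\ell=nk_{s,r}^-+k_{s,v}^-$). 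Compactness then follows exactly as for the compactness statements of Section \ref{SecGS} (cf. the proof of Lemma \ref{merextp-1} and the comment following \eqref{HypME}, which give in particular that $q(r)^\epsilon h_0^{-1/2}$ is compact): distributing the vanishing $T^\sigma$--factors of $V$, using \eqref{7.5.1b}--\eqref{7.5.1d} and the Hardy inequality of Lemma \ref{lemHardy} to bound the resulting first-order factors against $h_0^{1/2}$, and using that $p_{+\infty}^{-1}(z)$ is a function of $h_{0,s}$ vanishing at infinity, one writes $p_+^{-1}(z)Vp_{+\infty}^{-1}(z)$ (and $k_+p_+^{-1}(z)$) as a finite sum of products of bounded operators with operators of the form $\varphi(x)\langle h_0\rangle^{-\mu}$ or $\langle h_{0,s}\rangle^{-\mu}\varphi(x)$, $\varphi\to0$, $\mu>0$, all compact by Lemma \ref{lemfgcomp}.

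For the case of $\dot{H}_{-\infty},\dot{H}_-$ I would reduce to the previous one by the gauge transformation of Subsection \ref{SecGT}. Conjugation by a suitable $\Phi(\pm\ell)$ (bounded isomorphisms between the corresponding homogeneous energy spaces), together with a spectral shift by $\ell$, turns $\dot{H}_{-\infty}$ into $\dot{H}_{+\infty}+\ell$ (its gauged data being $(h_{0,s},0)$, since $h_{-\infty}+\ell^2=h_{0,s}$ and $k_{-\infty}-\ell=0$) and turns $\dot{H}_-$ into $\widehat{H}_-+\ell$ with $\widehat{H}_-=\bigl(\begin{smallmatrix}0 & 1\\ \tilde{h}_- & 2(k_--\ell)\end{smallmatrix}\bigr)$, whose data $(\tilde{h}_-,k_--\ell)$ satisfies \eqref{bornehp} by \eqref{G9}; correspondingly $\chi(\dot{H}_{-\infty})-\chi(\dot{H}_-)$ is conjugate, through these isomorphisms, to $\psi(\dot{H}_{+\infty})-\psi(\widehat{H}_-)$ with $\psi:=\chi(\,\cdot+\ell)\in C_0^\infty(\R)$. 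Since $\tilde{h}_--h_{0,s}=h_p-(k_--\ell)^2$ and $k_--\ell$ both vanish as $r\to r_\pm$ (by \eqref{G3}, \eqref{G8} and the choice of $\ell$, with $k_\pm,\tilde{h}_-$ as in \eqref{tutu}, \eqref{titi}), the comparison of $\widehat{H}_-$ with $\dot{H}_{+\infty}$ has exactly the structure of the previous case, and the argument applies verbatim.

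The genuinely delicate point is the compactness of $p_+^{-1}(z)-p_{+\infty}^{-1}(z)$ (and its analogue in the second case): because $h_{0,s}$ degenerates like $q(r)^2$ at the two ends, one must keep careful track of which weighted Sobolev spaces the various pieces of $V$ act between. What makes it work is that every perturbing coefficient lies in a class $T^\sigma$ with $\sigma$ strictly larger than the exponent of the corresponding leading term of $h_{0,s}$, which supplies simultaneously the decay near the ends and --- via \eqref{7.5.1b}--\eqref{7.5.1d} and Lemma \ref{lemHardy} --- the control of the derivatives of $V$ against $h_0^{1/2}$; this is precisely the bookkeeping already carried out in Subsection \ref{perthamil} and in the verification of \eqref{A2} and \eqref{HypME} in Section \ref{SecGS}, so no essentially new estimate is required.

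\qed
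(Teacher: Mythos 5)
Your route is genuinely different from the paper's. You reduce, via the Helffer--Sj\"ostrand formula, to compactness of the resolvent difference $\dot R_{\pm\infty}(z)-\dot R_{\pm}(z)$ on $\dot\CE_{\pm}$, and then exploit the quadratic-pencil resolvent identity together with the decay of $V=p_{+}(z)-p_{+\infty}(z)=h_p-k_+^2+2zk_+$ coming from the $T^\sigma$ classes. The paper instead conjugates $\dot H_{\pm},\dot H_{\pm\infty}$ by the diagonalizing maps $\CU(h_{\pm}),\CU(h_{\pm\infty})$ — which are \emph{unitary} from the homogeneous energy spaces to $\CH\oplus\CH$ — so that the required statement becomes compactness of $(\one-\CU(h_{+\infty})\CU^*(h_+))\chi(L_+)$ and of $\chi(L_{+\infty})-\chi(L_+)$ as operators on $\CH\oplus\CH$, which it then quotes from \cite[Lemmas 6.1.3, A.4.4]{Ha03}. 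The advantage of the paper's route is that all homogeneous-scale bookkeeping is absorbed into $\CU(h)$ and one works purely on $L^2$; the cost is the external citation. Your route is more self-contained and more explicit about where the $T^\sigma$ decay enters, and the gauge-transformation reduction you use for the $\dot H_{-\infty},\dot H_-$ pair is the right way to line it up with the first case.

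There is, however, a gap in the compactness step that your sketch does not close. The entries of $\dot R_{+\infty}(z)-\dot R_+(z)$ act between the \emph{homogeneous} spaces $h_+^{-1/2}\CH$ and $\CH$, and your stated reduction, to compactness ``between the spaces $\CH^s$, $s\in\{-\tfrac12,0,\tfrac12\}$'', uses the inhomogeneous scale $\CH^s=\langle h_0\rangle^{-s}\CH$, which near the ends is genuinely different from $h_\pm^{-1/2}\CH$ because $h_{0,s}$ degenerates like $q(r)^2$ there even on a fixed $\CH^n$. The correct reduction is, for instance, that $h_+^{1/2}p_+^{-1}(z)\,V\,p_{+\infty}^{-1}(z)$ be compact on $\CH$; using $h_+\sim h_{0,s}$ (Lemma \ref{lemma7.6.1}) this becomes relative form-compactness of $V$ with respect to $h_{0,s}$, and for the genuinely second-order part $D_rg^{rr}D_r$ of $h_p$ this needs a second-order weighted estimate (control of $qD_r\alpha_1\cdot\alpha_1 D_rq$ by $h_{0,s}$, i.e.\ working in the homogeneous $h_{0,s}^{-1}\CH$), not just the first-order estimates \eqref{7.5.1b}--\eqref{7.5.1d}. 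Finally, Lemma \ref{lemfgcomp} as stated gives compactness of $f(x)g(h_+)$ and $f(x)g(\tilde h_-)$ with $f,g\to0$; it does not literally cover operators such as $h_{0,s}^{-1/2}(D_i^*g^{ij}D_j)h_{0,s}^{-1/2}$. These pieces are all fillable by the same elliptic-regularity/Rellich arguments used to prove Lemma \ref{lemfgcomp}, but they are not ``no essentially new estimate'': they are precisely the content that the paper gets for free by transforming to $\CH\oplus\CH$ and invoking \cite{Ha03}.
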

\proof

Let $\chi\in C_0^{\infty}(\R)$. We prove the lemma for 
$\chi(\dot{H}_{+\infty})-\chi(\dot{H}_{+})$, the proof for 
$\chi(\dot{H}_{-\infty})-\chi(\dot{H}_{-})
$ being analogous. Let us introduce for a positive selfadjoint operator
$\hat{h}$ the transformation
\begin{equation*}
\CU(h):=\frac{1}{\sqrt{2}}\left(\begin{array}{cc} h^{1/2} &
    i\\ h^{1/2} & -i \end{array}\right),\quad
\CU^{-1}(h)=\frac{1}{\sqrt{2}}\left(\begin{array}{cc} h^{-1/2}
  & h^{-1/2} \\ -i & i \end{array}\right).
\end{equation*}
Note that
\begin{eqnarray*}
\CU(h_+):\dot{\CE}_{\pm}\rightarrow\CH\oplus\CH,\
\CU(h_{+\infty}):\dot{\CE}_{+\infty}\rightarrow \CH\oplus\CH
\end{eqnarray*}
are unitary. We set
\begin{eqnarray*}
L_{+\infty}:=\CU(h_{+\infty})\dot{H}_{+\infty}\CU^*(h_{+\infty}),\
L_+:=\CU(h_+)\dot{H}_+\CU^*(h_+).
\end{eqnarray*}
By \cite[Lemmas 6.1.3, A.4.4]{Ha03} we have 
\begin{equation}
\label{7.6.4}
(\one-\CU(h_{+\infty})\CU^*(h_+))\chi(L_+), \ \chi(L_{+\infty})-\chi(L_+)\in
\CB_{\infty}(\CH\oplus\CH).
\end{equation}
We now write
\begin{align*}
\chi(\dot{H}_{+\infty})-\chi(\dot{H}_+)=&\ \CU^*(h_{+\infty})(\one-\CU(h_{+\infty})\CU^*(h_+))\chi(L_+)\CU(h_+)\\
&+\CU^*(h_{+\infty})(\chi(L_{+\infty})-\chi(L_+))\CU(h_{+\infty})\\
&+\CU^*(h_{+\infty})\chi(L_+)(\one-\CU(h_+)\CU^*(h_{+\infty}))\CU(h_{+\infty}),
\end{align*}
which is compact by \eqref{7.6.4}.
\qed

Let 
\[\dot{R}_{\pm\infty}(z):=(\dot{H}_{\pm\infty}-z)^{-1}.\]
In the same way as for $\dot{H}_{\pm}$ we can show:\begin{proposition}
Assume \eqref{G1}-\eqref{G8}. Let $\epsilon>0$. There exists a
discrete and closed set $\CT_{\pm\infty}\subset\R$ such that for
all 
$\chi\in C_0^{\infty}(\R\setminus \CT_{\pm\infty})$ and all $k\in \N$  we have 
\begin{equation}
\label{5.1pminfty}
\sup_{\epsilon>0,\, \lambda\in \R}\Vert
w^{-\epsilon}\chi(\lambda)\dot{R}_{\pm\infty}^k(\lambda\pm i\epsilon)w^{-\epsilon}\Vert_{\CB(\dot{\CE}_{\pm\infty})}<\infty.
\end{equation}
\end{proposition}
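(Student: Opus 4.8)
The plan is to run, for the spherically symmetric pair $(h_{+\infty},k_{+\infty})=(h_{0,s},0)$ (and its $-\infty$ counterpart), exactly the argument which produced Proposition \ref{prop5.1} for $\dot H_{\pm}$; since $k_{+\infty}=0$ and $h_{0,s}$ is already purely separable, everything is strictly easier. First I would dispose of the $-\infty$ case by the gauge transformation $\Phi(\ell)$. As $\ell$ is a real constant, $\Phi(\ell)$ commutes with $w^{-\epsilon}$, and a matrix computation as in Subsect.\ \ref{SecGT} gives $\dot H_{-\infty}=\Phi(\ell)\dot H_{+\infty}\Phi(-\ell)+\ell$; hence $\Phi(\ell)$ is an isomorphism $\dot{\CE}_{+\infty}\to\dot{\CE}_{-\infty}$ with inverse $\Phi(-\ell)$ and $\dot R_{-\infty}(z)=\Phi(\ell)\dot R_{+\infty}(z-\ell)\Phi(-\ell)$, so that $w^{-\epsilon}\dot R_{-\infty}^{k}(z)w^{-\epsilon}=\Phi(\ell)\,w^{-\epsilon}\dot R_{+\infty}^{k}(z-\ell)w^{-\epsilon}\,\Phi(-\ell)$ and the estimate for $\dot H_{-\infty}$ follows from the one for $\dot H_{+\infty}$ with $\CT_{-\infty}:=\CT_{+\infty}+\ell$. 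It therefore suffices to treat $\dot H_{+\infty}$, which is selfadjoint on $\dot{\CE}_{+\infty}=h_{+\infty}^{-1/2}\CH\oplus\CH$; since $k_{+\infty}=0$ we have $h_0=h=h_{+\infty}=h_{0,s}>0$ by \eqref{bornehp}, so \eqref{Hyp1}, \eqref{A2}, \eqref{A3} hold trivially and the resolvent is given by \eqref{resdotHself} with $p_{+\infty}(z)=h_{+\infty}-z^2$.

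The main step is to verify that $(h_{+\infty},0,w)$, with $w=q(r)^{-1}$, satisfies \eqref{HypME} and \eqref{ME2}. Parts $a)$ and $b)$ of \eqref{HypME} are vacuous; $c)$ follows from the commutator identity $[ih_{+\infty},w^{-\epsilon}]=\alpha_1D_rq\,\alpha+\beta$ with $\alpha\in T^{\epsilon}$, $\beta\in T^{\epsilon+\delta}$, together with \eqref{7.5.1b} and Lemma \ref{lemHardy}; $d)$ is Lemma \ref{lemHardy}; and $e)$ is Lemma \ref{lemfgcomp}. These are precisely the computations carried out in the proof of Proposition \ref{propopo}, now for the simpler operator $h_{0,s}$ in place of $h_{\pm}$. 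For \eqref{ME2} I would repeat the Mazzeo--Melrose argument of Subsect.\ \ref{secmehpm}: introducing the smooth structure in which $y=\sqrt{r-r_-}$ (resp.\ $y=\sqrt{r_+-r}$) is a boundary defining function and conjugating $h_{+\infty}$ by the weight $(1-\chi)+\chi y^{2}$, the normal operator at each boundary point has the form required in Proposition \ref{MaMe87}; because $k_{+\infty}=0$ there is no constant term, so we are in the unshifted situation and the potential essential singularities lie at $(-K)^{1/2}(-i)\left(\frac{2k-1}{2}\right)^{1/2}$, $k\in\N$, i.e.\ strictly below $\R$. Thus Proposition \ref{MaMe87} provides a finite meromorphic extension of $w^{-\epsilon}(h_{+\infty}-z^{2})^{-1}w^{-\epsilon}$ to $\{{\rm Im}\,z>-\delta_{\epsilon}\}$ with values in $\CB_{\infty}(\CH)$.

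With \eqref{HypME} and \eqref{ME2} established, Lemma \ref{merextp-1} and Proposition \ref{merexttrresH}, specialised to $k=0$, yield a finite meromorphic extension of $w^{-\epsilon}\dot R_{+\infty}(z)w^{-\epsilon}$ to $\{{\rm Im}\,z>-\delta_{\epsilon/2}\}$ with values in $\CB_{\infty}(\dot{\CE}_{+\infty})$. Then I would set $\CT_{+\infty}$ equal to the set of its real poles --- closed, discrete, and independent of $\epsilon$ by the argument of Corollary \ref{Cor10.2.4} (using \eqref{PE2}) --- and use that $\dot H_{+\infty}$ is selfadjoint with exactly the structure used for $\dot H_{\pm}$, so that the proof of Proposition \ref{prop5.1}$\,ii)$ applies verbatim: for $\chi\in C_0^{\infty}(\R\setminus\CT_{+\infty})$ the extension is holomorphic up to $\R$ on $\supp\chi$ and, together with its $z$-derivatives (which, by the resolvent identity, are the $\dot R_{+\infty}^{k}$ up to constants) and the bound $\|\dot R_{+\infty}(\lambda\pm i\delta)\|\le\delta^{-1}$ for large $\delta$, is uniformly bounded there; as $\supp\chi$ is compact this is \eqref{5.1pminfty}. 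I do not expect a real obstacle: each step is a specialisation of the analysis already done for $\dot H_{\pm}$, and the vanishing of $k_{+\infty}$ only simplifies things --- notably removing the spectral shift in the Mazzeo--Melrose normal operator --- so the one item needing care is the routine re-verification of \eqref{HypME}--\eqref{ME2} for the separable operator $h_{0,s}$ and the check that the essential singularities stay off $\R$, which here is immediate.
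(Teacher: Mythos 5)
Your proposal is correct and follows the route the paper intends when it writes ``in the same way as for $\dot H_{\pm}$'': reduce the $-\infty$ case to the $+\infty$ case via the gauge transformation $\Phi(\ell)$ (which commutes with $w$), note that $k_{+\infty}=0$ makes \eqref{Hyp1}--\eqref{A3} automatic and removes the spectral shift in the Mazzeo--Melrose normal operator for $h_{+\infty}=h_{0,s}$, re-verify \eqref{HypME}--\eqref{ME2} exactly as in the proof of Prop.~\ref{propopo}, and then invoke Lemma~\ref{merextp-1}/Prop.~\ref{merexttrresH} together with selfadjointness of $\dot H_{+\infty}$ to get the uniform bound on $w^{-\epsilon}\chi(\lambda)\dot R_{+\infty}^{k}(\lambda\pm i\delta)w^{-\epsilon}$ away from the (closed, discrete) set of real poles. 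The one place where you are actually more careful than the paper is the observation that the vanishing of $k_{+\infty}$ puts the potential essential singularities of the Mazzeo--Melrose extension on the negative imaginary axis, so they cause no trouble on $\R$; this is correct and is exactly the point that makes the argument work.
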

Let $\hat{\CT}:=\CS\cup\CT_{\pm\infty}$. The admissible
energy cut-offs for $\dot{H}_{\pm\infty}$ are now defined in exactly
the same manner as for $\dot{H}$ replacing $\CS$ by
$\hat{\CT}$ in the definition. Let $\CC^{H_{\pm\infty}}$ the set of all
admissible cut-off functions for $\dot{H}_{\pm\infty}$. 
We define 
\begin{eqnarray*}
\dot{\CE}_{scatt, \pm\infty}&=&\{\chi(\dot{H}_{\pm\infty})u;\,\chi\in \CC^{H_{\pm\infty}},\,  u\in
\dot{\CE}_{\pm\infty}\},\\
\dot{\CE}_{scatt}&=&\{\chi(\dot{H})u;\,\chi\in \CC^{H_{\pm\infty}},\,  u\in
\dot{\CE}\}.
\end{eqnarray*}
\begin{theorem}
\label{asympcompl2}
Assume \eqref{G1}-\eqref{G8}. 

(i) For all $\varphi^{\pm}\in\dot{\CE}_{scatt, \pm\infty}$ there exist $\psi^{\pm}\in \dot{\CE}_{scatt}$ such that
\begin{eqnarray*}
e^{-it\dot{H}}\psi^{\pm}-i_{\pm}e^{-it\dot{H}_{\pm\infty}}\varphi^{\pm}\rightarrow
0,\, t\rightarrow\infty\quad\mbox{in}\quad \dot{\CE}.
\end{eqnarray*}
(ii)  For all $\psi^{\pm}\in \dot{\CE}_{scatt}$ there exist $\varphi^{\pm}\in\dot{\CE}_{scatt, \pm\infty}$ such that
\begin{eqnarray*}
e^{-it\dot{H}_{\pm\infty}}\varphi^{\pm}-i_{\pm}e^{-it\dot{H}}\psi^{\pm}\rightarrow
0,\, t\rightarrow\infty\quad\mbox{in}\quad \dot{\CE}_{\pm\infty}.
\end{eqnarray*}
\end{theorem}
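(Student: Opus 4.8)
The plan is to obtain Theorem \ref{asympcompl2} by composing the wave operators of Theorem \ref{asympcompl}, which already compare $e^{-it\dot H}$ with the two ``end'' dynamics $e^{-it\dot H_{\pm}}$ away from $\mathcal S$, with intermediate wave operators comparing $e^{-it\dot H_{\pm}}$ with the spherically symmetric dynamics $e^{-it\dot H_{\pm\infty}}$. By Lemma \ref{lemma7.6.1} the energy spaces $\dot{\CE}_{\pm}$ and $\dot{\CE}_{\pm\infty}$ coincide with equivalent norms, so the latter comparison is a genuine two--selfadjoint--operator scattering problem on one and the same space, and by Lemma \ref{lem7.6.2} the functions $\chi(\dot H_{\pm})$ and $\chi(\dot H_{\pm\infty})$ differ by a compact operator for every $\chi\in C_0^{\infty}(\R)$.

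\emph{Step 1 (intermediate wave operators).} By Proposition \ref{propopo} all abstract hypotheses hold, so Proposition \ref{prop5.2} gives the rapid decay $\Vert w^{-\epsilon}e^{-it\dot H_{\pm}}\chi(\dot H_{\pm})w^{-\epsilon}\Vert=O(\langle t\rangle^{-N})$ for $\chi\in C_0^{\infty}(\R\setminus\CT_{\pm})$, and the Proposition preceding Theorem \ref{asympcompl2} gives the analogous limiting absorption bound \eqref{5.1pminfty}, hence the same decay for $e^{-it\dot H_{\pm\infty}}\chi(\dot H_{\pm\infty})$ away from $\CT_{\pm\infty}$. Conjugating $\dot H_{-\infty}$ (resp. $\dot H_{-}$) by $\Phi(\ell)$ as in Subsection \ref{SecGT} turns it into the standard Klein--Gordon operator with coefficients $h_{+\infty}$ (resp. $\tilde h_{-}$, $k_{-}-\ell$), so that the difference of generators is governed by $h_p$ and, near $r_{-}$, by $k-\ell$ (and by $h_p,k$ near $r_{+}$ in the $+$ case); by the decay hypotheses \eqref{G7}--\eqref{G9} these are short range relative to $w^{-1}=q(r)$. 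A Cook's--method argument — differentiating $e^{it\dot H_{\pm}}\chi(\dot H_{\pm})e^{-it\dot H_{\pm\infty}}\chi(\dot H_{\pm\infty})$, inserting $\chi(\dot H_{\pm})-\chi(\dot H_{\pm\infty})$ (compact by Lemma \ref{lem7.6.2}) together with a pair of weights $w^{-\epsilon}$ that absorb the short--range decay against the above propagation estimates, exactly as in the proof of Theorem \ref{asympcompl} — then shows that
\[
\tilde W^{\pm}\varphi:=\lim_{t\to\infty}e^{it\dot H_{\pm}}\chi(\dot H_{\pm})e^{-it\dot H_{\pm\infty}}\chi(\dot H_{\pm\infty})\varphi
\]
exists for every $\varphi\in\dot{\CE}_{\pm\infty}$ and every admissible cut--off, and defines a bounded operator $\dot{\CE}_{scatt,\pm\infty}\to\dot{\CE}_{scatt,\pm}$. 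The same argument with the roles of $\dot H_{\pm}$ and $\dot H_{\pm\infty}$ interchanged yields $\Omega^{\pm}=\lim_{t\to\infty}e^{it\dot H_{\pm\infty}}\chi(\dot H_{\pm\infty})e^{-it\dot H_{\pm}}\chi(\dot H_{\pm})$, and using the intertwining relations together with a density argument of the kind used at the end of the proof of Theorem \ref{asympcompl} one checks $\Omega^{\pm}\tilde W^{\pm}=\one$ on $\dot{\CE}_{scatt,\pm\infty}$ and $\tilde W^{\pm}\Omega^{\pm}=\one$ on $\dot{\CE}_{scatt,\pm}$; here $\sigma_{pp}(\dot H_{\pm\infty})=\emptyset$ is used so that no bound states are lost.

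\emph{Step 2 (composition).} For $(i)$, given $\varphi^{\pm}\in\dot{\CE}_{scatt,\pm\infty}$, put $\varphi^{\pm}_0:=\tilde W^{\pm}\varphi^{\pm}\in\dot{\CE}_{scatt,\pm}$; by construction $e^{-it\dot H_{\pm}}\varphi^{\pm}_0-e^{-it\dot H_{\pm\infty}}\varphi^{\pm}\to 0$ in $\dot{\CE}_{\pm}=\dot{\CE}_{\pm\infty}$, hence applying the bounded operator $i_{\pm}:\dot{\CE}_{\pm}\to\dot{\CE}$ of Lemma \ref{lemma4.3.1} we get $i_{\pm}e^{-it\dot H_{\pm}}\varphi^{\pm}_0-i_{\pm}e^{-it\dot H_{\pm\infty}}\varphi^{\pm}\to 0$ in $\dot{\CE}$. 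Theorem \ref{asympcompl}$(i)$ then provides $\psi^{\pm}\in\dot{\CE}_{scatt}$ with $e^{-it\dot H}\psi^{\pm}-i_{\pm}e^{-it\dot H_{\pm}}\varphi^{\pm}_0\to 0$ in $\dot{\CE}$, and adding the two relations gives the claim. For $(ii)$, Theorem \ref{asympcompl}$(ii)$ yields, for $\psi^{\pm}\in\dot{\CE}_{scatt}$, some $\varphi^{\pm}_0\in\dot{\CE}_{scatt,\pm}$ with $e^{-it\dot H_{\pm}}\varphi^{\pm}_0-i_{\pm}e^{-it\dot H}\psi^{\pm}\to 0$; setting $\varphi^{\pm}:=\Omega^{\pm}\varphi^{\pm}_0\in\dot{\CE}_{scatt,\pm\infty}$ and using $e^{-it\dot H_{\pm\infty}}\varphi^{\pm}-e^{-it\dot H_{\pm}}\varphi^{\pm}_0\to 0$ together with the norm equivalence $\dot{\CE}_{\pm\infty}=\dot{\CE}_{\pm}$ gives the stated convergence. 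Since in Theorem \ref{asympcompl2} the scattering spaces use cut--offs vanishing near $\hat{\CT}=\mathcal S\cup\CT_{\pm\infty}$, it suffices to remove this (still discrete) set everywhere, controlling $\mathcal S$ via Proposition \ref{thm5.3.1}, so that the cut--offs are simultaneously admissible for all three comparisons.

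\emph{Main obstacle.} The delicate point is Step 1: verifying that $\dot H_{\pm}-\dot H_{\pm\infty}$ is short range in the precise sense needed for Cook's method — i.e. that after the gauge conjugation by $\Phi(\ell)$ the difference of generators is bounded when sandwiched between the weights $w^{-\epsilon}$ and picks up the decay $O(\langle t\rangle^{-N})$ from the propagation estimates — together with the book--keeping of the threshold sets $\CT_{\pm}$ and $\CT_{\pm\infty}$ so that all the cut--offs involved are admissible at once. Most of the operator--theoretic estimates required for this are already contained in the verification of the abstract hypotheses in Section \ref{SecGS} (the $T^{\sigma}$--calculus, Lemma \ref{lemHardy}, Lemma \ref{lemfgcomp}), so the remaining work is essentially one of assembling them.
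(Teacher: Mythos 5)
Your overall reduction is exactly the one the paper uses: (a) invoke Theorem~\ref{asympcompl} to compare $e^{-it\dot H}$ with $e^{-it\dot H_{\pm}}$; (b) compare $e^{-it\dot H_{\pm}}$ with $e^{-it\dot H_{\pm\infty}}$, both selfadjoint on $\dot\CE_{\pm}=\dot\CE_{\pm\infty}$ by Lemma~\ref{lemma7.6.1}; (c) use Lemma~\ref{lem7.6.2} to control the discrepancy between $\chi(\dot H_{\pm})$ and $\chi(\dot H_{\pm\infty})$ and restore admissibility of the cut-offs; (d) conjugate by $\Phi(\ell)$ to bring $\dot H_{-}$, $\dot H_{-\infty}$ into standard Klein--Gordon form. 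The gauge-conjugation step and the use of Lemma~\ref{lem7.6.2} match the paper's proof.

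The genuine gap is in your Step 1. The paper does \emph{not} prove the existence of the intermediate wave operators $W^{\pm}_{\chi}$, $\Omega^{\pm}_{\chi}$ by the Cook argument ``exactly as in the proof of Theorem~\ref{asympcompl}''; it cites [Ha03, Thm.~6.2.2], which is a full Mourre-theory scattering result for selfadjoint Klein--Gordon operators. The reason a bare Cook argument does not transfer is structural. In Theorem~\ref{asympcompl} the Cook derivative involves the \emph{first-order} commutator $[\dot H,i_{\pm}]$, and Lemma~\ref{lemma6.1} shows that $w[\dot H,i_{\pm}]w$ is bounded $\dot\CE\to\dot\CE_{\pm}$; the weighted propagation estimate of Proposition~\ref{prop5.2} then closes the integral. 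By contrast, here the Cook derivative is $\chi(\dot H_{\pm})(\dot H_{\pm}-\dot H_{\pm\infty})e^{-it\dot H_{\pm\infty}}\chi(\dot H_{\pm\infty})$, and $\dot H_{\pm}-\dot H_{\pm\infty}$ contains the \emph{second-order} operator $h_{p}$ (plus $k_{\pm}^2$). One cannot write $h_{p}=w^{-\epsilon}\,B\,w^{-\epsilon}$ with $B$ bounded on $\CH$; only $h_{0}$-relative boundedness is available, so the weighted propagation estimate in the $\dot\CE$-norm (which controls only first-order derivatives) does not directly close the integral. One must in addition use that $\chi(\dot H_{\pm})$ and $\chi(\dot H_{\pm\infty})$ (being compactly supported functions of selfadjoint operators) smooth by two derivatives and commute the extra factor of $\jap{h_0}$ through the propagation estimate, which the proposal does not do; this is precisely the bookkeeping that the Mourre-based argument in [Ha03] carries out. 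You flag the point as delicate, but the sketch as written claims more than the propagation estimates you cite can deliver.

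A second, minor, inaccuracy: Lemma~\ref{lem7.6.2} is not part of the Cook estimate. In the paper it is used only to establish the identity $\tilde\chi(\dot H_{\pm})W^{\pm}_{\chi}=W^{\pm}_{\chi}$ (and the analogue for $\Omega^{\pm}_{\chi}$), which is what guarantees that the limits land in the scattering subspaces. Your Step~1 ``insert $\chi(\dot H_{\pm})-\chi(\dot H_{\pm\infty})$'' conflates these two roles; in your Step~2 you use the lemma correctly.
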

\proof 

Let $\chi\in \CC^{H_{\pm\infty}}$. By Thm. \ref{asympcompl} it is sufficient to show the existence of
the wave operators
\begin{eqnarray*}
W^{\pm}_{\chi}&=&\slim_{t\rightarrow
  \infty}e^{it\dot{H}_{\pm}}e^{-it\dot{H}_{\pm\infty}}\chi(\dot{H}_{\pm\infty})\quad\mbox{in}\quad
\dot{\CE}_{\pm},\\
\Omega^{\pm}_{\chi}&=&\slim_{t\rightarrow\infty}e^{it\dot{H}_{\pm\infty}}e^{-it\dot{H}_{\pm}}\chi(\dot{H}_{\pm})\quad\mbox{in}\quad
\dot{\CE}_{\pm}
\end{eqnarray*}
and that 
\begin{equation}
\label{7.6.6}
\tilde{\chi}(\dot{H}_{\pm}) W^{\pm}_{\chi}=W^{\pm}_{\chi}, \,
\tilde{\chi}(\dot{H}_{\pm\infty})
\Omega^{\pm}_{\chi}=\Omega^{\pm}_{\chi}.
\end{equation}
for $\tilde{\chi}\in \CC^{H_{\pm\infty}}$ with $\tilde{\chi}\chi=\chi$.
The existence of $W^+_{\chi},\, \Omega^+_{\chi}$ follows directly from
\cite[Thm.  6.2.2]{Ha03}. For the existence of $W^-_{\chi},\,
\Omega^-_{\chi}$ note that 
\begin{eqnarray*}
\Phi(\ell)\dot{H}_{-\infty}\Phi^{-1}(\ell)&=&\dot{H}^{\ell}_{-\infty}+\ell
\one,\\
\Phi(\ell)\dot{H}_{-}\Phi^{-1}(\ell)&=&\dot{H}^{\ell}_{-}+\ell
\one,
\end{eqnarray*}
where
\begin{eqnarray*}
\dot{H}_{-\infty}^{\ell}&=&\left(\begin{array}{cc} 0 & \one \\
h_{0,s} & 0\end{array}\right),\\
\dot{H}_-^{\ell}&=&\left(\begin{array}{cc} 0 & \one \\
    h_0-(k_--\ell)^2 & 2(k_--\ell)\end{array}\right).
\end{eqnarray*}
Again the existence of 
\begin{eqnarray*}
\tilde{W}^{-}_{\chi}&=&\slim_{t\rightarrow
  \infty}e^{it\dot{H}^{\ell}_{-}}e^{-it\dot{H}^{\ell}_{-\infty}}\chi(\dot{H}^{\ell}_{-\infty})\\
\tilde{\Omega}^{-}_{\chi}&=&\slim_{t\rightarrow\infty}e^{it\dot{H}^{\ell}_{-\infty}}e^{-it\dot{H}^{\ell}_{-}}\chi(\dot{H}^{\ell}_{-})
\end{eqnarray*}
follows from \cite[Thm. 6.2.2]{Ha03}. The existence of $W^-_{\chi},\,
\Omega^-_{\chi}$ then follows applying the transformation $\Phi(\ell)$. 
The identity \eqref{7.6.6} follows from Lemma \ref{lem7.6.2}.
\qed
\begin{remark}
\begin{enumerate}
\item[i)]
Note that the results of \cite{Ha03} apply here although the situation
considered in \cite{Ha03} is slightly different. In \cite{Ha03} the
cylindrical manifold $\R\times S^{d-1}_{\omega}$ has one
asymptotically euclidean end and one asymptotically hyperbolic end
whereas we consider here two asymptotically hyperbolic ends. The
situation with two asymptotically hyperbolic ends is simpler, in
particular no gluing of the two conjugate operators for the ends in
the setting of Mourre theory is necessary. 
\item[ii)] As $\sigma_{pp}(\dot{H}_{\pm\infty})=\emptyset$ and
  $\dot{H}_{\pm},\, \dot{H}_{\pm\infty}$ are selfadjoint, the wave
  operators 
\[W^{\pm}=\slim_{t\rightarrow\infty}e^{it\dot{H}_{\pm}}e^{-it\dot{H}_{\pm\infty}}\]
exist. In a similar way we obtain the existence of the wave operators
\[\Omega_{\pm}=\slim_{t\rightarrow\infty}e^{it\dot{H}_{\pm\infty}}e^{-itH_{\pm}}\one^{ac}(\dot{H}_{\pm}),\]
where $\one^{ac}(\dot{H}_{\pm})$ is the projection of the absolutely
continuous subspace of $\dot{H}_{\pm}$.
\end{enumerate}
\end{remark}
\section{The Klein-Gordon equation on the De Sitter Kerr
spacetime}
\label{SecKGDK}
In this section we recall the Klein-Gordon equation  on the De Sitter Kerr spacetime, which will be our main example of the geometric framework from Sect. \ref{SecGS}.
\subsection{The De Sitter Kerr metric in Boyer-Lindquist coordinates}
In Boyer-Lindquist coordinates the De Sitter Kerr space-time is
described by a smooth 4-dimensional Lorentzian manifold
$\CM_{BH}=\R_t\times \R_r\times S^2_{\omega}$, whose space-time metric
is given by
\begin{eqnarray}
\label{Kerrmetric}
g&=&\frac{\Delta_r-a^2\sin^2\theta\Delta_{\theta}}{\lambda^2\rho^2}dt^2+\frac{2a\sin^2\theta((r^2+a^2)^2\Delta_{\theta} -a^2\sin^2\theta\Delta_r)}{\lambda^2\rho^2}dtd\varphi\nonumber\\
&-&\frac{\rho^2}{\Delta_r}dr^2-\frac{\rho^2}{\Delta_{\theta}}d\theta^2-\frac{\sin^2\theta\sigma^2}{\lambda^2\rho^2}d\varphi^2,\\
\rho^2&=&r^2+a^2\cos^2\theta,\quad
\Delta_r=\left(1-\frac{\Lambda}{3}r^2\right)(r^2+a^2)-2Mr,\nonumber\\
\Delta_{\theta}&=&1+\frac{1}{3}\Lambda a^2\cos^2\theta,\,
\sigma^2=(r^2+a^2)^2\Delta_{\theta}-a^2\Delta_r\sin^2\theta,\,
\lambda=1+\frac{1}{3}\Lambda a^2\nonumber.
\end{eqnarray}
Here $\Lambda>0$ is the cosmological constant, $M>0$ is the mass of
the black hole and $a$ its angular momentum per unit mass. The metric
is defined for $\Delta_r>0$, we assume that this is fulfilled on an
open interval $]r_-, r_+[$. (For $a=0$, this is true when
$9\Lambda M^2<1$; it remains true if we take $a$ small enough). 

 Note
that the vector fields $\partial_t$ and $\partial_{\varphi}$ are
Killing. The De Sitter-Schwarzschild metric ($a=0$) is
a special case of the above. The set $\{\rho^2=0\}$ is a true curvature singularity. In contrast to
$\rho^2$ the roots of $\Delta_r$ are mere coordinate
singularities. $r_-$ and $r_+$ represent {\em event horizons} and we will only  be
interested in the region $r_-<r<r_+$. This region is not stationary in
the sense that there exists no global time-like Killing vector
field. In particular there are regions in
$\R_t\times]r_{-}, r_{+}[_{r}\times S^2_{\omega}$ in which $\partial_t$
becomes space-like. Indeed, the function $\Delta'_r$ has a
single zero $r_{max}$ on $]r_-,r_+[$. On $]r_-,r_{max}[$ $\Delta_r$ is
strictly increasing, on $]r_{max},r_+[$ $\Delta_r$ is strictly
decreasing. Therefore there exist $r_1(\theta),\, r_2(\theta)$ defined
on $]0,\pi[$ such
that
\[
\begin{array}{rl}\Delta_r-a^2\sin^2\theta\Delta_{\theta}<0&\hbox{ on }]r_{-}, r_{1}(\theta)[,\\[2mm]
\Delta_r-a^2\sin^2\theta\Delta_{\theta}>0&\hbox{ on }]r_{1}(\theta), r_{2}(\theta)[,\\[2mm]
\Delta_r-a^2\sin^2\theta\Delta_{\theta}<0&\hbox{ on }]r_{2}(\theta), r_{+}[.
\end{array}
\]
As a consequence the vector field $\partial_t$ is 
\begin{itemize}
\item[--] time-like on $\{(t,r,\theta,\varphi):\, r_1(\theta)<r<r_2(\theta)\}$,\\
\item[--] spacelike on $\{(t,r,\theta,\varphi):\,
  r_-<r<r_1(\theta)\}\cup \{(t,r,\theta,\varphi:\,
  r_2(\theta)<r<r_+\}=:\CA_-\cup\CA_+.$
\end{itemize}
The regions $\CA_{\pm}$ are called {\em ergo-spheres}. Of particular
interest are the {\em locally non rotating observers}. These observers have
four velocity
\[u^a=\frac{\nabla^a t}{(\nabla_bt\nabla^bt)^{1/2}}.\]
They rotate with coordinate angular velocity
\begin{equation}
\label{angvel}
\Omega=-\frac{g_{t\varphi}}{g_{\varphi\varphi}}=\frac{a((r^2+a^2)\Delta_{\theta}-\Delta_ra^2\sin^2\theta)}{\sigma^2}.
\end{equation}
Note that this angular velocity has finite limits at both horizons:
\begin{equation}
\label{angvelhor}
\Omega_{\pm}:=\Omega(r_{\pm},\theta)=\frac{a}{r_{\pm}^2+a^2}.
\end{equation}
\subsection{The Klein-Gordon equation on the De Sitter Kerr space-time}
We now  reduce the Klein-Gordon equation on the De Sitter Kerr space time to the  abstract form \eqref{WE}.

A standard computation using $\Box_{g}= |g|^{-\12}\p_{a}|g|^{\12}g^{ab}\p_{b}$ yields for  the  De Sitter Kerr metric:
\begin{eqnarray}
\label{ChKlGor}
\lefteqn{\left(\frac{\sigma^2\lambda^2}{\rho^2\Delta_{\theta}\Delta_r}\partial_t^2-2\frac{a(\Delta_r-(r^2+a^2)\Delta_{\theta})\lambda^2}{\rho^2\Delta_{\theta}\Delta_r}\partial_{\varphi}\partial_t\nonumber\right.}\\
&-&\left.\frac{(\Delta_r-a^2\sin^2\theta\Delta_{\theta})\lambda^2}{\rho^2\Delta_{\theta}\Delta_r\sin^2\theta}\partial_{\varphi}^2-\frac{1}{\rho^2}\partial_r\Delta_r\partial_r-\frac{1}{\sin\theta\rho^2}\partial_{\theta}\sin\theta\Delta_{\theta}\partial_{\theta}+m^2\right)\psi=0.
\end{eqnarray}
We multiply to the left the equation by $c^2=\frac{\rho^2\Delta_r\Delta_{\theta}}{\lambda^2\sigma^2}$
and obtain:
\begin{eqnarray}
\label{eq3.9}
\\
\lefteqn{\left(\partial_t^2-2\frac{a(\Delta_r-(r^2+a^2)\Delta_{\theta})}{\sigma^2}\partial_{\varphi}\partial_t\right.}\nonumber\\
&-&\left.\frac{(\Delta_r-a^2\sin^2\theta\Delta_{\theta})}{\sin^2\theta\sigma^2}\partial_{\varphi}^2-\frac{\Delta_r\Delta_{\theta}}{\lambda^2\sigma^2}\partial_r\Delta_r\partial_r-\frac{\Delta_r\Delta_{\theta}}{\lambda^2\sin\theta\sigma^2}\partial_{\theta}\sin\theta\Delta_{\theta}\partial_{\theta}+\frac{\rho^2\Delta_r\Delta_{\theta}}{\lambda^2\sigma^2}m^2\right)\psi=0.\nonumber
\end{eqnarray}
We now consider the unitary transform
\begin{equation*}
U:\left. \begin{array}{rcl}
    L^2(\CM;\frac{\sigma^2}{\Delta_r\Delta_{\theta}}dr
    d\omega)&\rightarrow&L^2(\CM;
    drd\omega)\\\psi&\mapsto&\frac{\sigma}{\sqrt{\Delta_r\Delta_{\theta}}}\psi \end{array} \right.
\end{equation*}
If $\psi$ solves \eqref{eq3.9}, then $u=U\psi$ solves
\begin{eqnarray}
\label{KGKerrv1}
\lefteqn{\left(\partial_t^2-2\frac{a(\Delta_r-(r^2+a^2)\Delta_{\theta})}{\sigma^2}\partial_{\varphi}\partial_t-\frac{(\Delta_r-a^2\sin^2\theta\Delta_{\theta})}{\sin^2\theta\sigma^2}\partial_{\varphi}^2-\frac{\sqrt{\Delta_r\Delta_{\theta}}}{\lambda\sigma}\partial_r\Delta_r\partial_r
    \frac{\sqrt{\Delta_r\Delta_{\theta}}}{\lambda\sigma}\right.}\nonumber\\
&-&\left.\frac{\sqrt{\Delta_r\Delta_{\theta}}}{\lambda\sin\theta\sigma}\partial_{\theta}\sin\theta\Delta_{\theta}\partial_{\theta}\frac{\sqrt{\Delta_r\Delta_{\theta}}}{\lambda\sigma}+\frac{\rho^2\Delta_r\Delta_{\theta}}{\lambda^2\sigma^2}m^2\right)u=0.
\end{eqnarray}
We introduce a Regge-Wheeler type
coordinate $x$ by the requirement :
\[\frac{dx}{dr}=\lambda\frac{r^2+a^2}{\Delta_r}.\]
We then introduce the unitary transform
\begin{equation*}
\CV:\left.\begin{array}{rcl} L^2(]r_{-}, r_{+}[_{r}\times S^2)&\rightarrow&
    L^2(\R\times S^2,dxd\omega),\\
    v(r,\omega)&\mapsto&\sqrt{\frac{\Delta_r}{\lambda(r^2+a^2)}}v(r(x),\omega).\end{array}\right.
\end{equation*}
Let $u$ be a solution of the Klein-Gordon equation \eqref{KGKerrv1} and
$\psi=\sqrt{\frac{\Delta_r}{\lambda(r^2+a^2)}}u$. Then $\psi$ fulfills: 
\begin{eqnarray}
\label{eqpsiv1}
\\
\lefteqn{\left(\partial_t^2-2\frac{a(\Delta_r-(r^2+a^2)\Delta_{\theta})}{\sigma^2}\partial_{\varphi}\partial_t-\frac{(\Delta_r-a^2\sin^2\theta\Delta_{\theta})}{\sin^2\theta\sigma^2}\partial_{\varphi}^2
  \right.}\nonumber\\
&-&\left.\frac{\sqrt{(r^2+a^2)\Delta_{\theta}}}{\sigma}\partial_x(r^2+a^2)\partial_x
    \frac{\sqrt{(r^2+a^2)\Delta_{\theta}}}{\sigma}
-\frac{\sqrt{\Delta_r\Delta_{\theta}}}{\lambda\sin\theta\sigma}\partial_{\theta}\sin\theta\Delta_{\theta}\partial_{\theta}\frac{\sqrt{\Delta_r\Delta_{\theta}}}{\lambda\sigma}+\frac{\rho^2\Delta_r\Delta_{\theta}}{\lambda^2\sigma^2}m^2\right)\psi=0.\nonumber
\end{eqnarray}
\section{Asymptotic completeness 3 : The De Sitter Kerr case}
\label{SecAC3}
In this section we state the main theorems for the De Sitter Kerr
spacetime. The proofs are given in Sect. \ref{prDSKerr}.

We consider the Klein-Gordon equation \eqref{eqpsiv1} and write it in
the usual form
\begin{equation*}
(\partial_t^2-2ik\partial_t+h)\psi=0.
\end{equation*}
Let 
\begin{equation}\label{caribou}
\CH^n =\{u\in L^{2}(\R\times S^2):\  (D_{\varphi}-n)u=0\}, \ n\in \zz.
\end{equation}
We construct the energy spaces
$\dot{\CE}^n,\, \CE^n$ as well as the Klein-Gordon operators $H^n,\,
\dot{H}^n$ as in Sect. \ref{SecBG}. Also let $i_{\pm}\in C^{\infty}(\R),\,
i_-=0$ in a neighborhood of $\infty$, $i_+=0$ in a neighborhood of
$-\infty$ and $i_-^2+i_+^2=1$. We will use two types of comparison
dynamics :
\begin{enumerate}
\item[--] a separable comparison dynamics,
\item[--] asymptotic profiles.
\end{enumerate}
\subsection{Uniform boundedness of the evolution}
\begin{theorem}
\label{thunifbound}
There exists $a_0>0$ such that for $|a|<a_0$  the following holds:  for all $n\in \Z$, there exists $C_{n}>0$ such that \begin{equation}
\label{equnifbound}
\Vert e^{-it\dot{H}^n} u\Vert_{\dot{\CE}^n}\le C_n \Vert u\Vert_{\dot{\CE}^n}, \ u\in \dot{\CE}^{n}, \ t\in \rr.
\end{equation}
\end{theorem}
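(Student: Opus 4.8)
The strategy is to reduce Theorem \ref{thunifbound} to the abstract boundedness result Theorem \ref{thboundedness} applied to the operator $\dot H^n$, together with the absence of complex eigenvalues and singular points for $\dot H^n$ when $|a|$ is small. First I would verify that the geometric hypotheses \eqref{G1}--\eqref{G9} of Section \ref{SecGS} are satisfied by the operators $h^n$, $k^n$, $h_0^n$ attached to the Klein-Gordon equation \eqref{eqpsiv1} on the De Sitter Kerr metric restricted to $\CH^n$. Here the positivity of the cosmological constant $\Lambda>0$ is essential: it forces $\Delta_r$ to have two simple zeros $r_\pm$ on the relevant interval and makes both ends asymptotically hyperbolic, so that the defining function $q(r)=\sqrt{(r_+-r)(r-r_-)}$ and the Regge–Wheeler coordinate behave as required in \eqref{G2}, \eqref{G7}. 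By Proposition \ref{propopo}, checking \eqref{G1}--\eqref{G9} yields at once all the abstract hypotheses \eqref{Hyp1}-\eqref{A2}, \eqref{TE1}-\eqref{TE3}, \eqref{PE2}-\eqref{Hyp15} needed to invoke Theorem \ref{thboundedness}. Since $a=0$ is the De Sitter Schwarzschild case, the smallness of $|a|$ enters only to preserve these structural properties (in particular the superradiance is weak, i.e.\ $\ell=\ell(n)$ and $\|k^n\|$ stay controlled, and $h_{0,s}$, $h_0$ stay comparable to $\alpha_1(D_r q^2 D_r+P+1)\alpha_1$), which is a perturbative argument in $a$.

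Next, given the abstract hypotheses, Theorem \ref{thboundedness} gives uniform boundedness of $e^{-it\dot H^n}\chi_\mu(\dot H^n)$ at high frequency (part $i$, valid once $\mu\geq\mu_0$) and of $e^{-it\dot H^n}\chi(\dot H^n)$ for $\chi\in C_0^\infty(\R\setminus\CS_n)$ at low frequency (part $ii$), \emph{provided} $\sigma_{pp}^\C(\dot H^n)=\emptyset$ and the singular set $\CS_n$ is empty. So the remaining — and main — task is to prove, for $|a|<a_0$:
\begin{enumerate}
\item[--] $\sigma_{pp}^\C(\dot H^n)=\emptyset$, i.e.\ $\dot H^n$ has no non-real eigenvalues;
\item[--] $\dot H^n$ has no real resonances and no singular points, i.e.\ $\CT^n=\CT_\pm^n=\emptyset$ and hence, by Proposition \ref{thm5.3.1} (or Proposition \ref{propPE2}), $\CS_n=\emptyset$.
\end{enumerate}
For $n\neq 0$ the absence of real resonances and of complex eigenvalues of the compactly-supported cutoff resolvent follows from Dyatlov's work \cite{Dy11_01}; I would transfer this to the exponentially weighted resolvents $w^{-\epsilon}\dot R^n(z)w^{-\epsilon}$ by the hypoellipticity argument alluded to in the introduction (the poles of $w^{-\epsilon}\dot R^n w^{-\epsilon}$ on $\R$ can only sit where the compactly-supported cutoff resolvent has poles, using Corollary \ref{Cor10.2.4} and Lemma \ref{propresestlowfr} to pass between $\eta p^{-1}(z)\eta$ and $\eta\dot R(z)\eta$). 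For $n=0$ with $m>0$ one uses the strictly positive mass to get positivity of $h$ (so \eqref{A3} holds with $\ell=0$, $\dot H^0$ is selfadjoint, and $\CS_0=\CT_0$) plus again Dyatlov-type absence of real resonances. The additional input needed beyond \cite{Dy11_01} is the \emph{absence of singular points} that are not real resonances: by Proposition \ref{propPE2} it suffices to show the real poles of $\tilde A_w(z)$ are absorbed, and here I would use the general criterion of Section \ref{sec5.3} together with the estimates from \cite{GGH2} for the spherically symmetric comparison operators $\dot H_{\pm\infty}^n$ (whose resonance sets $\CT_{\pm\infty}^n$ are likewise controlled by selfadjoint methods since $\sigma_{pp}(\dot H_{\pm\infty})=\emptyset$).

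Finally, once $\sigma_{pp}^\C(\dot H^n)=\emptyset$ and $\CS_n=\emptyset$, I would glue the two regimes: choose $\chi\in C^\infty(\R)$ with $\chi=0$ on $[-1,1]$ and $\chi=1$ outside $[-2,2]$, and $\mu\geq\mu_0$ as in Theorem \ref{thboundedness} $i)$; then $1-\chi_\mu\in C_0^\infty(\R)$ and, since $\CS_n=\emptyset$, $1-\chi_\mu\in C_0^\infty(\R\setminus\CS_n)$, so Theorem \ref{thboundedness} $ii)$ applies to $(1-\chi_\mu)(\dot H^n)$. Writing $\mathrm{Id}=\chi_\mu(\dot H^n)+(1-\chi_\mu)(\dot H^n)$ and adding the two bounds gives
\[
\Vert e^{-it\dot H^n}u\Vert_{\dot\CE^n}\leq \Vert e^{-it\dot H^n}\chi_\mu(\dot H^n)u\Vert_{\dot\CE^n}+\Vert e^{-it\dot H^n}(1-\chi_\mu)(\dot H^n)u\Vert_{\dot\CE^n}\leq C_n\Vert u\Vert_{\dot\CE^n}
\]
uniformly in $t\in\R$, which is \eqref{equnifbound}. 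The hard part is unquestionably step 2 above: combining Dyatlov's microlocal resolvent estimates with the weighted-resolvent framework of this paper to rule out \emph{both} real resonances and the genuinely non-selfadjoint ``singular points,'' uniformly for $|a|<a_0$; everything else is either a perturbative check in $a$ or a direct citation of the abstract theorems already proved.
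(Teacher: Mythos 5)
Your proposal follows the paper's proof essentially step by step: reduce to Theorem \ref{thboundedness} via Proposition \ref{propopo}, then show $\sigma^{\C}_{pp}(\dot{H}^n)=\emptyset$ and $\CS_n=\emptyset$; for the latter use Proposition \ref{thm5.3.1} (i.e. $\CS\subset\CT\cup\CT_-\cup\CT_+$) and then kill $\CT$ by Dyatlov's result plus the analytic hypoellipticity argument, and kill $\CT_{\pm}$ by \cite[Prop.\ 9.3]{GGH2} together with an ellipticity argument at $z=0$. This is exactly Proposition \ref{allegromanotropo} and the surrounding text.

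One minor point to fix: for $n=0$ you should not route the argument through $m>0$. Since $k$ is a multiple of $D_{\varphi}$, the restriction $k^0$ to $\CH^0$ vanishes identically, so $h^0=h_0^0\geq 0$, assumption \eqref{A3} holds trivially, and $\dot{H}^0$ is selfadjoint on the homogeneous energy space regardless of the mass; the uniform bound \eqref{equnifbound} for $n=0$ is then automatic with $C_0=1$, which is what the remark after the theorem in the paper says. The hypothesis $m>0$ only enters when one wants to run the full geometric/abstract machinery (e.g.\ \eqref{bornehp}) for $n=0$; it is not needed for Theorem \ref{thunifbound} itself. Also, the relevant asymptotic resonance sets in Proposition \ref{thm5.3.1} are $\CT_{\pm}$ (for $\dot{H}_{\pm}$), not $\CT_{\pm\infty}$ (for the profile operators), though this is only a notational slip since both are handled by the same selfadjoint methods.
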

Note that  for $n=0$ the Hamiltonian $\dot{H}^n=\dot{H}^0$ is
  selfadjoint, therefore the only issue is $n\neq 0$.

Because of the existence of a zero resonance the evolution is not
expected to be uniformly bounded on the inhomogeneous energy space. This is already the case in the De Sitter Schwarzschild case, ie if $a=0$. In fact from \cite[Thm. 1.3]{BoHa08}, denoting by $r$ the zero resonance state, we have for $\chi\in \coinf(\rr)$:
\begin{eqnarray}
\label{res01}
\chi e^{-itH}\chi u&=&\gamma\left(\begin{array}{c} r\chi(\chi r|u_1) \\
    0\end{array}\right)+E(t),\quad \gamma>0\quad\mbox{and}\\
\label{res02}
\Vert E(t)\Vert_{\CE}&\lesssim& e^{-\epsilon
  t}\Vert\<-\Delta_{S^2}\>u\Vert_{\CE},
\end{eqnarray}
with $\epsilon>0$. Note that in \cite{BoHa08} the norm 
\[\Vert u_1\Vert^2+(h_0u|u)+\int_0^1\int_{S^2}|u_0|^2(x,\omega)dx
d\omega\]
is used, but that the same proof also gives the estimate \eqref{res02}. Now suppose
that $e^{-itH}$ is uniformly bounded on $\CE$. Then we obtain from
\eqref{res01} in the limit $t\rightarrow\infty$ :
\[\Vert r\chi(r\chi|u_1)\Vert_{\CE}\lesssim \Vert u\Vert_{\CE},\quad
u\in C_0^{\infty}(\R\times S^2)\oplus  C_0^{\infty}(\R\times
S^2)\]
and thus 
\[\Vert r\chi(r\chi|u_1)\Vert_{\CH}\lesssim \Vert u\Vert_{\CE},\quad
 u\in \CE\]
by density. Here $\CH=L^2(\R\times S^2, dxd\omega)$. It follows 
\[\Vert r\chi(r\chi|v)\Vert_{\CH}\lesssim \Vert v\Vert_{\CH},\quad
v\in \CH.\]
Thus $\Vert r\chi\Vert_{\CH}\lesssim 1$
uniformly in $\chi$ which implies $r\in \CH$ which is false. Therefore
the evolution is not uniformly bounded on $\CE$, neither on
$\CE^0$. It is however bounded on $\CE^n$ for all $n\neq 0$.
\subsection{Separable comparison dynamics}
Let $\ell_{\pm}=\Omega_{\pm}n$. We put: 
\[
h_{\pm \infty}:=-\ell_\pm^2-\partial_x^2+\frac{\Delta_r}{\lambda^2(r^2+a^2)}P+\Delta_rm^2,\
k_{\pm\infty}:=\ell_\pm,
\]where
\begin{equation*}
P:=-\frac{\lambda^2}{\sin^2\theta}\partial_{\varphi}^2-\frac{1}{\sin\theta}\partial_{\theta}\sin\theta\Delta_{\theta}\partial_{\theta}.
\end{equation*}
In the case $n=0$ $P$ might have a zero eigenvalue and the natural
energy spaces associated to $h_0$ and $h_{\pm \infty}$ may be
different in the massless case. We will therefore consider the case
$n=0$ only in the massive case. Let $\dot{\CE}^n_{\pm\infty},\, \dot{H}_{\pm\infty}^n$ be the
homogeneous energy spaces and operators associated to
$(h_{\pm\infty},k_{\pm\infty})$ according to Sect. \ref{SecBG}.
\begin{theorem}
\label{thACKerrSC}
There exists $a_0>0$ such that for $|a|<a_0$  and $n\in \zz\setminus\{0\}$ the following holds: 
\begin{enumerate}
\item[--] The wave operators 
\begin{equation}
\label{WEKERRSC}
W^{\pm}=\slim_{t\rightarrow\infty}e^{it\dot{H}^n}i_{\pm}e^{-it\dot{H}^n_{\pm\infty}}
\end{equation}
exist as bounded operators $W^{\pm}\in \CB(\dot{\CE}^n_{\pm\infty};\dot{\CE}^n)$.
\item[--] The inverse wave operators 
\begin{equation}
\label{IWEKERRSC}
\Omega^{\pm}=\slim_{t\rightarrow\infty}e^{it\dot{H}^n_{\pm\infty}}i_{\pm}e^{-it\dot{H}^n}
\end{equation}
exist as bounded operators $\Omega^{\pm}\in
\CB(\dot{\CE}^n;\dot{\CE}^n_{\pm\infty})$.
\end{enumerate}
\eqref{WEKERRSC} and  \eqref{IWEKERRSC} also hold for $n=0$ if $m>0$.
\end{theorem}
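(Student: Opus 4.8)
The plan is to reduce Theorem~\ref{thACKerrSC} to the geometric asymptotic completeness result Theorem~\ref{asympcompl2}, applied to the Klein--Gordon equation \eqref{eqpsiv1} restricted to a fixed angular sector $\CH^{n}$; the only extra ingredients are the verification of the geometric hypotheses for small $|a|$ and the absence of complex eigenvalues and of singular points of $\dot H^{n}$. Writing \eqref{eqpsiv1} as $(\partial_{t}^{2}-2ik\partial_{t}+h)\psi=0$ on $\CH^{n}$, one reads off that $k$ tends to a nonzero $n$-dependent constant at each horizon, with different limits at $r_{-}$ and $r_{+}$. I would therefore first apply the gauge transformation of Section~\ref{SecGT} that normalises $k$ so as to vanish at the cosmological horizon; since $h_{0}=h+k^{2}$ is gauge invariant, the limit of the gauged $k$ at the black hole horizon becomes the parameter $\ell$ of Section~\ref{SecTE}. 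The separable operator $h_{0,s}$ is taken to be $-\partial_{x}^{2}+\frac{\Delta_{r}}{\lambda^{2}(r^{2}+a^{2})}P+\Delta_{r}m^{2}$ (the gauge shifts producing $h_{\pm\infty}$ once the gauge is undone); for $a=0$ this is exactly \eqref{eqpsiv1}, and for $|a|<a_{0}$ the differences between $h_{0}$ and $h_{0,s}$, and between $k$ and $k_{s}$, lie in the weighted classes $T^{\sigma}$ of Section~\ref{SecGS}. Here $\Lambda>0$ is used crucially: it forces $\Delta_{r}$ to vanish to first order at $r_{\pm}$, so that the Regge--Wheeler variable $x$ makes both ends asymptotically hyperbolic and Proposition~\ref{MaMe87} applies, yielding \eqref{ME2}. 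Checking \eqref{G1}--\eqref{G9} is then a long but essentially routine perturbative computation, and by Proposition~\ref{propopo} all abstract hypotheses \eqref{Hyp1}--\eqref{A2}, \eqref{TE1}--\eqref{TE3}, \eqref{PE2}--\eqref{Hyp15} follow, while $\dot\CE^{n}_{\pm\infty}=\dot\CE^{n}_{\pm}$ with equivalent norms by Lemma~\ref{lemma7.6.1} and $\sigma_{pp}(\dot H^{n}_{\pm\infty})=\emptyset$.

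The heart of the proof is to show, for $n\neq0$ (and for $n=0$, $m>0$), that $\sigma_{pp}^{\C}(\dot H^{n})=\emptyset$ and $\CS=\emptyset$. By Proposition~\ref{thm5.3.1} one has $\CS\subset\CT\cup\CT_{-}\cup\CT_{+}$, so it suffices to prove that the real resonance sets $\CT,\CT_{\pm}$ are empty. Emptiness of $\CT_{\pm}$ concerns the selfadjoint asymptotic Hamiltonians $\dot H_{\pm}$ and is obtained as in \cite{GGH2}. The absence of non-real eigenvalues and of the real resonances $\CT$ of $\dot H^{n}$ is where Dyatlov's estimates \cite{Dy11_01} enter: they give the absence of poles, in the closed upper half plane, of the compactly supported cut-off resolvent of the De Sitter--Kerr operator. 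I would first translate this, using the resolvent formulas \eqref{resdotH}, \eqref{eq:pm} together with Lemma~\ref{propresestlowfr} and Corollary~\ref{Cor10.2.4}, into the absence of real poles of $\eta p^{-1}(z)\eta$, hence of $\eta\dot R(z)\eta$, for $\eta\in C_{0}^{\infty}(\R)$, and then run a hypo-ellipticity argument near the two horizons to upgrade the compactly supported cut-off to the exponential weight $w^{-\epsilon}=q(r)^{\epsilon}$ required in Definition~\ref{faila}. The main obstacle is precisely this upgrading, together with the need to reconcile Dyatlov's coordinates, which extend smoothly across the horizons, with the Boyer--Lindquist/Regge--Wheeler coordinates used here; this is also the step where $n\neq0$ (no zero resonance, cf.\ the discussion after Theorem~\ref{thunifbound}) is essential.

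Once $\sigma_{pp}^{\C}(\dot H^{n})=\emptyset$ and $\CS=\emptyset$ are established, Theorem~\ref{thunifbound} gives the uniform bound $\|e^{-it\dot H^{n}}u\|_{\dot\CE^{n}}\lesssim\|u\|_{\dot\CE^{n}}$, and Theorem~\ref{asympcompl2}, applied to the gauged operator, produces the wave operators $W^{\pm}$ and inverse wave operators $\Omega^{\pm}$ as strong limits on the dense scattering subspaces. Since $\dot H^{n}_{\pm\infty}$ is selfadjoint and $e^{-it\dot H^{n}}$ is uniformly bounded, the families $e^{it\dot H^{n}}i_{\pm}e^{-it\dot H^{n}_{\pm\infty}}$ and $e^{it\dot H^{n}_{\pm\infty}}i_{\pm}e^{-it\dot H^{n}}$ are uniformly bounded in $t$, so strong convergence on a dense subspace extends to all of $\dot\CE^{n}_{\pm\infty}$, resp.\ $\dot\CE^{n}$, giving $W^{\pm}\in\CB(\dot\CE^{n}_{\pm\infty};\dot\CE^{n})$ and $\Omega^{\pm}\in\CB(\dot\CE^{n};\dot\CE^{n}_{\pm\infty})$. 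Undoing the gauge transformation, which acts by the bounded isomorphisms $\Phi(\ell_{\pm})$ and commutes with $i_{\pm}$ (since $[x,k]=0$), returns the statement for the original De Sitter--Kerr operators. For $n=0$ with $m>0$, the mass term makes $h_{0}$ and $h_{0,\pm\infty}$ strictly positive, so the homogeneous energy spaces coincide and $\dot H^{0}$ is selfadjoint; the same argument, now entirely within the selfadjoint setting, applies.
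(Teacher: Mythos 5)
Your proposal is correct and follows essentially the same route as the paper: gauge to normalize $k$ at the cosmological horizon, verify the geometric hypotheses so that Proposition~\ref{propopo} applies, use Dyatlov's results together with the analytic hypo-ellipticity argument to show $\sigma_{pp}^{\C}(\dot H^{n})=\CS=\CT_{\pm\infty}=\emptyset$ (this is Proposition~\ref{allegromanotropo} and Lemma~\ref{AbspHpminfty}), and then conclude from Theorem~\ref{asympcompl2} combined with the uniform boundedness of the two evolutions. The only organizational difference is that the paper notes $\one=\one_{\R}(\dot H)$ is itself an admissible cut-off once $\CS\cup\CT_{\pm\infty}=\emptyset$ (so no separate density-plus-boundedness extension step is needed), whereas you phrase the final step as convergence on a dense scattering subspace extended by uniform boundedness; these amount to the same thing.
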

\subsection{Asymptotic profiles}
We now introduce the Hamiltonians $\dot{H}_r,\, \dot{H}_l$ which
describe the simplest possible asymptotic comparison dynamics. Let 
\[
h^n_{r/l}=-\partial_x^2-\ell_{+/-}^2,\
k_{r/l}=\ell_{+/-},
\]
acting on $\CH^{n}$ defined in (\ref{caribou}).

We associate to these operators the natural homogeneous energy spaces $\dot{\CE}^n_{l/r}$ and Hamiltonians $\dot{H}^n_{l/r}$.Note that the solution of 
\begin{eqnarray}
\label{profilwe}
\left\{\begin{array}{rcl}
    (\partial_t^2-2i\ell_{\pm}\partial_t-\partial_x^2-\ell_{\pm}^2)u&=&0,\\
u|_{t=0}&=&u_0,\\
\partial_tu|_{t=0}&=&u_1\end{array}
\right.
\end{eqnarray}
can be computed explicitly. Indeed if $u$ is the solution of
\eqref{profilwe}, then $v=e^{-i\ell_{\pm}t}u$ fulfills
\begin{eqnarray}
\label{profilwev}
\left\{\begin{array}{rcl}
    (\partial_t^2-\partial_x^2)v&=&0,\\
v|_{t=0}&=&u_0,\\
\partial_tv|_{t=0}&=&u_1-i\ell_{\pm}u_0\end{array}
\right.
\end{eqnarray}
Thus for smooth data the explicit solution of \eqref{profilwe} is
given by
\begin{eqnarray*}
u_0(t,x,\omega)=\frac{e^{i\ell_{\pm}t}}{2}\left(u_0(x+t,\omega)+u_0(x-t,\omega)+\int_{x-t}^{x+t}(u_1(\tau,\omega)-i\ell_{\pm}u_0(\tau,\omega))d\tau\right).
\end{eqnarray*}

Let us  denote the  cutoffs $i_{+/-}$ by $i_{r/l}$.

The spaces $i_l\dot{\CE}^n_l$ and $i_r\dot{\CE}^n_r$ are not
included in $\dot{\CE}^n$ and the group $e^{-it\dot{H}^n_{r/l}}$ doesn't
improve regularity. There is therefore no chance that the limits 
\begin{equation*}
W^+u=\lim_{t\rightarrow\infty}e^{it\dot{H}^n}i_{r/l}e^{-it\dot{H}^n_{r/l}}u
\end{equation*}
exist for all $u\in \dot{\CE}^n_{r/l}$. We will first show the existence of the limits on smaller spaces  and then extend the wave
operators by continuity.  Let $\{\lambda_{q}: q\in \nn\}= \sigma(P)$ and $Z_{q}= \one_{\{\lambda_{q}\}}(P)\cH$. Then 
\begin{equation*}
D(h_0)=D(h_{0,s})=\{u\in \CH: \ \sum_{q\in \N}\| h_{0}^{s,q}\one_{\{\lambda_{q}\}}(P)u\|^{2}<\infty\},
\end{equation*}
where $h_0^{s,q}$ is the restriction of $h_{0,s}$ to $L^2(\R)\otimes
Z_q$. Let
\begin{eqnarray*}
W_q&:=&(L^2(\R)\otimes Z_q)\oplus(L^2(\R)\otimes Z_q), \
\CE^{q,n}_{l/r}:=\CE^n_{r/l}\cap W_q,\\
\CE^{fin,n }_{l/r}&:=&\left\{u\in \CE_{l/r}^n: \,\exists Q>0,\, u\in \oplus_{q\le Q} \CE^{q,n}_{l/r}\right\}.
\end{eqnarray*}
\begin{theorem}
\label{Asympcompl2}
There exists $a_0>0$ such that for all $|a|<a_0$  and $n\in \zz\setminus\{0\}$ the following holds:
\begin{enumerate}
\item[i)]
For all $u\in \CE^{fin,n}_{r/l}$ the limits
\begin{equation*}
W_{r/l}u=\lim_{t\rightarrow\infty}e^{it\dot{H}^n}i^2_{r/l}e^{-it\dot{H}^n_{r/l}}u
\end{equation*}
exist in $\dot{\CE}^n$. The operators $W_{r/l}$ extend to bounded
operators $W_{r/l}\in \CB(\dot{\CE}^n_{r/l};\dot{\CE}^n)$. 
\item[ii)] The inverse wave operators 
\begin{equation*}
\Omega_{r/l}=\slim_{t\rightarrow\infty}e^{it\dot{H}^n_{r/l}}i^2_{r/l}e^{-it\dot{H}^n}
\end{equation*}
exist in $\CB(\dot{\CE}^n;\dot{\CE}^n_{r/l})$.
\end{enumerate}
$i),\, ii)$ also hold for $n=0$ if $m>0$.
\end{theorem}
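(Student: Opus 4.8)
The plan is to specialize the abstract and geometric results above to the De Sitter Kerr Klein--Gordon operator of \eqref{eqpsiv1} restricted to the fixed angular mode $\CH^n$, and then to add one final comparison of asymptotic dynamics. First I would check that, for $|a|$ small, this operator fits the geometric framework of Section \ref{secACgeo}: reading off the separable part $h_{0,s}$ and the perturbation $h_p$ from \eqref{eqpsiv1} (the $a$-dependent mixed and off-diagonal terms being lower order in the scales $T^\sigma$), one verifies \eqref{G1}--\eqref{G9}. Positivity of $\Lambda$ is decisive: in the Regge--Wheeler variable both ends $r\to r_\pm$ are at infinite distance and $\Delta_r$ vanishes exponentially, so the separable model is asymptotically hyperbolic at both ends and \eqref{ME2} for $h_+,\tilde h_-$ follows from Mazzeo--Melrose, Proposition \ref{MaMe87} (as in Section \ref{secmehpm}); the positivity \eqref{bornehp} (hence \eqref{Hyp1}) and the remaining hypotheses \eqref{TE2}--\eqref{TE3}, \eqref{PE2}, \eqref{Hyp15} hold at $a=0$ (De Sitter--Schwarzschild) and persist for small $a$. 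Second, I would prove $\sigma_{pp}^{\C}(\dot{H}^n)=\emptyset$ and $\CS=\emptyset$: for $n\neq0$ Dyatlov's results \cite{Dy11_01} give absence of non-real eigenvalues and of real resonances of the compactly cut-off resolvent, a hypo-ellipticity argument upgrades this to the absence of real poles of $w^{-\epsilon}p^{-1}(z)w^{-\epsilon}$ for an exponential weight $w$, hence by Corollary \ref{Cor10.2.4} and \eqref{PE2} of $w^{-\epsilon}\dot{R}^n(z)w^{-\epsilon}$, i.e.\ $\CT=\emptyset$; the same input gives $\CT_\pm=\emptyset$ (Remark \ref{remdeb}) and Proposition \ref{thm5.3.1} ($\CS\subset\CT\cup\CT_+\cup\CT_-$) yields $\CS=\emptyset$; for $n=0$ the operator $\dot{H}^0$ is self-adjoint and the only obstruction, a zero resonance or eigenvalue, is excluded when $m>0$.

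\textbf{Reduction to a one-dimensional comparison.} Once $\CS=\emptyset$, the constant function is an admissible cut-off, the scattering spaces of Section \ref{secAsympc1} are the full energy spaces, Theorem \ref{thboundedness} gives Theorem \ref{thunifbound}, and Theorems \ref{asympcompl}, \ref{asympcompl2} give Theorem \ref{thACKerrSC}: existence and mutual invertibility of $W^\pm=\slim_t e^{it\dot{H}^n}i_\pm e^{-it\dot{H}^n_{\pm\infty}}$ and $\Omega^\pm=\slim_t e^{it\dot{H}^n_{\pm\infty}}i_\pm e^{-it\dot{H}^n}$. It then remains to compare $\dot{H}^n_{\pm\infty}$ with the profiles $\dot{H}^n_{r/l}$. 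On each reducing subspace $W_q=(L^2(\R)\otimes Z_q)\oplus(L^2(\R)\otimes Z_q)$, $\dot{H}^n_{\pm\infty}$ is the homogeneous Klein--Gordon operator for $h_0=-\partial_x^2+V_q$, $k=\ell_\pm$, with $V_q=\frac{\Delta_r}{\lambda^2(r^2+a^2)}\lambda_q+\Delta_r m^2\ge0$ decaying exponentially at both ends (since $\Delta_r\to0$ exponentially in $x$), while $\dot{H}^n_{r/l}$ corresponds to $h_0=-\partial_x^2$, $k=\ell_\pm$. On $\CE^{fin,n}_{r/l}$ only finitely many $q$ occur, $V_q$ is then a genuine short-range potential, and the comparison of $\dot{H}^n_{r/l}$ with $\dot{H}^n_{\pm\infty}$ --- composed with $W^\pm,\Omega^\pm$ by the chain rule for wave operators, which produces the squared cut-off $i_{r/l}^2$ --- is the classical one-dimensional short-range scattering statement, proved by a Cook argument using the explicit d'Alembert representation of $e^{-it\dot{H}^n_{r/l}}u$. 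This yields the limits $W_{r/l}u$ for $u\in\CE^{fin,n}_{r/l}$ and the inverse wave operators $\Omega_{r/l}$ on all of $\dot{\CE}^n$ (the latter needs no ``fin'' restriction, since $e^{-it\dot{H}^n}$ regularizes).

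\textbf{Main obstacle.} The delicate point is the boundedness and continuous extension of $W_{r/l}$ to all of $\dot{\CE}^n_{r/l}$. Because of the angular term in $h^n$ the subspace $i_{r/l}\dot{\CE}^n_{r/l}$ is \emph{not} contained in $\dot{\CE}^n$, so the wave operators genuinely cannot be defined as limits on the full space; one must prove the $t$-uniform bound $\|e^{it\dot{H}^n}i_{r/l}^2 e^{-it\dot{H}^n_{r/l}}u\|_{\dot{\CE}^n}\lesssim\|u\|_{\dot{\CE}^n_{r/l}}$ on the dense subspace $\CE^{fin,n}_{r/l}$, and this requires combining the exponential decay of $\Delta_r$ at the horizons (so that the energy of $i_{r/l}^2 e^{-it\dot{H}^n_{r/l}}u$ sitting in the region where the coefficients of $h^n$ differ from the flat profile is uniformly controlled) with the cancellation between the transport commutator $[\dot{H}^n,i_{r/l}^2]$ and the short-range remainder $i_{r/l}^2(\dot{H}^n-\dot{H}^n_{r/l})$. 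Granting this bound, $W_{r/l}$ extends by density to a bounded operator $\dot{\CE}^n_{r/l}\to\dot{\CE}^n$ that still coincides with the strong limit, and $\Omega_{r/l}W_{r/l}=\mathrm{id}$, $W_{r/l}\Omega_{r/l}=\mathrm{id}$ follow from the corresponding identities for $W^\pm,\Omega^\pm$ and for the one-dimensional pair. Finally, for $n=0$ and $m>0$ the positive term $V_0=\Delta_r m^2$ keeps the homogeneous scales of $h_{0,s}$ and $h_{\pm\infty}$ comparable and rules out a zero resonance, so the whole argument goes through unchanged.
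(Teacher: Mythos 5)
The global structure of your proposal — specializing the abstract results of Sections~\ref{secAsympc1}--\ref{secACgeo} to the fixed angular mode, using Dyatlov's results and hypo-ellipticity to conclude $\sigma_{pp}^{\C}(\dot{H}^n)=\CS=\emptyset$, reducing via Theorem~\ref{thACKerrSC} to a comparison between $\dot{H}^n_{\pm\infty}$ and the one-dimensional profiles $\dot{H}^n_{r/l}$, and establishing that limit on $\CE^{fin,n}_{r/l}$ by a Cook argument mode-by-mode — matches the paper. You also correctly identify the genuinely delicate step: $i_{r/l}\dot{\CE}^n_{r/l}\not\subset\dot{\CE}^n$, so the wave operator is only defined as a strong limit on the dense subspace $\CE^{fin,n}_{r/l}$, and one must then prove the $t$-uniform bound $\|e^{it\dot{H}^n}i^2_{r/l}e^{-it\dot{H}^n_{r/l}}u\|_{\dot{\CE}^n}\lesssim\|u\|_{\dot{\CE}^n_{r/l}}$ with a constant that does not grow with the number $Q$ of angular modes present in $u$.

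However, the mechanism you invoke for that bound is not correct. You attribute it to a ``cancellation between the transport commutator $[\dot{H}^n,i_{r/l}^2]$ and the short-range remainder $i_{r/l}^2(\dot{H}^n-\dot{H}^n_{r/l})$''. If one differentiates $e^{it\dot{H}}i^2_{l}e^{-it\dot{H}_l}u$ in $t$, the resulting terms involve the angular potential $\frac{\Delta_r}{r^2+a^2}P$ acting on $e^{-it\dot{H}_l}u$, whose $\CH$-norm is proportional to $\lambda_q\|u\|$ on the $q$-th mode; this is \emph{not} controlled by $\|u\|_{\dot{\CE}_l}$, and there is no cancellation with the commutator that removes this loss. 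What actually saves the day in the paper is the incoming/outgoing decomposition of Lemma~\ref{lem12.2.1}: for compactly supported data $u$ in $\CD^{fin}_{l}$ one writes $u=u^{\mathrm{in}}+u^{\mathrm{out}}$. For the outgoing part, finite speed of propagation pushes $e^{-it\dot{H}_l}u^{\mathrm{out}}$ out of $\supp i_l$, hence $W_l u^{\mathrm{out}}=0$. For the incoming part, $e^{-it\dot{H}_l}u^{\mathrm{in}}$ is supported in $\{x<R_2-t\}$, where the coefficient $\frac{\Delta_r}{r^2+a^2}$ is $O(e^{-\kappa_- t})$; the uniform boundedness of $e^{it\dot{H}}$ (Theorem~\ref{thunifbound}) and this exponential smallness of the potential term give the $t$- and $Q$-independent bound. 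Your sketch also omits the auxiliary density statement (Lemma~\ref{lem12.2.2}) that compactly supported data with $\int u_1\,dx=0$ are dense, needed to even define the incoming/outgoing split, and the elementary comparison Lemma~\ref{normcomp} needed for the inverse wave operators. Without the incoming/outgoing decomposition the uniform estimate is not obtained, so this is a genuine gap.
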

\section{Proof of the main theorems for the De Sitter Kerr spacetime} 
\label{prDSKerr}
We want to apply the geometric setting developed in Sect. \ref{SecGS}. To
do so, we have to reduce ourselves to $\ell_+=0$ by a change of coordinates. We introduce the new coordinate
\[
\tilde{\varphi}=\varphi-\frac{a}{r_+^2+a^2}t,
\] the other coordinates
remain unchanged. We will denote $\tilde{\varphi}$ again by $\varphi$
in the following. In the new coordinates (\ref{eqpsiv1}) writes :
\begin{eqnarray}
\lefteqn{\left(\left(\partial_t-\frac{a}{r_+^2+a^2}\partial_{\varphi}\right)^2-2\frac{a(\Delta_r-(r^2+a^2)\Delta_{\theta})}{\sigma^2}\partial_{\varphi}\left(\partial_t-\frac{a}{r_+^2+a^2}\partial_{\varphi}\right)-\frac{(\Delta_r-a^2\sin^2\theta\Delta_{\theta})}{\sin^2\theta\sigma^2}\partial_{\varphi}^2\right.}\nonumber\\
&-&\left.\frac{\sqrt{\Delta_r\Delta_{\theta}}}{\lambda\sigma}\partial_r\Delta_r\partial_r
    \frac{\sqrt{\Delta_r\Delta_{\theta}}}{\lambda\sigma}-\frac{\sqrt{\Delta_r\Delta_{\theta}}}{\lambda\sin\theta\sigma}\partial_{\theta}\sin\theta\Delta_{\theta}\partial_{\theta}\frac{\sqrt{\Delta_r\Delta_{\theta}}}{\lambda\sigma}+\frac{\rho^2\Delta_r\Delta_{\theta}}{\lambda^2\sigma^2}m^2\right)\psi=0,
\end{eqnarray}
i.e.
\begin{eqnarray}
\label{equ}
\lefteqn{\left(\partial_t^2-2\left(\frac{a}{(r_+^2+a^2)}+\frac{a(\Delta_r-(r^2+a^2)\Delta_{\theta})}{\sigma^2}\right)\partial_{\varphi}\partial_t\right.}\nonumber\\
&+&\left.\left(\frac{a^2}{(r_+^2+a^2)^2}+2\frac{a^2(\Delta_r-(r^2+a^2)\Delta_{\theta})}{\sigma^2(r_+^2+a^2)}-\frac{(\Delta_r-a^2\sin^2\theta\Delta_{\theta})}{\sin^2\theta\sigma^2}\right)\partial_{\varphi}^2\right.\nonumber\\
&-&\left.\frac{\sqrt{\Delta_r\Delta_{\theta}}}{\lambda\sigma}\partial_r\Delta_r\partial_r
    \frac{\sqrt{\Delta_r\Delta_{\theta}}}{\lambda\sigma}-\frac{\sqrt{\Delta_r\Delta_{\theta}}}{\lambda\sin\theta\sigma}\partial_{\theta}\sin\theta\Delta_{\theta}\partial_{\theta}\frac{\sqrt{\Delta_r\Delta_{\theta}}}{\lambda\sigma}+\frac{\rho^2\Delta_r\Delta_{\theta}}{\lambda^2\sigma^2}m^2\right)\psi=0.
\end{eqnarray}
Let us put:
\begin{eqnarray*}
k&:=&\left(\frac{a}{(r_+^2+a^2)}+\frac{a(\Delta_r-(r^2+a^2)\Delta_{\theta})}{\sigma^2}\right) D_{\varphi},\\
h&:=&\left(\frac{a^2}{(r_+^2+a^2)^2}+2\frac{a^2(\Delta_r-(r^2+a^2)\Delta_{\theta})}{\sigma^2(r_+^2+a^2)}-\frac{(\Delta_r-a^2\sin^2\theta\Delta_{\theta})}{\sin^2\theta\sigma^2}\right)\partial_{\varphi}^2\\
&-&\frac{\sqrt{\Delta_r\Delta_{\theta}}}{\lambda\sigma}\partial_r\Delta_r\partial_r
    \frac{\sqrt{\Delta_r\Delta_{\theta}}}{\lambda\sigma}-\frac{\sqrt{\Delta_r\Delta_{\theta}}}{\lambda\sin\theta\sigma}\partial_{\theta}\sin\theta\Delta_{\theta}\partial_{\theta}\frac{\sqrt{\Delta_r\Delta_{\theta}}}{\lambda\sigma}+\frac{\rho^2\Delta_r\Delta_{\theta}}{\lambda^2\sigma^2}m^2.
\end{eqnarray*}
Noting that the coordinate change $\varphi\rightarrow \tilde{\varphi}$
corresponds to the unitary transform $e^{-it\Omega_+D_{\varphi}}$ and
using Subsect. \ref{SecGT} we see that it is sufficient to show the
corresponding theorems of Sect. \ref{SecAC3} for the operators $h,k$. We put $h_0:=h+k^2.$ A tedious calculation gives :
\begin{eqnarray}
\label{h0kerr}
h_0&=&-\frac{\rho^4\Delta_r\Delta_{\theta}}{\sigma^4\sin^2\theta}\partial_{\varphi}^2\nonumber\\
&-&\frac{\sqrt{\Delta_r\Delta_{\theta}}}{\lambda\sigma}\partial_r\Delta_r\partial_r
    \frac{\sqrt{\Delta_r\Delta_{\theta}}}{\lambda\sigma}-\frac{\sqrt{\Delta_r\Delta_{\theta}}}{\lambda\sin\theta\sigma}\partial_{\theta}\sin\theta\Delta_{\theta}\partial_{\theta}\frac{\sqrt{\Delta_r\Delta_{\theta}}}{\lambda\sigma}+\frac{\rho^2\Delta_r\Delta_{\theta}}{\lambda^2\sigma^2}m^2.
\end{eqnarray}
We put 
\begin{eqnarray}
\label{h0kerrn}
h^n_0&:=&\frac{(\rho^4-\sigma^2)\Delta_r\Delta_{\theta}}{\sigma^4\sin^2\theta}n^2\nonumber\\
&-&\frac{\sqrt{\Delta_r\Delta_{\theta}}}{\lambda\sigma}\partial_r\Delta_r\partial_r
    \frac{\sqrt{\Delta_r\Delta_{\theta}}}{\lambda\sigma}+\frac{\sqrt{\Delta_r\Delta_{\theta}}}{\lambda\sigma}P\frac{\sqrt{\Delta_r\Delta_{\theta}}}{\lambda\sigma}+\frac{\rho^2\Delta_r\Delta_{\theta}}{\lambda^2\sigma^2}m^2,\\
\label{h0kerrkn}
k^n&:=&\left(\frac{a}{(r_+^2+a^2)}+\frac{a(\Delta_r-(r^2+a^2)\Delta_{\theta})}{\sigma^2}\right)n.
\end{eqnarray}

We will drop in the following the index $n$ which is implicit in the
operators. 
\subsection{Verification of the geometric hypotheses}
Let us recall that
\[P=-\frac{\lambda^2}{\sin^2\theta\Delta_{\theta}}\partial_{\varphi}^2-\frac{1}{\sin
    \theta}\partial_{\theta}\sin\theta\Delta_{\theta}\partial_{\theta}.\]
With this choice of $P$ \eqref{G1} is clearly fulfilled. 
We now put 
\begin{eqnarray*}
h_{0,s}&:=&-\frac{\sqrt{\Delta_r}}{\lambda(r^2+a^2)}\partial_r\Delta_r\partial_r
\frac{\sqrt{\Delta_r}}{\lambda(r^2+a^2)}+\frac{\sqrt{\Delta_r}}{\lambda(r^2+a^2)}P\frac{\sqrt{\Delta_r}}{\lambda(r^2+a^2)}+\Delta_rm^2.
\end{eqnarray*}
Recall that $q(r):=\sqrt{(r_+-r)(r-r_-)}$. We write
$\Delta_r=q^2(r)P_2(r)$, where $P_2$ is a polynomial of degree 2. It is easy to see that \eqref{G2} is fulfilled with
\begin{equation*}
\alpha_1^{\pm}=\alpha_3^{\pm}:=\frac{\sqrt{P_2(r_{\pm})}}{\lambda(r_{\pm}^2+a^2)},\, \alpha_2^{\pm}:=\sqrt{P_2(r_{\pm})},\,
 \alpha_4^{\pm}:=m^2\frac{\sqrt{P_2(r_{\pm})}}{(r_{\pm}^2+a^2)\lambda^2}.
\end{equation*}
We also put
\begin{eqnarray*}
k_{s,v}&:=&k_s^n:=\left(\frac{a}{r_+^2+a^2}-\frac{a}{r^2+a^2}\right)n,\\
k^-_{s,v}&:=&\frac{an}{(r_+^2+a^2)(r_-^2+a^2)}(r_--r_+)(r_-+r_+),\\
k_{s,r}&=&k_{s,r}^-=0.
\end{eqnarray*}
With these choices \eqref{G3} is clearly fulfilled.  \eqref{intheta1} follows
from \eqref{h0kerrn}. Let
\begin{eqnarray*}
g_1&:=&\frac{\sqrt{\Delta_r\Delta_{\theta}}}{\lambda\sigma}\in
T^1,\quad g_0:=\frac{\sqrt{\Delta_r}}{\lambda(r^2+a^2)}\in T^1,\\
p_1&:=&\frac{\sqrt{\Delta_r\Delta_{\theta}}}{\sigma}\in T^1,\quad p_0:=\frac{\sqrt{\Delta_r}}{r^2+a^2}\in T^1.
\end{eqnarray*}
We have
\begin{eqnarray*}
g_1-g_0&\in&T^3,\quad p_1-p_0\in T^3.
\end{eqnarray*}
An elementary calculation gives :
\begin{eqnarray*}
g^{rr}&=&(g_0-g_1)(g_0+g_1)\Delta_r\in T^5,\\
g^r&=&i((\partial_rg_1)g_1-(\partial_rg_0)g_0)\Delta_r\in T^3,\\
g^{\theta\theta}&=&(p_0-p_1)(p_0+p_1)\in T^3,\\
g^{\theta}&=&i((\partial_{\theta}p_1)p_1-(\partial_{\theta}p_0)p_0)\in T^2,\\
f&=&((\partial_rg_0)^2-(\partial_rg_1)^2)\Delta_r+((\partial_{\theta}p_0)^2-(\partial_{\theta}p_1)^2)+\frac{m^2\Delta_r
  \rho^2\Delta_{\theta}}{\lambda^2\sigma^2}-\Delta_r m^2\\
&+&\left(\frac{\rho^4\Delta_r\Delta_{\theta}}{\sigma^4\sin^2\theta}-\frac{\Delta_r}{(r^2+a^2)^2\sin^2\theta}\right)n^2\in
T^2,\\
g^{\varphi\varphi}&=&g^{\varphi}=0.
\end{eqnarray*}
Note that because of the diagonalization w.r.t.  $D_{\varphi}$, we can put
$g^{\varphi\varphi}$ and $g^{\varphi}$ into $f$. It follows that
hypothesis \eqref{G7} is fulfilled. Let us now check
\eqref{bornehp}. We consider the case $n=0$ only if $m>0$. Also
\eqref{bornehp} will only be satisfied if $|a|<a_1$ for some $a_1$
independent of $n$.
We first show that \eqref{bornehp} is fulfilled for $h_{0,s}^n$. We
have 
\begin{eqnarray*}
h_{0,s}^n&=&\alpha_1(D_r\Delta_rD_r+P+(r^2+a^2)^2m^2)\alpha_1\\
&\gtrsim&\alpha_1(D_rq^2D_r+P+1)\alpha_1,
\end{eqnarray*}
with $\alpha_1=\frac{\sqrt{\Delta_r}}{\lambda(r^2+a^2)}$ because $P\gtrsim 1$ for $n\neq 0$ and we suppose $m>0$ for $n=0$. Let now
\begin{eqnarray*}
\tilde{h}_{0}^n&=&\frac{\rho^4\Delta_r\Delta_{\theta}}{\sigma^2\Delta_{\theta}\sin^2\theta}n^2-\frac{\sqrt{\Delta_r}}{\lambda(r^2+a^2)}\partial_r\Delta_r\partial_r
\frac{\sqrt{\Delta_r}}{\lambda(r^2+a^2)}\\
&-&\frac{\sqrt{\Delta_r}}{\lambda(r^2+a^2)\sin\theta}\partial_{\theta}\sin
\theta\Delta_{\theta}\partial_{\theta}\frac{\sqrt{\Delta_r}}{\lambda(r^2+a^2)}+\frac{\rho^2\Delta_rm^2\Delta_{\theta}}{\lambda^2\sigma^2}\\
&\gtrsim&\alpha_1(D_rq^2D_r+P+1)\alpha_1.
\end{eqnarray*}
We then compute
\begin{eqnarray*}
h_0^n-\tilde{h}_{0}^n&=&-\left(\frac{1}{\sigma}-\frac{1}{(r^2+a^2)\sqrt{\Delta_{\theta}}}\right)\frac{\sqrt{\Delta_r\Delta_{\theta}}}{\lambda}\partial_r\Delta_r\partial_r
\frac{\sqrt{\Delta_r\Delta_{\theta}}}{\lambda\sigma}\\
&-&\frac{\sqrt{\Delta_r}}{\lambda(r^2+a^2)}\partial_r\Delta_r\partial_r
\frac{\sqrt{\Delta_r\Delta_{\theta}}}{\lambda}\left(\frac{1}{\sigma}-\frac{1}{(r^2+a^2)\sqrt{\Delta_{\theta}}}\right)\\
&-&\left(\frac{1}{\sigma}-\frac{1}{(r^2+a^2)\sqrt{\Delta_{\theta}}}\right)
\frac{\sqrt{\Delta_r\Delta_{\theta}}}{\lambda\sin\theta}\partial_{\theta}\sin\theta\Delta_{\theta}\partial_{\theta}\frac{\sqrt{\Delta_r\Delta_{\theta}}}{\lambda\sigma}\\
&-&\frac{\sqrt{\Delta_r}}{(r^2+a^2)\lambda\sin
  \theta}\partial_{\theta}\sin \theta\Delta_{\theta}\partial_{\theta}\frac{\sqrt{\Delta_r}}{\lambda}\left(\frac{1}{\sigma}-\frac{1}{(r^2+a^2)\sqrt{\Delta_{\theta}}}\right).
\end{eqnarray*}
We compute
\begin{eqnarray*}
\left(\frac{1}{\sigma}-\frac{1}{(r^2+a^2)\sqrt{\Delta_{\theta}}}\right)&=&\frac{a^2\Delta_r\sin^2\theta}{\sigma^2(r^2+a^2)^2\Delta_{\theta}}\left(\frac{1}{\sigma}+\frac{1}{(r^2+a^2)\Delta_{\theta}}\right)^{-1}=:a^2g_a.
\end{eqnarray*}
We have $ g_a\in T^2$ uniformly in $a$ meaning that 
\begin{equation*}
\forall\, \alpha,\beta\in \N,\quad\vert \partial^{\alpha}_r\partial_{\omega}^{\beta}f_a\vert\le C_{\alpha\beta}q(r)^{2-2\alpha}
\end{equation*}
with $C_{\alpha,\beta}$ independent of $a$. We then compute
\begin{eqnarray*}
-a^2g_a \frac{\sqrt{\Delta_r\Delta_{\theta}}}{\lambda}\partial_r\Delta_r\partial_r
\frac{\sqrt{\Delta_r\Delta_{\theta}}}{\lambda\sigma}&=&-a^2
\frac{\sqrt{\Delta_r\Delta_{\theta}}}{\lambda\sigma}\partial_r\tilde{g}_a\Delta_r\partial_r\frac{\sqrt{\Delta_r\Delta_{\theta}}}{\lambda\sigma}\\
&+&a^2\tilde{g}'_a
\frac{\sqrt{\Delta_r\Delta_{\theta}}}{\lambda\sigma}\Delta_r\partial_r\frac{\sqrt{\Delta_r\Delta_{\theta}}}{\lambda\sigma}\\
&\gtrsim&-a^2h^n_0,
\end{eqnarray*}
where $\tilde{g}_a=g_a\sigma\in T^2$ uniformly in $a$. Using similar
arguments for the other terms we find 
\[h_0^n-\tilde{h}_0^n\gtrsim-a^2h_0^n-a^2\tilde{h}_0^n\]
and thus for $a$ small enough (independently of $n$):
\begin{equation}
\label{G5h0}
h_{0}^n\gtrsim \tilde{h}_{0}^n\gtrsim
\alpha_1(D_rq^2D_r+P+1)\alpha_1.
\end{equation}
This is \eqref{bornehp} for $h_0^n$. We also have
\begin{equation*}
k_{p,v}=\left(\frac{a\Delta_r}{\sigma^2}+\frac{a^3\sin^2\theta\Delta_r}{(r^2+a^2)\sigma^2}\right)n\in
T^2,\quad k_{p,r}=0.
\end{equation*}
This implies \eqref{G8}. Let us now check \eqref{G9}. Recall that
$\ell=k_{s,v}^-$. We construct $h_{+}$, $\tilde{h}_{-}$ and $k_{\pm}$ as in Subsect. \ref{subitopresto}. We have 
\[
h_+=h_0-k_+^2,\ k^2_+\le
C_+a^2n^2(r_+-r)(r-r_-).
\] 
Observing that $P\ge \frac{n^2}{\sin^2\theta}$ we obtain using
\eqref{G5h0}:
\begin{equation*}
h_0^n\gtrsim \alpha_1(D_rq^2D_r+P+1)\alpha_1+n^2(r_+-r)(r-r_-). 
\end{equation*}
Thus fixing a cut-off scale $\epsilon>0$ (see Section
\ref{asympham}) there exists $a_{2}>0$ (independent of $n$) such that for all
$|a|<a_2$ and $n\in \Z$ we have :
\[
h_+, \  \tilde{h}_{-}\gtrsim \alpha_1(r)(D_rq^2(r)D_r+\tilde{P}+1)\alpha_1(r).
\]
In particular \eqref{bornehp} is fulfilled for $h_+,\, \tilde{h}_-$ if
$a$ is small enough. Thus \eqref{G9} is fulfilled. In the sequel we assume 
$|a|<a_0$, where $a_0$ is such that all the geometric hypotheses are
fulfilled for all $n\in \Z\setminus \{0\}$ ($m=0$) resp. all
$n\in \Z$ ($m>0)$ if $|a|<a_0$.
\subsection{Proof of Thm. \ref{thunifbound}}
Thm. \ref{thunifbound}  will follow from Thm. \ref{thboundedness}, provided we show that the set $\CS$ of singular points is empty.
 We recall that the sets  $\CS$, $\CT$, $\CT_{\pm}$ were defined in Subsect. \ref{fai} and that we showed in Prop. \ref{thm5.3.1} that  $\CS\subset \CT\cup \CT_{-}\cup \CT_{+}$. Therefore the proof of Thm. \ref{thunifbound} will  follow from
 \begin{proposition}\label{allegromanotropo}
\begin{enumerate}
\item[i)] There exists $a_{1}>0$ such that for  $|a|<a_{1},\, n\neq
  0$: 
\[
\sigma^{\cc}_{\rm pp}(\dot{H})= \cT=\emptyset.
\]
\item[ii)] One has $\CT_{\pm}= \emptyset$ for $n\neq 0$.
\end{enumerate}
\end{proposition}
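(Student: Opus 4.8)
\emph{Proof proposal.} The plan is to push everything down to the quadratic pencils $p(z)$, $p_{\pm}(z)$ and then feed in Dyatlov's analysis of quasinormal modes. For (i), recall from Proposition~\ref{lemmaresHdot} (together with Lemma~\ref{lm:kg}) and Proposition~\ref{ResH} that $\sigma^{\cc}_{\rm pp}(\dot{H})$ is exactly the set of non-real $z$ for which $p(z):\CH^1\to\CH$ is not bijective, and by Proposition~\ref{pr:est} such $z$ lie in the disc $\{|z|\le(1+\varepsilon)\|k\|_{\CB(\CH)}\}$, whose radius is $O(|a|\,|n|)$ since $k=k^n$ is a bounded multiple of $a$ (see \eqref{h0kerrkn}). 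Likewise, by Corollary~\ref{Cor10.2.4}, $\CT=\emptyset$ follows once we know that the meromorphic continuation of $w^{-\epsilon}p^{-1}(z)w^{-\epsilon}$, with $w=q(r)^{-1}$, has no poles on $\R$. So part (i) is reduced to: for $n\ne0$ and $|a|$ small, $p(z)$ is bijective for $\im z>0$, and $w^{-\epsilon}p^{-1}(z)w^{-\epsilon}$ continues holomorphically across the whole real axis.

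The non-real part is direct: bijectivity of $p(z)$ for $\im z>0$ is the absence of growing ($L^2$) modes of the stationary De Sitter Kerr Klein-Gordon operator, and — after the change of coordinates relating the Regge-Wheeler picture to the coordinates regular across the horizons — Dyatlov's results \cite{Dy11_01} show that for fixed $n\ne0$ and $|a|<a_1$ all quasinormal modes lie strictly in $\{\im z<0\}$; together with the conjugation invariance of $\sigma(\dot{H})$ this yields $\sigma^{\cc}_{\rm pp}(\dot{H})=\emptyset$. The same reference gives that the compactly cut-off continuation $\chi p^{-1}(z)\chi$, $\chi\in C_0^{\infty}$, has no poles on $\R$. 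The hard part is to upgrade this to the exponential weight $w^{-\epsilon}=q(r)^{\epsilon}$, which in the Regge-Wheeler variable decays like $e^{-\epsilon\kappa_{\pm}|x|/2}$ at the two ends, so that $w^{\epsilon}\CH$ permits only a controlled exponential growth toward the horizons. A real pole $z_0$ of $w^{-\epsilon}p^{-1}(z)w^{-\epsilon}$ produces a nonzero resonant state $u\in w^{\epsilon}\CH$ with $p(z_0)u=0$; near each horizon, passing to coordinates smooth across it, $p(z_0)$ has the radial-point (``hypoelliptic at the horizon'') structure of Melrose-Dyatlov, and the associated propagation/regularity estimate (in the spirit of \cite{GGH2}) forces $u$ to be smooth up to the horizon, hence exponentially decaying in $x$. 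Then $\chi u\ne0$ for some $\chi\in C_0^{\infty}$, so $z_0$ is a pole of the compactly cut-off continuation of $p^{-1}$, contradicting \cite{Dy11_01}; the threshold point $z_0=0$ is harmless precisely because, $n$ being nonzero, the constant solution does not lie in the $\CH^n$-sector.

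For (ii), note first that $\dot{H}_{\pm}$ are selfadjoint by construction (see \eqref{TE2}), so by Kato's $H$-smoothness theory $\CT_{\pm}=\CS_{\pm}$ (Remark~\ref{remdeb}); equivalently $\CT_{\pm}$ is the set of real poles of the weighted resolvent of $\dot{H}_{\pm}$, which by the analogue of Corollary~\ref{Cor10.2.4} is controlled by the real poles of $w^{-\epsilon}p_{\pm}^{-1}(z)w^{-\epsilon}$. Since $k_{\pm}\to0$ at both ends and $h_{\pm}$ differs from the spherically symmetric operator $h_{\pm\infty}$ only by an exponentially decaying perturbation (compare \eqref{G2}, \eqref{bornehp}, \eqref{G9}), I would use the resolvent identity together with Lemmas~\ref{lemma7.6.1}--\ref{lem7.6.2} (the norm identity $\dot{\CE}_{\pm\infty}=\dot{\CE}_{\pm}$ and the compactness of $\chi(\dot{H}_{\pm\infty})-\chi(\dot{H}_{\pm})$) to conclude that $w^{-\epsilon}\dot{R}_{\pm}(z)w^{-\epsilon}$ and $w^{-\epsilon}\dot{R}_{\pm\infty}(z)w^{-\epsilon}$ have the same real poles, i.e. $\CT_{\pm}=\CT_{\pm\infty}$. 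Finally, $\dot{H}_{\pm\infty}$ (after the gauge $\Phi(\ell)$ in the $-$ case) is the wave operator attached to the Schr\"odinger-type operator $h_{0,s}$ on the cylinder with two asymptotically hyperbolic ends; for $n\ne0$ the effective angular potential is $\gtrsim n^2/\sin^2\theta>0$, so $\dot{H}_{\pm\infty}$ has no embedded eigenvalues (the remark following Lemma~\ref{lemma7.6.1}), and — by the Mourre estimate for asymptotically hyperbolic operators away from the threshold, plus a convexity/maximum-principle argument at $z=0$ (a zero-resonance would be a bounded-at-both-ends, $h_{0,s}$-harmonic function, hence identically zero since the radial potential is non-negative and not identically zero) — no real resonances. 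Here I would cite \cite{Ha03, BoHa08} for the spherically symmetric model rather than reprove it.

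The routine part is the abstract bookkeeping (Propositions~\ref{lemmaresHdot}, \ref{pr:est}, \ref{ResH}, Corollary~\ref{Cor10.2.4}, and the comparison of $\dot{H}_{\pm}$ with $\dot{H}_{\pm\infty}$). The substantive external input is Dyatlov's theorem, and the genuinely delicate new step is the hypoellipticity/smoothness-across-the-horizon argument that promotes ``no poles of the compactly cut-off resolvent'' to ``no poles of the $q(r)^{\epsilon}$-weighted resolvent on the real axis'' — one must track the microlocal structure of $p(z_0)$ at the horizons and the exact exponential rates — together with checking that the $z=0$ threshold cases in both (i) and (ii) are harmless, which is exactly where the hypothesis $n\ne0$ is used.
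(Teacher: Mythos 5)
Your reduction to the pencil and your use of Dyatlov for the non-real spectrum match the paper's opening moves, but the continuation across $\R$ with the exponential weight is argued very differently, and your treatment of (ii) has a genuine gap. For (i) the paper does \emph{not} extract a resonant state and follow it across the horizon with a radial-point/b-calculus propagation estimate; instead it replaces $w^{-\epsilon}$ by the holomorphic-equivalent weight $\cosh(\epsilon x)^{-1}$, observes that $\tilde p(z)=w^{\epsilon}p(z)w^{\epsilon}$ is elliptic with \emph{real-analytic} coefficients, and invokes analytic hypoellipticity (Morrey--Nirenberg, H\"ormander Thm.\ 7.5.1) to conclude that the kernel $K_z(x,x')$ of $w^{-\epsilon}p^{-1}(z)w^{-\epsilon}$ is analytic off the diagonal. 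Combined with Dyatlov's ``no poles of $\eta p^{-1}(z)\eta$,'' the leading Laurent coefficient $P_N$ at a putative real pole has kernel supported on the diagonal, hence $\tilde p(z_0)P_N=0$ (contour argument), and then analytic hypoellipticity forces $P_Nu$ (which has compact support) to vanish. This sidesteps the delicate steps you leave open: extracting a genuine resonant state $u\in w^{\epsilon}\CH$ from a possibly higher-order pole of a non-self-adjoint meromorphic family, and the converse implication ``nonzero localized $u$ solving $p(z_0)u=0$ $\Rightarrow$ pole of $\chi p^{-1}(z)\chi$,'' neither of which is automatic.

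Part (ii) as you write it would not go through. The claim that $\CT_{\pm}=\CT_{\pm\infty}$ cannot be obtained from the norm-equivalence of $\dot\CE_{\pm}$ and $\dot\CE_{\pm\infty}$ (Lemma~\ref{lemma7.6.1}) nor from the compactness of $\chi(\dot H_{\pm\infty})-\chi(\dot H_{\pm})$ (Lemma~\ref{lem7.6.2}): relatively compact (even exponentially decaying) perturbations can and do shift resonances, so these facts control the essential spectrum and functional calculus but say nothing about the location of real poles of the weighted resolvent. The paper instead quotes \cite[Prop.~9.3]{GGH2} to get $\CT_{\pm}\cap(\R\setminus\{0\})=\emptyset$ directly for $\dot H_{\pm}$ (not for the profile operators), and then rules out a resonance at $z=0$ by a short algebraic argument: since $p_{\pm}(0)\gtrsim n^{2}w^{-2}$ for $n\ne0$, the operator $w^{\epsilon}p_{\pm}(0)w^{\epsilon}$ is injective, and plugging the Laurent expansion of $w^{-\epsilon}p_{\pm}^{-1}(z)w^{-\epsilon}$ into $w^{\epsilon}p_{\pm}(z)w^{\epsilon}\cdot(\cdot)=\one$ and letting $z\to0$ forces the leading residue to vanish. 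So the role of $n\ne0$ is exactly where you put it (the $z=0$ threshold), but the mechanism is a pointwise lower bound on $p_{\pm}(0)$, not a Mourre/maximum-principle argument for the profile model.
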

\proof
{\it i)} essentially follows from the work of
Dyatlov \cite{Dy11_01}.
Let us first prove that $\sigma^{\cc}(\dot{H})=\emptyset$. 
By \cite[Thm. 4]{Dy11_01} we have
$\rho(h,k)\cap\{ {\rm Im}z>0\}=\emptyset$ for $a>0$ sufficiently
small. Then we apply Prop.  \ref{lemmaresHdot}.

Let us now prove that $\mathcal{T}= \emptyset$, i.e. that 
$r(z): =w^{- \epsilon}p^{-1}(z)w^{-\epsilon}$
has no real poles. We replace the weight $w^{-\epsilon}$ by $\cosh(\epsilon x)^{-1}$
which is equivalent and holomorphic in a neighborhood of the real
axis. We will note this new weight again by $w^{-\epsilon}$. 
We know that $r(z)$ has a meromorphic extension to $\{{\rm Im}
z>-\delta_{\epsilon}\}$ for some $\delta_{\epsilon} >0$. We note again
$r(z)$ this meromorphic extension. Let
\[
\tilde{p}(z)= \tilde{p}(z, x, \p_{x}):= w^{\epsilon} p(z)w^{\epsilon}.
\]
This is an elliptic second order operator with analytic
coefficients. We clearly have
\beq\label{e1}
r(z)\circ \tilde{p}(z)= \tilde{p}(z)\circ r(z)=\one,
\eeq
first for ${\rm Im} z$ sufficiently large and then in $\{ {\rm
  Im}z>-\delta_{\epsilon}\}$ by meromorphic extension. Let 
$K_{z}(x, x')$ the distribution kernel of $r(z)$. We have : 
\[
 \tilde{p}(z)(x, \p_{x})K_{z}(x, x')= \delta(x, x'), \ z\in \Omega,
\]
\[
\tilde{p}(z)^{t}(x', \p_{x'})K_{z}(x, x')= \delta(x, x'),\ z\in \Omega,
\]
where  $\tilde{p}(z)^{t}$ is the transpose of $\tilde{p}(z)$, and  is also elliptic with analytic
coefficients. By the Morrey-Nirenberg theorem \cite[Thm. 7.5.1]{Ho},
$\tilde{p}(z)$ and $\tilde{p}(z)^{t}$ are {\em analytic hypo-elliptic}, 
from which we obtain that 
 $K_{z}(x, x')$ is analytic in $x,x'$ outside the diagonal, for $\{{\rm Imz}>-\delta_{\epsilon}\}$.

Recall from \cite{Dy11_01} that there exists $\delta_r>0$ such that for
all $\eta\in C_0^{\infty} ((r_-+\delta_r,r_+-\delta_r)$ $\eta
p^{-1}(z)\eta$ has no poles in $\{{\rm Im}z>-\delta_0\}$ for some $\delta_0>0$. 
Let now $z_{0}\in \{{\rm Im} z>-\delta_0\}$ be a possible pole of
$r(z)$. We write:
\[
r(z)= \sum_{i=1}^{N}P_{j}(z- z_{0})^{-j}+ H(z),
\] 
where $P_{j}$ are finite rank operators and $H(z)$ is holomorphic close
to  $z_{0}$ and $P_{N}\neq 0$. We want to show that all the $P_{j}$
are zero. Clearly it is sufficient to show that $P_{N}=0$.

We have
\[
P_{N}= \frac{1}{2\i \pi}\ointctrclockwise_{\gamma}(z-z_{0})^{N-1} r(z)dz,
\]
which shows that the kernel $P_{N}(x, x')$ of $P_{N}$ is analytic
outside the diagonal. But as $\eta p(z)^{-1}\eta$ has no poles, we
necessarily have $P_{N}(x, x')=0$ for $x\neq x'$, $x, x'\in \supp
\eta$. By analytic continuation  we therefore have $P_{N}(x, x')=0$ for
$x\neq x'$. We then have
 \[
\begin{array}{rl}
\tilde{p}(z_{0})P_{N}=&\frac{1}{2\i \pi}\ointctrclockwise_{\gamma}\tilde{p}(z_{0})(z-z_{0})^{N-1} r(z)dz\\[2mm]
=&\frac{1}{2\i \pi}\ointctrclockwise_{\gamma}(z-z_{0})^{N-1}(\tilde{p}(z_{0})- \tilde{p}(z)) r(z)dz\\[2mm]
&+\frac{1}{2\i \pi}\ointctrclockwise_{\gamma}(z-z_{0})^{N-1}\tilde{p}(z) r(z)dz.
\end{array}
\]
As $\tilde{p}(z)- \tilde{p}(z_{0})= (z- z_{0})T(z)$ with $T(z)$
holomorphic  close to $z_{0}$, the first term is zero, the second is zero because $\tilde{p}(z) r(z)=\one$.
It follows that $\tilde{p}(z_{0})P_{N}=0$.

Let us show that this implies $P_{N}=0$. Let $u\in L^{2}(\rr\times
S^{2})$ with compact support. As the distribution kernel of  $P_{N}$
is supported on the diagonal, $v= P_{N}u$ has also compact support and
 $\tilde{p}(z_{0})v=0$. Again by analytic hypo-ellipticity of
$\tilde{p}(z_{0})$, we obtain that $v$ is analytic with compact
support, thus $v=0$. By a density argument we obtain $P_{N}=0$. This completes the proof of {\it i)}.

Let us now prove {\it ii)}. 
By  \cite [Proposition 9.3]{GGH2} we know that $\CT_{\pm}\cap \rr\backslash\{0\}= \emptyset$.
By Corollary \ref{Cor10.2.4} it is sufficient to show that $0$ is not a resonance
of $w^{-\epsilon}p^{-1}_{\pm}(z)w^{-\epsilon}$. We treat the $+$ case, the $-$
case being analogous. Suppose that $0$ is a resonance. First note that
$p_+(0)$ is an elliptic operator with
$p_+(0)\gtrsim n^2 w^{-2}$. In particular $w^{\epsilon}p_+(0)w^{\epsilon}v=0$ implies $v=0$. Let
$r(z)=w^{-\epsilon}p_+^{-1}(z)w^{-\epsilon}$. Suppose that $r(z)$ has a
pole at $z=0$:
\begin{equation*}
r(z)=\sum_{j=1}^N\frac{P_j}{z^j}+H(z),\quad P_N\neq 0.
\end{equation*}
Here $P_j$ are of finite rank and $H(z)$ is holomorphic. 
Let $u\in \CH$ such that $P_Nu\neq 0$. We have 
\begin{equation*}
z^Nu=\sum_{j=1}^Nz^{N-j}w^{\epsilon}p_+(z)w^{\epsilon}P_ju+z^Nw^{\epsilon}p_+(z)w^{\epsilon}H(z)u.
\end{equation*}
In the limit $z\rightarrow 0$ we obtain
$w^{\epsilon}p_+(z)w^{\epsilon}P_Nu=0$ and thus $P_Nu=0$ which is a
contradiction.  This completes the proof of the proposition. 
\qed

\subsection{Proof of Thm. \ref{thACKerrSC}}
We will apply here the results of Sect.  \ref{secACgeo}. First note
that in our new setting (i.e. after rotation) we have to consider
\begin{eqnarray*}
h_{-\infty}&:=&-\ell^2-\partial_x^2+\frac{\Delta_r}{r^2+a^2}P+\frac{\Delta_rm^2}{\lambda^2(r^2+a^2)},\\
k_{-\infty}&:=&\ell,\\
h_{+\infty}&:=&-\partial_x^2+\frac{\Delta_r}{r^2+a^2}P+\frac{\Delta_rm^2}{\lambda^2(r^2+a^2)},\\
k_{+\infty}&:=&0,\\
\ell&:=&\left(\frac{a}{r_+^2+a^2}-\frac{a}{r_-^2+a^2}\right)n.
\end{eqnarray*}
We associate to these operators the operators $H_{\pm\infty},\,
\dot{H}_{\pm\infty}$ and spaces $\CE_{\pm\infty},\,
\dot{\CE}_{\pm\infty}$ as in Sect.  \ref{SecBG}. Let
$\CT_{\pm\infty}$ be the set of singular points of
$\dot{H}_{\pm\infty}$. 
We have : 
\begin{lemma}
\label{AbspHpminfty}
For $n\neq 0$ we have $\CT_{\pm\infty}=\emptyset$.
\end{lemma}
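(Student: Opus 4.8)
The plan is to reduce the statement to the already-proven fact $\CT_\pm=\emptyset$ of Proposition \ref{allegromanotropo} $ii)$ by comparing the one-dimensional operators $h_{\pm\infty}$ with the asymptotic operators $h_\pm$, $\tilde h_-$ coming from the geometric setting. Recall that $\CT_{\pm\infty}$ was defined as the set of real poles of the meromorphic extension of $w^{-\epsilon}\dot R_{\pm\infty}(z)w^{-\epsilon}$, equivalently (by Corollary \ref{Cor10.2.4} and the self-adjoint analog of Corollary \ref{Cor10.2.4}, i.e. Kato smoothness as in Remark \ref{remdeb}) of $w^{-\epsilon}p_{\pm\infty}^{-1}(z)w^{-\epsilon}$, where $p_{\pm\infty}(z)=h_{\pm\infty}+z(2k_{\pm\infty}-z)$. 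Since $\dot H_{\pm\infty}$ are self-adjoint, $\CT_{\pm\infty}=\CS_{\pm\infty}$ and by the standard theory it suffices to exclude real poles of the weighted resolvent. Because $k_{+\infty}=0$ and $k_{-\infty}=\ell$ is a constant, a gauge transformation $\Phi(\ell)$ (Subsect. \ref{SecGT}) reduces the $-$ case to the $+$ case with $h_{+\infty}-\ell^2$ replaced by $h_{0,s}$-type operator, so I would treat only $h_{+\infty}$ and note the $-$ case is analogous.

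First I would observe that $h_{+\infty}$ is exactly $h_{0,s}$ with the angular potential $\frac{\Delta_r}{\lambda^2(r^2+a^2)}P$, and that after the Regge--Wheeler change of variables it is a direct sum over the spectral decomposition of $P$ of one-dimensional Schr\"odinger operators $-\partial_x^2+V_q(x)$ on $L^2(\R)$, where $V_q(x)\to 0$ exponentially as $x\to+\infty$ and $V_q(x)\to \lambda_q\cdot(\text{const})\cdot 0$... more precisely $\frac{\Delta_r}{r^2+a^2}\to 0$ at both horizons, so $V_q\to 0$ exponentially at $\pm\infty$ together with the mass term. Thus each summand is a short-range (indeed exponentially decaying) perturbation of $-\partial_x^2$ on the line, whose only possible real resonance of the weighted resolvent is at $z=0$; absence of a positive resonance and absence of embedded eigenvalues is classical (or follows from \cite[Proposition 9.3]{GGH2}, already invoked for $\CT_\pm$). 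So the only thing to check is that $0$ is not a resonance. For $n\neq 0$ the potential $V_q$ is bounded below by a strictly positive multiple of the weight, since $P\ge n^2/\sin^2\theta \ge n^2$ and $\frac{\Delta_r}{\lambda^2(r^2+a^2)}\gtrsim q(r)^2 = w^{-2}$; hence $h_{+\infty}\gtrsim n^2 w^{-2}>0$ as a form, so $p_{+\infty}(0)=h_{+\infty}$ is boundedly invertible between the relevant weighted spaces and $z=0$ cannot be a pole. This is exactly the argument already used at the end of the proof of Proposition \ref{allegromanotropo} $ii)$ for $p_+(0)$, and I would simply reproduce it: if $r(z)=w^{-\epsilon}p_{+\infty}^{-1}(z)w^{-\epsilon}$ had a pole at $0$ with principal part $\sum_{j=1}^N P_j z^{-j}$, $P_N\neq 0$, then applying $w^\epsilon p_{+\infty}(z)w^\epsilon$ and letting $z\to 0$ gives $w^\epsilon p_{+\infty}(0)w^\epsilon P_N=0$, whence $P_N=0$, a contradiction.

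The only genuinely delicate point is the existence of the meromorphic extension of $w^{-\epsilon}\dot R_{\pm\infty}(z)w^{-\epsilon}$ past the real axis, which is needed even to speak of $\CT_{\pm\infty}$; but this is provided by the proposition stated just before (\ref{5.1pminfty}), which asserts $\CT_{\pm\infty}$ is a discrete closed subset of $\R$, established in the same way as for $\dot H_\pm$. So the structure of the argument is: (1) reduce $-$ to $+$ by the gauge transformation; (2) recall the weighted resolvent of $h_{+\infty}$ extends meromorphically and that self-adjointness gives $\CT_{+\infty}=\CS_{+\infty}$; (3) exclude nonzero real poles by \cite[Proposition 9.3]{GGH2} (short-range one-dimensional scattering, no positive resonances, no embedded eigenvalues); (4) exclude $z=0$ using $h_{+\infty}\gtrsim n^2 w^{-2}$ for $n\neq 0$. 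I expect step (3)—pinning down precisely which result of \cite{GGH2} or classical one-dimensional theory rules out real resonances away from $0$ for the exponentially decaying potentials $V_q$, uniformly in the spectral parameter $q$ of $P$—to be the main obstacle, though morally it is the same statement already used for $\CT_\pm$; everything else is a direct transcription of arguments appearing earlier in the paper.
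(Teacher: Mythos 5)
Your proposal matches the paper's proof, which simply says that since $\dot{H}_{\pm\infty}$ is selfadjoint one can invoke Kato's theory of $H$-smoothness, and that the absence of real resonances is established exactly as in Proposition \ref{allegromanotropo}~$ii)$ (nonzero $z$ by \cite[Proposition 9.3]{GGH2}, and $z=0$ excluded by $p_{\pm\infty}(0)\gtrsim n^2 w^{-2}$ for $n\neq 0$ and the finite-rank pole argument). You fill in these omitted details correctly, so the substance and the route are the same.
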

\proof

As $\dot{H}_{\pm\infty}$ is selfadjoint we can use the Kato theory of
$H-$ smoothness. The proof for the absence of real resonances is
analogous to the proof of Prop. \ref{allegromanotropo} {\it ii)}, we omit the details. 
\qed

\subsection{Proof of Thm. \ref{thACKerrSC}}
We first consider the case $n\neq 0$. By Prop. \ref{allegromanotropo} we know that
$\sigma_{pp}^{\C}(\dot{H})=\CS=\CT_{\pm\infty}=\emptyset$. Thus
$\one=\one_{\R}(\dot{H})$ is an admissible energy cut-off. Using in
addition that $e^{-it\dot{H}},\, e^{-it\dot{H}_{\pm\infty}}$ are
uniformly bounded, the theorem follows from Thm. \ref{asympcompl2}. In
the case $n=0$ all operators are selfadjoint. This case then follows from
\cite{Ha03}, we omit the details. 
\qed
\subsection{Proof of Thm. \ref{Asympcompl2}}
We first write the comparison dynamics which we obtain after rotation:
\begin{eqnarray*}
h_r&=&-\partial_x^2,\, h_l=-\partial_x^2-\ell^2,\\
k_r&=&0,\, k_l=\ell.
\end{eqnarray*}
We associate to these operators the natural homogeneous  energy spaces
$\dot{\CE}_{l/r}$. Let 
\begin{equation*}
\dot{H}_r=\left(\begin{array}{cc} 0 & \one \\ h_r &
    2k_r \end{array}\right),\quad \dot{H}_l=\left(\begin{array}{cc} 0 & \one \\ h_l &
    2k_l \end{array}\right).
\end{equation*}
We now further analyze the energy spaces. Note that
\begin{eqnarray*}
\dot{\CE}_l&=&\Phi(\ell)(H^1(\R);L^2(S^2))\oplus L^2(\R\times S^2)),\\
\dot{\CE}_r&=&H^1(\R;L^2(S^2))\oplus L^2(\R\times S^2).
\end{eqnarray*}
We will need the following subspaces
\begin{eqnarray*}
\dot{\CE}_l^L&=&\{(u_0,u_1)\in \dot{\CE}_l;\, u_1-i\ell u_0\in L^1(\R;L^2(S^2)),\,
\int (u_1-i\ell u_0) (x,\omega)dx=0\quad \mbox{a.e. in}\quad \omega\},\\
\dot{\CE}_r^L&=&\{(u_0,u_1)\in \dot{\CE}_r;\, u_1\in L^1(\R;L^2(S^2)),\,
\int u_1(x,\omega)dx=0\quad \mbox{a.e. in}\quad \omega\}.
\end{eqnarray*}
We define the spaces of {\em incoming /outgoing} initial data:
\begin{eqnarray*}
\dot{\CE}_l^{\rm in}&=&\{u\in \dot{\CE}^L_l;\,
u_1=\partial_xu_0+i\ell u_0\},\\
\dot{\CE}_l^{\rm out}&=&\{u\in \dot{\CE}^L_l;\,
u_1=-\partial_xu_0+i\ell u_0\},\\
\dot{\CE}_r^{\rm in}&=&\{u\in \dot{\CE}^L_r;\,
u_1=\partial_xu_0\},\\
\dot{\CE}_r^{\rm out}&=&\{u\in \dot{\CE}^L_r;\,
u_1=-\partial_xu_0\}.
\end{eqnarray*}
If $(u_0,u_1)\in \dot{\CE}^{\rm in}_l$, then the solution of
\eqref{profilwe} is given by
\[u_0(t,x,\omega)=e^{i\ell t}u_0(x+t,\omega),\]
which is clearly incoming. We have the following
\begin{lemma}
\label{lem12.2.1}
We have 
\begin{equation*}
\dot{\CE}^L_l=\dot{\CE}^{\rm in}_l\oplus \dot{\CE}^{\rm out}_l,\quad \dot{\CE}^L_r=\dot{\CE}^{\rm in}_r\oplus\dot{\CE}^{\rm out}_r.
\end{equation*}
\end{lemma}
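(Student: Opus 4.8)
The plan is to reduce the ``$l$'' statement to the ``$r$'' statement by a gauge transformation, and then to produce the splitting for $\dot{\CE}^L_r$ by an explicit d'Alembert‑type construction, the whole substance lying in checking that the two pieces land in the prescribed subspaces. For the reduction I would use Subsect.~\ref{SecGT}: the substitution $(u_0,u_1)\mapsto(u_0,u_1-i\ell u_0)$ intertwines $\dot H_l$ with $\dot H_r$ (up to the additive scalar $\ell$) and is an isomorphism $\dot{\CE}_l\tilde\to\dot{\CE}_r$. Since the $L^1$/zero--mean condition defining $\dot{\CE}^L_l$ is imposed on $u_1-i\ell u_0$, and the incoming/outgoing constraints for $l$ read $u_1-i\ell u_0=\pm\partial_x u_0$, this map carries $\dot{\CE}^L_l,\dot{\CE}^{\rm in}_l,\dot{\CE}^{\rm out}_l$ onto $\dot{\CE}^L_r,\dot{\CE}^{\rm in}_r,\dot{\CE}^{\rm out}_r$. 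So it suffices to prove $\dot{\CE}^L_r=\dot{\CE}^{\rm in}_r\oplus\dot{\CE}^{\rm out}_r$.

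Next I would check directness together with the easy inclusion. If $u\in\dot{\CE}^{\rm in}_r\cap\dot{\CE}^{\rm out}_r$ then $\partial_x u_0=u_1=-\partial_x u_0$, hence $\partial_x u_0=0$, which forces $u_0=0$ (and then $u_1=0$) because $u_0\in H^1(\R;L^2(S^2))$; thus the sum is direct. Conversely, adding $(v_0,\partial_x v_0)\in\dot{\CE}^{\rm in}_r$ and $(w_0,-\partial_x w_0)\in\dot{\CE}^{\rm out}_r$ gives $(v_0+w_0,\partial_x(v_0-w_0))$, whose first component lies in $H^1(\R;L^2(S^2))$ and whose second component lies in $L^1$ with vanishing $x$--integral, each summand having these properties; so $\dot{\CE}^{\rm in}_r\oplus\dot{\CE}^{\rm out}_r\subset\dot{\CE}^L_r$.

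For the remaining inclusion (surjectivity) I would take $(u_0,u_1)\in\dot{\CE}^L_r$ and set $G(x,\omega):=\int_{-\infty}^{x}u_1(\tau,\omega)\,d\tau$. Because $u_1\in L^1(\R;L^2(S^2))$ this integral converges in $L^2(S^2)$ and $G$ is continuous in $x$ with $G(-\infty)=0$; because moreover $\int_{\R}u_1\,d\tau=0$ a.e.\ one also gets $G(+\infty)=0$, while $\partial_x G=u_1$. I would then put $u_0^{\rm in}:=\tfrac12(u_0+G)$, $u_1^{\rm in}:=\partial_x u_0^{\rm in}=\tfrac12(\partial_x u_0+u_1)$ and $u_0^{\rm out}:=\tfrac12(u_0-G)$, $u_1^{\rm out}:=-\partial_x u_0^{\rm out}=\tfrac12(u_1-\partial_x u_0)$, so that $u_0^{\rm in}+u_0^{\rm out}=u_0$ and $u_1^{\rm in}+u_1^{\rm out}=u_1$ by construction and the incoming/outgoing relations hold by definition. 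What then remains is to verify that $(u_0^{\rm in},u_1^{\rm in})\in\dot{\CE}^{\rm in}_r$ and $(u_0^{\rm out},u_1^{\rm out})\in\dot{\CE}^{\rm out}_r$: membership in $\dot{\CE}_r$ and the $L^1$--integrability of $u_1^{\rm in/out}$ are extracted from $\partial_x u_0,u_1\in L^2$, the integrability conditions built into $\dot{\CE}^L_r$, and the endpoint behaviour of $G$, while $\int_{\R}u_1^{\rm in/out}\,dx=0$ follows from $\int_{\R}u_1\,dx=0$ together with $u_0(\pm\infty)=0$.

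The only genuinely delicate point is this last bookkeeping in the surjectivity step: one must make sure that the primitive $G$ produces an admissible first component — which is exactly why the vanishing of the mean of $u_1$ is imposed in the definition of $\dot{\CE}^L_r$ — and that no integrability is lost in passing from $(u_0,u_1)$ to the incoming and outgoing pieces. Everything else is formal manipulation of d'Alembert's formula, and the $l$--case carries over verbatim after conjugation by the gauge map.
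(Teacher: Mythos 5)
Your proof is correct and is essentially the same as the paper's: in both cases the crux is the explicit d'Alembert decomposition via a primitive of the velocity component, with the zero–mean condition used to make the primitive vanish at both $\pm\infty$, and with directness proved from $\partial_x u_0=0\Rightarrow u_0=0$. The only organizational difference is that you reduce the $l$-case to the $r$-case via the gauge map $(u_0,u_1)\mapsto(u_0,u_1-i\ell u_0)$ and then work at $\ell=0$, whereas the paper writes the decomposition \eqref{equation12.6} directly for general $\ell$ and observes that $r$ is the special case $\ell=0$; if one unwinds the paper's formulas at $\ell=0$ using $u_0(\pm\infty)=0$ and $\int u_1=0$, one obtains exactly your $u_0^{\rm in/out}=\tfrac12(u_0\pm G)$ with $G(x)=\int_{-\infty}^x u_1$. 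Your write-up also makes explicit the endpoint behaviour of $G$, which the paper leaves to the reader.
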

\proof 
We only show the lemma for $\dot{\CE}_l^L$, $\dot{\CE}_r^L$ being the special case
$\ell=0$.
We define for $u=(u_0,u_1)\in \dot{\CE}^L_l$:
\begin{eqnarray}
\label{equation12.6}
\begin{array}{rcl}
u^{\rm in}_{0}&=&\frac{1}{2}\int_x^{\infty}(-\partial_xu_0-(u_1-i\ell u_0))(\tau,\omega)d\tau,\\[2mm]
u^{\rm in}_{1}&=&\frac{1}{2}(u_1-i\ell
u_0+\partial_xu_0)+\frac{i\ell}{2}\int_x^{\infty}(-\partial_xu_0-(u_1-i\ell
u_0))(\tau,\omega)d\tau,\\[2mm]
u^{\rm out}_{0}&=&\frac{1}{2}\int_{-\infty}^x(\partial_xu_0-(u_1-i\ell
u_0))(\tau,\omega)d\tau,\\[2mm]
u^{\rm out}_{1}&=&\frac{1}{2}(u_1-i\ell
u_0-\partial_xu_0)+\frac{i\ell}{2}\int_{-\infty}^x(\partial_xu_0-(u_1-i\ell
u_0))(\tau,\omega)d\tau,\\[2mm]
u^{{\rm in}/{\rm out}}&=& (u_{0}^{{\rm in}/{\rm out}}, u_{1}^{{\rm in}/{\rm out}}).
\end{array}
\end{eqnarray}
It is easy to check that
\[
u= u^{\rm in}+ u^{\rm out}, \ u^{{\rm int}/ {\rm out}}\in \dot{\CE}_{\ell}^{{\rm in}/ {\rm out}},
\]
\def\ino{{\rm in}}\def\outo{{\rm out}}\def\doti{\dot{\CE}}
which shows that $\doti^{\ino}_{\ell}+ \doti^{\outo}_{\ell}= \doti_{\ell}$. Next if $v\in \doti^{\ino}_{\ell}\cap \doti^{\outo}_{\ell}$ we have $\p_{x}v_{0}=0$, $v_{0}\in L^{2}$ hence $v_{0}=0$, hence $v_{1}=0$.  \qed
\begin{remark}
\label{suppinout}
If $(u_0,u_1)\in \dot{\CE}^L_l$ or $(u_0,u_1)\in \dot{\CE}^L_r,\, \supp u_0,\, \supp
u_1\subset]R_1,R_2[\times S^2$, then we have
\begin{equation}
\label{suppin}
\supp u^{\rm in}_{0,1}\subset]-\infty,R_2[\times S^2,\quad
\supp u^{\rm out}_{0,1}\subset ]R_1,\infty[\times S^2.
\end{equation}
\end{remark}
The spaces $\CE^q_{l/r},\, \CE^{fin}_{l/r}$ are defined as before but
starting with our now slightly modified operators (due to the
rotation). Let
\begin{equation*}
\CD^{fin}_{r/l}=C_0^{\infty}(\R\times S^2)\times C_0^{\infty}(\R\times
S^2)\cap\CE^{fin}_{r/l}\cap\dot{\CE}_{r/l}^L.
\end{equation*}
\begin{lemma}
\label{lem12.2.2}
$\CD^{fin}_{r/l}$ is dense in $\CE^{fin}_{r/l}$.
\end{lemma}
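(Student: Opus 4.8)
The plan is to reduce the assertion to a one--dimensional (radial) approximation problem, treated separately in each angular mode of $P$; here ``dense'' is meant for the norm of $\dot{\CE}^n_{r/l}$, although the argument below will in fact produce approximation even in the stronger (inhomogeneous) norm of $\CE^n_{r/l}$. First I would observe that, by their very definitions, $\CE^{fin}_{r/l}$ and $\CD^{fin}_{r/l}$ are increasing unions over $Q$ of their intersections with $W_{\le Q}:=\oplus_{q\le Q}W_q$, so it suffices to show, for each fixed $Q$, that $\CD^{fin}_{r/l}\cap W_{\le Q}$ is dense in $\CE^n_{r/l}\cap W_{\le Q}$. Since $P$ is elliptic and self--adjoint on the compact surface $S^2$ and commutes with $D_\varphi$, its eigenfunctions with eigenvalue in $\{\lambda_q:q\le Q\}$ that lie in the $n$--th $D_\varphi$--eigenspace span a finite dimensional space $V\subset\coinf(S^2)$. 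One then identifies $\CE^n_{r/l}\cap W_{\le Q}$ with the set of pairs $(u_0,u_1)$, $u_0\in H^1(\R)\otimes V$, $u_1\in L^2(\R)\otimes V$, equipped with the relevant energy norm, and one checks that $\coinf(\R\times S^2)\cap W_{\le Q}=\coinf(\R)\otimes V$: the continuous spectral projection onto $W_{\le Q}$ maps $f\in\coinf(\R\times S^2)$ to $\sum_j\phi_j\otimes e_j$ with $\phi_j(x)=\langle e_j\,|\,f(x,\cdot)\rangle_{L^2(S^2)}\in\coinf(\R)$, for a smooth basis $(e_j)$ of $V$.

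With this reduction in hand, I would approximate the radial part. Given $u=(u_0,u_1)$ in $\CE^n_{r/l}\cap W_{\le Q}$, the classical density of $\coinf(\R)$ in $H^1(\R)$ and in $L^2(\R)$ (mollification and truncation in $x$) provides $w=(w_0,w_1)\in\coinf(\R)\otimes V$ with $w_0$ close to $u_0$ in $H^1(\R)\otimes V$ and $w_1$ close to $u_1$ in $L^2(\R)\otimes V$, hence $w$ close to $u$ in $\dot{\CE}^n_{r/l}$. The remaining task is to correct $w$ so that it satisfies the extra condition defining $\dot{\CE}^L_{r/l}$, namely $\int_\R w_1(x,\cdot)\,dx=0$ in the $r$--case, resp. $\int_\R(w_1-i\ell w_0)(x,\cdot)\,dx=0$ in the $l$--case, without spoiling the approximation. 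To this end I would fix $\theta\in\coinf(\R)$ with $\int_\R\theta\,dx=1$, set $\theta_R(x)=R^{-1}\theta(x/R)$ so that $\int_\R\theta_R\,dx=1$ while $\|\theta_R\|_{L^2(\R)}=R^{-1/2}\|\theta\|_{L^2(\R)}\to0$, let $c\in V$ be the finite dimensional integral $\int_\R w_1(x,\cdot)\,dx$, resp. $\int_\R(w_1-i\ell w_0)(x,\cdot)\,dx$, and replace $w$ by $v:=(w_0,\,w_1-\theta_R\otimes c)$. Then $v\in\coinf(\R)\otimes V$, the relevant integral vanishes, so $v\in\CD^{fin}_{r/l}\cap W_{\le Q}$; and since $v$ differs from $w$ only in its second component, by $\theta_R\otimes c$, one has $\|v-w\|_{\dot{\CE}^n_{r/l}}\lesssim\|\theta_R\|_{L^2(\R)}\,\|c\|_{L^2(S^2)}\to0$. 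Letting $R\to\infty$ after choosing $w$ sufficiently close to $u$ concludes the argument.

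The only step carrying real content --- and the only place I expect any subtlety --- is this last correction: the conditions cutting $\CD^{fin}_{r/l}$ out of $\coinf(\R\times S^2)^2$ define a proper subspace whose $\dot{\CE}^n_{r/l}$--closure is nevertheless all of $\CE^{fin}_{r/l}$, precisely because the linear functional $f\mapsto\int_\R f\,dx$ is \emph{unbounded} for the $L^2(\R)$ (hence the energy) norm; the spreading cut--off $\theta_R$ is designed to exploit exactly this discrepancy. Everything else --- the reduction to finitely many $P$--modes, the smoothness and finite dimensionality of the eigenspaces of $P$ on $S^2$, the standard density of $\coinf$ in $H^1(\R)$ and $L^2(\R)$, and the fact that the isomorphism $\Phi(\ell)$ enters only harmlessly in the $l$--case --- is routine.
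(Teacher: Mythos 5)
Your argument is correct and coincides with the paper's: both first approximate by elements of $C_0^{\infty}\times C_0^{\infty}$ supported in finitely many $P$--modes, and then restore the vanishing--integral condition by subtracting a spreading cutoff with unit mass whose $L^2$ norm is $O(R^{-1/2})$. Your $\theta_R(x)=R^{-1}\theta(x/R)$ is precisely the paper's $n^{-1}\psi(n^{-1}x)$, exploiting in the same way the unboundedness of $f\mapsto\int f\,dx$ for the $L^2$ norm.
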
   
\proof
We prove the lemma in two steps.
\begin{enumerate}
\item[--] $C_0^{\infty}(\R\times S^2)\times C_0^{\infty}(\R\times S^2)\cap\CE^{fin}_{r/l}$ is
  dense in $\CE^{fin}_{r/l}$. This follows easily from the usual
  regularization procedures.
\item[--] 
\begin{equation*}
C_0^{\infty}(\R\times S^2)\times C_0^{\infty}(\R\times S^2)\cap
\CE_{r/l}^{fin}\cap\dot{\CE}_{r/l}^L\,\mbox{is dense in}\,\,
C_0^{\infty}(\R\times S^2)\times C_0^{\infty}(\R\times
S^2)\cap\CE^{fin}_{r/l}.
\end{equation*}
We can clearly replace $\CE^{fin}_{r/l}$ by $\CE^q_{r/l}$ in the
statement. We only treat the $l-$ case. Let 
\begin{equation*}
u=(u_0,u_1)\in C_0^{\infty}(\R\times S^2)\times C_0^{\infty}(\R\times
S^2)\cap\CE^q_{l}. 
\end{equation*}
We will consider $u$ as a function of $x$ alone. We put 
\[v=\Phi(-\ell)u.\]
Let $\psi\in C_0^{\infty}(\R),\, \psi\ge 0,\, \psi= 1$ in a
neighborhood of zero and $\int\psi(x)dx=1$. We put 
\begin{equation*}
v_0^n=v_0,\quad
v_1^n=v_1-n^{-1}\psi(n^{-1}x)\int v_1(x)dx,
\end{equation*}
so that 
$\int v_1^n(x)dx=0$. We then estimate
\begin{eqnarray*}
\| v_{1}- v_{1}^{n}\|_{L^{2}}\leq n^{-\12}\| v_{1}\|_{L^{1}}\| n^{-\12}\psi(n^{-1}\cdot)\|_{L^{2}}\leq Cn^{-\12}
\|v_1\|_{L^1}\rightarrow 0,
\end{eqnarray*}
which  completes the proof. 
\end{enumerate}
\qed

We need an additional
\begin{lemma}
\label{normcomp}
There exists $C>0$ such that 
\[
\Vert i_{r/l}u\Vert_{\dot{\CE}_{r/l}}\le C\Vert
u\Vert_{\dot{\CE}}, \ u\in \doti.
\]

\end{lemma}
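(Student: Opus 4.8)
\textbf{Proof proposal for Lemma \ref{normcomp}.}

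The plan is to unwind the definitions of the homogeneous energy norms and reduce the claimed bound to a pointwise comparison of the underlying selfadjoint operators. Recall that, after the rotation, $h_r=-\partial_x^2$ and $h_l=-\partial_x^2-\ell^2$, with $k_r=0$, $k_l=\ell$, so that $h_{0,r}:=h_r+k_r^2=-\partial_x^2$ and $h_{0,l}:=h_l+k_l^2=-\partial_x^2$ as well. Hence both $\dot{\CE}_r$ and $\dot{\CE}_l$ are, up to the bounded isomorphism $\Phi(\ell)$ in the $l$-case, equal to $(-\partial_x^2)^{-1/2}\CH\oplus\CH$; in particular
\[
\Vert i_{r/l}u\Vert_{\dot{\CE}_{r/l}}^2 \simeq \Vert (i_{r/l}u)_1-k_{r/l}(i_{r/l}u)_0\Vert_\CH^2 + \Vert \partial_x (i_{r/l}u)_0\Vert_\CH^2 .
\]
Since $[k,i_{\pm}]=0$ by \eqref{TE1}, the first term equals $\Vert i_{r/l}(u_1-ku_0)\Vert_\CH^2\le \Vert u_1-ku_0\Vert_\CH^2\le\Vert u\Vert_{\dot{\CE}}^2$, so only the second term, the ``kinetic'' part $\Vert\partial_x(i_{r/l}u_0)\Vert_\CH$, needs work.

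For that term I would write $\partial_x(i_{r/l}u_0)=i_{r/l}\partial_x u_0 + (\partial_x i_{r/l})u_0$ and estimate the two pieces separately. For the first piece, note that on $\supp i_{r/l}$ we have $q(r)\gtrsim 1$ (the cutoff $i_+$ is supported away from $r_-$ where $q$ vanishes, and symmetrically for $i_-$), hence $w^{-1}=q\gtrsim 1$ there; combining this with the estimate \eqref{7.5.1b}, namely $\Vert q(r)D_r\alpha_1 v\Vert\lesssim\Vert h_0^{1/2}v\Vert$, and the relation $\partial_x=\alpha_1^{-2}\partial_r$ together with the conjugation by $\CU_1$ used in Subsect. \ref{subitopresto}, one gets $\Vert i_{r/l}\partial_x u_0\Vert_\CH\lesssim \Vert h_0^{1/2}u_0\Vert_\CH \le \Vert u\Vert_{\dot{\CE}}$. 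For the second piece, $\partial_x i_{r/l}$ is a smooth compactly supported function of $x$, so $\Vert (\partial_x i_{r/l})u_0\Vert_\CH$ is controlled by $\Vert\langle x\rangle^{-1}u_0\Vert_\CH$ (after transferring to the $r$-variable, by $\Vert f u_0\Vert$ with $f\in T^\delta$), and the Hardy inequality of Lemma \ref{lemHardy} gives $\Vert\langle x(r)\rangle^{-1}u_0\Vert_\CH\lesssim\Vert h_0^{1/2}u_0\Vert_\CH\le\Vert u\Vert_{\dot{\CE}}$. Adding the two pieces yields $\Vert\partial_x(i_{r/l}u_0)\Vert_\CH\lesssim\Vert u\Vert_{\dot{\CE}}$, and together with the charge term above this proves the lemma.

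The main obstacle is the bookkeeping around coordinates and unitary transforms: the space $\dot{\CE}$ is built from the perturbed operator $h_0$ in the $(r,\omega)$ picture, while $\dot{\CE}_{r/l}$ lives naturally in the $(x,\omega)$ picture after the unitary $\CU_1$ (and, for $l$, after $\Phi(\ell)$), so one must be careful that the ``flat'' energy norm $\Vert\partial_x u_0\Vert_\CH^2$ really is dominated by $\Vert h_0^{1/2}u_0\Vert_\CH^2$ only on the support of the cutoff — which is exactly where the degeneracy $q(r)\to 0$ at the horizons does not hurt. This localization is what makes the comparison work and is the only genuinely non-formal point; everything else is a direct application of \eqref{7.5.1b}--\eqref{7.5.1d}, Lemma \ref{lemHardy}, and $[k,i_{\pm}]=0$.
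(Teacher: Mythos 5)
Your overall plan (split the $\dot{\CE}_{r/l}$ norm into a charge part and a kinetic part and compare each to $h_0$) is sensible, but both pieces contain concrete errors.

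For the charge part, you write that $\Vert (i_{r/l}u)_1-k_{r/l}(i_{r/l}u)_0\Vert$ equals $\Vert i_{r/l}(u_1-ku_0)\Vert$ "since $[k,i_{\pm}]=0$". This is not so: $k_{r/l}$ are \emph{constants} ($k_r=0$, $k_l=\ell$), whereas $k$ is a non-constant function on $\CM$, so $\Vert i_{r}u_1\Vert$ is not $\Vert i_r(u_1-ku_0)\Vert$ and $\Vert i_l(u_1-\ell u_0)\Vert$ is not $\Vert i_l(u_1-ku_0)\Vert$. One has to add and subtract $ku_0$ and then absorb the extra term $\Vert i_{r/l}(k-k_{r/l})u_0\Vert$, using that $i_{r/l}(k-k_{r/l})\in T^{2}$ is bounded (by \eqref{G3}) together with Lemma \ref{lemHardy}, or, as in the text, simply $\Vert k\, i_{r/l}u_0\Vert^2\lesssim (k^2 u_0|u_0)\lesssim (h_0 u_0|u_0)$.

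For the kinetic part, the assertion that $q(r)\gtrsim 1$ on $\supp i_{r/l}$ is false: $q(r)=\sqrt{(r_+-r)(r-r_-)}$ vanishes at both horizons, and $\supp i_r$ (resp.\ $\supp i_l$) contains a neighborhood of $r_+$ (resp.\ $r_-$). The claim that ``this localization is what makes the comparison work'' is therefore also incorrect; in fact no localization is needed for the kinetic bound. Because $\alpha_1\sim q\sim\alpha_2$, the ratio $\alpha_2^2\alpha_1^{-2}$ is bounded away from zero on $(r_-,r_+)$, so $\CU_1 h_{0,s}\CU_1^{-1}=D_x\alpha_2^2\alpha_1^{-2}D_x+\alpha_3^2P+\alpha_4^2\gtrsim D_x^2$, and together with $h_{0,s}\lesssim h_0$ (Lemma \ref{lemma7.6.1}) this gives the \emph{global} bound $\Vert\partial_x u_0\Vert\lesssim\Vert h_0^{1/2}u_0\Vert$. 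The Leibniz term $(\partial_x i_{r/l})u_0$ is indeed handled by Lemma \ref{lemHardy} as you say.

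Finally, compare with the paper's route, which is shorter and avoids these pitfalls: one bounds the kinetic part of $\Vert i_{r/l}u\Vert_{\dot{\CE}_{r/l}}^2$ above by $(i_{r/l}\,h_{\pm\infty}\,i_{r/l}u_0|u_0)$, replaces $\Vert i_{r}u_1\Vert^2$ by $\Vert u_1-ku_0\Vert^2+(i_r k^2 i_r u_0|u_0)$ (and the analogue with $k-\ell$ for the $l$-case), and then invokes a single quadratic-form inequality $i_{r/l}(h_{\pm\infty}+k_{r/l}^2)i_{r/l}\lesssim h_0$, which is a localized version of \eqref{7.6.1} and of \eqref{TE3}\,\emph{d)}. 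This packages the charge correction and the kinetic comparison into one step and sidesteps the coordinate bookkeeping entirely.
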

\proof 
We have:
\begin{eqnarray*}
\Vert i_ru\Vert^2_{\dot{\CE}_r}&=&\Vert
i_ru_1\Vert_{\CH}^2+(i_rh_{+\infty}i_ru_0|u_0)\\
&\lesssim&\Vert (u_1-ku_0)\Vert^2_{\CH}+(i_r(h_{+\infty}+k^2)i_ru_0|u_0)\\
&\lesssim&\Vert (u_1-ku_0)\Vert^2_{\CH}+(h_0u_0|u_0)=\Vert
u\Vert^2_{\dot{\CE}}.
\end{eqnarray*}
Now recall that
\[\tilde{h}_{-\infty}=-\partial_x^2+\frac{\Delta_r}{r^2+a^2}P+m^2\Delta_r.\]
We then estimate
\begin{eqnarray*}
\Vert i_lu\Vert^2_{\dot{\CE}_l}&=&\Vert i_l(u_1-\ell
u_0)\Vert^2+(i_l\tilde{h}_{-\infty}i_lu_0|u_0)\\
&\lesssim&\Vert
i_l(u_1-ku_0)\Vert^2_{\CH}+(i_l(\tilde{h}_{-\infty}+(k-\ell)^2)i_lu_0|u_0)\\
&\lesssim& \Vert
(u_1-ku_0)\Vert^2_{\CH}+(h_0u_0|u_0)=\Vert u\Vert^2_{\dot{\CE}}.
\end{eqnarray*}
\qed
\proof[Proof of Thm. \ref{Asympcompl2}]
We first show for $u\in \CE^{fin}_{r/l}$ the existence of the limit 
\[\tilde{W}_{r/l}u=\lim_{t\rightarrow\infty}e^{it\dot{H}_{\pm\infty}}i_{r/l}e^{-it\dot{H}_{r/l}}u\]
in $\dot{\CE}_{\pm\infty}$. Let $u\in \oplus_{|q|\le Q}\CE^q$. Using the
estimate 
\[\Vert e^{it\dot{H}_{\pm\infty}}i_{r/l}e^{-it\dot{H}_{r/l}}u\Vert_{\dot{\CE}_{\pm\infty}}\le
C(Q)\Vert u\Vert_{\dot{\CE}_{r/l}}\]
as well as the spherical symmetry of the problem, it is sufficient to
show for all $|q|\le Q$ the existence of the limits
\[\lim_{t\rightarrow\infty}e^{it\dot{H}_{\pm\infty}^q}i_{r/l}e^{-it\dot{H}^q_{r/l}}u^q,\]
where $u^q\in \CE^q$ and $\dot{H}^q_{\pm\infty}$ resp. $\dot{H}^q_{r/l}$ are the
restrictions of $\dot{H}_{\pm\infty}$ resp. $\dot{H}^l_{r/l}$
 to
$\dot{\CE}^q_{r/l}$. The existence of this limit follows from standard
arguments using the exponential decay of $\Delta_r$ at
$\pm\infty$. Using Thm. \ref{thACKerrSC} we obtain the existence
of the limit
\[\lim_{t\rightarrow\infty}e^{it\dot{H}}i^2_{r/l}e^{-it\dot{H}_{r/l}}u=W_{r/l}u.\]
We now want to show that there exists $C>0$ such that for all $u\in
\CE^{fin}_{r/l}$
\begin{equation}
\label{estWrl}
\Vert W_{r/l}u\Vert_{\dot{\CE}}\le C\Vert u\Vert_{\dot{\CE}_{r/l}}.
\end{equation}
We first consider $W_l$. By Lemma \ref{lem12.2.2} we can suppose $(u_0,u_1)\in \CD^{fin}_l$. Let $\supp u_0,\, \supp u_1\subset
(R_1,R_2)$. We decompose $(u_0,u_1)$ in incoming and outgoing
solutions according to the discussion at the beginning of this
subsection :
\begin{eqnarray*}
u_0=u_{0,l}^{\rm in}+u_{0,l}^{\rm out},\quad
u_1=u_{0,l}^{\rm in}+u_{0,l}^{\rm out}.
\end{eqnarray*}
By Remark \ref{suppinout} we have
\begin{eqnarray*}
\supp u_{0,l}^{\rm in},\, \supp u_{1,l}^{\rm in}\subset ]-\infty, R_2[\times S^2,\\
\supp u_{0,l}^{\rm out},\, \supp u_{1,l}^{\rm out}\subset ]R_1,\infty[\times S^2.
\end{eqnarray*}
Let $u_l^{\rm in}=(u_{0,l}^{\rm in},u_{1,l}^{\rm in}),\,
u_l^{\rm out}=(u_{0,l}^{\rm out},u_{1,l}^{\rm out})$. We have 
\[W_lu_l^{\rm out}=0,\]
because 
\[i^2_{l}e^{-it\dot{H}_l}u^{\rm out}_l=0\]
for $t$ sufficiently large. We have 
\[\supp e^{-it\dot{H}_l}u_l^{\rm in}\subset (]-\infty, R_2-t[\times S^2)\times(
]-\infty, R_2-t[\times S^2).\]
We then estimate for $t$ large
\begin{eqnarray*}
\Vert e^{it\dot{H}}i^2_le^{-it\dot{H}_l}u^{\rm in}\Vert_{\dot{\CE}}&\lesssim&\Vert
i_l^2e^{-it\dot{H}_l}u^{\rm in}\Vert_{\dot{\CE}}\\
&\lesssim&\Vert
u^{\rm in}\Vert^2_{\dot{\CE}_l}+\left(\left(\frac{\Delta_r}{r^2+a^2}P+\Delta_rm^2\right)\left(e^{-it\dot{H}_l}u^{\rm in}\right)_0\big\vert\left(e^{-it\dot{H}_l}u^{\rm in}\right)_0\right)\\
&\lesssim& \Vert
u^{\rm in}\Vert^2_{\dot{\CE}_l}+e^{-\kappa_-t}(Q+1)\Vert u^{\rm in}\Vert^2_{\CH}\\
&\rightarrow&\Vert u^{\rm in}\Vert^2_{\dot{\CE}_l},\quad t\rightarrow \infty.
\end{eqnarray*}
It follows 
\begin{equation*}
\Vert W_lu\Vert_{\dot{\CE}}\le C\Vert u\Vert_{\dot{\CE}_l},
\end{equation*}
which is the required estimate. The proof for $W_r$ is analogous. Part
$ii)$ is shown in the same way. The required estimate follows from
Lemma \ref{normcomp}.
\qed

\end{document}